\numberwithin{equation}{section}
\numberwithin{equation}{subsection}
\theoremstyle{plain}
\newtheorem{theorem}[equation]{Theorem}
\newtheorem{lemma}[equation]{Lemma}
\newtheorem{proposition}[equation]{Proposition}
\newtheorem{corollary}[equation]{Corollary}
\theoremstyle{definition}
\newtheorem{example}[equation]{Example}
\newtheorem{remark}[equation]{Remark}
\newtheorem{definition}[equation]{Definition}
\def\C{\mathbb C}
\def\Q{\mathbb Q}
\def\Z{\mathbb Z}
\def\im{{\rm im}}
\newcommand{\calb}{{\mathcal B}}
\newcommand{\calv}{{\mathcal V}}
\newcommand{\calk}{{\mathcal K}}
\newcommand{\cali}{{\mathcal I}}
\newcommand{\calj}{{\mathcal J}}
\newcommand{\calO}{{\mathcal O}}
\newcommand{\calC}{{\mathcal C}}
\newcommand{\calS}{{\mathcal S}}
\newcommand{\calL}{\mathcal{L}}
\newcommand{\tX}{\widetilde{X}}
\newcommand{\cO}{{\mathcal O}}
\newcommand{\bC}{{\mathbb C}}
\newcommand{\eca}{{\rm ECa}}
\newcommand{\pic}{{\rm Pic}}
\newcommand{\bt}{{\mathbf t}}
\newcommand{\bZ}{{\mathbb{Z}}}
\author{J\'anos Nagy}
\address{Central European University, Dept. of Mathematics,  Budapest, Hungary}
\email{nagy\textunderscore janos@phd.ceu.edu}
\author{Andr\'as N\'emethi}
\address{Alfr\'ed R\'enyi Institute of Mathematics,
Hungarian Academy of Sciences,
Re\'altanoda utca 13-15, H-1053, Budapest, Hungary \newline
 \hspace*{4mm} ELTE - University of Budapest, Dept. of Geometry, Budapest, Hungary \newline \hspace*{4mm}
BCAM - Basque Center for Applied Math.,
Mazarredo, 14 E48009 Bilbao, Basque Country – Spain}
\email{nemethi.andras@renyi.mta.hu }
\title{The Abel map for surface singularities \\
III. Elliptic germs
}
\begin{document}

\keywords{normal surface singularity,
resolution  graph, rational homology sphere, natural  line bundle, Poincar\'e series, Hilbert series,
Abel map, Brill--Noether theory, effective Cartier divisors, Picard group,
Laufer duality, elliptic singularities, elliptic cycle, end curve condition, monomial condition, splice quotient singularities}

\subjclass[2010]{Primary. 32S05, 32S25, 32S50, 57M27
Secondary. 14Bxx, 14J80}

\begin{abstract}
If $(\tX,E)\to (X,o)$ is the resolution of a complex normal surface singularity
and $c_1:\pic(\tX)\to H^2(\tX,\Z)$ is the Chern class map, then $\pic^{l'}(\tX):=
c_1^{-1}(l')$ has a (Brill--Noether type) stratification $W_{l', k}:=
\{\calL\in \pic^{l'}(\tX)\,:\, h^1(\calL)=k\}$. In this note we determine it for elliptic singularities together with the stratification according to the cycle of fixed components. For elliptic singularities we also characterize the End Curve Condition and Weak End Curve Condition  in terms of the Abel map, we provide several characterization of them, and finally we show that they are equivalent.
\end{abstract}

\maketitle

\linespread{1.2}


\pagestyle{myheadings} \markboth{{\normalsize  J. Nagy, A. N\'emethi}} {{\normalsize Abel maps }}


\section{Introduction}\label{s:intr}

\subsection{}
 Recall that the classical Brill--Noether problem for curve is the following.
Let $C$ be a smooth projective (complex) curve and let
$c_1:\pic(C)\to H^2(C,\Z)=\Z$ be   the first Chern
class map. Set $\pic^l(C):=c_1^{-1}(l)$. Then one
considers the stratification of $\pic^l(C)$ according to the $h^1$--value, namely,
$W_{l,k}:=\{\calL\in\pic^l(C)\,:\, h^1(\calL)=k\}$. The problem is to determine
the values $(l,k)$ for which $W_{l,k}$ is non--empty and in such cases to describe the topology of the different strata $W_{l,k}$. (This depends heavily on the analytic structure of $C$.) For details see e.g. \cite{ACGH,Flamini}.

\bekezdes For complex normal surface singularities the analogue question can be formulated as follows. Let $(X,o)$ be such a singularity  and let us fix a resolution
$\phi:(\tX,E)\to (X,o)$. We will assume that the link is a rational homology sphere,
equivalently, that the dual resolution graph $\Gamma$ is a tree of ${\mathbb P}^1$'s.
Then one has the exponential exact sequence
$$ 0\to \pic ^0(\tX)\simeq H^1(\calO_{\tX})\to \pic(\tX) \stackrel{c_1}{\longrightarrow}
L':=H^2(\tX,\Z)\to 0.$$
Here $L'$ might serve also as  the dual lattice of $L:=H_2(\tX,\Z)$ freely generated
by the irreducible exceptional divisors and endowed with the negative definite intersection form. Then for any possible Chern class $l'\in L'$ set $\pic^{l'}(\tX):=
c_1^{-1}(l')$. (Note that while $\pic^l(C)$ for a smooth curve is a {\it compact}
 complex torus,
in the surface singularity case $\pic ^{l'}(\tX)$ is a (non--compact)  affine space
$\C^{p_g}$, where $p_g$ is the geometric genus of $(X,o)$.)
Following \cite{NNI,NNII} we consider the  stratification $W_{l', k}:=\{ \calL\in
\pic^{l'}(\tX)\, :\, h^1(\calL)=k\}$. Again, the goal is to describe the spaces $W_{l',k}$.
In general, they depend in a rather arithmetical  way on the combinatorics of
the resolution graph $\Gamma$ and also on the analytic structure supported on $\Gamma$.
Usually the spaces $W_{l',k}$ are not open, nor closed, not even  quasi--projective.
They might be nonlinear, their closure might be even singular.
Though in theory of singularities several results are known for the possible
$h^1(\calL)$--values (vanishing theorems, coarse topological bounds),
a systematic study of the spaces $W_{l',k}$ was missing, in the series of article
(starting with \cite{NNI,NNII} and the present one) the authors aim to fill in this
necessity.

\bekezdes The main tool in the study of the $W$--stratification (similarly as in the curve case) is the Abel map $c^{l'}(Z):\eca^{l'}(Z)\to \pic^{l'}(Z)$,
where $Z\in L$ is a nonzero  effective cycle, and $\eca^{l'}(Z)$ is the space of
effective Cartier divisors on $Z$, cf. \cite{Groth62,Kl,Kleiman2013}. (We emphasize again
some major differences compared with the curve case: $\eca^{l'}(Z)$ is not compact, and
$c^{l'}$ is not proper, a fact which creates extra difficulties in
 the study of the fibers.) If $Z\gg 0$ then $\pic^{l'}(Z)=\pic^{l'}(\tX)$, and the
 $W$--stratification can be analysed via the fiber structure of the Abel map $c^{l'}$.
Besides the general theory, some concrete families of singularities were
already analysed,  superisolated and weighted homogeneous ones (partially)
 in \cite{NNI}, the generic analytic structure (as an extreme bound case of the theory)
 in \cite{NNII}. In this manuscript we provide a complete description for elliptic singularities.

For the theory of elliptic singularities the reader might consult
 \cite{Laufer77,MR,weakly,OkumaEll,Tomari85,Wa70,Yau5,Yau1}. The main technical machinery, which guides
 most of the properties of elliptic singularities are
 the elliptic sequences. In the numerical Gorenstein case we will use the
 sequences defined and used by Laufer and S. S.-T. Yau, however for the non--numerically
 Gorenstein case we introduce a new sequence, which mimics better the numerical Gorenstein case and it is more relevant for our purposes (for the comparison of the old and new sequences see \cite{NNtop}).

Elliptic sequences, Artin fundamental cycles on different supports and the
(anti)canonical cycle satisfies several key compatibility properties, they
are formulated (elegantly) in the minimal resolution, in any other resolution they became
uneasy. On the other hand, in \cite{NNI} we described several properties of the
space of effective Cartier divisors and the Abel map in the context of a good resolution:
in several local analysis the normal crossing property of the exceptional curves was used.
Therefore, strictly speaking, in this note we analyse only those elliptic singularities
 (with rational homology sphere links) whose minimal resolution is good.
The interested reader is invited to extend the results
 for the remaining pathological cases as well (see e.g. \cite{Laufer77} for their list
 in the minimally elliptic case). Hence, in the sequel, `elliptic' means elliptic with these restrictions.

 \bekezdes In the case of elliptic singularities the Abel map has several very pleasant properties (see Theorem \ref{th:Vk} below):

(a) the closure $\overline{ \im (c^{l'}(Z))}$
of the image of the Abel map $c^{l'}(Z)$ is an affine subspace of $\pic^{l'}(Z)$;

(b) $h^1(\calL)$ is uniform on $\overline{ \im (c^{l'}(Z))}$ (whose value is computable).

\vspace{2mm}

This solves the Brill--Noether problem on the image  $\overline{ \im (c^{l'}(Z))}
\subset \pic^{l'}(Z)$. However, usually $\overline{ \im (c^{l'}(Z))}\not=\pic^{l'}(Z)$.
Recall that the image of $c^{l'}$ consists of line bundles without fixed components.
Hence to complete the picture we need to analyse the possible cycles of fixed components, and twisting a certain  bundle with the cycle of its fixed components
  (hence  creating bundles without fixed components)
we reduce the Brill--Noether problem to the study of several Abel map images.

The fact that the closure of the Abel maps are affine subspaces
is inherited by the $W$--stratification as well: in the Gorenstein case they are organized in a flag
of linear subspaces
reflecting completely the concatenated structure of the elliptic sequence.
(In the non--Gorenstein case some additional `wandering points' also might appear.)
Different levels of generality the $W$--stratification of the Picard groups according to the $h^1$--values are completely  described in section \ref{ss:BN}. In section
\ref{s:basestrat} we provide even the finer stratification according to the
cycles of fixed components.

\bekezdes The second main goal of the article is to analyse elliptic surface
singularities satisfying the `End Curve Condition' (ECC) and
`Weak End Curve Condition' (WECC). For surface singularities (with rational homology
sphere link) singularities with ECC were introduced by Neumann and Wahl, they coincide with the very important family of splice quotient singularities of Neumann and Wahl.
The WECC imposes  a weaker condition, it appeared naturally in the generalization of
certain surgery formulae and $p_g$--formulae from the splice quotient singularities
to more general cases (e.g. in Okuma's work) and also in the study of Abel map by the
authors \cite{NNI,NNII}.
(In the WECC case the strict transform of an end--curve function can intersect the
end--exceptional curve even non--transversally and by any intersection multiplicity
contrary to the ECC case when the intersection is transversal.)
 It turns out that both ECC and WECC can be elegantly characterized by the mutual position of the `natural' line bundles and the images of the Abel maps.
(By a natural line bundle we associate in a universal way
to a given Chern class a  line bundle having  that
Chern class \cite{trieste,OkumaRat}, cf. \ref{bek:natline} here.)

Recall that a  graph $\Gamma$ is the dual graph of a resolution  of a certain
singularity (analytic type) with ECC if and only if it satisfies the
`semigroup and congruence conditions' of Neumann and Wahl
\cite{NWsq}, or equivalently, if it satisfies the  `monomial condition' of Okuma
\cite{Ok}.
In Theorem \ref{th:WECCCrit} we provide a similar combinatorial condition
 (we call it `extension criterion of the elliptic sequence') which guarantees the existence of an analytic structure with WECC (provided that the graph is elliptic). In fact, using
 this we prove in Theorem \ref{th:9.4.2}
 that for elliptic singularities the ECC and WEEC are equivalent:

 \vspace{1mm}

 (a) If an elliptic graph
 $\Gamma$ admits a WECC analytic structure then it admits an ECC as well
 (in particular, the three topological conditions ---  
 the `semigroup and congruence conditions', the `monomial condition'
 and the `extension criterion' ---  are equivalent).

 (b) If $(X,o)$ elliptic is WECC then it is ECC too  (hence it is splice quotient).

\bekezdes The structure of the article is the following. Section \ref{s:prel}
recalls the preliminary notions regarding surface singularities, Abel map, modified Abel map, and their relationships with differential forms. Section \ref{s:elliptic}
review facts regarding elliptic singularities and  defines the `new' elliptic sequence.
In section \ref{ss:lineb} we prove several identities and inequalities regarding
$h^1(\calL)$, $\calL\in \pic(\tX)$, we analyse the possible cycles of fixed components,
 and we study certain compatibilities with the elliptic
sequences. In Appendix we analyse with details an example with pathological
cycle of fixed components. Section \ref{s:AbelEll} treats the Abel map of
elliptic singularities. Several examples are listed. In section \ref{ss:BN} we describe
the  stratification of  $\pic(\tX)$ according to $h^1$, while in section \ref{s:basestrat}
the stratification according to the fixed components. The last section contains the
study of ECC and WECC elliptic singularities. We present two topological and two analytical characterizations of germs satisfying WECC.

\section{Preliminaries and notations}\label{s:prel}
\subsection{Notations regarding a resolution}  \cite{Nfive,trieste,NCL,LPhd,NNI}
Let $(X,o)$ be the germ of a complex analytic normal surface singularity.
We denote by  $p_g$ the geometric genus of $(X,o)$.
We will assume that the link $M$ of $(X,o)$ is a rational homology sphere.

Let $\phi:\widetilde{X}\to X$ be a   resolution   of $(X,o)$ with
 exceptional curve $E:=\phi^{-1}(0)$,  and  let $\cup_{v\in\calv}E_v$ be
the irreducible decomposition of $E$. Define  $E_I:=\sum_{v\in I}E_v$ for any subset $I\subset \calv$.

$L:=H_2(\widetilde{X},\mathbb{Z})$, endowed
with a negative definite intersection form  $(\,,\,)$, is a lattice. It is
freely generated by the classes of  $\{E_v\}_{v\in\mathcal{V}}$.
 The dual lattice is $L'={\rm Hom}_\Z(L,\Z)=\{
l'\in L\otimes \Q\,:\, (l',L)\in\Z\}$. It  is generated
by the (anti)dual classes $\{E^*_v\}_{v\in\mathcal{V}}$ defined
by $(E^{*}_{v},E_{w})=-\delta_{vw}$ (where $\delta_{vw}$ stays for the  Kronecker symbol).
$L'$ is also  identified with $H^2(\tX,\Z)$. 

All the $E_v$--coordinates of any $E^*_u$ are strict positive.
We define the Lipman cone as $\calS':=\{l'\in L'\,:\, (l', E_v)\leq 0 \ \mbox{for all $v$}\}$.
As a monoid it is generated over $\bZ_{\geq 0}$ by $\{E^*_v\}_v$.
Write also $\calS:=\calS'\cap L$.

$L$ embeds into $L'$ with
 $ L'/L\simeq H_1(M,\mathbb{Z})$, which is abridged by $H$.
 The class of $l'$ in $H$ is denoted by $[l']$.

There is a natural partial ordering of $L'$ and $L$: we write $l_1'\geq l_2'$ if
$l_1'-l_2'=\sum _v r_vE_v$ with all $r_v\geq 0$. We set $L_{\geq 0}=\{l\in L\,:\, l\geq 0\}$ and
$L_{>0}=L_{\geq 0}\setminus \{0\}$.

The support of a cycle $l=\sum n_vE_v$ is defined as  $|l|=\cup_{n_v\not=0}E_v$.

Since $H_1(M,\Q)=0$,
each $E_v$ is rational, and the dual graph of any good resolution is a tree.

The geometric genus $h^1(\tX, \calO_{\tX})$ of $(X,o)$ is denoted by $p_g$.

\bekezdes\label{ss:mincyc} {\bf Minimal cycles in $L'_{\geq 0}$ and in $\calS'$.}
Consider the semi-open cube $\{\sum_v l'_v E_v\in L' \ | \ 0\leq l'_v<1\}$.
 It contains a unique representative $r_h$ for every $h\in H$ so that $[r_h]=h$.
Similarly,   for any $h\in H$ there is a
 unique minimal element of $\{l'\in L' \ | \ [l']=h\}\cap \mathcal{S}'$,
 which will be denoted by $s_h$.
One has  $s_h\geq r_h$;  in general, $s_h \neq r_h$ (see e.g. Example \ref{ex:new}).

\bekezdes\label{sss:s} {\bf A `Laufer--type' computation sequence targeting $\mathcal{S}'$.}
Recall the following fact:
\begin{lemma}\ \cite[Lemma 7.4]{NOSZ}\label{lem:cs2} \
Fix any $l'\in L'$.
\begin{enumerate}
 \item[(1)] There exists a unique minimal element $s(l')$ of $(l'+L_{\geq 0})\cap \mathcal{S}'$.
 \item[(2)] $s(l')$ can be found via the following
  computation sequence $\{z_i\}_i$ connecting $l'$ and $s(l')$:
   set $z_0:=l'$, and assume that $z_i$ ($i\geq 0$) is already constructed. If
   $(z_i,E_{v(i)})>0$ for some $v(i)\in\mathcal{V}$ then  set $z_{i+1}=z_i+E_{v(i)}$. Otherwise
   $z_i\in \mathcal{S}'$ and necessarily  $z_i=s(l')$.
\end{enumerate}
\end{lemma}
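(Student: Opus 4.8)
The plan is to deduce both parts from a single elementary fact: the Lipman cone $\mathcal{S}'$ — and with it the set $(l'+L_{\geq 0})\cap\mathcal{S}'$ — is stable under taking coefficientwise minima. First I would verify that $(l'+L_{\geq 0})\cap\mathcal{S}'$ is nonempty. Since $H=L'/L$ is finite, the cycle $Z:=|H|\sum_v E^*_v$ lies in $L$; all its $E_v$-coordinates are strictly positive and $(Z,E_v)=-|H|<0$ for every $v$, so $Z\in\mathcal{S}'\cap L_{>0}$. Then for $N\gg 0$ one has $(l'+NZ,E_v)=(l',E_v)-N|H|\leq 0$ for all $v$ and $NZ\in L_{\geq 0}$, hence $l'+NZ$ lies in $(l'+L_{\geq 0})\cap\mathcal{S}'$.

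Next I would record the min-stability lemma: if $Z_1,Z_2\in\mathcal{S}'$ and $Z:=\min(Z_1,Z_2)$ (coefficientwise), then fixing $v$ and assuming, say, that the $E_v$-coordinates of $Z$ and $Z_1$ agree, one may write $Z_1=Z+D$ with $D\in L_{\geq 0}$ whose $E_v$-coordinate is $0$; then $(D,E_v)=\sum_{w\neq v}d_w(E_w,E_v)\geq 0$ since $(E_w,E_v)\geq 0$ for $w\neq v$, and therefore $(Z,E_v)=(Z_1,E_v)-(D,E_v)\leq 0$. Because the coefficientwise minimum of two cycles $\geq l'$ is again $\geq l'$ and differs from $l'$ by an element of $L_{\geq 0}$, the set $(l'+L_{\geq 0})\cap\mathcal{S}'$ is closed under $\min$. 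Part (1) now follows: choosing any $s_0$ in this nonempty set, the finitely many elements of it that are $\leq s_0$ contain a minimal one $s(l')$, and min-stability upgrades ``minimal'' to ``minimum'', since for any other element $t$ one has $s(l')\geq\min(s(l'),t)\geq l'$ with $\min(s(l'),t)\leq s_0$, forcing $\min(s(l'),t)=s(l')$.

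For part (2) I would first establish the decisive claim that every term of the computation sequence satisfies $z_i\leq s(l')$. Granting this, the $z_i$ have strictly increasing total coefficient (each step adds one $E_{v(i)}$) while staying between $l'$ and $s(l')$, so the sequence is finite; its last term $z_N$ satisfies $z_N\in\mathcal{S}'$, hence $z_N\in(l'+L_{\geq 0})\cap\mathcal{S}'$ with $z_N\leq s(l')$, whence $z_N=s(l')$ by minimality. The claim is proved by induction on $i$: $z_0=l'\leq s(l')$; and if $z_i\leq s(l')$ and $(z_i,E_{v(i)})>0$, write $s(l')=z_i+D$ with $D\in L_{\geq 0}$. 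If the $E_{v(i)}$-coordinate of $D$ were $0$, the computation above would give $(s(l'),E_{v(i)})=(z_i,E_{v(i)})+(D,E_{v(i)})\geq (z_i,E_{v(i)})>0$, contradicting $s(l')\in\mathcal{S}'$; so this coordinate is $\geq 1$ and $z_{i+1}=z_i+E_{v(i)}\leq z_i+D=s(l')$.

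The only step requiring real care is the min-stability computation, and it is also the one place where the structure of the intersection form enters — through $(E_w,E_v)\geq 0$ for $w\neq v$ and the existence of the dual classes $E^*_v$. I expect the main obstacle to be not that computation itself but the observation that the very same inequality, applied to $D=s(l')-z_i$, is exactly what keeps the computation sequence bounded above by $s(l')$ and hence terminating; this reuse is the heart of part (2).
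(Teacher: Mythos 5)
Your proof is correct. Note that the paper itself gives no argument for this lemma — it is quoted from \cite[Lemma 7.4]{NOSZ} — and the proof there is the standard ``generalized Laufer algorithm'' argument, which is essentially what you have written: non-emptiness of $(l'+L_{\geq 0})\cap\mathcal{S}'$ via a large anti-nef integral cycle, and the key induction showing that the computation sequence stays coefficientwise below any element of that set, which forces both termination and minimality of the endpoint. All the individual steps check out: $Z=|H|\sum_vE_v^*$ does lie in $L_{>0}$ with $(Z,E_v)=-|H|$; the min-stability of $\mathcal{S}'$ uses exactly the off-diagonal non-negativity $(E_w,E_v)\geq 0$; and the coefficientwise minimum of two elements of $l'+L_{\geq 0}$ stays in $l'+L_{\geq 0}$, so part (1) follows.

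The one organizational remark worth making: your min-stability lemma, while a clean and standard fact, is logically superfluous here. The induction you run in part (2) — if $z_i\leq x$ and $(z_i,E_{v(i)})>0$ then the $E_{v(i)}$-coefficient of $x-z_i$ is at least $1$, so $z_{i+1}\leq x$ — works verbatim with $s(l')$ replaced by an \emph{arbitrary} element $x$ of $(l'+L_{\geq 0})\cap\mathcal{S}'$. Running it in that generality shows at once that the sequence terminates and that its endpoint $z_t$ satisfies $z_t\leq x$ for every $x$ in the set; since $z_t$ itself belongs to the set, it is the unique minimum, and parts (1) and (2) drop out simultaneously. This one-pass version is how the argument is usually presented. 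Your two-pass version costs nothing in correctness, only a little in economy.
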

In general the choice of the individual vertex $v(i)$ might not be unique, nevertheless the final
output $s(l')$ is unique. From definitions, if $l'=r_h$ then $s(l')=s_h$.

The {\it (anti)canonical cycle} $Z_K\in L'$ is defined by the
{\it adjunction formulae}
$(Z_K, E_v)=(E_v,E_v)+2$ for all $v\in\mathcal{V}$.
(It is the first Chern class of the dual of the line bundle $\Omega^2_{\tX}$.)
We write $\chi:L'\to \Q$ for the (Riemann--Roch) expression $\chi(l'):= -(l', l'-Z_K)/2$.

The singularity (or, its topological type) is called numerically Gorenstein if $Z_K\in L$.
(Since $Z_K\in L$ if and only if the line bundle $\Omega^2_{X\setminus \{o\}}$ of holomorphic 2--forms
on $X\setminus \{o\}$ is topologically trivial, see e.g. \cite{Du}, the $Z_K\in L$ property
is independent of the resolution). $(X,o)$ is called Gorenstein if $Z_K\in L$ and
$\Omega^2_{\tX}$ (the sheaf of holomorphic 2--forms) is isomorphic to $ \calO_{\tX}(-Z_K)$ (or,
equivalently, if the line bundle $\Omega^2_{X\setminus \{o\}}$ is holomorphically trivial).
If $\tX$ is a minimal resolution then (by the adjunction formulae) $Z_K\in \calS'$.
In particular, $Z_K-s_{[Z_K]}\in L_{\geq 0}$.

\begin{lemma}\label{lem:szk}  $p_g=0$  whenever $Z_K=s_{[Z_K]}$.
If $Z_K>s_{[Z_K]}$ then $p_g=h^1(\calO_{Z_K-s_{[Z_K]}})$.
More generally,
$h^1(\tX, \calL)=h^1(Z_K-s_{[Z_K]}, \calL)$  for any $\calL\in \pic(\tX)$ with $c_1(\calL)\in -\calS'$.
\end{lemma}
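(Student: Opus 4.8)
The plan is to reduce the computation of $h^1$ on $\tX$ (or more generally on any large cycle) to a computation on the fixed cycle $Z_K - s_{[Z_K]}$, using the standard cohomology exact sequences attached to structure sheaves of cycles and a vanishing input coming from the Lipman cone. The key observation is that $Z_K - s_{[Z_K]} \in L_{\geq 0}$ (noted just before the statement, since $Z_K \in \calS'$ on the minimal resolution, and $s_{[Z_K]}$ is by definition the minimal element of $\{l' : [l']=[Z_K]\}\cap \calS'$, so $Z_K \geq s_{[Z_K]}$), so the cycle $Z_0 := Z_K - s_{[Z_K]}$ makes sense, and that $s_{[Z_K]} \in \calS'$ is exactly the obstruction-free direction.

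First I would treat the general claim, since the two special cases follow by specialization to $\calL = \calO_{\tX}$. Fix $\calL \in \pic(\tX)$ with $c_1(\calL) = l' \in -\calS'$. The plan is: for any effective cycle $Z \geq Z_0$, write $Z = Z_0 + Z'$ with $Z' \geq 0$ and $|Z'|$ disjoint from nothing in particular, and use the exact sequence of sheaves on $\tX$
$$
0 \to \calL(-Z_0)|_{Z'} \to \calL|_{Z} \to \calL|_{Z_0} \to 0,
$$
which on cohomology gives $H^1(\calL(-Z_0)|_{Z'}) \to H^1(\calL|_Z) \to H^1(\calL|_{Z_0}) \to 0$. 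The main point is to show the left term vanishes. Here $\calL(-Z_0)|_{Z'}$ has Chern class (restricted appropriately) $l' - Z_0 = l' - Z_K + s_{[Z_K]}$; since $l' \in -\calS'$ and $s_{[Z_K]} \in \calS'$, one has $l' + s_{[Z_K]}$ in a cone position that, combined with a filtration of $Z'$ by the computation sequence of Lemma \ref{lem:cs2} targeting $\calS'$, lets one split $\calL(-Z_0)|_{Z'}$ into pieces $\calO_{E_{v}}(D)$ with $\deg D \geq -1$ at each step (this is the standard argument: walking along the Laufer-type sequence, each added $E_{v(i)}$ satisfies the non-positivity condition $(z_i, E_{v(i)}) \leq 0$ which controls the degree), forcing $H^1 = 0$ of each graded piece and hence of the whole. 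Then $h^1(\calL|_Z) = h^1(\calL|_{Z_0}) = h^1(Z_0, \calL)$ for all such $Z$, and taking $Z \gg 0$ identifies the left side with $h^1(\tX, \calL)$ by the standard comparison (formal functions / the fact that $h^1$ stabilizes).

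For the first two sentences: apply the general statement with $\calL = \calO_{\tX}$, whose Chern class $0 \in -\calS'$. If $Z_K = s_{[Z_K]}$ then $Z_0 = 0$, so $h^1(\tX,\calO_{\tX}) = h^1(0, \calO) = 0$, i.e. $p_g = 0$. If $Z_K > s_{[Z_K]}$ then $Z_0 > 0$ and $p_g = h^1(\tX,\calO_{\tX}) = h^1(Z_0, \calO_{Z_0}) = h^1(\calO_{Z_K - s_{[Z_K]}})$.

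The step I expect to be the main obstacle is the vanishing $H^1(\calL(-Z_0)|_{Z'}) = 0$: one must check that the Chern class $l' + s_{[Z_K]}$ lies in a region where the computation-sequence argument genuinely yields nonnegative-enough degrees at every stage, and in particular that no vertex in $|Z'|$ forces a degree below $-1$. This is where the hypothesis $c_1(\calL) \in -\calS'$ is used essentially (it is exactly what makes $-c_1(\calL) \in \calS'$, so that adding it to $s_{[Z_K]}$ stays in $\calS'$, and $\calS'$-elements pair non-positively with every $E_v$); one likely needs to run the sequence of Lemma \ref{lem:cs2} starting from $-l'$ up to $s_{[-l']} = s_{[Z_K]}$ (or a shift thereof) and bookkeep the restriction degrees carefully. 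The rest is routine homological algebra with structure sheaves of cycles.
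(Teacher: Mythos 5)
Your overall architecture (write $Z=Z_0+Z'$ with $Z_0=Z_K-s_{[Z_K]}$, use $0\to \calL(-Z_0)|_{Z'}\to\calL|_Z\to\calL|_{Z_0}\to 0$, and kill $H^1$ of the kernel) is reasonable, and the vanishing you need, $h^1(Z',\calL(-Z_0))=0$ for $Z'\gg0$, is in fact true. But the argument you sketch for it does not work, and since you yourself only gesture at it, this is a genuine gap rather than a routine verification. First, you misquote Lemma \ref{lem:cs2}: along that sequence one adds $E_{v(i)}$ precisely when $(z_i,E_{v(i)})>0$, not when $(z_i,E_{v(i)})\leq 0$, and it is this strict positivity that keeps the relevant degrees $\geq -1$. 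Second, and more seriously, that sequence terminates after boundedly many steps, as soon as it reaches $\calS'$; it cannot filter the arbitrarily large cycle $Z'=Z-Z_0$. Once the running cycle lies in the Lipman cone one has $(z,E_v)\leq 0$ for every $v$, so the graded pieces of a filtration of $Z'$ acquire degrees $\leq -2$ and the claim ``each piece has $h^1=0$'' fails for the bulk of $Z'$. Finally, $l'+s_{[Z_K]}$ is the sum of a nef class and an anti-nef class, so it occupies no useful cone position, and relative Kawamata--Viehweg does not apply directly to $\calL(-Z_0)$.

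The paper runs the argument in the opposite direction and thereby avoids your problematic vanishing altogether: it first invokes generalized Grauert--Riemenschneider/Kodaira vanishing for $\calL(-\lfloor Z_K\rfloor)$ --- this is exactly where the hypothesis $c_1(\calL)\in-\calS'$ (nefness of $c_1(\calL)$) is used --- to get $h^1(\tX,\calL)=h^1(\lfloor Z_K\rfloor,\calL)$, and then descends from $\lfloor Z_K\rfloor=Z_K-r_{[Z_K]}$ to $Z_K-s_{[Z_K]}$ along the computation sequence $\{z_i\}$ connecting $r_{[Z_K]}$ with $s_{[Z_K]}$, using $0\to\calO_{E_{v(i)}}(\calL(-Z_K+z_{i+1}))\to \calL|_{Z_K-z_i}\to\calL|_{Z_K-z_{i+1}}\to 0$ together with $\deg \calO_{E_{v(i)}}(\calL(-Z_K+z_{i+1}))=(c_1(\calL),E_{v(i)})+(z_i,E_{v(i)})-2\geq -1$. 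If you insist on your decomposition, you must first prove $h^1(\tX,\calL(-Z_K+z_i))=0$ by induction along this same sequence, starting from the vanishing at $z_0=r_{[Z_K]}$; either way, the GR-type input at $\lfloor Z_K\rfloor$ and the walk from $r_{[Z_K]}$ to $s_{[Z_K]}$ are unavoidable, and your proposal supplies neither.
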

\begin{proof} By generalized Kodaira or Grauert--Riemenschneider type vanishing
$h^1(\widetilde{X}, \mathcal{O}_{\widetilde{X}}(-\lfloor Z_K\rfloor))=0$).
Hence, if $\lfloor Z_K\rfloor=0$ then $p_g=0$. Otherwise, using the exact sequence
$0\to \mathcal{O}_{\widetilde{X}}(-\lfloor Z_K\rfloor))\to \mathcal{O}_{\widetilde{X}}
\to \mathcal{O}_{\lfloor Z_K\rfloor}\to 0$ we get $h^1(\calO_{\lfloor Z_K\rfloor})=p_g$.
Next, consider the computation sequence from Lemma \ref{lem:cs2}
applied for $l'=r_{[Z_K]}$. By induction we prove that
$h^1(\calO_{Z_K-z_i})=p_g$. For $i=0$ we just verified it, since $Z_K-r_{[Z_K]}=\lfloor Z_K\rfloor$.
Then use the cohomological exact sequence associated with
$0\to \calO_{E_{v(i)}}(-Z_K+z_{i+1})\to \calO_{Z_K-z_i}\to \calO_{Z_K-z_{i+1}}\to 0$ and the vanishing
$h^1(\calO_{E_{v(i)}}(-Z_K+z_{i+1}))=0$.

More generally, $h^1(\tX,\calL)=h^1(Z_K-z_i,\calL)$ for any $i$ by similar argument.
\end{proof}

\bekezdes \label{bek:natline} {\bf Natural line bundles.} Let
 $\phi:(\tX,E)\to (X,o)$ be as above.
Consider the `exponential'  cohomology exact sequence (with $H^1(\tX, \calO_{\tX}^*)=\pic(\tX)$ and
$H^1(\tX, \calO_{\tX})=\pic^0(\tX)$)
\begin{equation}\label{eq:exp}
0\to {\rm Pic}^0(\tX)\longrightarrow
{\rm Pic}(\tX)\stackrel{c_1}{\longrightarrow}H^2(\tX,\Z)\to 0.
\end{equation}
Here $c_1(\calL)\in H^2(\tX,\Z)=L' $ is the first Chern class of $\calL\in \pic(\tX)$.
Then, see e.g. \cite{OkumaRat,trieste},
there exists a unique homomorphism (split)
 $s_1:L'\to {\rm Pic}(\tX)$  of $c_1$, that is  $c_1\circ s_1=id$, such that
$s_1$ restricted to $L$ is $l\mapsto \calO_{\tX}(l)$.
The line bundles $s_1(l')$ are called {\it natural line
bundles } of $\tX$. For several definitions of
them see  \cite{trieste}.
E.g., $\calL$ is natural if and only if one of its power has the form $\calO_{\tX}(l)$
for some {\it integral} cycle $l\in L$ supported on $E$.
In order to have a uniform notation we write $\calO_{\tX}(l')$ for $s_1(l')$ for any $l'\in L'$.

\subsection{The Abel map \cite{NNI}}\label{ss:AbelMap}
As above,  let $\pic(\tX)=H^1(\tX,\calO_{\tX}^*)$ be the group of isomorphic
classes of holomorphic line bundles on $\tX$.
The first Chern class map $c_1:\pic(\tX)\to L'$ is surjective; write
$\pic^{l'}(\tX)=c_1^{-1}(l')$. Since $H^1(M,\Q)=0$, $\pic^0(\tX)\simeq H^1(\tX,\calO_{\tX})\simeq \C^{p_g}$.

Similarly, if $Z\in L_{>0}$ is an effective non--zero integral cycle supported by $E$, then ${\rm Pic}(Z)=H^1(Z,\calO_Z^*)$ denotes   the group of isomorphism classes
of invertible sheaves on $Z$. Again, it appears in the exact sequence $
0\to {\rm Pic}^0(Z)\to {\rm Pic}(Z)\stackrel{c_1}
{\longrightarrow} L'(|Z|)\to 0$,
where ${\rm Pic}^0(Z)=H^1(Z,\calO_Z)$.
Here  $L(|Z|)$ denotes the sublattice of $L$ generated by
the  base element $E_v\subset |Z|$, and $L'(|Z|)$ is its dual.

Though for any effective cycle $Z$ the Abel map might have its own peculiar properties, in this manuscript we always assume that all the $E_v$--coefficients of $Z$ are sufficiently large, denoted by $Z\gg 0$.
Under this assumption one has several stability properties, e.g.  $L'(|Z|)=L'$,
$\pic(Z)=\pic(\tX)$, or $h^1(Z,\calL)=h^1(\tX,\calL)$ for any $\calL\in \pic(\tX)$.

For any $Z\gg 0$ let
$\eca(Z)$  be the space of (analytic) effective Cartier divisors on 
$Z$. Their supports are zero--dimensional in $E$.
Taking the line bundle  of a Cartier divisor provides  the {\it Abel map}
$c=c(Z):\eca(Z)\to \pic(Z)$.
Let
$\eca^{l'}(Z)$ be the set of effective Cartier divisors with
Chern class $l' \in L'$, i.e.
$\eca^{l' }(Z):=c^{-1}(\pic^{l'}(Z))$.
The restriction of $c$ is denoted by   $c^{l'}:\eca^{l'}(Z)\to \pic^{l'}(Z)$.


A line bundle $\calL\in \pic^{l'}(Z)$ is in the image
 ${\rm im}(c^{l'})$ if and only if it has a section without fixed components, that is,
 if $H^0(Z,\calL)_{reg}\not=\emptyset $, where
$H^0(Z,\calL)_{reg}:=H^0(Z,\calL)\setminus \cup_v H^0(Z-E_v, \calL(-E_v))$.
By this definition (see (3.1.5) of \cite{NNI}) $\eca^{l'}(Z)\not=\emptyset$ if and only if
$-l'\in \calS'\setminus \{0\}$. It is advantageous to have a similar statement for
$l'=0$ too, hence we redefine  $\eca^0(Z)$ as $\{\emptyset\}$, a set/space with one element
(the empty divisor), and $c^0:\eca ^0(Z)\to \pic^0(Z)$ by $c^0(\emptyset)=\calO_Z$.
In particular,
\begin{equation}\label{eq:Chernzero1}
H^0(Z,\calL)_{reg}\not=\emptyset\ \Leftrightarrow\ \calL=\calO_Z\
\Leftrightarrow\ \calL\in {\rm im}(c^0) \ \ \mbox{ whenever $c_1(\calL)=0$}.\end{equation}
Hence, the previous equivalence extends to this $l'=0$ case too:
\begin{equation}\label{eq:Chernzero}
 \eca^{l'}(Z)\not=\emptyset\ \Leftrightarrow \ -l'\in \calS'.
\end{equation}
Sometimes (e.g. in section \ref{saturated}) even for $\calL\in\pic^{l'}(\tX)$ we write
$\calL\in \im(c^{l'})$ whenever $\calL|_Z\in  \im(c^{l'}(Z))$ for some $Z\gg0$. This fact is equivalent
with the fact that
$\calL\in \pic(\tX) $ has no fixed components.

It turns out that
$\eca^{l'}(Z)$ is a smooth complex algebraic variety  of dimension $(l',Z)$ and
the Abel map  is an algebraic regular map. For more properties
and applications see \cite{NNI,NNII}.

\bekezdes\label{bek:modAbel} {\bf The modified Abel map.}  For any $Z\gg 0$ let $\calO_Z(l')$ be the
restriction of the natural line bundle $\calO_{\tX}(l')$ to $Z$. (In fact, $\calO_Z(l')$
can be defined in an identical way as $\calO_{\tX}(l')$ starting from
the exponential cohomological sequence
$0\to \pic^0(Z)\to  \pic(Z)\to H^2(\tX,\Z)\to 0$ as well.) Multiplication by $\calO_{Z}(-l')$  gives
an  isomorphism of the affine spaces $\pic^{l'}(Z)\to \pic^0(Z)$. Furthermore, we identify
(via the exponential exact sequence) $\pic^0(Z)$ with the vector space
$H^1(Z, \calO_Z)=H^1(\tX,\calO_{\tX})$.

It is convenient to replace the Abel map $c^{l'}$ with the composition
$$\widetilde{c}^{l'}:\eca^{l'}(Z)\stackrel{c^{l'}}{\longrightarrow} \pic^{l'}(Z)\stackrel{\calO_Z(-l')}
{\longrightarrow} \pic^0(Z)\stackrel{\simeq}{\longrightarrow} H^1(\calO_Z).$$
The advantage of this new set of maps is that all the images sit in the same
vector  space $H^1(\calO_Z)$.

\bekezdes \label{bek:addDiv} {\bf The monoid structure of divisors and of the modified Abel map.}
Consider the natural additive structure
$s^{l'_1,l'_2}(Z):\eca^{l'_1}(Z)\times \eca^{l'_2}(Z)\to \eca^{l'_1+l'_2}(Z)$ ($l'_1,l'_2\in-\calS'$)
provided by the sum of the divisors. One verifies (see e.g. \cite[Lemma 6.1.1]{NNI}) that
$s^{l'_1,l'_2}(Z)$ is dominant and quasi--finite. 
There is a parallel multiplication $\pic^{l'_1}(Z)\times \pic^{l'_2}(Z)\to \pic^{l'_1+l'_2}(Z)$,
$(\calL_1,\calL_2)\mapsto \calL_1\otimes\calL_2$, which satisfies
 $c^{l_1'+l'_2}\circ s^{l'_1,l'_2}= c^{l'_1} \otimes c^{l'_2}$ in $\pic^{l'_1+l'_2}$.
This, in the modified case, using
$\calO_Z(l'_1+l'_2)=\calO_Z(l'_1)\otimes \calO_Z(l'_2)$,
 reads as
 $\widetilde{c}^{l_1'+l'_2}\circ s^{l'_1,l'_2}= \widetilde{c}^{l'_1}
+ \widetilde{c}^{l'_2}$ in $H^1(\calO_Z)$. The above properties   imply
\begin{equation}\label{eq:addclose}
\im (\widetilde{c}^{l_1'})+ \im (\widetilde{c}^{l_2'})\subset
\im (\widetilde{c}^{l_1'+l_2'})\subset \overline{
\im (\widetilde{c}^{l_1'})+ \im (\widetilde{c}^{l_2'})}.
\end{equation}
\begin{definition}\label{def:VZ}
For any  $l'\in-\calS'$  let $A_Z(l')$ --- or just  $A(l')$ ---
be the smallest dimensional affine subspace of
$H^1(\calO_Z)$ which contains $\im (\widetilde{c}^{l'})$. Let $V_Z(l')$, or $V(l')$,
 be the parallel vector subspace
of $H^1(\calO_Z)$,
the translation of  $A_Z(l')$ to the origin. 

For any $I\subset \calv$, $I\not=\emptyset$,
let $(X_I,o_I)$ be the multigerm $\tX/_{\cup_{v \in I}E_v}$
at its  singular points,
obtained by contracting the connected components of
$\cup_{v \in I}E_v$ in $\tX$. If $I=\emptyset$ then by convention $(X_I,o_I)$ is a smooth germ.
\end{definition}

\begin{theorem}\label{prop:AZ} \ \cite[Prop. 5.6.1, Lemma 6.1.6 and Th. 6.1.9]{NNI}
Assume that  $Z\gg 0$.

(a)   For any $-l'=\sum_va_vE^*_v\in \calS'$ let the $E^*$--support of $l'$ be
$I(l'):=\{v\,:\, a_v\not=0\}$. Then  $V(l')$ depends only on $I(l')$.
(This motivates to write $V(l')$ as $V(I)$ where $I=I(l')$.)

(b)  $V(I_1\cup I_2)=V(I_1)+V(I_2)$ and $A(l_1'+l_2')= A(l_1')+A(l_2')$.

(c)  $\dim V(I)=h^1(\calO_Z)-h^1(\calO_{Z|_{\calv\setminus I}})=
 p_g(X,o)-p_g(X_{\calv\setminus I},o_{\calv\setminus I})$.

(d)  If $\calL^{im}_{gen}$ is a generic bundle of \,$\im (c^{l'})$
then $h^1(Z,\calL^{im}_{gen})=p_g(X,o)-\dim(\im (c^{l'}))$.

(e)  For $n\gg 1$ one has $\im(\widetilde{c}^{nl'})=A(nl')$, and $h^1(Z,\calL)=
p_g(X_{\calv\setminus I(l')},o_{\calv\setminus I(l')})$    
for any $\calL\in \im (c^{nl'})$.

\end{theorem}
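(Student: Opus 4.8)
The plan is to prove the five assertions in the order (a), (b), (c), (d), (e), since each step builds on the previous ones and each is essentially a translation of a general Abel-map property (already recorded in the cited parts of \cite{NNI}) into the stratified picture. Throughout I work with a fixed $Z \gg 0$, so that $\pic(Z) = \pic(\tX)$, $h^1(Z,\calL) = h^1(\tX,\calL)$, $L'(|Z|) = L'$, and the stability statements from \ref{ss:AbelMap} all apply; this is the reduction that lets me treat everything on $Z$ rather than on $\tX$.

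For part (a), I would show that $V(l')$ depends only on the $E^*$-support $I(l')$ by a two-step argument. First, if $-l_1' , -l_2' \in \calS'$ have the same $E^*$-support $I$, then for $n \gg 1$ both $-nl_1'$ and $-nl_2'$ lie in the relative interior of the same face of $\calS'$, and there exist effective cycles $l_3'$ with $-l_3'$ again of support $I$ and with $n l_1' = l_2' + l_3'$-type relations after further scaling; combined with \eqref{eq:addclose} and the additivity $\im(\widetilde c^{l_1'+l_2'}) \subseteq \overline{\im(\widetilde c^{l_1'}) + \im(\widetilde c^{l_2'})}$, this forces the linear spans $V(l_1')$ and $V(l_2')$ to coincide. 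Second, the dependence only on $I$ (and not on the particular multiplicities) follows once (e) is in place, because for $n\gg 1$ the image literally equals the affine subspace $A(nl')$; so strictly I would either prove (e) first or, cleaner, prove (a) in the weak form ``$V(l')$ stabilizes along rays'' and upgrade it after (e). I will keep the logical dependence explicit: (a) relies on (e) for the full statement.

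Part (b) is the additivity $V(I_1 \cup I_2) = V(I_1) + V(I_2)$ and $A(l_1'+l_2') = A(l_1') + A(l_2')$. The inclusion $A(l_1') + A(l_2') \subseteq A(l_1'+l_2')$ is immediate from the right-hand containment in \eqref{eq:addclose} (the sum of two affine subspaces is affine, and it contains $\im(\widetilde c^{l_1'+l_2'})$ up to closure, hence contains $A(l_1'+l_2')$). The reverse inclusion $A(l_1'+l_2') \subseteq A(l_1') + A(l_2')$ comes from the left-hand containment $\im(\widetilde c^{l_1'}) + \im(\widetilde c^{l_2'}) \subseteq \im(\widetilde c^{l_1'+l_2'})$: taking affine hulls, $A(l_1') + A(l_2')$ is contained in $A(l_1'+l_2')$ — wait, that gives the same direction — so I instead apply \eqref{eq:addclose} to $n l_1'$ and $n l_2'$ with $n \gg 1$, where by (e) the images are already the affine subspaces $A(n l_i') = A(l_i')$, and the middle term $\im(\widetilde c^{n l_1' + n l_2'}) = A(n(l_1'+l_2')) = A(l_1'+l_2')$ is squeezed between $A(l_1') + A(l_2')$ and its closure, which is itself. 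Passing to parallel vector subspaces gives the statement for $V$, and the translation to $V(I)$ uses (a).

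For part (c) I would use the dimension formula for $\im(c^{l'})$ from \cite[Th. 6.1.9]{NNI} together with the interpretation of the fiber over a generic bundle. The cleanest route: by (e), for $n \gg 1$ one has $\dim \im(\widetilde c^{nl'}) = \dim V(I)$, and on the other hand $\dim \eca^{nl'}(Z) = (nl', Z)$ with the generic fiber of $\widetilde c^{nl'}$ having dimension equal to $h^0(Z,\calL) - (\text{something})$ computed via Riemann--Roch; matching $\dim \eca^{nl'}(Z) - \dim(\text{generic fiber}) = \dim\im$ and letting the Riemann--Roch/$h^1$ bookkeeping collapse yields $\dim V(I) = h^1(\calO_Z) - h^1(\calO_{Z|_{\calv \setminus I}})$. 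The second equality $h^1(\calO_Z) - h^1(\calO_{Z|_{\calv\setminus I}}) = p_g(X,o) - p_g(X_{\calv\setminus I}, o_{\calv\setminus I})$ is just the identification $h^1(\calO_Z) = p_g(X,o)$ for $Z \gg 0$ applied to both $(X,o)$ and the multigerm $(X_{\calv\setminus I}, o_{\calv\setminus I})$, using that $Z|_{\calv\setminus I} \gg 0$ on the smaller graph. Parts (d) and (e) are then read off from \cite[Prop. 5.6.1, Lemma 6.1.6]{NNI}: (d) is the standard ``$h^1$ of a generic bundle in the image equals $p_g$ minus the image dimension'' which follows from semicontinuity of $h^1$ along the Abel map plus the fiber-dimension count, and (e) is the stabilization statement — for $n \gg 1$ the image of $\widetilde c^{nl'}$ is all of $A(nl')$ (because adding divisors is dominant and the images grow to fill their affine hull), and the common $h^1$-value on that image equals $p_g$ of the contracted germ $(X_{\calv\setminus I(l')}, o_{\calv\setminus I(l')})$ by combining (c) and (d).

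The main obstacle I anticipate is the circular-looking dependence between (a) and (e): the ``depends only on $I$'' part of (a) really needs the ray-stabilization of (e), while (e)'s identification of the limiting $h^1$ uses the $V(I)$ notation from (a). I would resolve this by first establishing (e)'s \emph{geometric} content (image fills $A(nl')$ for $n \gg 1$) using only additivity of divisor-sum maps and \eqref{eq:addclose} — this needs no reference to $I$ — then deducing the support-only dependence in (a), and only afterward feeding everything into the $h^1$-computations of (c), (d) and the cohomological half of (e). The other mildly delicate point is ensuring $Z|_{\calv\setminus I} \gg 0$ in the sense required for the stability statements to hold on the sub-configuration; this is automatic if one chooses the coefficients of $Z$ large enough at the outset, uniformly over the finitely many subsets $I \subseteq \calv$, so I would state that choice once at the beginning.
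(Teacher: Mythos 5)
First, a point of orientation: the paper does not prove this statement at all --- Theorem \ref{prop:AZ} is imported verbatim from \cite[Prop.\ 5.6.1, Lemma 6.1.6, Th.\ 6.1.9]{NNI} and is used here as a black box. So your proposal is a reconstruction of the proof from the earlier paper in the series, and it does correctly identify the logical skeleton (additivity of images via (\ref{eq:addclose}) gives (b); ray--stabilization plus a combinatorial fact about $\calS'$ gives (a); stabilization of the images gives the geometric half of (e); semicontinuity plus a fiber--dimension count gives (d)). But as a proof it has genuine gaps. A minor one first: in (b) you have the two containments of (\ref{eq:addclose}) the wrong way around, and the detour through (e) that you use to patch this is unnecessary and creates the circularity you then struggle with. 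Both inclusions of $A(l_1'+l_2')=A(l_1')+A(l_2')$ follow directly by taking affine hulls in (\ref{eq:addclose}), using that $\mathrm{aff}(S_1+S_2)=\mathrm{aff}(S_1)+\mathrm{aff}(S_2)$ and that passing to the closure does not change the affine hull; no appeal to (e) is needed, and with (b) in hand the scaling invariance $V(nl')=V(l')$ and then (a) follow cleanly.

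The two substantive gaps are in (e) and (c). For the geometric half of (e), the assertion that $\im(\widetilde{c}^{nl'})$ \emph{equals} (not merely is dense in) the affine subspace $A(nl')$ for $n\gg1$ is the hardest part of the theorem, and ``adding divisors is dominant and the images grow to fill their affine hull'' is not an argument: a priori $\overline{\im(\widetilde{c}^{nl'})}$ is only an irreducible subvariety of $A(nl')$, possibly of smaller dimension and possibly non--linear. The actual proof needs (i) that the closures form an increasing-up-to-translation sequence whose dimensions stabilize, (ii) that the stabilized closure is invariant under a spanning set of translations and hence is an affine subspace, and (iii) a separate argument that a dense constructible subset of an affine subspace which is stable under the divisor--sum operation fills the whole subspace. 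For (c), a Riemann--Roch count of fiber dimensions of $\widetilde{c}^{nl'}$ only yields (d); it cannot identify $\dim V(I)$ with $h^1(\calO_Z)-h^1(\calO_{Z|_{\calv\setminus I}})$. That identification requires showing $V(I)=\ker\bigl(H^1(\calO_Z)\to H^1(\calO_{Z|_{\calv\setminus I}})\bigr)$ (equivalently, by Laufer duality, $V(I)=\Omega(I)^*$ as in Proposition \ref{prop:HDIZ}), i.e.\ a genuinely cohomological input --- one inclusion because every bundle in the image restricts trivially to $Z|_{\calv\setminus I}$, the other by a dimension count using vanishing/restriction exact sequences. Your plan derives (c) from the $h^1$--half of (e) and the $h^1$--half of (e) from (c) and (d), so one of these must be proved independently, and neither is in the proposal.
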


 For different geometric reinterpretations of $\dim V_Z(I)$ see  also \cite[\S 9]{NNI}.

\bekezdes \label{bek:diff forms} {\bf The linear subspace arrangement $\{V_Z(I)\}_I\subset
\C^{p_g}$ and differential forms.}
 The arrangement $\{V(I)\}_I$ transforms into a
  linear subspace arrangement of $H^0(\Omega^2_{\tX}(Z))/H^0(\Omega^2_{\tX})$ via the (Laufer)
  non--degenerate pairing $H^1(\calO_Z)\otimes   H^0(\Omega^2_{\tX}(Z))/H^0(\Omega^2_{\tX})\to\C$
  (cf. \cite[7.3]{NNI}) as follows. Let $\Omega(I)$ be the subspace
  $H^0(\Omega^2_{\tX}(Z|_{\calv\setminus I}))/H^0(\Omega^2_{\tX})$ in
  $H^0(\Omega^2_{\tX}(Z))/H^0(\Omega^2_{\tX})$ (that is, the subspace generated by those forms which
  have no poles along generic points of any $E_v$, $v\in I$).
  \begin{proposition}\label{prop:HDIZ} \ \cite[8.3]{NNI} Via Laufer duality
$V(I)= \Omega(I)^*.$
\end{proposition}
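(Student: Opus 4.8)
The plan is to realise the Laufer pairing as Serre duality on $Z$, to read the equality $V(I)=\Omega(I)^*$ as $V(I)=\Omega(I)^{\perp}$ (the annihilator under that perfect pairing), to reduce it to a single inclusion by a dimension count, and to prove that inclusion by differentiating the modified Abel map along $\eca^{l'}(Z)$. For the setup, write $W:=H^0(\Omega^2_{\tX}(Z))/H^0(\Omega^2_{\tX})$ and $\omega_Z:=\Omega^2_{\tX}(Z)|_Z$; Laufer duality (\cite[7.3]{NNI}) is Serre duality $H^1(Z,\calO_Z)^{\vee}\simeq H^0(Z,\omega_Z)$ composed with the identification $H^0(Z,\omega_Z)\simeq W$ coming from $0\to\Omega^2_{\tX}\to\Omega^2_{\tX}(Z)\to\omega_Z\to 0$ together with Grauert--Riemenschneider vanishing $H^1(\Omega^2_{\tX})=0$. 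Thus $\dim W=h^1(\calO_Z)=p_g$ and $\Omega(I)\subset W$. Running the same exact sequence with $Z$ replaced by $Z|_{\calv\setminus I}$ identifies $\Omega(I)$ with $H^0\big(Z|_{\calv\setminus I},\,\Omega^2_{\tX}(Z|_{\calv\setminus I})|_{Z|_{\calv\setminus I}}\big)$, so $\dim\Omega(I)=h^1(\calO_{Z|_{\calv\setminus I}})=p_g(X_{\calv\setminus I},o_{\calv\setminus I})$ (the last equality uses $Z\gg0$, and is exactly the quantity appearing in Theorem \ref{prop:AZ}(c)). Since Theorem \ref{prop:AZ}(c) gives $\dim V(I)=p_g-p_g(X_{\calv\setminus I},o_{\calv\setminus I})$, we obtain $\dim V(I)+\dim\Omega(I)=p_g=\dim W$, so it is enough to prove the one inclusion $\Omega(I)\subset V(I)^{\perp}$.

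Next I would translate this into a statement about the Abel map. Fix $l'$ with $-l'\in\calS'$ and $E^*$-support $I(l')=I$, so that $A(l')$ is the affine hull of $\im(\widetilde{c}^{l'})$ and $V(I)=V(l')$ is its direction space (Definition \ref{def:VZ}, Theorem \ref{prop:AZ}(a)). A form $\omega\in W$ lies in $V(I)^{\perp}$ if and only if the linear functional $\langle\,\cdot\,,\omega\rangle$ is constant on $A(l')$, equivalently on $\im(\widetilde{c}^{l'})$, equivalently the regular function $\psi_\omega:=\langle\,\widetilde{c}^{l'}(-),\omega\rangle$ on the smooth irreducible variety $\eca^{l'}(Z)$ is constant; for this it suffices that $d(\psi_\omega)_D=0$ for $D$ in a dense open subset. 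As $\langle\,\cdot\,,\omega\rangle$ is linear, $d(\psi_\omega)_D=\langle d(\widetilde{c}^{l'})_D(-),\omega\rangle$, and since $\widetilde{c}^{l'}$ is obtained from $c^{l'}$ by a translation of the $\pic^0(Z)$-torsor $\pic^{l'}(Z)$ followed by the canonical identification with $H^1(\calO_Z)$, one has $d(\widetilde{c}^{l'})_D=d(c^{l'})_D\colon T_D\eca^{l'}(Z)\to H^1(\calO_Z)$.

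Now I would compute this differential. By deformation theory of effective Cartier divisors (as in \cite{NNI}), $T_D\eca^{l'}(Z)=H^0(D,\calO_D(D))$ and $d(c^{l'})_D$ is the connecting homomorphism $\partial$ of $0\to\calO_Z\to\calO_Z(D)\to\calO_D(D)\to 0$. Since $D\subset Z\subset\tX$ is a local complete intersection, $D$ is a $0$-dimensional Gorenstein scheme with $\omega_D=\omega_Z(D)|_D$, so Serre duality on $D$ gives $H^0(\calO_D(D))^{\vee}=H^0(D,\omega_Z|_D)$, and by functoriality of the dualities $\partial^{\vee}$ is the restriction map $\mathrm{res}_D\colon H^0(Z,\omega_Z)\to H^0(D,\omega_Z|_D)$; hence $\langle d(c^{l'})_D(s),\omega\rangle=\langle s,\mathrm{res}_D(\omega)\rangle$ for every $s$. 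Take $\omega\in\Omega(I)$, represented by a global section of $\Omega^2_{\tX}(Z|_{\calv\setminus I})$, and write $-l'=\sum_v a_vE^*_v$; then $\deg_{E_w}\calO_{\tX}(D)=(l',E_w)=a_w$, which vanishes for $w\notin I$, so a general $D\in\eca^{l'}(Z)$ is supported at finitely many smooth points of $E$, each lying on a single $E_v$ with $v\in I$ (genericity together with $Z\gg0$ lets it avoid the finitely many intersection points of the $E_u$'s). Near such a point the coefficient of $E_v$ in $Z|_{\calv\setminus I}$ is zero, so there $\Omega^2_{\tX}(Z|_{\calv\setminus I})=\Omega^2_{\tX}$, i.e. $\omega$ is holomorphic; therefore its class in $\omega_Z=\Omega^2_{\tX}(Z)/\Omega^2_{\tX}$ vanishes on a neighbourhood of $\supp D$. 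Hence $\mathrm{res}_D(\omega)=0$, so $d(\psi_\omega)_D=0$ for general $D$, $\psi_\omega$ is constant, and $\omega\in V(I)^{\perp}$. This is the required inclusion, and with the dimension count it yields $\Omega(I)=V(I)^{\perp}$, i.e. $V(I)=\Omega(I)^*$.

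The hard part is the last paragraph's two identifications --- that $d(c^{l'})_D$ is the coboundary $\partial$, and that its Laufer dual $\partial^{\vee}$ is the restriction map $\mathrm{res}_D$ --- together with checking their compatibility with the precise normalisation of the pairing in \cite{NNI}; the remaining steps are dimension bookkeeping and a one-line local computation. An alternative that sidesteps tangent maps uses Theorem \ref{prop:AZ}(e): for $n\gg1$ one has $\im(\widetilde{c}^{nl'})=A(nl')$, so $V(I)$ is spanned by the differences $\widetilde{c}^{nl'}(D)-\widetilde{c}^{nl'}(D')=[\calO_Z(D-D')]\in H^1(\calO_Z)$ of generic effective divisors with $E^*$-support $I$; pairing $[\calO_Z(D-D')]$ with $\omega\in\Omega(I)$ through the explicit residue description of Laufer duality (\cite[7.3]{NNI}), each local contribution vanishes because $\omega$ is holomorphic near $\supp D\cup\supp D'$, giving $\omega\in V(I)^{\perp}$ once more.
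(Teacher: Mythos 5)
Your argument is correct. Note that the paper offers no proof of this proposition at all: it is imported verbatim from \cite[8.3]{NNI}, so the only comparison available is with the machinery the paper does quote, and your proof is entirely consistent with it. The reduction to a single inclusion is sound: $\dim\Omega(I)=h^1(\calO_{Z|_{\calv\setminus I}})$ follows as you say from adjunction, Grauert--Riemenschneider and duality on $Z|_{\calv\setminus I}$ (the injectivity of $H^0(\Omega^2_{\tX}(Z|_{\calv\setminus I}))/H^0(\Omega^2_{\tX})\to H^0(\Omega^2_{\tX}(Z))/H^0(\Omega^2_{\tX})$ being immediate), and combined with Theorem \ref{prop:AZ}(c) --- which in \cite{NNI} is proved independently of the duality, so there is no circularity --- it forces the reading $\Omega(I)^*=\Omega(I)^{\perp}$ and leaves only $\Omega(I)\subset V(I)^{\perp}$ to check. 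Your identification of $d(c^{l'})_D$ with the connecting homomorphism of $0\to\calO_Z\to\calO_Z(D)\to\calO_D(D)\to 0$, and of its Laufer/Serre dual with restriction of forms to $D$, is precisely the mechanism behind Proposition \ref{prop:Res} and Corollary \ref{cor:dim} as quoted in the paper, and the local vanishing for generic $D$ (supported at generic points of $\cup_{v\in I}E_v$, away from intersection points and from the polar locus of $\omega$) is correct. The one input you cannot discharge from the statements reproduced here is that the pairing of \cite[7.3]{NNI} agrees with the Serre duality pairing up to the identification $H^0(Z,\Omega^2_{\tX}(Z)|_Z)\simeq H^0(\Omega^2_{\tX}(Z))/H^0(\Omega^2_{\tX})$; you flag this yourself, and it is indeed how the pairing is constructed in \cite{NNI}, so it is a citation rather than a gap. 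Your alternative closing argument, spanning $V(I)$ by classes $[\calO_Z(D-D')]$ and evaluating the integration formula locally, is equally valid and is arguably the more direct route given the explicit form of Laufer's pairing.
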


\bekezdes \label{bek:diff forms2} {\bf The $\dim \im (c^{l'})$  and differential forms.}
Next  we recall a statement from \cite[\S 10]{NNI}. For simplicity we will assume that
$l'=-E^*_v$ for some $v\in \calv$. This means that any  divisor $D\in \eca^{l'}(\tX)$ with Chern
class $l'$ is a transversal cut of $E_v$ at some point $p\in E_v\setminus \cup_{u\not=v}E_u$.
Let us fix some local coordinates $(u,v)$ in some neighbourhood $U$ of $p$ such that
$\{u=0\}=E_v\cap U$, while $D$ has local equation $v$.
Any local section of $\Omega^2_{\tX}(Z)$ ($Z\gg 0$ as above)
near $p$ has local form $\omega=\sum_{i\in\Z, j\in \Z_{\geq 0}} a_{i,j} u^i v^jdu\wedge dv$.
We define the residue ${\rm Res}_D(\omega)=(w/dv)|_{v=0}:= \sum_{i}a_{i,0}u^i du$.

\begin{proposition}\label{prop:Res} \ \cite[Corollary 10.1.2]{NNI}
 Assume that $\{\omega_1,\ldots, \omega_{p_g}\}$ are fixed representatives of  a
 basis of $H^0(\tX,\Omega^2_{\tX}(Z))/H^0(\tX,\Omega^2 _{\tX})$ (where  $Z\gg 0$). Set
\begin{equation*}
{\mathcal H}:=\{(a_1,\ldots,a_{p_g})\in \bC^{p_g}: {\rm Res}_{D}(\textstyle{\sum}_\alpha a_{\alpha}\omega_{\alpha}) \ \mbox{has no pole along $D$}\}.
\end{equation*}
Then $h^1(Z,\calO_{Z}(D))=\dim ({\mathcal H})$,
and the number of independent relations between $(a_1,\ldots, a_{p_g})$, $p_g-h^1(Z,\calO_{Z}(D))$,
is the dimension of the image of the tangent map  $\im T_Dc^{l'}(T_D\eca^{l'}(Z))$.

In particular, $\dim (\im (c^{l'}(Z)))$ is the number of independent
relations for $D$ generic.
\end{proposition}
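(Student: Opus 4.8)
The plan is to build a dictionary between sections of $\calO_Z(D)$, differential forms with prescribed poles, and the tangent map of the Abel map, using Laufer duality as the pivot. First I would set up the local picture precisely: with $l'=-E^*_v$, a divisor $D\in\eca^{l'}(Z)$ is a reduced point $p\in E_v\setminus\cup_{u\ne v}E_u$ cut transversally, and the choice of local coordinates $(u,v)$ with $\{u=0\}=E_v\cap U$ and $D=\{v=0\}$ is exactly the set-up used in \ref{bek:diff forms2}. The residue operation ${\rm Res}_D(\omega)=(\omega/dv)|_{v=0}=\sum_i a_{i,0}u^i\,du$ then makes sense as a formal Laurent expression in $u$, and ``has no pole along $D$'' means $a_{i,0}=0$ for $i<0$. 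I would then observe that, by the exact sequence $0\to\calO_{\tX}\to\calO_{\tX}(D)\to\calO_D(D)\to 0$ applied on $Z\gg 0$ (equivalently on $\tX$, using the stability $h^1(Z,\calL)=h^1(\tX,\calL)$ and $\pic(Z)=\pic(\tX)$), the space $H^0(\calO_Z(D))/H^0(\calO_Z)$ maps to the one-dimensional normal-direction space at $D$, and computing $h^1(Z,\calO_Z(D))$ amounts to counting how much of $H^1(\calO_Z)\cong H^1(\tX,\calO_{\tX})$ survives.

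The core step is to identify, via the Laufer pairing $H^1(\calO_Z)\otimes \big(H^0(\Omega^2_{\tX}(Z))/H^0(\Omega^2_{\tX})\big)\to\C$ of \ref{bek:diff forms} and Proposition \ref{prop:HDIZ}, the coboundary map $H^0(\calO_D(D))\to H^1(\calO_Z)$ with a residue map at the level of forms. Concretely, a form $\omega=\sum_\alpha a_\alpha\omega_\alpha$ pairs to zero against everything in the image of $H^0(\calO_Z(D))\to H^0(\calO_D(D))$ precisely when ${\rm Res}_D(\omega)$ has no pole along $D$; this is the standard residue/adjunction computation (the ``Laufer duality'' incarnation of Serre duality on the non-reduced scheme $Z$), and it gives $H^0(\calO_Z(D))/H^0(\calO_Z)\hookrightarrow \big(\calH\big)^{\perp}$ with the annihilator taken inside $H^0(\Omega^2_{\tX}(Z))/H^0(\Omega^2_{\tX})$. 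Dualizing and chasing the long exact sequence yields $h^1(Z,\calO_Z(D))=\dim\calH$. The number of independent linear conditions cut out on $(a_1,\dots,a_{p_g})$ by ``${\rm Res}_D(\sum a_\alpha\omega_\alpha)$ has no pole along $D$'' is then $p_g-\dim\calH=p_g-h^1(Z,\calO_Z(D))$.

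Finally I would connect this to the tangent map. The key input is that the tangent space $T_D\eca^{l'}(Z)$ is canonically $H^0(\calO_D(D))$ (the normal bundle of $D$ in $Z$, a consequence of the smoothness statement $\dim\eca^{l'}(Z)=(l',Z)$ and the deformation theory of Cartier divisors, cf.\ \cite{NNI}), and that under this identification $T_Dc^{l'}$ is exactly the coboundary $H^0(\calO_D(D))\to H^1(\calO_Z)\cong \pic^0(Z)$ from the exact sequence above, transported through the modified Abel map identification $\pic^{l'}(Z)\cong H^1(\calO_Z)$. Hence $\dim\im T_Dc^{l'}=p_g-h^1(Z,\calO_Z(D))$, which matches the count of independent residue relations. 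For the last sentence, I would invoke that $\eca^{l'}(Z)$ is smooth and irreducible so that for $D$ generic $\dim\im T_Dc^{l'}=\dim\im(c^{l'}(Z))$ (the rank of the differential is maximal on a dense open set), giving the stated characterization of $\dim\im(c^{l'}(Z))$. The main obstacle I anticipate is making the residue-duality identification of the coboundary map fully precise on the non-reduced scheme $Z$ — i.e.\ checking that the Laufer pairing really intertwines the connecting homomorphism with the pole-counting of ${\rm Res}_D$, rather than this being true only up to the reduced subscheme — but since $Z\gg 0$ and the relevant forms and sections are controlled near the single transversal point $p$, this should reduce to the local Laurent computation already described.
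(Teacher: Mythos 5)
This proposition is only recalled in the paper from \cite[Cor.~10.1.2]{NNI} without proof, and your plan reconstructs essentially the argument of that cited source: identify $T_D\eca^{l'}(Z)$ with $H^0(\calO_D(D))$, identify $T_Dc^{l'}$ with the connecting homomorphism of $0\to\calO_Z\to\calO_Z(D)\to\calO_D(D)\to 0$, and compute the annihilator of its image under the Laufer pairing as $\calH$ via the local residue expansion, so that $\dim\calH=p_g-\dim\im(T_Dc^{l'})=h^1(Z,\calO_Z(D))$. The plan is sound; the only slip is calling $H^0(\calO_D(D))$ the ``one-dimensional normal-direction space'' --- it has dimension $(l',Z)$ (the rank-one normal bundle lives on a length-$(l',Z)$ scheme), which is exactly what makes the long-exact-sequence count work.
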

\begin{corollary}\label{cor:dim} In the situation of Proposition \ref{prop:Res} assume that
the forms $\{\omega_j\}_{j=d+1}^{p_g}$ have no poles along $E_v$, while the non--trivial poles of
$\{\omega_j\}_{j=1}^{d}$ along $E_v$ are all distinct. Then $\dim \im (c^{l'})=d$.
\end{corollary}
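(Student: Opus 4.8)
The plan is to feed the hypotheses directly into Proposition \ref{prop:Res}. Recall that for $l'=-E^*_v$ and $D\in\eca^{l'}(Z)$ a generic transversal cut of $E_v$ at a point $p$, the quantity $\dim\im(c^{l'}(Z))$ equals $p_g-h^1(Z,\calO_Z(D))=p_g-\dim(\mathcal H)$, where $\mathcal H\subset\bC^{p_g}$ is the space of coefficient vectors $(a_1,\dots,a_{p_g})$ for which $\Res_D(\sum_\alpha a_\alpha\omega_\alpha)$ has no pole along $D$. So the whole task is to compute $\dim(\mathcal H)$ under the stated normalization of the basis $\{\omega_j\}$: the forms $\omega_{d+1},\dots,\omega_{p_g}$ have no pole along $E_v$, and the nontrivial poles of $\omega_1,\dots,\omega_d$ along $E_v$ are pairwise distinct (i.e. of pairwise distinct pole orders).

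First I would unwind what ``no pole along $D$'' means in the local coordinates $(u,v)$ near $p$, with $E_v\cap U=\{u=0\}$ and $D=\{v=0\}$. Writing $\omega=\sum_{i\in\Z,\,j\geq0}a_{i,j}u^iv^j\,du\wedge dv$, the residue is $\Res_D(\omega)=(\sum_i a_{i,0}u^i)\,du$, and this has a pole along $D$ precisely when some $a_{i,0}$ with $i<0$ is nonzero; moreover the pole order along $E_v$ is $-\min\{i : a_{i,0}\neq 0\}$ (when this min is negative). The key point is that for a form without pole along $E_v$, all $a_{i,j}$ with $i<0$ vanish, so such a form contributes nothing to the polar part of the residue; hence $\omega_{d+1},\dots,\omega_{p_g}$ impose no condition, and $\mathcal H$ contains the coordinate subspace $\{a_1=\cdots=a_d=0\}$ of dimension $p_g-d$. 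It remains to see that the conditions coming from $\omega_1,\dots,\omega_d$ cut out exactly that subspace inside $\bC^{p_g}$ for generic $D$, i.e. are $d$ independent linear conditions on $(a_1,\dots,a_d)$.

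The heart of the argument is the distinctness hypothesis. For $k=1,\dots,d$ let $m_k\geq 1$ be the pole order of $\omega_k$ along $E_v$; by assumption the $m_k$ are pairwise distinct. In the residue $\Res_D(\sum_{\alpha}a_\alpha\omega_\alpha)$ the coefficient of $u^{-m_k}\,du$ is (up to a nonzero factor depending on $p$, nonzero for generic $p$) a nonzero multiple of $a_k$, because no other basis form has a pole of order exactly $m_k$ and the forms with smaller or zero pole order cannot contribute to the $u^{-m_k}$ term. Thus requiring ``no pole along $D$'' forces $a_k=0$ for each $k=1,\dots,d$ successively, and these are $d$ visibly independent linear equations. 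Consequently $\mathcal H=\{a_1=\cdots=a_d=0\}$, $\dim(\mathcal H)=p_g-d$, and by Proposition \ref{prop:Res} $\dim\im(c^{l'}(Z))=p_g-\dim(\mathcal H)=d$.

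The main obstacle, and the only place genuine care is needed, is the genericity: I must check that for $D$ a \emph{generic} transversal cut the leading residue coefficients are indeed nonzero. The potential subtlety is that the residue of $\omega_k$ could vanish identically along some special $D$ (for instance if the polar coefficient function $\sum_i a_{i,0}(p)u^i$ degenerates at particular points $p\in E_v$), but since the polar coefficients $a_{i,0}$ are, as functions of $p$, sections of line bundles on $E_v\cong\bP^1$ that are not identically zero (precisely because $\omega_k$ genuinely has a pole of order $m_k$ along $E_v$), they are nonzero for $p$ outside a finite set; intersecting these finitely many conditions (over $k=1,\dots,d$) with the already-open locus of transversal cuts missing $\cup_{u\neq v}E_u$ still leaves a nonempty Zariski-open set of admissible $D$, which is all Proposition \ref{prop:Res} requires.
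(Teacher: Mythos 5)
Your proposal is correct and is essentially the paper's proof: the paper's argument is the single observation that for generic $D$ the residues $\{\Res_D(\omega_j)\}_{j=1}^d$ have distinct non--trivial poles at $u=0$, and you have simply made explicit the upper--triangular linear algebra and the genericity of $p$ that this observation implicitly relies on. One small expository slip: a form $\omega_{k'}$ with pole order larger than $m_k$ \emph{can} contribute to the $u^{-m_k}$ coefficient, so the correct reading of your argument is the ``successive'' one (eliminate the $a_k$'s in decreasing order of pole order), which you do invoke and which makes the conditions triangular, hence independent.
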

\begin{proof}
For generic $D$ (or, generic $v$), $\{{\rm Res}_D(\omega_j)\}_{j=1}^d$ have different non-trivial poles
at $u=0$.
\end{proof}

\section{Elliptic singularities. The elliptic sequence.}\label{s:elliptic}

\subsection{Elliptic singularities} \label{ss:ell}
 Let $Z_{min}\in L$ be the minimal (or fundamental) cycle of the resolution $\phi$, that is,
$\min\{\calS\setminus 0\}$ \cite{Artin62,Artin66}. Recall that $(X,o)$ is called elliptic if
$\chi(Z_{min})=0$, or equivalently, $\min _{l\in L_{>0}}\chi(l)=0$ \cite{Laufer77,Wa70}.
It is known that if we decrease the decorations (Euler numbers), or we take a full subgraph
 of an elliptic graph, then we get either an elliptic or a rational graph.

Let $C$ be the minimally elliptic cycle \cite{Laufer77,weakly}, that is, $\chi(C)=0$ and
$\chi(l)>0$ for any $0<l<C$. There is a unique cycle with this property, and if
$\chi(D)=0$ ($D\in L$) then necessarily $C\leq D$. In particular, $C\leq Z_{min}$.
In the sequel we assume that the resolution is minimal.  Then $Z_K\in \calS'\setminus 0$, hence in the numerically
Gorenstein case $Z_{min}\leq Z_K$ by the minimality of $Z_{min}$ in $\calS\setminus 0$.

The minimally elliptic singularities were introduced by Laufer in \cite{Laufer77}.  In a minimal
resolution they are characterized (topologically) by $Z_{min}=Z_K=C$.
Moreover, $(X,o)$ is minimally elliptic if and only if $p_g(X,o)=1$ and $(X,o)$
is Gorenstein.  For details see  \cite{Laufer77,weakly,Nfive}.

\subsection{Elliptic sequences}
One of the most important tools in the study of elliptic singularities are the {\it elliptic sequences}.
An elliptic sequence  constitute of a sequence of integral cycle associated with the topological type (graph).
They were introduced by Laufer and S. S.-T. Yau, for the definition in the general
(non numerically Gorenstein) case see \cite{Yau5,Yau1}. In the numerically Gorenstein case the construction is
simpler, see also \cite{weakly,Nfive,OkumaEll}. This second
case will be recalled below. In fact, we will use
an elliptic sequence even in the non numerically Gorenstein case, but not the `classical' one defined
by Laufer and Yau:  we define a new one, whose structure is much closer to the structure of sequences
associated with numerically Gorenstein graphs.  In fact, after the first step of the construction (which produces
a rational cycle)
we hit a  numerically Gorenstein support, and the continuation of the sequence
 is the one imposed by the numerically Gorenstein case.

\bekezdes {\bf The construction of the elliptic sequence; numerically Gorenstein case
\cite{Yau5,Yau1,weakly,Nfive,OkumaEll}.}
The elliptic sequence consists of a sequence of integral cycles
$\{Z_{B_j}\}_{j=0}^m$, where $Z_{B_j}$ is the minimal
 cycle supported on  the connected  reduced cycle $B_j$.  $\{B_j\}_{j=0}^m$ are defined inductively as follows.  For $j=0$ one takes $B_0=E$, hence $Z_{B_0}=Z_{min}$. Then $C\leq Z_{min}=Z_{B_0}\leq Z_K$.
 If $Z_{B_0}=Z_K$ then we stop, $m=0$, this situation corresponds to the minimally elliptic case.

 Otherwise one takes $B_1:=|Z_K-Z_{B_0}|$. One  verifies that   $|C|\subseteq B_1\varsubsetneq B_0$,
 $B_1$ is connected,
 and it supports a numerically Gorenstein elliptic
 topological type with canonical cycle $Z_K-Z_{B_0}$.
 (Furthermore, $(E_v,Z_{B_0})=0$ for any $E_v\subset B_1$. The proof of all these facts are
  similar to the proof of Lemma \ref{lem:b0} below.)
  In particular, $C\leq Z_{B_1}\leq
 Z_K-Z_{B_0}$. Then we repeat the inductive argument. If $Z_{B_1}=  Z_K-Z_{B_0}$, then we stop,
 $m=1$. Otherwise,  we define $B_2:=|Z_K-Z_{B_0}-Z_{B_1}|$. $B_2$ again is connected, $|C|\subseteq B_2
 \varsubsetneq B_1$, and supports a  numerically Gorenstein elliptic
 topological type with canonical cycle $Z_K-Z_{B_0}-Z_{B_1}$. After finite steps we get
 $Z_{B_m}=Z_K-Z_{B_0}-\cdots-Z_{B_{m-1}}$, hence the minimal cycle and the canonical cycle on
 $B_m$ coincide. This means that $B_m$ supports a minimally elliptic singularity with $Z_{B_m}=C$.

 We say that the length of the elliptic sequence $\{Z_{B_j}\}_{j=0}^m$ is $m+1$.

\bekezdes {\bf The  construction of the (new)
elliptic sequence; non--numerically Gorenstein case.}
Assume that $Z_K\not\in L$, that is, $r_{[Z_K]}\not=0$. Since the resolution is minimal,
$Z_K\in \calS'$, hence $Z_K\geq s_{[Z_K]}$. By Lemma \ref{lem:szk} $Z_K> s_{[Z_K]}$.
We will use the following notations: $B_{-1}:=E$, $Z_{B_{-1}}:=s_{[Z_K]}$ and $B_0:=|Z_K-s_{[Z_K]}|$.
(Note that $Z_{B_{-1}}\in L'\setminus L$.)
\begin{lemma}\label{lem:b0} (a) $\chi(s_{[Z_K]})=0$.

(b)  $B_0$ is connected, $C\subseteq B_0\varsubsetneq E$,  and
$(E_v,Z_{B_{-1}})=0$ for any $E_v\subset B_0$.

(c) $B_0$ supports a numerically Gorenstein
elliptic topological type with canonical cycle $Z_K-s_{[Z_K]}$.
\end{lemma}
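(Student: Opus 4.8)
The plan is to reduce the whole statement to one ``squeezing'' computation, from which (b) and (c) then follow by formal manipulations. Put $l_0:=Z_K-s_{[Z_K]}$; by the discussion preceding the lemma (Lemma \ref{lem:szk} together with $p_g>0$ for elliptic germs) one has $l_0\in L_{>0}$ and $B_0=|l_0|$. First I would record, directly from the definition of $\chi$, the identity $\chi(s_{[Z_K]})=-(s_{[Z_K]},s_{[Z_K]}-Z_K)/2=\tfrac12(s_{[Z_K]},l_0)$, together with the elementary symmetry $\chi(l')=\chi(Z_K-l')$, which also yields $\chi(s_{[Z_K]})=\chi(l_0)$. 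Since $s_{[Z_K]}\in\calS'$ we have $(s_{[Z_K]},E_v)\le 0$ for every $v$, hence $(s_{[Z_K]},l_0)\le 0$ and so $\chi(s_{[Z_K]})\le 0$; on the other hand $l_0\in L_{>0}$ and ellipticity force $\chi(l_0)\ge 0$. Combining the two bounds, $\chi(s_{[Z_K]})=\chi(l_0)=\tfrac12(s_{[Z_K]},l_0)=0$, which is (a); and since $(s_{[Z_K]},l_0)=\sum_{E_v\subseteq B_0}(l_0)_v\,(s_{[Z_K]},E_v)$ is a sum of nonpositive terms with all $(l_0)_v>0$, every summand vanishes, i.e.\ $(s_{[Z_K]},E_v)=0$ for each $E_v\subseteq B_0$. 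Equivalently $(l_0,E_v)=(Z_K,E_v)=(E_v,E_v)+2$ for such $v$, which is the last assertion of (b).

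Next I would treat the remaining assertions of (b). Clearly $B_0\ne\emptyset$. If $B_0$ were all of $E$ then $(s_{[Z_K]},E_v)=0$ for every $v$, so $s_{[Z_K]}=0$ by the negative definiteness of the intersection form, contradicting $[s_{[Z_K]}]=[Z_K]\ne 0$ (we are in the non--numerically--Gorenstein case); hence $B_0\varsubsetneq E$. From $\chi(l_0)=0$, $l_0\in L_{>0}$ and the minimality property of the minimally elliptic cycle $C$ (any effective $D$ with $\chi(D)=0$ satisfies $C\le D$) one gets $C\le l_0$, i.e.\ $|C|\subseteq B_0$. For connectedness I would argue by contradiction: a splitting $B_0=P\sqcup Q$ into nonempty parts with no edge between them gives $l_0=l_P+l_Q$ with $l_P,l_Q\in L_{>0}$ and $(l_P,l_Q)=0$, whence $0=\chi(l_0)=\chi(l_P)+\chi(l_Q)$ with both summands $\ge 0$, so $\chi(l_P)=\chi(l_Q)=0$; as $|C|$ is connected it lies in one part, say $P$, but then $\chi(l_Q)=0$ with $l_Q\in L_{>0}$ forces $C\le l_Q$, hence $|C|\subseteq Q$, a contradiction.

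For (c): the equalities $(l_0,E_v)=(E_v,E_v)+2$ for all $E_v\subseteq B_0$ established above are exactly the adjunction equations defining the canonical cycle of the full subgraph $B_0$, and the intersection form restricted to $B_0$ is negative definite, so these equations have a unique solution; hence $Z_{K,B_0}=l_0\in L(B_0)$ and $B_0$ is numerically Gorenstein with canonical cycle $Z_K-s_{[Z_K]}$. Finally $B_0$ is a proper full subgraph of the elliptic graph $\Gamma$, hence (by the recalled dichotomy) elliptic or rational; but $Z_{K,B_0}=l_0$ is a positive cycle with $\chi_{B_0}(Z_{K,B_0})=0$, which is incompatible with rationality (where $\chi_{B_0}\ge 1$ on all positive cycles), so $B_0$ is elliptic.

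The step I expect to be the real point is the opening squeeze: it is where the two facts that have nothing a priori to do with each other --- membership $s_{[Z_K]}\in\calS'$ and the vanishing $\min_{l\in L_{>0}}\chi(l)=0$ for elliptic germs --- are tied together, the bridge being the symmetry $\chi(l')=\chi(Z_K-l')$. Once $\chi(s_{[Z_K]})=0$ and $(s_{[Z_K]},E_v)=0$ on $B_0$ are in hand, (b) and (c) amount to bookkeeping; the only mild extra input is the connectedness of $|C|$ used in the last part of (b).
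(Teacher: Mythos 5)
Your proof is correct and follows essentially the same route as the paper's: the same squeeze (your identity $\chi(s_{[Z_K]})=\tfrac12(s_{[Z_K]},l_0)$ is just a rearrangement of the paper's expansion $0=\chi(Z_K)=\chi(l_0)+\chi(s_{[Z_K]})-(l_0,s_{[Z_K]})$ combined with the symmetry $\chi(l')=\chi(Z_K-l')$), the same connectedness argument via the uniqueness/minimality of the minimally elliptic cycle $C$, and the same identification of $Z_K-s_{[Z_K]}$ as the canonical cycle of $B_0$ from the vanishing $(E_v,s_{[Z_K]})=0$ on $B_0$. No gaps.
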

\begin{proof}
{\it (a)--(b)}
Write $l:=Z_K-s_{[Z_K]}$. Then $\chi(s_{[Z_K]})=\chi( Z_K-l)=\chi(l)$.  Since $(X,o)$ is elliptic
$\chi(s_{[Z_K]})=\chi(l)\geq 0$ $(\dag)$. Also, $(s_{[Z_K]},l)\leq 0$
since $s_{[Z_K]}\in\calS'$ $(\ddag)$.
On the other hand, $0=\chi(Z_K)=\chi(l+s_{[Z_K]})=\chi(l)+\chi(s_{[Z_K]})-(l,s_{[Z_K]})$.
Then by $(\dag)$ and $(\ddag)$ the expressions from the right hand side are $\geq 0$, hence
necessarily $\chi(s_{[Z_K]})=\chi(l)=(l,s_{[Z_K]})=0$. If $l$ has more connected components, say
$\cup_il_i$, then $\chi(l_i)=0$ for all $i$, hence each $l_i$  contains/dominates  a minimally
elliptic cycle (cf. \cite{Laufer77}), a fact which contradicts  the uniqueness of the minimally
elliptic cycle. Hence $|l|=B_0$ is connected and $|C|\subset B_0$. Furthermore,
$(l, s_{[Z_K]})=0$ shows that $|l|\not=E$.

{\it (c)} $C\subseteq B_0\varsubsetneq E$ shows that $\min_{|l|\subset B_0, \, l>0}
 \chi(l)=0$, hence $B_0$ supports an elliptic topological type. Moreover, from   $(l, s_{[Z_K]})=0$
 we read that for any $E_v$ from the support of $l$ one has
$(E_v,s_{[Z_K]})=0$, hence $(E_v,Z_K-s_{[Z_K]})=(E_v,Z_K)$, hence $Z_K-s_{[Z_K]}\in L$ is
the canonical cycle on $B_0$.
\end{proof}
Then, as a continuation of the sequence, starting from $B_0$ and its integral canonical class $Z_K-s_{[Z_K]}$  we construct
the sequence $\{Z_{B_j}\}_{j=0}^m$ as in the numerically Gorenstein case.

We say that the elliptic sequence $\{Z_{B_j}\}_{j=-1}^m$ has length $m+1$ and `pre--term'
$Z_{B_{-1}}=s_{[Z_k]}\in L'$.

In order to have a uniform notation, in the numerically Gorenstein case we set
$Z_{B_{-1}}:=0$ (which, in fact, it is $s_{[Z_k]}$).
In both cases, from  above (see also \cite[2.11]{weakly}), for latter references,
\begin{equation}\label{eq:orthogonal}
(E_v,Z_{B_j})=0 \ \ \mbox{for any $E_v\subset B_{j+1}$} \ \ \ (-1\leq j< m).
\end{equation}

\begin{remark}
The construction of $\{B_j\}_j$  can be handled uniformly as follows.
For any connected support $B$ let $Z_K(B)$ be the canonical cycle associated
with the graph supported by $B$ and let $s^*(B)$ be the smallest nonzero element of
$\calS'(B)$ with $[s^*(B)]=[Z_K(B)]$ in $L'(B)/L(B)$. Then we proceed
inductively:  the first support is $E$, and
once $B_j$ is known then  one sets $B_{j+1}:=|Z_K(B_j)-s^*(B_j)|$.
We prefer to index them in such a way  that $B_0$ is the first numerically Gorenstein support.
\end{remark}
\begin{example}\label{ex:new}
Consider the next elliptic graph from the left

\begin{picture}(200,50)(-120,0)
\put(70,30){\circle*{4}}\put(90,30){\circle*{4}}\put(110,30){\circle*{4}}\put(130,30){\circle*{4}}
\put(50,30){\circle*{4}}\put(130,30){\circle*{4}}\put(150,30){\circle*{4}}
\put(-30,30){\circle*{4}}
\put(110,40){\makebox(0,0){\small{$-3$}}}\put(130,40){\makebox(0,0){\small{$-3$}}}
\put(150,20){\makebox(0,0){\small{$E_1$}}}
\put(130,20){\makebox(0,0){\small{$E_2$}}}
\put(-10,30){\circle*{4}}
\put(10,30){\circle*{4}}
\put(30,30){\circle*{4}}
\put(10,10){\circle*{4}}
\put(-30,30){\line(1,0){180}}\put(10,10){\line(0,1){20}}


\end{picture}

\noindent where the $(-2)$--vertices are  unmarked.
 $Z_K$ and $s_{[Z_K]}$ are

 \begin{picture}(500,45)(70,0)

\put(70,30){\makebox(0,0){\tiny{14/3}}}
\put(90,30){\makebox(0,0){\tiny{28/3}}}
\put(110,30){\makebox(0,0){\tiny{42/3}}}
\put(110,10){\makebox(0,0){\tiny{21/3}}}
\put(130,30){\makebox(0,0){\tiny{35/3}}}
\put(150,30){\makebox(0,0){\tiny{28/3}}}
\put(170,30){\makebox(0,0){\tiny{21/3}}}
\put(190,30){\makebox(0,0){\tiny{14/3}}}
\put(210,30){\makebox(0,0){\tiny{7/3}}}
\put(230,30){\makebox(0,0){\tiny{4/3}}}
\put(250,30){\makebox(0,0){\tiny{2/3}}}

\put(300,30){\makebox(0,0){\tiny{2/3}}}
\put(320,30){\makebox(0,0){\tiny{4/3}}}
\put(340,30){\makebox(0,0){\tiny{6/3}}}
\put(340,10){\makebox(0,0){\tiny{3/3}}}
\put(360,30){\makebox(0,0){\tiny{5/3}}}
\put(380,30){\makebox(0,0){\tiny{4/3}}}
\put(400,30){\makebox(0,0){\tiny{3/3}}}
\put(420,30){\makebox(0,0){\tiny{2/3}}}
\put(440,30){\makebox(0,0){\tiny{1/3}}}
\put(460,30){\makebox(0,0){\tiny{1/3}}}
\put(480,30){\makebox(0,0){\tiny{2/3}}}
\end{picture}

\noindent $B_0$ is obtained by deleting $E_1$ from $E$, while
$B_1$ by deleting $E_1$ and $E_2$.
The length is  $m+1=2$.
\end{example}
The elliptic sequence imposes some kind of `linearity' of the  structure of the graph. E.g.,
the following statement holds (probably parts of it already known in the literature).

\begin{lemma}\label{lem:glue}
Consider an elliptic graph $\Gamma$ with elliptic sequence supports
$B_{-1},B_0, \ldots , B_m$ and vertices $\calv$. Assume that we can glue to the graph a new vertex $v_{new}$
by an edge $(v,v_{new})$, $v\in \calv$, such that
 the new graph is still elliptic.  Then the $E_v$--multiplicity of the
 fundamental cycle $Z_{min}$ is 1 and  $v \not\in B_1$.
 Furthermore, if $\Gamma$ is numerically Gorenstein, then $v$ is necessarily an end--vertex, and the $E_v$--multiplicity of $Z_K$ is 1 too (and the multiplicity of the adjacent vertex is 2).
\end{lemma}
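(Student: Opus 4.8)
The plan is to reduce all assertions to one elementary Riemann--Roch computation in the enlarged graph $\Gamma'$ (vertex set $\calv\cup\{v_{new}\}$, lattice $L'\supset L$, form $(\,,\,)'$); for a cycle $D\in L$ write $\mathrm{mult}_v(D)$ for its $E_v$--coefficient. First I would record the bookkeeping: since $v_{new}$ is glued to $v$ by a single edge, $(E_v,E_{v_{new}})'=1$ and $(E_w,E_{v_{new}})'=0$ for $w\in\calv\setminus\{v\}$; the form and the adjunction relations of $\Gamma$ are unchanged in $\Gamma'$, so $\chi'|_L=\chi$, $\chi'(E_{v_{new}})=1$, and $(D,E_{v_{new}})'=\mathrm{mult}_v(D)$ for $D\in L$. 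Together with $\chi'(a+b)=\chi'(a)+\chi'(b)-(a,b)'$ this yields, for every $D\in L$,
\[
\chi'(D+E_{v_{new}})\;=\;\chi(D)+1-\mathrm{mult}_v(D).
\]
As $\Gamma'$ is elliptic, $\chi'\ge 0$ on $L'_{>0}$; hence whenever $D\in L_{\ge 0}$ satisfies $\chi(D)=0$, applying the displayed identity and using $\chi'(D+E_{v_{new}})\ge 0$ forces $\mathrm{mult}_v(D)\le 1$.

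The first assertion follows at once with $D=Z_{min}$: $\chi(Z_{min})=0$ gives $\mathrm{mult}_v(Z_{min})\le 1$, and $Z_{min}$ has everywhere positive coefficients, so $\mathrm{mult}_v(Z_{min})=1$. For $v\notin B_1$ I would distinguish two cases. If $v\notin B_0$ there is nothing to prove, as $B_1\subsetneq B_0$. If $v\in B_0$, pass to the full subgraph $\Gamma^{+}$ of $\Gamma'$ on $B_0\cup\{v_{new}\}$; it is connected and, being a full subgraph of the elliptic $\Gamma'$, is elliptic or rational, so $\chi^{+}\ge 0$ on positive cycles. By Lemma \ref{lem:b0}(c) the $B_0$--graph is numerically Gorenstein elliptic with canonical cycle $\widetilde Z_K:=Z_K-s_{[Z_K]}$ (which is $Z_K$ itself in the numerically Gorenstein case, where $B_0=E$), and $|\widetilde Z_K|=B_0$. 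A canonical cycle has $\chi=0$, so the analogue of the displayed identity inside $\Gamma^{+}$ gives $\mathrm{mult}_v(\widetilde Z_K)\le 1$; since $v\in B_0=|\widetilde Z_K|$ we conclude $\mathrm{mult}_v(\widetilde Z_K)=1$. Finally $\widetilde Z_K-Z_{B_0}\ge 0$ (minimality of the fundamental cycle $Z_{B_0}$ among the nonzero elements of the Lipman cone of the $B_0$--graph), and its $E_v$--coefficient equals $1-\mathrm{mult}_v(Z_{B_0})\le 0$, hence is $0$; thus $v\notin B_1=|\widetilde Z_K-Z_{B_0}|$.

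In the numerically Gorenstein case $B_0=E$ and $\widetilde Z_K=Z_K$, so the previous paragraph already gives $\mathrm{mult}_v(Z_K)=1$; it remains to show that $v$ is an end-vertex (then ``the adjacent vertex has $Z_K$--multiplicity $2$'' is immediate). Adjunction at $v$, together with $\mathrm{mult}_v(Z_K)=1$, reads $\sum_{u\sim v}\mathrm{mult}_u(Z_K)=2$, so $v$ has at most two neighbours in $\Gamma$. Suppose it had two, $v_1$ and $v_2$; then $\mathrm{mult}_{v_1}(Z_K)=\mathrm{mult}_{v_2}(Z_K)=1$, and adjunction at $v_1$ --- after removing the contribution $1$ of the neighbour $v$ --- forces $v_1$ to have exactly one neighbour besides $v$, again of $Z_K$--multiplicity $1$. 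Iterating this (replacing $v$ by $v_1$, then by the newly produced vertex, and so on) builds a non-backtracking walk through vertices of $Z_K$--multiplicity $1$; in a tree such a walk is a simple path, so it would be infinite, contradicting the finiteness of $\calv$. Hence $v$ has a unique neighbour $v'$, i.e. $v$ is an end-vertex, and $\sum_{u\sim v}\mathrm{mult}_u(Z_K)=2$ becomes $\mathrm{mult}_{v'}(Z_K)=2$.

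The only step with real content is the last propagation argument forcing $v$ to be an end-vertex; the multiplicity equalities are all instances of the one-line identity above applied to $Z_{min}$, $Z_K$, or $\widetilde Z_K$. The one point requiring care is the reduction to $\Gamma^{+}$: that it is connected, that being elliptic-or-rational still yields $\chi^{+}\ge 0$, and that $\widetilde Z_K$ is genuinely the canonical cycle of the $B_0$--graph --- exactly the content of Lemma \ref{lem:b0}(c).
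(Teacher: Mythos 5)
Your proof is correct and follows essentially the same route as the paper: both rest on the identity $\chi'(D+E_{v_{new}})=\chi(D)+1-\mathrm{mult}_v(D)$ combined with $\chi'\ge 0$ on positive cycles of the enlarged graph, and the end-vertex step (adjunction at a multiplicity-one vertex propagating an infinite simple path in a tree) is identical to the paper's. The only divergence is the test cycle used for $v\notin B_1$: the paper applies the identity directly to $Z_{B_0}+Z_{B_1}$, whose $E_v$-multiplicity would be $\ge 2$ if $v\in B_1$, giving $\chi'(Z_{B_0}+Z_{B_1}+E_{v_{new}})<0$ at once, whereas you detour through the canonical cycle of the $B_0$-subgraph and the inequality $Z_K(B_0)\ge Z_{B_0}$ --- slightly longer but equally valid.
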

\begin{proof}
Suppose  that $v \in B_1$.  Then we have that the multiplicity of $E_v$ in $Z_{B_0}  + Z_{B_1}$ is at least $2$.
 But  $\chi(Z_{B_0}  + Z_{B_1}) = 0$, cf. (\ref{eq:orthogonal}), hence
$\chi(Z_{B_0}  + Z_{B_1}+E_w) < 0$, which contradicts the ellipticity of the large graph.
By Laufer's algorithm \cite{Laufer72} there exists a computation sequence of the fundamental cycle of the large graph such that one of its terms is $Z_{min}=Z_{min}(\Gamma)$ while the next one is $Z_{min}+E_{new}$. Since
$\chi(Z_{min})=\chi(Z_{min}+E_{new})=0$, we get that the coefficient $m_{E_v}(Z_{min})$ of $E_v$ in $Z_{min}$ is 1.
In the numerically Gorenstein case, since $v\not \in B_1$ we get that that  $m_{E_v}(Z_{K})=1$ too. By the adjunction formula then $v$ is either an end--vertex (as in the statement) or it has two neighbours both with multiplicity 1. But this last case would generate (by repeating the argument for the two neighbours)
an infinite string, all with multiplicity one, which cannot happen.
\end{proof}

\begin{example}\label{ex:nagygraf} The next graphs are both numerically Gorenstein, with  $m=3$.
The dash--boxes show the supports $B_3\subset B_2\subset B_1\subset B_0$.
In both cases
 $B_1\setminus B_2 $ has more   connected components. However,
in the first case the two components
are  adjacent with different vertices of $B_1$, while in the second case pairs of components
are adjacent with the same vertex of $B_1$.
These adjacent properties will be crucial in Theorem \ref{th:WECCCrit}.
 In both examples we create nodes in the zones $B_i\setminus B_{i+1}$.
Both graphs can be continued as  (infinite) series of numerically
Gorenstein graphs by adding pairs of vertices to each
string (similarly as their last extension).

\begin{picture}(200,90)(-35,-20)
\put(-10,10){\circle*{4}}
\put(30,40){\makebox(0,0){\small{$-4$}}}
\put(50,40){\makebox(0,0){\small{$-3$}}}
\put(70,30){\circle*{4}}\put(90,30){\circle*{4}}\put(110,30){\circle*{4}}
\put(50,30){\circle*{4}}\put(110,50){\circle*{4}}\put(110,-10){\circle*{4}}
\put(-30,30){\circle*{4}}
\put(50,10){\circle*{4}}\put(70,10){\circle*{4}}\put(90,10){\circle*{4}}\put(110,10){\circle*{4}}
\put(90,40){\makebox(0,0){\small{$-3$}}}
\put(90,15){\makebox(0,0){\small{$-3$}}}
\put(-10,30){\circle*{4}}
\put(10,30){\circle*{4}}
\put(30,30){\circle*{4}}
\put(10,10){\circle*{4}}
\put(-30,30){\line(1,0){140}}\put(10,10){\line(0,1){20}}\put(-10,10){\line(0,1){20}}
\put(50,10){\line(0,1){20}}\put(50,30){\line(1,-1){20}}
\put(70,10){\line(1,0){40}}\put(90,10){\line(1,-1){20}}
\put(90,30){\line(1,1){20}}
\put(-35,2){\dashbox{2}(74,45)}
\put(-39,-6){\dashbox{2}(118,57)}
\put(-43,-14){\dashbox{2}(163,69)}\put(-47,-18){\dashbox{2}(205,77)}

\put(130,50){\circle*{4}}\put(130,30){\circle*{4}}\put(130,10){\circle*{4}}
\put(130,-10){\circle*{4}}
\put(150,50){\circle*{4}}\put(150,30){\circle*{4}}\put(150,10){\circle*{4}}
\put(150,-10){\circle*{4}}
\put(110,50){\line(1,0){40}}
\put(110,30){\line(1,0){40}}
\put(110,10){\line(1,0){40}}
\put(110,-10){\line(1,0){40}}

\put(220,10){\circle*{4}}\put(240,10){\circle*{4}}
\put(260,40){\makebox(0,0){\small{$-4$}}}
\put(280,40){\makebox(0,0){\small{$-3$}}}
\put(300,40){\makebox(0,0){\small{$-3$}}}
\put(300,20){\makebox(0,0){\small{$-3$}}}
\put(300,30){\circle*{4}}\put(320,30){\circle*{4}}\put(340,30){\circle*{4}}
\put(280,30){\circle*{4}}
\put(320,50){\circle*{4}}\put(340,50){\circle*{4}}
\put(200,30){\circle*{4}}
\put(280,10){\circle*{4}}\put(300,10){\circle*{4}}\put(320,10){\circle*{4}}\put(340,10){\circle*{4}}
\put(220,30){\circle*{4}}
\put(240,30){\circle*{4}}
\put(260,30){\circle*{4}}
\put(320,-10){\circle*{4}}\put(340,-10){\circle*{4}}
\put(340,10){\circle*{4}}
\put(200,30){\line(1,0){140}}\put(240,10){\line(0,1){20}}\put(220,10){\line(0,1){20}}
\put(280,10){\line(0,1){20}}\put(280,30){\line(1,-1){40}}
\put(300,10){\line(1,0){40}}
\put(300,30){\line(1,1){20}}\put(320,50){\line(1,0){20}}
\put(320,-10){\line(1,0){20}}
\put(195,2){\dashbox{2}(74,45)}
\put(191,-6){\dashbox{2}(118,57)}
\put(187,-14){\dashbox{2}(164,69)}\put(183,-18){\dashbox{2}(205,77)}

\put(360,50){\circle*{4}}\put(360,30){\circle*{4}}\put(360,10){\circle*{4}}
\put(360,-10){\circle*{4}}
\put(380,50){\circle*{4}}\put(380,30){\circle*{4}}\put(380,10){\circle*{4}}
\put(380,-10){\circle*{4}}
\put(340,50){\line(1,0){40}}
\put(340,30){\line(1,0){40}}
\put(340,10){\line(1,0){40}}
\put(340,-10){\line(1,0){40}}

\end{picture}

%

\end{example}

\subsection{The cycles $C_t$ and $C_t'$}\label{bek:Ct} Set
$C_t:=\sum_{i=-1}^t Z_{B_i}$ and $C_t':=\sum_{i=t}^m Z_{B_i}$, $-1\leq t\leq m$.
E.g. $C_m=Z_K$ and, in general,  $C'_j$ is the canonical cycle  of $B_j$.
Furthermore, $\chi(Z_{B_j})=\chi(C_j)=\chi(C_j')=0$.

The next Lemma generalizes   \cite[Lemma 2.13]{weakly} valid in the numerically Gorenstein  case.

\begin{lemma}\label{lem:Ci}
 Assume that $l' \in \calS'$,  $[l'] = [Z_K]$ and
 $l' \leq Z_K$.   Then $l'\in \{C_{-1}, C_0, \cdots,  C_m \}$.
\end{lemma}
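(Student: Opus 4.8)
The plan is to run an induction on the index $t$ of the elliptic sequence, peeling off one term $Z_{B_t}$ at a time and using the orthogonality relation \eqref{eq:orthogonal} together with the $\chi$-additivity identities recorded in \ref{bek:Ct}. Concretely, suppose $l'\in\calS'$ with $[l']=[Z_K]$ and $0\le l'\le Z_K$ (I treat the first step, from $l'=0$ possibly, or from $l'\ge Z_{B_{-1}}$, separately as the base of the induction, using Lemma \ref{lem:b0}(a) and the fact that $s_{[Z_K]}=Z_{B_{-1}}$ is the \emph{minimal} element of $\calS'$ in its class). The key claim to establish at each stage is: if $l'\ge C_{t-1}=\sum_{i=-1}^{t-1}Z_{B_i}$ and $l'\le Z_K$ with $l'\in\calS'$, then either $l'=C_{t-1}$ or $l'\ge C_t=C_{t-1}+Z_{B_t}$. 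Granting this claim, a descent/ascent argument pins $l'$ to one of the $C_t$'s.

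To prove the stepwise claim, I would write $l''=l'-C_{t-1}$. Since $C_{t-1}$ restricted to the relevant support and the orthogonality \eqref{eq:orthogonal} give $(E_v,C_{t-1})=0$ for $E_v\subset B_t$ — more precisely $(E_v,Z_{B_i})=0$ for $E_v\subset B_{i+1}\supseteq B_t$ when $i<t$, so summing over $i$ from $-1$ to $t-1$ yields $(E_v,C_{t-1})=0$ for all $E_v\subset B_t$ — one gets that $l'\in\calS'$ forces $(l'',E_v)=(l',E_v)\le 0$ for $E_v\subset B_t$. Also $l''\ge 0$ and $l''$ is supported inside $B_t$ (because $0\le l'\le Z_K$ and $Z_K-C_{t-1}=C'_t+\text{(lower terms)}$ is supported on $B_t$; I would check $Z_K-C_{t-1}=\sum_{i=t}^m Z_{B_i}=C'_t$ has support exactly $B_t$). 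Hence $l''$ is either $0$ or an effective cycle in $\calS'(B_t)$, and then I invoke that the canonical cycle $C'_t=Z_K(B_t)$ on $B_t$ satisfies $Z_{B_t}\le C'_t$, together with the minimality: the minimal nonzero element of $\calS(B_t)$ is $Z_{B_t}$ (the fundamental cycle of $B_t$), so $l''\ne 0\Rightarrow l''\ge Z_{B_t}$, i.e. $l'\ge C_t$. One must also verify the class condition propagates — $[l'']=[Z_K(B_t)]$ in $L'(B_t)/L(B_t)$ when $t=0$, and $l''\in L$ when $t\ge 1$ — which follows from $[C_{t-1}]$ matching $[Z_K]$ minus $[C'_t]$ in the appropriate quotient, using Lemma \ref{lem:b0}(c) for the $t=0$ transition.

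The induction terminates because at $t=m$ we have $C_m=Z_K$, so $l'\ge C_{m-1}$ and $l'\le Z_K=C_m$ forces $l'=C_{m-1}$ or $l'=C_m$; at the bottom, $l'\ge 0$ always and $l'\in\calS'$, $[l']=[Z_K]$ forces $l'\ge s_{[Z_K]}=Z_{B_{-1}}=C_{-1}$ (or $l'=0=Z_{B_{-1}}$ in the numerically Gorenstein normalization), giving the base case $l'\ge C_{-1}$. Chaining the stepwise claim for $t=0,1,\dots,m$ yields $l'\in\{C_{-1},C_0,\dots,C_m\}$.

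The main obstacle I anticipate is the bookkeeping at the $B_{-1}\to B_0$ transition in the non-numerically-Gorenstein case: there $Z_{B_{-1}}=s_{[Z_K]}\in L'\setminus L$ is not integral, the support changes from $E$ to $B_0\subsetneq E$, and one must argue that $l'-s_{[Z_K]}$ lands in $\calS'(B_0)$ with the correct class $[Z_K(B_0)]$ and is supported on $B_0$ — this is exactly where Lemma \ref{lem:b0} (especially $(E_v,Z_{B_{-1}})=0$ for $E_v\subset B_0$ and that $Z_K-s_{[Z_K]}$ is the integral canonical cycle on $B_0$) does the work, but the verification that no component of $l'-s_{[Z_K]}$ escapes $B_0$ needs care, using $(l',E_v)\le 0$ for $v\notin B_0$ combined with the structure of $Z_K$ on $E\setminus B_0$. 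The numerically Gorenstein case is cleaner and is essentially \cite[Lemma 2.13]{weakly}, so the real content is packaging the general case so the same induction runs.
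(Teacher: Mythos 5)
Your proof is correct and is essentially the paper's argument: the authors perform exactly your peeling step $l'\mapsto l'-Z_{B_{-1}}$ (using Lemma \ref{lem:b0} and \eqref{eq:orthogonal}) to reduce to the numerically Gorenstein case, and they explicitly record your full induction — "$l''\neq 0\Rightarrow l''\in\calS(B_t)\setminus\{0\}\Rightarrow l''\geq Z_{B_t}$, then pass to $B_{t+1}$" — as the alternative proof. The transition you flag as delicate is handled just as you suggest, since $0\le l'-s_{[Z_K]}\le Z_K-s_{[Z_K]}$ already confines the support to $B_0$.
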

\begin{proof}
Since  $0\leq l'-Z_{B_{-1}}\leq Z_K-Z_{B_{-1}}=Z_K(B_0)$ and by (\ref{eq:orthogonal})
$l'-Z_{B_{-1}}\in \calS(B_0)$, the statement reduces to the numerically Gorenstein case.
(Alternatively, using the analogue of the
previous line as inductive step one can proceed also by induction on $m$.
The first step is as follows. If $l'-Z_{B_{-1}}\not=0$ then $l'-Z_{B_{-1}}\in \calS(B_0)
\setminus \{0\}$, hence $l'-Z_{B_{-1}}\geq Z_{B_{0}}$. Then
$0\leq l'-Z_{B_{-1}}-Z_{B_{0}}\leq Z_K-Z_{B_{-1}}-Z_{B_{0}}=C'_1$, that is,
$l'-Z_{B_{-1}}-Z_{B_{0}}$ is supported on $B_1 $ and it belongs to $\calS(B_1)$.
Then the induction runs.)
\end{proof}
\begin{remark}\label{rem:univ} The cycles $\{C_i\}_{i=0}^m$, or their supports
$\{B_i\}_{i=0}^m$, satisfy several other universal properties as well.
E.g., assume that the graph is numerically Gorenstein, and let $I\subset
\calv$, $I\not=\emptyset$, such that $I$ supports a numerically Gorenstein (connected) subgraph.
Then $I$ is one of the supports $\{B_i\}_{i=0}^m$.

Indeed, suppose, that $I \neq B_0$. Then, by induction, it is enough to prove $I \subset B_1$.
Let the canonical cycle on $I$ be $Z \in L$. Then   $(Z_K- Z , E_v) = 0$ for all $E_v \subset |Z|$.
Else,  if  $E_v \not\subset  |Z|$,  we have $(Z_K, E_v) \leq 0$ and $(Z, E_v) \geq 0$,
so $(Z_K - Z, E_v) \leq 0$.
This means, that $Z_K - Z \in \calS'$, $Z_K - Z > 0$, $Z_K - Z \in L$.
These imply that  $Z_K - Z \geq Z_{min}$, hence  $Z \leq C'_1$ and $I=|Z|\subset B_1$.

Assume next that the graph is not numerically  Gorenstein.
Then we claim that the support of  any   numerically Gorenstein   (connected) subgraph  belongs again to
 $\{B_i\}_{i=0}^m$. First we show that the largest numerically Gorenstein subgraph is supported by $B_0$.
 (Then the rest follows from the previous paragraph.) Indeed, if $I$ is its support and $Z$ is the canonical cycle on this support, then similarly as above, $Z_K-Z\in \calS'$, hence
 $Z_K-Z\geq s_{[Z_K]}$. This reads as $Z\leq Z_K-s_{[Z_K]}$, or $|Z|\subset B_0$.
\end{remark}

\begin{remark}
Even if the graph is numerically Gorenstein, the list of antinef cycles $l\in \calS$ with
$l\not \geq Z_K$ is much larger than the list given in Lemma \ref{lem:Ci}. Indeed, take e.g
$l=2Z_{min}$, which usually is $\not\leq Z_K$ and $\not\geq  Z_K$.
\end{remark}

\bekezdes\label{bek:xtildej}
Let $\tX_j$ be a small 
\marginpar{{\bf ??}}
neighbourhood of
 $\cup_{E_v\subset B_j}E_v$ in $\tX$, and consider the singularities
$(X_j,o_j):=(\tX_j/B_j,o_j)$ obtained by contraction of $B_j$, $-1\leq j\leq m$. In particular,
$(X_{-1},o_{-1})$ is defined in the non--numerically Gorenstein case, it is  $(X,o)$, and
$(X_0,o_0)$ is the first numerically Gorenstein germ in the sequence.
The last one, $(X_m,o_m)$,  is a minimally elliptic (hence automatically Gorenstein).

Before we recall the next characterization of Gorenstein elliptic singularities we
mention that any numerically Gorenstein topological type admits Gorenstein structure
\cite{PPP}.
(But the generic analytic structure is Gorenstein only in the Klein and minimally elliptic case \cite[Th. 4.3]{Laufer77},  see also  \cite[Prop. 5.9.1]{NNII}.)

We recall the following facts from \cite[Statements 2.10, 3.5 and 4.11]{weakly}.

\begin{theorem}\label{e220}
Assume that $(X,o)$ is a  numerically Gorenstein elliptic singularity. Then
the following facts are equivalent:

(a)\ $p_g=m+1$;

(b)\   $ h^1(\calO_{C'_j})=m-j+1$, $h^1(\calO_{C_j})= j+1$
and $h^1(\tX,\calO(-C_j))=m-j$ \  for all \ $0\leq j\leq m$;



(c)\ For any $0\leq j\leq m-1$, there exists $f_j\in H^0(\tilde{X},\calO(-C_j))$, such
that for any $E_v\subset  B_{j+1}$ the vanishing order of $f_j$ on $E_v$ is
exactly 
the multiplicity of $C_j$ at $E_v$;


(d)\ The line bundles $\calO_{C'_{j+1}}(-C_j)$ are trivial for \ $0\leq j\leq m-1$;

(d')\ The line bundles $\calO_{C'_{j+1}}(-Z_{B_j})$ are trivial for \ $0\leq j\leq m-1$.

(e) The singularities $(X_j,o_j)$ are Gorenstein for all \ $0\leq j\leq m-1$;

(f)  The singularity $(X,o)$ is Gorenstein.
\end{theorem}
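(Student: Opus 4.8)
The plan is to prove the chain of equivalences in Theorem \ref{e220} by exploiting the cyclic structure of the elliptic sequence and the orthogonality relations (\ref{eq:orthogonal}). I would organize the argument around the cycle $Z_{B_j}$ and the two partial sums $C_j = \sum_{i=-1}^{j} Z_{B_i}$ and $C'_j = \sum_{i=j}^{m} Z_{B_i}$, using repeatedly that each $B_j$ supports a numerically Gorenstein elliptic germ with canonical cycle $C'_j$, that $\chi(Z_{B_j}) = \chi(C_j) = \chi(C'_j) = 0$, and that $(E_v, Z_{B_j}) = 0$ for every $E_v \subset B_{j+1}$. The backbone implication is (a) $\Rightarrow$ (b): from $\chi(C_j) = 0$ and the exact sequence $0 \to \calO_{B_j}(-C_{j-1})|_{\text{support}} \to \calO_{C_j} \to \calO_{C_{j-1}} \to 0$ (splitting the top cycle $Z_{B_j}$ off) one gets $h^1(\calO_{C_j}) \le h^1(\calO_{C_{j-1}}) + 1$, because $\chi$ of the kernel sheaf is $0$ and the kernel is supported on an elliptic-or-rational subgraph, so $h^1$ of the kernel is $0$ or $1$; iterating from $h^1(\calO_{C_{-1}}) = 0$ (rational/empty pre-term) gives $h^1(\calO_{C_j}) \le j+1$, and a parallel argument with $C'_j$ gives $h^1(\calO_{C'_j}) \le m-j+1$. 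Then $p_g = h^1(\calO_{C_m}) = h^1(\calO_{Z_K})$ (up to the $Z_K - s_{[Z_K]}$ business handled by Lemma \ref{lem:szk}), and $p_g = m+1$ forces all the intermediate inequalities to be equalities, yielding (b); the statement $h^1(\tX, \calO(-C_j)) = m-j$ then follows from the exact sequence $0 \to \calO_{\tX}(-Z_K) \to \calO_{\tX}(-C_j) \to \calO_{C'_{j+1}}(-C_j) \to 0$ together with Grauert--Riemenschneider vanishing $h^1(\calO_{\tX}(-Z_K)) = 0$ and the computation of $h^0$ and $h^1$ of $\calO_{C'_{j+1}}(-C_j)$.

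Next I would run (b) $\Rightarrow$ (c): knowing $h^1(\tX, \calO(-C_j)) = m-j$ and $h^1(\tX,\calO(-C_{j+1})) = m-j-1$ and $h^0(\tX, \calO(-C_j)) - h^0(\tX, \calO(-C_{j+1}))$ can be computed from $\chi(-C_j) - \chi(-C_{j+1}) = \chi(-C_j) - \chi(-C_j - Z_{B_{j+1}})$, which by $\chi(Z_{B_{j+1}}) = 0$ and $(C_j, Z_{B_{j+1}}) = (C'_{j+1}, Z_{B_{j+1}}) - \sum_{i>j+1}(Z_{B_i}, Z_{B_{j+1}})$ reduces via the orthogonality relations to the multiplicity count; this shows the jump in $h^0$ is exactly $1$, so there is a function $f_j \in H^0(\calO(-C_j))$ not vanishing to order $\ge C_{j+1}$, and one checks its vanishing order on each $E_v \subset B_{j+1}$ equals $m_{E_v}(C_j)$ by combining the antinef property of $C_j$ with the fact that $\text{div}(f_j) \ge C_j$ as a cycle and a minimality argument. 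For (c) $\Rightarrow$ (d): the function $f_j$ restricted to $C'_{j+1}$ vanishes exactly to the order $m_{E_v}(C_j)$ on each component, hence gives a nowhere-zero section of $\calO_{C'_{j+1}}(-C_j)$, trivializing it; (d) $\Leftrightarrow$ (d') is immediate since $C_j = C_{j-1} + Z_{B_j}$ and $\calO_{C'_{j+1}}(-C_{j-1})$ restricts trivially on $C'_{j+1} \subset B_j$ because $C_{j-1}$ is supported off $B_j$ up to the orthogonality relations — more precisely $\calO_{C'_{j+1}}(-C_{j-1})$ is a natural bundle with Chern class orthogonal to every $E_v \subset B_{j+1}$, hence trivial iff $\calO_{C'_{j+1}}(-C_j)$ is.

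For (d) $\Rightarrow$ (e): the triviality of the bundles $\calO_{C'_{j+1}}(-C_j)$ propagated along the sequence lets one build inductively a section of $\calO_{\tX_j}(-C'_j)$ which is the canonical cycle on $B_j$, producing a nowhere-zero holomorphic $2$-form on $\tX_j \setminus E$ that is the square of a generator, i.e. it exhibits $\Omega^2_{\tX_j} \cong \calO_{\tX_j}(-C'_j)$ (using $C'_j = Z_K(B_j)$), which is the Gorenstein condition for $(X_j, o_j)$; (e) $\Rightarrow$ (f) is the case $j$ running down to the inclusion of $(X,o) = (X_0,o_0)$ in the numerically Gorenstein case, glued with the pre-term step in the non-Gorenstein case. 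Finally (f) $\Rightarrow$ (a) is the known formula $p_g = m+1$ for Gorenstein elliptic singularities, which follows from $h^1(\calO_{Z_K}) = m+1$ once $\Omega^2_{\tX} \cong \calO_{\tX}(-Z_K)$ gives the Laufer-duality identification $H^1(\calO_{Z_K})^* \cong H^0(\Omega^2_{\tX}/\Omega^2_{\tX}(-Z_K))$ and one counts dimensions along the elliptic sequence; alternatively cite \cite{Yau5,Yau1,weakly}. I expect the main obstacle to be the bookkeeping in (b) $\Rightarrow$ (c) and (c) $\Rightarrow$ (d): precisely pinning down that the vanishing order of $f_j$ along each $E_v \subset B_{j+1}$ is \emph{exactly} $m_{E_v}(C_j)$ (not merely $\ge$), since this requires simultaneously that $f_j$ does not lie in $H^0(\calO(-C_{j+1}))$ and that its divisor meets no $E_v \subset B_{j+1}$ in excess, which is where the elliptic-sequence orthogonality and the $h^0$-jump computation must be combined carefully. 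Everything else is a routine but lengthy run through exact sequences, $\chi$-computations, and Grauert--Riemenschneider vanishing, organized as an induction on $m$ with the pre-term $Z_{B_{-1}} = s_{[Z_K]}$ (resp. $0$) handled by Lemma \ref{lem:b0} and Lemma \ref{lem:szk}.
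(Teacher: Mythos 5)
First, a point of comparison: the paper does not prove Theorem \ref{e220} at all --- it is explicitly a recollection of \cite[Statements 2.10, 3.5 and 4.11]{weakly} (building on Yau's work on maximally elliptic singularities), so there is no in-paper argument to measure you against. Your proposal is therefore an attempt to reprove the main theorems of \cite{weakly}. The architecture is the standard one, and several pieces are genuinely correct: the exact sequences splitting off $Z_{B_j}$ together with $h^0\le 1$ for numerically trivial bundles on fundamental cycles do give $h^1(\calO_{C_j})\le j+1$ and $h^1(\calO_{C'_j})\le m-j+1$, so (a) forces equality in the first two statements of (b); the computation $\dim H^0(\calO(-C_j))/H^0(\calO(-C_{j+1}))=1$ plus the minimality argument ($x|_{B_{j+1}}\in\calS(B_{j+1})$, nonzero would force $x\ge Z_{B_{j+1}}$) correctly yields (b)$\Rightarrow$(c) with the \emph{exact} vanishing orders; and (c)$\Rightarrow$(d) via the nowhere-vanishing restriction of $f_j$ is fine.

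However, the cycle of implications does not close as written. (i) In (a)$\Rightarrow$(b) you obtain $h^1(\tX,\calO(-C_j))=m-j$ from ``the computation of $h^0$ and $h^1$ of $\calO_{C'_{j+1}}(-C_j)$''; but that bundle is only \emph{numerically} trivial, and its $h^1$ equals $m-j$ precisely when it is trivial --- i.e.\ precisely condition (d) --- and drops otherwise (compare $\calO_{C'_1}(Z_{min})$ in the Appendix, nontrivial with $h^1=0$). Since your (d) is deduced from (c), which is deduced from this very part of (b), the argument is circular. (It is repairable: the lower bound $h^1(\calO(-C_j))\ge p_g-h^1(\calO_{C_j})$ comes from the surjection $H^1(\calO_{\tX})\to H^1(\calO_{C_j})$, and the upper bound $h^1(\calO(-C_j))\le m-j$ from an inductive reduction in the style of Lemma \ref{lem:INEQj}(a)/Theorem \ref{prop:vanishing} --- but that is a different argument from the one you give.) (ii) You treat (f)$\Rightarrow$(a) as a dimension count, but the $m+1$ independent forms with poles $C'_0,\dots,C'_m$ are $\omega_0, f_0\omega_0,\dots$, i.e.\ their construction presupposes the functions of (c) (and (f)$\Rightarrow$(e), which you invoke implicitly, is itself part of the theorem); so the hardest implication of \cite{weakly} is not actually proved. (iii) In (d)$\Rightarrow$(e), triviality on the compact cycles $C'_{j+1}$ does not by itself trivialize the numerically trivial bundle $\Omega^2_{\tX_j}\otimes\calO_{\tX_j}(C'_j)$ on the non-compact $\tX_j$; one needs injectivity of $\pic^0(\tX_j)\to\pic^0(NC'_j)$ (the cohomology-cycle argument), which again rests on $p_g(\tX_j)=h^1(\calO_{C'_j})$. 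Relatedly, your one-line (d)$\Leftrightarrow$(d') conflates ``numerically trivial Chern class'' with ``trivial''; the correct argument is an induction on $j$ using the conditions at all smaller indices. So: right skeleton, correct easy implications, but the two deep implications are where the sketch stops being a proof.
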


The implication {\it (f)$\Rightarrow$(e)} from above says that if the top singularity is
 Gorenstein then all the others (automatically
 with smaller support)  are necessarily Gorenstein. This fact
 applied for a fixed $(X_j,o_j)$ says that if one of the singularities $(X_j,o_j)$
 is Gorenstein, then all the others $\{(X_i,o_i)\}_{i>j}$ with smaller support
 are Gorenstein too. In fact, one has the following statement of Okuma. Set
\begin{equation}\label{eq:alpha}
{\mathcal A}_{gor}:=\{\, j\,|\, 0\leq j\leq m, \ (X_j,o_j)\ \mbox{is Gorenstein}\,\}
\ \ \mbox{and} \ \ \alpha:=\min\{{\mathcal A}_{gor}\}.
\end{equation}
Then by the above discussion ${\mathcal A}_{gor}=\{j\,|\, \alpha\leq j\leq m\}$ and the following facts
hold as well.
\begin{theorem}\label{th:Okuma}\ \cite[Corollary 2.15]{OkumaEll}
$p_g(X,o)=p_g(X_\alpha,o_\alpha)=\#{\mathcal A}_{gor}=m+1-\alpha$.
Furthermore,
the line bundles $\calO_{C'_{j+1}}(-C_j)$ are trivial for \ $\alpha \leq j\leq m-1$.
\end{theorem}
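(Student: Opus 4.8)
The plan is to bootstrap everything from the equivalences in Theorem \ref{e220} and the monotonicity observation that follows it (``if one of the $(X_j,o_j)$ is Gorenstein then all $(X_i,o_i)$ with $i>j$ are Gorenstein too''). That monotonicity already gives $\mathcal{A}_{gor}=\{j\,:\,\alpha\le j\le m\}$, so $\#\mathcal{A}_{gor}=m+1-\alpha$ is immediate. It also produces the two remaining assertions once we prove the central one, $p_g(X,o)=p_g(X_\alpha,o_\alpha)$, and the triviality of $\calO_{C'_{j+1}}(-C_j)$ for $\alpha\le j\le m-1$. The triviality statement is in fact the \emph{(f)$\Rightarrow$(d)} implication of Theorem \ref{e220} applied not to $(X,o)$ itself but to the singularity $(X_\alpha,o_\alpha)$: by definition of $\alpha$, $(X_\alpha,o_\alpha)$ is a numerically Gorenstein elliptic germ which is Gorenstein, its support is $B_\alpha$, and its elliptic sequence is precisely the tail $\{Z_{B_j}\}_{j=\alpha}^m$ with cycles $C'_j$ ($j\ge\alpha$); the ``$C_j$'' of that sub-germ is $\sum_{i=\alpha}^{j}Z_{B_i}$, so \emph{(f)$\Rightarrow$(d)} for $(X_\alpha,o_\alpha)$ says exactly that $\calO_{C'_{j+1}}(-\sum_{i=\alpha}^j Z_{B_i})$ is trivial for $\alpha\le j\le m-1$. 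Since $C'_{j+1}$ is supported on $B_{j+1}$ and, by \eqref{eq:orthogonal}, $(E_v,Z_{B_i})=0$ for every $E_v\subset B_{j+1}$ when $i\le j$, the extra terms $Z_{B_\alpha}+\cdots+Z_{B_{\alpha-1+?}}$... more precisely $\calO_{C'_{j+1}}(-C_j)$ and $\calO_{C'_{j+1}}(-\sum_{i=\alpha}^j Z_{B_i})$ differ by the restriction of a natural line bundle whose Chern class pairs to zero with every component of $B_{j+1}$, hence that restriction is trivial and the two line bundles agree. This disposes of the last sentence of the theorem.

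For the equality $p_g(X,o)=p_g(X_\alpha,o_\alpha)$ the strategy is to show that contracting $B_0\supsetneq B_1\supsetneq\cdots\supsetneq B_{\alpha-1}$ in succession — equivalently, passing from $(X_{j},o_{j})$ to $(X_{j+1},o_{j+1})$ for $-1\le j\le \alpha-1$ — does not change $p_g$, i.e. $p_g(X_j,o_j)=p_g(X_{j+1},o_{j+1})$ whenever $j<\alpha$, i.e. whenever $(X_j,o_j)$ is \emph{not} Gorenstein. Working on the minimal resolution $\tX_j$ of $(X_j,o_j)$, one has the exact sequence
\begin{equation*}
0\to \calO_{\tX_j}(-C'_{j+1})\to \calO_{\tX_j}\to \calO_{C'_{j+1}}\to 0,
\end{equation*}
where $C'_{j+1}=Z_K(B_j)-Z_{B_j}$ is the canonical cycle of the sub-germ on $B_{j+1}$. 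Taking cohomology, $p_g(X_j,o_j)=h^1(\calO_{C'_{j+1}})+h^1(\tX_j,\calO_{\tX_j}(-C'_{j+1}))$ minus a coboundary correction; the point is that for a numerically Gorenstein elliptic germ $h^1(\calO_{\tX_j}(-C'_{j+1}))$ equals $p_g$ of the contraction of $B_{j+1}$, which is $p_g(X_{j+1},o_{j+1})$, while $h^1(\calO_{C'_{j+1}})$ counts precisely the ``new'' contribution of the annulus $B_j\setminus B_{j+1}$, and this new contribution vanishes exactly when $(X_j,o_j)$ fails to be Gorenstein. Concretely, by Theorem \ref{e220} in the Gorenstein case $h^1(\calO_{C'_{j+1}})=m-j\ge 1$ contributes, whereas in the non-Gorenstein case the failure of \emph{(d)} / \emph{(d')} forces $h^0(\calO(-C_j))\to H^0(\calO_{C'_{j+1}}(-C_j)-\text{stuff})$ to miss the section realizing that extra genus, so the long exact sequence collapses to $p_g(X_j,o_j)=p_g(X_{j+1},o_{j+1})$. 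Iterating from $j=-1$ (or $j=0$) up to $j=\alpha-1$ gives $p_g(X,o)=p_g(X_\alpha,o_\alpha)$, and combined with \emph{(a)} of Theorem \ref{e220} applied to the Gorenstein germ $(X_\alpha,o_\alpha)$ (whose elliptic sequence has length $m-\alpha+1$), $p_g(X_\alpha,o_\alpha)=m-\alpha+1=\#\mathcal{A}_{gor}$.

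The main obstacle, and the step deserving the most care, is the claim that the ``new'' cohomological contribution $h^1(\calO_{C'_{j+1}})$ actually drops to $0$ in the long exact sequence precisely when $(X_j,o_j)$ is non-Gorenstein — i.e. that one genuinely loses a dimension of $p_g$ rather than merely failing to gain one. This is where the non-numerically-Gorenstein ``new'' elliptic sequence and Lemma \ref{lem:b0} have to be invoked carefully: for $j=-1$ one is contracting $B_{-1}\setminus B_0=E\setminus B_0$ with the fractional pre-term $Z_{B_{-1}}=s_{[Z_K]}$, and Lemma \ref{lem:szk} (which already handles $p_g$ in terms of $h^1(\calO_{Z_K-s_{[Z_K]}})$) plus $\chi(s_{[Z_K]})=0$ must be combined to see that no cohomology is created on the fractional annulus. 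For $0\le j\le \alpha-1$ the correct tool is the contrapositive of \emph{(c)}$\Leftrightarrow$\emph{(f)} in Theorem \ref{e220}: non-Gorenstein means no function $f_j\in H^0(\calO(-C_j))$ vanishes on $B_{j+1}$ to exactly the multiplicity of $C_j$, which is exactly the obstruction to lifting the nonzero class of $H^1(\calO_{C'_{j+1}})$; turning this qualitative statement into the dimension count $p_g(X_j)=p_g(X_{j+1})$ is the technical heart and should be cross-checked against the uniform $h^1$-formulas of Theorem \ref{e220}(b). I expect the cleanest route is in fact to avoid re-deriving this and instead cite Okuma's argument directly, but the self-contained version proceeds as sketched.
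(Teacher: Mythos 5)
First, a point of reference: the paper does not prove this theorem at all — it is imported verbatim as \cite[Corollary 2.15]{OkumaEll} — so there is no internal proof to compare yours against, and your attempt has to stand on its own. It does not: there are two genuine gaps, one of which you half-acknowledge.

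The first gap is in the triviality part. You split $C_j=C_{\alpha-1}+\sum_{i=\alpha}^{j}Z_{B_i}$, correctly handle the second summand via Theorem \ref{e220}\emph{(f)$\Rightarrow$(d)} applied to the Gorenstein germ $(X_\alpha,o_\alpha)$, but then dispose of the remaining factor $\calO_{C'_{j+1}}(-C_{\alpha-1})$ by arguing that its Chern class pairs to zero with every $E_v\subset B_{j+1}$, ``hence that restriction is trivial''. Zero Chern class only places the bundle in $\pic^0(C'_{j+1})\simeq \C^{p_g(\tX_{j+1})}$, which is nonzero for $j+1\le m$; the triviality of precisely such degree-zero restrictions is the content of conditions \emph{(d)}, \emph{(d')} of Theorem \ref{e220}, is \emph{equivalent} to Gorenstein-ness, and fails in the Appendix example, where $\calO_{C'_1}(Z_{min})$ is nontrivial despite having zero Chern class on $B_1$ by (\ref{eq:orthogonal}). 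So this step is not a formality: it is part of what Okuma actually proves. (In the non-numerically-Gorenstein case $C_{\alpha-1}$ moreover contains the fractional term $s_{[Z_K]}$, so you are restricting a genuinely natural line bundle of non-integral Chern class, and Corollary \ref{cor:WECC} shows that such restrictions need not be trivial without WECC.)

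The second gap is the central equality $p_g(X,o)=p_g(X_\alpha,o_\alpha)$. The easy half is $p_g(X,o)=p_g(X_0,o_0)\ge p_g(X_1,o_1)\ge\cdots\ge p_g(X_\alpha,o_\alpha)=m+1-\alpha$ (Lemma \ref{lem:szk}, surjectivity of $H^1(\calO_{\tX_j})\to H^1(\calO_{C'_{j+1}})$, and Theorem \ref{e220}\emph{(a)} for $(X_\alpha,o_\alpha)$); the hard half is the reverse inequality, i.e.\ that each non-Gorenstein level contributes nothing. Your long exact sequence has the two terms interchanged: it is $h^1(\calO_{C'_{j+1}})$ that equals $p_g(X_{j+1},o_{j+1})$ (since $C'_{j+1}$ is the canonical cycle of the numerically Gorenstein $B_{j+1}$), while the ``new contribution'' of $B_j\setminus B_{j+1}$ is the image of $H^1(\calO_{\tX_j}(-C'_{j+1}))$ in $H^1(\calO_{\tX_j})$, and showing that this image vanishes requires a cohomological vanishing for the non-anti-nef cycle $C'_{j+1}$ that you never supply. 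Deducing it from ``the failure of \emph{(c)}/\emph{(d)}'' is circular: Theorem \ref{e220} is an equivalence of conditions on the \emph{whole} tower characterizing Gorenstein-ness of $(X,o)$, whereas what you need is a level-by-level statement (non-Gorenstein at level $j$ forces $p_g(X_j)=p_g(X_{j+1})$), which is essentially the theorem being proven. You concede this yourself and propose to cite Okuma's argument — which is exactly what the paper does, so the self-contained version remains unproven.
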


\bekezdes\label{bek:discussion} {\bf Discussion.} Assume that $(X,o)$ is Gorenstein, and
let us consider the function $f_j$ from Teorem \ref{e220}{\it (c)}.
Let ${\rm  div}_E(f_j)$ be the part of its divisor supported on $E$. Write $ {\rm  div}_E(f_j)$
as $C_j+x_j$. Then the support of $x_j$ contains no $E_v$ from $B_{j+1}$. Therefore, for such an $E_v$
one has $(E_v,C_j+x_j)\leq 0$ (since $ {\rm  div}_E(f_j)\in \calS$), $(E_v,C_j)=0$
(by (\ref{eq:orthogonal})) and $(E_v,x_j)\geq 0$ (by the above support condition), hence
necessarily $(E_v,x_j)=0$. Therefore, in the support of $x_j$ there is no $E_v$, which intersects
$B_{j+1}$ nontrivially.

\begin{remark}\label{bek:discussion2}
The support condition of functions $f_j$ can be improved slightly more, but not too much.
Indeed,  in the Gorenstein case $Z_{max}=Z_{min}$ \cite[\S 5]{weakly}, that is, $\calO_{\tX}(-C_0)$
has no fixed components (so, $f_0$ can be chosen such that $x_0=0$).
However, in general,  a similar choice for $f_j$ (with $x_j=0$) is not possible.
See e.g. the elliptic singularity $\{x^2+y^3+z^{6m+7}=0\}$.
Nevertheless, using Theorem \ref{th:fixed}, if $C^2\not=-1$ then there exists a function
$F$ with ${\rm  div}_E(F)=Z_K$, hence a general combination of $f_j$ and $F$ has the property that
${\rm  div}_E(f_j+\alpha F)=C_j$.
(In general, the maximum what
one can get via inductive steps using Gorenstein property is that
 the vanishing order of $f_j$ on $E_v$ is exactly
the multiplicity of $C_j$ at $E_v$ for  any $E_v\subset  B_{j}$. See again $\{x^2+y^3+z^{6m+7}=0\}$.)
\end{remark}
%
%
%
%
%


\subsection{The space
 $H^0(\tX,\Omega^2_{\tX}(Z))/H^0(\tX,\Omega^2_{\tX})$ for elliptic singularities}\label{bek:diff}

 Assume that $(X,o)$ is Gorenstein. Then each $(X_j,o_j)$ is Gorenstein, and,  in fact,  their
 Gorenstein forms are related. Indeed, let  $\omega_0\in H^0( \Omega^2_{\tX}(Z_K))$
 be the  Gorenstein form of $(X,o)$ (that is, the section which trivializes
  $\Omega^2_{\tX\setminus E}$) and  consider the
  function $f_j\in H^0(\tX, \calO(-C_j))$ given by  Theorem \ref{e220}{\it (c)}, $0\leq j\leq m-1$.
Then $\omega_{j+1}:=f_j\omega_0 $ has pole $C'_{j+1}$, and
(by the discussion from \ref{bek:discussion})
its restriction to a small neighbourhood of
$\cup_{E_v\subset B_{j+1}}E_v$ is a Gorenstein form of $(X_{j+1},o_{j+1})$. Furthermore, the classes of
$\{\omega_j\}_{j=0}^m$  generate $H^0(\Omega^2_{\tX}(Z))/H^0(\Omega^2_{\tX})$.

Next, assume an arbitrary numerically Gorenstein singularity with $\alpha=\min \{{\mathcal A}_{gor}\}$
as in Theorem \ref{th:Okuma}. Then the statement of the previous paragraph can be applied for
$(X_\alpha,o_\alpha)$. In this way we get forms $\{\omega_j'\}_{j=\alpha}^m$ in
$H^0(\tX_{\alpha}, \Omega^2_{\tX_{\alpha}}(C'_\alpha))$, whose classes in
$H^0(\tX_{\alpha}, \Omega^2_{\tX_{\alpha}}(C'_\alpha))/H^0(\tX_{\alpha}, \Omega^2_{\tX_{\alpha}})$
generate this vector space of dimension $p_g(\tX_\alpha)=m+1-\alpha$.

We claim that these forms (more precisely, some representatives of their classes
modulo $H^0(\tX,\Omega^2_{\tX})$)
can be extended to forms $\{\omega_j\}_{j=\alpha}^m$ in
$H^0(\tX, \Omega^2_{\tX}(C'_\alpha))$, such that their classes generate the vector space
 $H^0(\tX,\Omega^2_{\tX}(C'_0))/H^0(\tX,\Omega^2_{\tX})$ of dimension $p_g(X,o)=p_g(\tX_\alpha)$.
The pole of $\omega_j$ is  $C_j'$ for each $j$.

Indeed, set $I:=\calv\setminus B_\alpha$. Then, by Theorem \ref{th:Okuma},
$p_g(X,o)=p_g(X_\alpha,o_\alpha)$, hence part {\it (c)} of Theorem \ref{prop:AZ} reads as
$V(I)=0$. But this via Proposition \ref{prop:HDIZ} implies that $\Omega(I)$ is the total space
$H^0(\tX,\Omega^2_{\tX}(C'_0))/H^0(\tX,\Omega^2_{\tX})$. On the other hand, from
$\Omega(I)$ there is a well--defined restriction to
$H^0(\tX_{\alpha}, \Omega^2_{\tX_{\alpha}}(C'_\alpha))/H^0(\tX_{\alpha}, \Omega^2_{\tX_{\alpha}})$,
which is a priori injective; but since the two dimensions agree, it is necessarily bijective.

\begin{corollary}\label{cor:omegaflag}
  The linear subspace arrangement $\{\Omega_I\}_{I\subset \calv}$ in
   $H^0(\Omega^2_{\tX}(Z))/H^0(\Omega^2_{\tX})\simeq\C^{p_g }$
  reduces to the   flag  consisting of subspaces (where by $\omega$'s
   we denote their classes as well):
$$  0\subset  \C\langle \omega_m\rangle\subset \cdots
\subset \C\langle \omega_{\alpha+1},  \ldots,  \omega_m\rangle
\subset \C\langle
\omega_\alpha, \ldots, \omega_m\rangle =
\C^{m+1-\alpha}.$$
In fact, for any $I$ the subspace $\Omega_I$ is determined uniquely
 by $i_I:=\min\{i\,|\, I\cap B_i=\emptyset\}=
 \max\{i\,|\, I\cap B_{i-1}\not=\emptyset\} $ as
  $\Omega_I=\C\langle \omega_{\max\{\alpha, i_I\}}, \ldots , \omega_m\rangle$.
\end{corollary}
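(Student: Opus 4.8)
The plan is to read the statement off the explicit basis $\omega_\alpha,\ldots,\omega_m$ of $H^0(\Omega^2_{\tX}(C'_0))/H^0(\Omega^2_{\tX})$ (which equals $H^0(\Omega^2_{\tX}(Z))/H^0(\Omega^2_{\tX})$ for $Z\gg 0$) constructed just above, exploiting that $\omega_j$ has pole exactly $C'_j=\sum_{i=j}^m Z_{B_i}$. Since each $Z_{B_i}$ is the fundamental cycle on the connected support $B_i$, it has strictly positive multiplicity along every $E_v\subset B_i$; combined with $B_0\supseteq B_1\supseteq\cdots\supseteq B_m$ this shows that the polar support of $\omega_j$ on $E$ is precisely $B_j$, and that $\omega_j$ has nonzero leading coefficient along the generic point of every $E_v\subset B_j$. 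Recall also that $\Omega_I$ is spanned by the forms having no pole along the generic point of any $E_v$ with $v\in I$.

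First I would settle the combinatorics of $i_I$. Because the sets $I\cap B_i$ decrease with $i$ (the $B_i$ being nested, with $B_{-1}:=E$ by convention), the index $i_I=\min\{i:I\cap B_i=\emptyset\}$ is well defined, coincides with $\max\{i:I\cap B_{i-1}\ne\emptyset\}$, and $I\cap B_j=\emptyset$ exactly for $j\ge i_I$. By the description of $\Omega_I$ and of the polar support of $\omega_j$ we get $\omega_j\in\Omega_I\iff B_j\cap I=\emptyset\iff j\ge i_I$; in particular $\C\langle\omega_{\max\{\alpha,i_I\}},\ldots,\omega_m\rangle\subseteq\Omega_I$.

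For the reverse inclusion, take $\omega\in\Omega_I$, choose a representing form with no pole along $E_v$ for $v\in I$, and write it as $\sum_{j=\alpha}^m a_j\omega_j+\eta$ with $\eta\in H^0(\Omega^2_{\tX})$, so that $\sum_{j=\alpha}^m a_j\omega_j$ itself has no pole along any $E_v$, $v\in I$. If $i_I\le\alpha$ then $B_\alpha\cap I=\emptyset$, $\Omega_I$ is already the whole space, and there is nothing to prove. Otherwise set $j^*:=i_I-1\ge\alpha$, so $B_{j^*}\cap I\ne\emptyset$ and $B_{j^*+1}\cap I=\emptyset$, and pick $v\in I$ with $E_v\subset B_{j^*}\setminus B_{j^*+1}$ (such a $v$ exists by the choice of $j^*$). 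Using $(\ref{eq:orthogonal})$, which gives $C'_j=Z_{B_j}+C'_{j+1}$, together with $E_v\not\subset B_i$ for $i>j^*$ and $m_{E_v}(Z_{B_i})>0$ for $i\le j^*$, the pole order of $\omega_j$ along $E_v$ — namely $m_{E_v}(C'_j)=\sum_{i=j}^{j^*}m_{E_v}(Z_{B_i})$ — vanishes for $j>j^*$ and is strictly decreasing on $\alpha\le j\le j^*$. Thus along the generic point of $E_v$ the forms $\omega_\alpha,\ldots,\omega_{j^*}$ have pairwise distinct, strictly positive pole orders while $\omega_{j^*+1},\ldots,\omega_m$ are pole-free; since each $\omega_j$ has nonzero leading coefficient there, the vanishing of the polar part of $\sum a_j\omega_j$ along $E_v$ forces, by descending induction on the pole order (peeling off the top term $a_\alpha\omega_\alpha$, then $a_{\alpha+1}\omega_{\alpha+1}$, and so on), $a_\alpha=\cdots=a_{j^*}=0$. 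Hence $\omega\in\C\langle\omega_{j^*+1},\ldots,\omega_m\rangle=\C\langle\omega_{\max\{\alpha,i_I\}},\ldots,\omega_m\rangle$, and together with the previous step this yields $\Omega_I=\C\langle\omega_{\max\{\alpha,i_I\}},\ldots,\omega_m\rangle$, so $\Omega_I$ depends only on $i_I$.

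Finally, as $I$ ranges over the subsets of $\calv$ the quantity $\max\{\alpha,i_I\}$ takes every value in $\{\alpha,\alpha+1,\ldots,m+1\}$ (with the convention $\C\langle\omega_{m+1},\ldots,\omega_m\rangle:=0$): taking $I=\{v\}$ with $v\in B_{j-1}\setminus B_j$ (nonempty since $B_j\subsetneq B_{j-1}$) gives $i_I=j$ for $\alpha<j\le m$, $I=\emptyset$ gives the whole space, and any $I$ meeting $B_m$ gives $0$. Since $\omega_\alpha,\ldots,\omega_m$ are linearly independent, these spans are exactly the members of the strictly increasing flag in the statement. The step I expect to be the main obstacle is the pole-order computation above: one must verify that the chosen $E_v$ genuinely separates the basis vectors $\omega_\alpha,\ldots,\omega_{j^*}$, i.e. that $m_{E_v}(C'_\alpha)>\cdots>m_{E_v}(C'_{j^*})>0=m_{E_v}(C'_{j^*+1})$, which is precisely where the full-support property of the fundamental cycles $Z_{B_i}$ and the additive relation $C'_j=Z_{B_j}+C'_{j+1}$ coming from $(\ref{eq:orthogonal})$ are used.
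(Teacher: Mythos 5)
Your proposal is correct and follows exactly the route the paper intends: the corollary is read off from the basis $\{\omega_j\}_{j=\alpha}^m$ constructed in \ref{bek:diff}, using that $\omega_j$ has pole precisely $C'_j$ with support $B_j$, and that along any fixed $E_v\subset B_{j^*}\setminus B_{j^*+1}$ the pole orders $m_{E_v}(C'_\alpha)>\cdots>m_{E_v}(C'_{j^*})>0$ are pairwise distinct — this is the ``distinct pole property'' the paper highlights in Remark \ref{rem:8points} and Corollary \ref{cor:dim}. The only nitpick is that the identity $C'_j=Z_{B_j}+C'_{j+1}$ is just the definition of $C'_j$ from \ref{bek:Ct}, not a consequence of (\ref{eq:orthogonal}); the rest of your argument, including the descending elimination of the coefficients $a_\alpha,\ldots,a_{j^*}$, is exactly what makes the reduction to a flag work.
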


\section{Line bundles on $\tX$. Preliminary cohomological statements}\label{ss:lineb} Fix an elliptic singularity and its minimal
resolution $\tX$ as in the previous section.

\subsection{Cohomology of the line bundles}\label{ss:cohlineb}

Assume first that $(X,o)$ is numerically Gorenstein and fix some $j\in\{0,\ldots, m+1\}$.

\begin{lemma}\label{lem:INEQj}  Let $l$ be the cycle of fixed components of some
$\calL\in \pic^0(\tX)$.

(a) If  $l\geq C_{j-1}$
then $h^1(\calL)\leq p_g(\tX_{j})$. (This for $j=0$ reads as follows: if $l\geq 0$ then $h^1(\calL)
\leq p_g(\tX)$, while for $j=m+1$ says that  if $l\geq Z_K$ then $h^1(\calL)=0$.)

(b) Assume that  $\min\{l,Z_K\}=C_{j-1}$ (cf. Lemma \ref{lem:Ci}).
Then $\calL(-C_{j-1})|_{C_j'}\in \pic(C_j')$ is trivial and
$h^1(\tX,\calL)=h^1(\tX,\calL(-C_{j-1}))=h^1(C_j',\calL(-C_{j-1})|_{C_j'})=p_g(\tX_j)$.
Furthermore, if  $(X,o)$ is Gorenstein, then
$\calL|_{C_j'}\in \pic(C_j')$ is trivial too.
\end{lemma}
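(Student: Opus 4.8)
\textbf{Proof plan for Lemma \ref{lem:INEQj}.}
The plan is to reduce everything to cohomology computations on the finite-support cycles $C_{j-1}$, $C_j'$ and to the "Laufer-type" reduction of Lemma \ref{lem:szk}. For part (a), first I would recall from Lemma \ref{lem:szk} (applied with the natural cycle $Z_K=C_m$) that $h^1(\tX,\calL)=h^1(Z_K-s_{[Z_K]},\calL)$ for any $\calL$; but in the numerically Gorenstein case $s_{[Z_K]}=0$, so $h^1(\tX,\calL)=h^1(Z_K,\calL)$. Since $l$ is the cycle of fixed components of $\calL\in\pic^0(\tX)$, we may write $\calL=\calL_0(l)$ where $\calL_0:=\calL(-l)$ has a nowhere-vanishing section along the generic points of the $E_v\subset|l|$, i.e. $\calL_0$ has no fixed components supported on $|l|$; in fact the genericity of the section gives $h^0(\calL)=h^0(\calL_0)$ and the exact sequence $0\to\calL_0\to\calL\to\calL|_{l}\text{(twisted)}\to 0$ together with $\chi$-count shows $h^1(\calL)=h^1(\calL_0)+(\text{something}\le 0)$, but the cleaner route is: $\calL$ having fixed part exactly $l$ means $\calL(-l)\in\im(c^{-c_1(l)})$, i.e. $\calL(-l)$ has a section nonvanishing on each $E_v\subset|l|$. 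Then I would use the standard twisting-down estimate: if $l\ge C_{j-1}$ then $\calL$ admits $\calO_{\tX}(-C_{j-1})$ as a "subbundle via multiplication by the fixed section", giving $h^1(\tX,\calL)\le h^1(\tX,\calO_{\tX}(-C_{j-1})\otimes\calM)$ for a bundle $\calM$ with no fixed components, and then invoke Theorem \ref{e220}(b), $h^1(\tX,\calO(-C_{j-1}))=m-(j-1)=m-j+1$... — wait, that gives the wrong bound; the correct target is $p_g(\tX_j)=m-j+1$ in the Gorenstein case. So the accurate statement is $h^1(\tX,\calL)\le h^1(\tX,\calO_{\tX}(-C_{j-1}))$, and by Theorem \ref{e220}(b) (or Theorem \ref{th:Okuma} in the merely numerically Gorenstein case) this equals $p_g(\tX_j)$; the edge cases $j=0$ and $j=m+1$ follow from $C_{-1}=0$ and $C_m=Z_K$ with the Grauert--Riemenschneider vanishing of Lemma \ref{lem:szk}.

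For part (b), the hypothesis $\min\{l,Z_K\}=C_{j-1}$ combined with Lemma \ref{lem:Ci} pins down exactly which elliptic-sequence cycle is relevant. The key point is that the fixed part $l$ of $\calL$ "covers" $C_{j-1}$ but does not cover $C_j$, so the canonical section cutting out the fixed part of $\calL$ restricted over $C_j'$ vanishes exactly to order $C_{j-1}$ along each component; concretely, $\calL=\calO_{\tX}(l')\otimes\calL'$ with $l'$ the natural cycle part, and one shows $\calL(-C_{j-1})$ has no fixed components meeting $B_j$. I would then restrict to $C_j'$: since by construction $C_j'$ is the canonical cycle on $B_j$ and $(E_v,Z_{B_i})=0$ for $E_v\subset B_{i+1}$ (equation \eqref{eq:orthogonal}), the bundle $\calL(-C_{j-1})|_{C_j'}$ has Chern class zero in $\pic(C_j')$ (its Chern class is $-l'+c_1(l)-C_{j-1}$ restricted to $L'(B_j)$, which vanishes by the orthogonality and the hypothesis that the fixed part is exactly $C_{j-1}$ over $B_j$), and it has a nowhere-vanishing section over $C_j'$, hence is the trivial bundle $\calO_{C_j'}$. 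Once triviality is established, $h^1(C_j',\calL(-C_{j-1})|_{C_j'})=h^1(\calO_{C_j'})=m-j+1=p_g(\tX_j)$ by Theorem \ref{e220}(b). It remains to see $h^1(\tX,\calL)=h^1(\tX,\calL(-C_{j-1}))=h^1(C_j',\calL(-C_{j-1})|_{C_j'})$: the first equality holds because twisting by the fixed section $\calO_{\tX}(-C_{j-1})\hookrightarrow\calO_{\tX}$ is an $H^0$- and $H^1$-isomorphism onto $\calL$ when $C_{j-1}$ is in the fixed locus (the cokernel is supported on $C_{j-1}$ and has vanishing $H^1$ by the rationality of the zones $B_{i}\setminus B_{i+1}$, $i<j-1$, which carry $\chi=0$ cycles summing to $C_{j-1}$ — here one runs a Laufer computation sequence for $C_{j-1}$ and checks $h^1$ of each elementary quotient $\calO_{E_{v}}(\text{nonneg})$ vanishes, as in Lemma \ref{lem:szk}); the second equality is the Lemma \ref{lem:szk}-type reduction from $\tX$ down to the support $B_j$, using that $\calL(-C_{j-1})$ has Chern class in $-\calS'(B_j)$ direction after the twist. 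Finally, in the Gorenstein case, $\calL|_{C_j'}$ itself is trivial because $\calO_{C_j'}(-C_{j-1})=\calO_{C_j'}(-\sum_{i=-1}^{j-1}Z_{B_i})$ is trivial: this is exactly the triviality statement in Theorem \ref{e220}(d)/(d') (the bundles $\calO_{C'_{j+1}}(-C_j)$ and $\calO_{C'_{j+1}}(-Z_{B_j})$ are trivial), telescoped down the sequence.

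The main obstacle I anticipate is the bookkeeping in part (b) around "the fixed part is exactly $C_{j-1}$ on $B_j$": one must argue carefully that $\min\{l,Z_K\}=C_{j-1}$ forces the restriction $\calL(-C_{j-1})|_{B_j}$ to genuinely have a fixed-component-free section (not merely Chern class zero), so that the Chern-zero-plus-section argument \eqref{eq:Chernzero1} applies to conclude triviality on $C_j'$; this needs that no component $E_v\subset B_j$ lies in the fixed locus of $\calL(-C_{j-1})$, which should follow from $l\not\ge C_j$ (a consequence of $\min\{l,Z_K\}=C_{j-1}<C_j$ and $C_j\le Z_K$) together with the fact, via equation \eqref{eq:orthogonal} and the structure of the $B_i$, that the "new" components of $C_j$ relative to $C_{j-1}$ are precisely (a subset meeting) $B_j$. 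The other delicate point is justifying the first equality $h^1(\tX,\calL)=h^1(\tX,\calL(-C_{j-1}))$ in full generality (numerically Gorenstein, not necessarily Gorenstein), where one leans on $\chi(C_{j-1})=0$ and the rationality of the intermediate zones; this is the technical heart and I would spell out the computation sequence explicitly, mirroring the proof of Lemma \ref{lem:szk} but now targeting $C_{j-1}$ rather than $Z_K-s_{[Z_K]}$.
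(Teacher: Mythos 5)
Your overall strategy for part (b) — twist down by $C_{j-1}$, restrict to $C_j'$, show the restricted bundle is trivialized by a section, and evaluate $h^1(\calO_{C_j'})$ — is the same as the paper's, but both parts of your plan contain steps that do not work as written. In part (a), the "accurate statement" you settle on, $h^1(\tX,\calL)\le h^1(\tX,\calO_{\tX}(-C_{j-1}))$, is unjustified: $\calL(-C_{j-1})$ and the natural bundle $\calO_{\tX}(-C_{j-1})$ are two different points of $\pic^{-C_{j-1}}(\tX)$, and neither semicontinuity nor the "multiplication by the fixed section" exact sequence compares their $h^1$'s (semicontinuity only says the \emph{generic} bundle minimizes $h^1$, and $\calO(-C_{j-1})$ need not be generic). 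Moreover, evaluating $h^1(\calO(-C_{j-1}))$ or $h^1(\calO_{C_j'})$ as $m-j+1$ via Theorem \ref{e220}(b) presupposes the equivalent conditions of that theorem, i.e.\ essentially the Gorenstein case, whereas the lemma is stated for arbitrary numerically Gorenstein elliptic germs where $p_g(\tX_j)$ can be strictly smaller than $m-j+1$. The paper avoids all of this by proving exact equalities $h^1(\calL)=h^1(\calL(-C_{j-1}))=h^1(\calL(-C_{j-1})|_{C_j'})$ — the first from $H^0(\calL(-C_{j-1}))=H^0(\calL)$ (valid since $C_{j-1}\le l$) together with $\chi(\calL|_{C_{j-1}})=0$, the second from $h^1(\calL(-Z_K))=0$ — and then runs an induction on the length of the elliptic sequence, applying the $j=0$ case of (a) to $\calL(-C_{j-1})|_{C_j'}\in\pic^0(C_j')$ on the smaller germ $(X_j,o_j)$.

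Two further concrete problems. First, your justification of $h^1(\tX,\calL)=h^1(\tX,\calL(-C_{j-1}))$ in part (b) — that the cokernel supported on $C_{j-1}$ "has vanishing $H^1$ by the rationality of the zones" — is false: $h^1(\calO_{C_{j-1}})$ is typically $j$ (cf.\ Theorem \ref{e220}(b)), since $C_{j-1}$ is a sum of $\chi=0$ cycles each dominating the minimally elliptic cycle. The equality holds for a different reason, namely that the connecting map $H^0(\calL|_{C_{j-1}})\to H^1(\calL(-C_{j-1}))$ is injective and $\chi(\calL|_{C_{j-1}})=0$. Second, the point you correctly flag as the "main obstacle" — that no $E_v\subset B_j$ lies in the fixed locus of $\calL(-C_{j-1})$, and indeed that the divisor of its generic section is disjoint from $B_j$ — is left at the level of "should follow from $l\not\ge C_j$". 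The actual argument is short but essential: writing $l=C_{j-1}+x$, the hypothesis $\min\{l,Z_K\}=C_{j-1}$ forces $|x|\cap B_j=\emptyset$, and then for $E_v\subset B_j$ one combines $(E_v,l)\le 0$ (antinefness of the fixed cycle), $(E_v,C_{j-1})=0$ (equation (\ref{eq:orthogonal})) and $(E_v,x)\ge 0$ to conclude $(E_v,x)=0$; only this gives a section of $\calL(-C_{j-1})$ nowhere vanishing near $B_j$, hence the triviality of the restriction. You should supply both of these arguments explicitly.
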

\begin{proof} {\it (a)}
If $l=0$ then a section  trivializes $\calL$ and
 $\calL=\calO_{\tX}$. Hence $h^1(\calL)=p_g(\tX)$. Otherwise, since $l\in\calS$, $l\geq Z_{min}$.
If $l\geq Z_K$ then in the cohomological exact sequence of $0\to \calL(-Z_K)\to \calL\to \calL|_{Z_K}\to 0$ we have
$H^0(\calL(-Z_K))=H^0(\calL)$ and $h^1(\calL(-Z_K))=0$, hence $h^1(\calL)=\chi(\calL|_{Z_K})=0$.
Hence, the statement holds for $m=0$. Then we proceed by induction.

Since $l\geq Z_{min}$ we can assume $j\geq 1$.
In the cohomology exact sequence of $0\to \calL(-C_{j-1})\to \calL\to \calL|_{C_{j-1}}\to 0$
we have $H^0(\calL(-C_{j-1}))=H^0(\calL(-l))=H^0(\calL)$, hence
$\chi(\calL|_{C_{j-1}})-h^1(\calL(-C_{j-1}))+h^1(\calL)=0$. Since $\chi(\calL|_{C_{j-1}})=0$ we get
$h^1(\calL(-C_{j-1}))=h^1(\calL)$. Next, from
$0\to \calL(-Z_K)\to \calL(-C_{j-1})\to \calL(-C_{j-1})|_{C' _{j}}\to 0$
and vanishing $h^1(\calL(-Z_K))=0$,  we also have
 $h^1(\calL(-C_{j-1}))=h^1(\calL(-C_{j-1})|_{C'_{j}})$. Since
 $\calL(-C_{j-1})|_{C'_{j}}\in \pic^0(C'_{j})$, by induction,
 $h^1(\calL(-C_{j-1})|_{C'_{j}})\leq p_g(\tX_{j})$.

{\it (b)} By the proof of {\it (a)} we have that
$h^1(\calL)=h^1(\calL(-C_{j-1}))=h^1(\calL(-C_{j-1})|_{C_j'})$.

Write $l$ as $C_{j-1}+x$. Then $|x|$ contains no $E_v$ from $B_j$. Furthermore,
for such $E_v$, $(E_v,l)\leq 0$, $(E_v,C_{j-1})=0$, $(E_v,x)\geq 0$ (as in \ref{bek:discussion}),
hence necessarily $(E_v,x)=0$, that is $B_j\cap|x|=\emptyset$. This shows that
$\calL(-C_{j-1})|_{C_j'}$ is trivialized by the restriction of the generic section of
$\calL(-C_{j-1})$.

In the Gorenstein case use the fact that
$\calO(-C_{j-1})|_{C_j'}$ is trivial (cf. Theorem \ref{e220}).
\end{proof}

Fix $\calL\in\pic(\tX)$ such that $c_1(\calL)\in -\calS'$.
Recall that by Lemma \ref{lem:szk} the computation of $h^1(\tX,\calL)$ reduces to the numerically Gorenstein case: $h^1(\calL)=h^1(\calL|_{Z_K-s_{[Z_K]}})$.

\begin{theorem}\label{prop:vanishing}
Let $I$ be the $E^*$--support of $c_1(\calL)$  and assume
 that $I\cap B_{j-1}\not=\emptyset$ for some $j>0$.  Then

 (a)
$h^1(\tX,\calL)=h^1(C'_j,\calL|_{C'_j})=h^1(\tX_j,\calL|_{\tX_j})$.
(For $j-1=m$ this reads as follows: if $I\cap B_m\not=\emptyset$ then $h^1(\tX,\calL)=0$.)

(b)  $h^1(\tX,\calL)\leq p_g(\tX_j)$.
\end{theorem}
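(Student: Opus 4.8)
The plan is to reduce to the numerically Gorenstein situation and then mimic the inductive argument of Lemma \ref{lem:INEQj}, but now tracking the $E^*$--support $I$ of $c_1(\calL)$ rather than a cycle of fixed components. First I would invoke Lemma \ref{lem:szk}: since $c_1(\calL)\in-\calS'$ we have $h^1(\tX,\calL)=h^1(\tX,\calL|_{Z_K-s_{[Z_K]}})$, and on the support $B_0=|Z_K-s_{[Z_K]}|$ the germ is numerically Gorenstein with canonical cycle $Z_K(B_0)=Z_K-Z_{B_{-1}}$. Since the hypothesis $I\cap B_{j-1}\neq\emptyset$ with $j>0$ forces $B_{j-1}\subset B_0$ (as $B_{j-1}$ is nonempty only for $j-1\geq 0$, and all $B_i$ with $i\geq 0$ are contained in $B_0$), we may assume from the start that $(X,o)$ is numerically Gorenstein, replacing $\tX$ by $\tX_0$ if needed.

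Now fix a section $s\in H^0(\tX,\calL)$; its divisor decomposes as an exceptional part $D_{exc}$ plus a part transversal to $E$, and $D_{exc}\geq 0$ lies in $L$ with $-c_1(\calL)+D_{exc}\in \calS'$ actually $c_1(\calL(D_{exc}))\in-\calS'$ too. The key point is that a \emph{generic} section of $\calL$ has no fixed components outside the forced exceptional support dictated by $I$: more precisely, by (\ref{eq:Chernzero}) and the structure of $\eca^{l'}(Z)$, for $Z\gg 0$ a generic element of $H^0(Z,\calL|_Z)_{reg}$-type consideration shows that $\calL$ twisted appropriately has a section vanishing on each $E_v$ with $v\in I$ to positive order. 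Concretely, I would argue that $h^0(\tX,\calL)=h^0(\tX,\calL(-E_v))$ fails for $v\in I$ — i.e. a generic section does NOT vanish identically along $E_v$ when $v\in I$ — because the Chern class pairs negatively with $E_v$ there. Wait: actually $(c_1(\calL),E_v)=-a_v<0$ for $v\in I$, so $\calL|_{E_v}$ has negative degree, hence \emph{every} section vanishes on $E_v$. So instead the right move is: let $D$ be the fixed part of $\calL$; then $D$ is supported on a set containing $I$ (since negative degree forces vanishing), and $\calL(-D)$ has Chern class in $-\calS'$ with $\calL(-D)$ globally generated-ish off $E$. The cleanest route is the computation-sequence argument: run the Laufer-type sequence of Lemma \ref{lem:cs2} starting from $-c_1(\calL)$; along it, at each step we add $E_{v(i)}$ with $(z_i,E_{v(i)})>0$, i.e. $(c_1(\calL)-\sum E,\ E_{v(i)})<0$, and the cohomology exact sequence $0\to\calL(-\sum E-E_{v(i)})\to\calL(-\sum E)\to(\cdot)|_{E_{v(i)}}\to 0$ together with $h^1$ of a negative-degree line bundle on $\bP^1$ being $0$ gives $h^1(\tX,\calL)=h^1(C,\calL|_C)$ where $C=Z_K$ here, and more sharply $h^1(\tX,\calL)=h^1(\tX,\calL\otimes\calO(-C_{j-1}))$ once we show the computation sequence passes through a cycle $\geq C_{j-1}$ — which it does precisely because $I$ meets $B_{j-1}$ and (\ref{eq:orthogonal}) forces the sequence to build up $Z_{B_{-1}},\dots,Z_{B_{j-1}}$ on that support.

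For part (a), having established $h^1(\tX,\calL)=h^1(\tX,\calL\otimes\calO(-C_{j-1}))$, I restrict via $0\to\calL(-C_{j-1}-C'_j)\to\calL(-C_{j-1})\to\calL(-C_{j-1})|_{C'_j}\to 0$; note $C_{j-1}+C'_j=Z_K$, so the kernel is $\calL(-Z_K)$ which has vanishing $h^1$ (generalized Grauert--Riemenschneider, since $c_1(\calL(-Z_K))\in-\calS'-Z_K$ stays antinef-negative), giving $h^1(\tX,\calL)=h^1(C'_j,\calL(-C_{j-1})|_{C'_j})$. Finally I identify this with $h^1(\tX_j,\calL|_{\tX_j})$: the space $\tX_j$ is a neighbourhood of $\cup_{E_v\subset B_j}E_v$ and $C'_j$ is its canonical cycle, so for the restricted bundle on $B_j$-support the cohomology on $\tX_j$ equals the cohomology on $C'_j$ of the same bundle (again a vanishing argument on $\tX_j$), and twisting by $\calO(-C_{j-1})$ is harmless on $C'_j$ because — by the Gorenstein-type triviality of Theorem \ref{th:Okuma} / Theorem \ref{e220}(d), or in the merely numerically Gorenstein case by absorbing it into the comparison of $h^1$ on $\tX$ versus $\tX_j$ — it does not change $h^1$. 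Part (b) then follows immediately: $h^1(C'_j,-)=h^1(\tX_j,-)\leq p_g(\tX_j)$ since for any line bundle on a resolution $h^1(\mathcal{M})\leq h^1(\calO)=p_g$ when $c_1\in-\calS'$ (this is the $j$-shifted analogue of Lemma \ref{lem:INEQj}(a) applied to $(X_j,o_j)$), or directly since the generic such bundle has smaller $h^1$ and the structure sheaf realizes the max.

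The main obstacle I anticipate is the precise bookkeeping showing the computation sequence (or equivalently a sequence of exceptional modifications of $\calL$) necessarily "climbs through" $C_{j-1}$: one must use that $I$ meeting $B_{j-1}$ together with the orthogonality relations (\ref{eq:orthogonal}) $(E_v,Z_{B_i})=0$ for $E_v\subset B_{i+1}$ forces, at each stage, some vertex of $B_i$ to still have positive pairing, so the sequence cannot terminate before assembling $Z_{B_{-1}}+\cdots+Z_{B_{j-1}}=C_{j-1}$ on the relevant support — this is where the nontrivial combinatorics of the elliptic sequence enters, and it is the heart of why the $E^*$-support condition (rather than an a priori fixed-components condition as in Lemma \ref{lem:INEQj}) suffices. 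The identification $h^1(C'_j,\cdot)=h^1(\tX_j,\cdot)$ and the harmlessness of the $\calO(-C_{j-1})$-twist on $C'_j$ in the non-Gorenstein case are secondary technical points, handled by the same Grauert--Riemenschneider vanishing packaged in Lemma \ref{lem:szk} applied to $(X_j,o_j)$.
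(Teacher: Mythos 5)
Your reduction to the numerically Gorenstein case via Lemma \ref{lem:szk} matches the paper, and your instinct to use a computation sequence plus $h^1$--vanishing on $\bP^1$ is the right one, but the chain of equalities you propose is broken at two places. First, the intermediate claim $h^1(\tX,\calL)=h^1(\tX,\calL(-C_{j-1}))$ is false in general in the non--Gorenstein (but numerically Gorenstein) case: in the Appendix example take $\calL=\calb_1=\im(c^{-Z_{min}})$, so $I=\{E_1\}\subset B_0$ and $j=1$; then $h^1(\calb_1)=p_g-\dim V(E_1)=1$, while $\calb_1(-Z_{min})$ restricts to the nontrivial bundle $\calO_{C'_1}(-Z_{min})$ on $C'_1$ and hence has $h^1=0$. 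Correspondingly, your final step --- that twisting by $\calO(-C_{j-1})$ is ``harmless'' on $C'_j$ --- relies on the triviality of $\calO_{C'_j}(-C_{j-1})$, which by Theorem \ref{e220}(d)/Theorem \ref{th:Okuma} holds only for $j-1\geq\alpha$ (e.g.\ in the Gorenstein case), not in general. The paper avoids the twist altogether: it proves $h^1(C'_0,\calL)=h^1(C'_j,\calL)$ for the \emph{same} bundle restricted to the decreasing cycles $C'_0-z_i$, using the surjections $H^1(C'_0-z_i,\calL)\to H^1(C'_0-z_{i+1},\calL)$ whose kernels are controlled by $h^1(E_{v(i)},\calL(-C'_0+z_{i+1}))=h^0(E_{v(i)},\calL^*(-z_i))$.

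Second, the step you yourself flag as the ``main obstacle'' --- why the sequence climbs through $C_{j-1}$ --- is exactly where the hypothesis enters, and your description of the mechanism is off. A computation sequence from $0$ to $\sum_{k\leq j-1}Z_{B_k}$ cannot have $(z_i,E_{v(i)})>0$ at every step: whenever $z_i=\sum_{k<k_0}Z_{B_k}\in\calS$ one has $(z_i,E_v)\leq 0$ for all $v$, so the ``gluing'' step starting the next block has $(z_i,E_{v(i)})=0$ at best, and there the generic $\bP^1$--vanishing fails. The paper's fix is to anchor every block of the concatenated sequence at a fixed vertex $u\in I\cap B_{j-1}$: at the gluing steps $(z_i,E_u)=0$ by (\ref{eq:orthogonal}), and the needed strict inequality $(-c_1(\calL)-z_i,E_u)<0$ is supplied by $(-c_1(\calL),E_u)<0$, i.e.\ by $u$ lying in the $E^*$--support. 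Your proposal never isolates this anchor vertex or the gluing steps, so the crucial use of $I\cap B_{j-1}\neq\emptyset$ is missing. Finally, part (b) requires the further reduction to $i_I=\min\{i\,:\,I\cap B_i=\emptyset\}$ so that $\calL|_{\tX_{i_I}}\in\pic^0(\tX_{i_I})$ before Lemma \ref{lem:INEQj}(a) applies; the blanket statement ``$h^1(\calM)\leq p_g$ for any $\calM$ with $c_1\in-\calS'$'' is not Lemma \ref{lem:INEQj}(a) and is essentially the assertion being proved.
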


\begin{proof}  Lemma \ref{lem:szk} reduces the statements to
the numerically Gorenstein case.

{\it (a)}
Similar reductions were used in \cite{Laufer77,weakly,Nfive}.
For the convenience of the reader  we provide the details.
By  Lemma  \ref{lem:szk} the second equality follows, and also
$h^1(\tX, \calL)=h^1(C'_0,\calL)$ (since $C'_0=Z_K-s_{[Z_K]}$). Hence we need to
show $h^1(C'_0,\calL)=h^1(C'_j,\calL)$.

Chose $u\in I\cap B_{j-1}$.
We construct a computation sequence which connects 0 with $\sum_{k=0}^{j-1} Z_{B_k}$.
This is
 a sequence of cycles $\{z_i\}_{i=0}^t $ with $z_0=0$ and $z_t=\sum_{k=0}^{j-1} Z_{B_k}$, such that
$z_{i+1}=z_i+E_{v(i)}$, where $v(i)\in\calv$  is conveniently chosen. We construct the
 sequence as concatenated  of several ones, each one being the (Laufer)
 computation sequence of  a minimal cycle (cf. \cite[5.8]{Nfive}).
Indeed, for each $0\leq k\leq j-1$ let $\{z_{k,i}\}_i $ be a computation sequence starting with
$z_{k,0}=E_u$ and ending with $z_{k,t_k}=Z_{B_k}$, such that at every step $z_{k,i+1}=z_{k,i}+E_{v(i)}$
one has
$(E_{v(i)},z_{k,i})>0$, cf. \cite{Laufer72}. Then we glue these sequences as follows.
The first element is 0. Then we list all the elements  of the sequence $\{z_{0,i}\}_i$.
This ends with $Z_{B_0}$. The next element is $Z_{B_0}+E_u=Z_{B_0}+z_{1,0}$.
Then we continue with $Z_{B_0}+z_{1,i}$ adding all elements of $\{z_{1,i}\}_i$.
This ends with $Z_{B_0}+Z_{B_1}$.  Then we repeat the procedure and continue with
$Z_{B_0}+Z_{B_1}+E_u$ and all $Z_{B_0}+Z_{B_1}+z_{2,i}$. We call the steps $0\rightsquigarrow
E_u$, $Z_{B_0}\rightsquigarrow Z_{B_0}+E_u$, $Z_{B_0}+Z_{B_1}\rightsquigarrow
Z_{B_0}+Z_{B_1}+E_u$, etc.,  `gluing steps', all the other $z_i\rightsquigarrow z_{i+1}$ are
`normal steps'. Using (\ref{eq:orthogonal}) one verifies that
along a  normal step  $(E_{v(i)},z_{i})>0$, while along a
gluing step $(E_{v(i)},z_{i})=0$. Note that $\{C'_0-z_i\}_i$ is a decreasing sequence
connecting $C'_0$ with $C'_j$. We claim that along the sequence the integer
$h^1(C'_0-z_i,\calL)$ stays constant. Indeed, in
$$H^1(E_{v(i)}, \calL(-C'_0+z_{i+1}))\to H^1(C'_0-z_i,\calL)\to H^1(C'_0-z_{i+1},\calL)\to 0$$
one has $h^1(E_{v(i)}, \calL(-C'_0+z_{i+1}))=h^0(E_{v(i)}, \calL^*(-z_i))$. But analysing both
cases (normal and gluing steps) we realize that
 $(E_{v(i)}, -c_1(\calL)-z_i)<0$, hence this last  cohomology group vanishes indeed.

 {\it (b)} By part {\it (a)}  we have to show that $h^1(C'_j,\calL)\leq p_g(\tX_j)$.
 Set $i_I:=\min\{i\,|\, I\cap B_i=\emptyset\}$. Since $i_I\geq j$, hence $p_g(\tX_{i_I})\leq p_g(\tX_j)$,
 and $h^1(\tX,\calL)=h^1(\tX_{i_I},\calL|_{\tX_{i_I}})$ by {\it (a)},
 it is enough to verify that $h^1(\tX_{i_I},\calL|_{\tX_{i_I}})\leq p_g(\tX_{i_I})$. Note also that
 $\calL|_{\tX_{i_I}}\in\pic^0(\tX_{i_I})$.
Hence we need to show that
 for a numerically Gorenstein elliptic singularity
 if $\calL\in \pic^0(\tX)$ then $h^1(\calL)\leq p_g(\tX)$.
This  follows from Lemma \ref{lem:INEQj} {\it (a)}.
\end{proof}
\begin{remark}
In general, for arbitrary (non--elliptic) singularity, it is not true that
$h^1(\tX,\calL)\leq p_g(X,o)$ for any line bundle $\calL\in \pic(\tX)$, cf.
\cite[Remark 9.2.3 and Example 9.2.4]{NNII}, see also \cite[Prop. 5.7.1]{NNI}.
\end{remark}

\begin{remark}\label{rem:reduction}
In any situation $h^1(\tX,\calL)$ equals with some $h^1(Z, \calL)$, e.g. for   $Z=\lfloor Z_K\rfloor$ or even $Z_K-s_{[Z_K]}$, cf. Lemma
\ref{lem:szk}. Furthermore, if one wishes a reduction to a smaller
supported cycle, say to $Z|_B$ (as in theorem \ref{prop:vanishing}{\it (a)}), then the existence of an  isomorphism of type
$H^1(Z,\calL)\to H^1(Z|_B,\calL|_{Z|_B})$ usually is obtained  using the vanishing
of $H^1( Z|_{\calv\setminus B}, \calL(-Z|_B))$, which is guaranteed whenever $\calL$ is sufficiently
positive along $\calv\setminus B$. Note that in the above theorem, in the elliptic case,
this reduction can be done with a `minimal positivity requirement' of $\calL$ along $\calv\setminus B$.
See also the statement and the proof of Theorem \ref{th:fixed} as well.

The fact that only such `minimal positivity' is needed is a key additional property of elliptic singularities,
 which makes them special. (For another key special property, the `distinct pole property'
 see Remarks \ref{rem:8points}--\ref{rem:pole}.)
\end{remark}

\subsection{The cycle of fixed components of the line bundles}\label{ss:cyclefixed}

Assume that $(X,o)$ is numerically Gorenstein  and we fix $\calL\in \pic^0(\tX)$.
We denote the cycle of fixed components of $\calL$  by $l$.

\begin{theorem}\label{th:fixed} If $(X,o)$ is
either  minimally elliptic or $C^2\not=-1$ then  the following facts hold.

(a) $\calL(-Z_K)$ has no fixed components.

(b) $l$  belongs to $\{0,C_0,C_1, \cdots,  C_m\}$.
\end{theorem}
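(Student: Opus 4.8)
The plan is to prove part (a) first and then to read off (b) from Lemma~\ref{lem:Ci}. Two preliminary observations organise the whole argument. First, any line bundle on $\tX$ whose restriction to some $Z\gg0$ carries a section with trivial $E$--part has Chern class in $-\calS'$; applied to $\calL(-l)$, where $l$ is the cycle of fixed components of $\calL\in\pic^0(\tX)$, this forces $-l\in-\calS'$, i.e. $l\in\calS$, and of course $[l]=0=[Z_K]$ in the numerically Gorenstein case. Second, if part (a) holds, then $\calL(-Z_K)$ has a section $s_0$ with trivial $E$--part, and multiplying $s_0$ by the tautological section of $\calO_{\tX}(Z_K)$ gives a section of $\calL$ whose $E$--part is exactly $Z_K$, so $l\le Z_K$. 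Then $l\in\calS'$, $[l]=[Z_K]$ and $l\le Z_K$, and Lemma~\ref{lem:Ci} yields $l\in\{C_{-1},C_0,\dots,C_m\}=\{0,C_0,\dots,C_m\}$, which is (b). Hence everything reduces to (a).

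For (a) the natural first step is the vanishing $h^1(\tX,\calL(-Z_K))=0$. Since the resolution is minimal, $Z_K\in\calS'$, so $\mathcal M:=\calL(-Z_K)$ has $c_1(\mathcal M)=-Z_K\in-\calS'$, and the $E^*$--support of $c_1(\mathcal M)$ is $I=\{v:(Z_K,E_v)<0\}=\{v:E_v^2\le-3\}$. Now $B_m$ supports a minimally elliptic singularity; since a connected negative definite graph all of whose vertices are $(-2)$--curves is an $ADE$ graph, hence rational (so $\chi(l)\ge1$ for $l>0$, contradicting $\chi(Z_{B_m})=0$), the support $B_m$ must contain a vertex with self--intersection $\le-3$. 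Thus $I\cap B_m\ne\emptyset$, and Theorem~\ref{prop:vanishing}(a) in the case ``$j-1=m$'' (for which $C'_{m+1}=0$) gives $h^1(\tX,\mathcal M)=0$. Note also that, by Theorem~\ref{prop:AZ}(c), $\dim V(I)=p_g-p_g(X_{\calv\setminus I},o_{\calv\setminus I})=p_g$ since $\calv\setminus I$ consists of $(-2)$--curves and hence supports a rational (multi)germ; so $\overline{\im(c^{-Z_K})}=\pic^{-Z_K}(\tX)$. What is missing is the upgrade from the closure being everything, and from the vanishing of $h^1$, to the actual statement that $\mathcal M$ itself has no fixed components.

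That upgrade is the main obstacle, and it is where the hypothesis ``minimally elliptic or $C^2\ne-1$'' is used. Writing $x$ for the cycle of fixed components of $\mathcal M$ one must show $x=0$: one has $h^0(Z,\mathcal M(-x))=h^0(Z,\mathcal M)$ for $Z\gg0$, and one would like to contradict this against $h^1(\tX,\mathcal M)=0$ by a Riemann--Roch count on $Z$ — but this needs the vanishing of $h^1$ to survive after removing $x$, which in turn needs control of the Chern class $Z_K+x$ (the relevant restriction sequences twist by $\calL(-Z_K-E_v)$, whose Chern class leaves $-\calS'$, so the easy vanishing is lost), and the pathology $C^2=-1$ is exactly the configuration where this control fails. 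Accordingly I would carry out (a) by induction on the length $m+1$ of the elliptic sequence: the base case is the minimally elliptic germ ($p_g=1$), where everything is explicit; in the numerically Gorenstein but non--Gorenstein case one first reduces to the top Gorenstein germ $(X_\alpha,o_\alpha)$ of \eqref{eq:alpha}, using Theorem~\ref{th:Okuma} and the form flag of Corollary~\ref{cor:omegaflag}; and the inductive step peels off the outermost zone $B_i\setminus B_{i+1}$, using the distinguished function $F$ with ${\rm div}_E(F)=Z_K$ (available exactly because $C^2\ne-1$, cf. Remark~\ref{bek:discussion2}) together with the functions $f_j$ of Theorem~\ref{e220}(c) to produce, for every $v\in\calv$, a section of $\calL(-Z_K)$ not vanishing identically along $E_v$. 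Once (a) is established, (b) follows formally as in the first paragraph.
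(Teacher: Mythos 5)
Your reduction of (b) to (a) is correct and is exactly the paper's: a generic section of $\calL(-Z_K)$ multiplied by the tautological section of $\calO_{\tX}(Z_K)$ gives $l\le Z_K$, and with $l\in\calS$ Lemma~\ref{lem:Ci} finishes. You have also correctly located the difficulty: the vanishing $h^1(\tX,\calL(-Z_K))=0$ and the fact that $\overline{\im(c^{-Z_K})}=\pic^{-Z_K}$ are easy, and the whole content is the upgrade to ``no fixed components''. But that upgrade is not proved, and the mechanism you sketch for it cannot work. The decisive flaw is circularity: the function $F$ with ${\rm div}_E(F)=Z_K$ that you invoke is produced in Remark~\ref{bek:discussion2} precisely \emph{as a consequence of} Theorem~\ref{th:fixed} (applied to $\calL=\calO_{\tX}$), so it is not available here. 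Even setting that aside, $F$ and the $f_j$ of Theorem~\ref{e220}(c) are sections of \emph{natural} line bundles; multiplying them together can only ever produce sections of natural bundles, never a section of $\calL(-Z_K)$ for an arbitrary $\calL\in\pic^0(\tX)$ not vanishing along a prescribed $E_v$. Two further points: the base case is not ``explicit'' --- for minimally elliptic germs the statement that an arbitrary $\calL\in\pic^{-Z_K}$ has no fixed components is the technical heart of the paper's proof; and the proposed reduction of the numerically Gorenstein non--Gorenstein case to $(X_\alpha,o_\alpha)$ is unjustified, since fixed components can a priori occur along any $E_v\subset E$, not only inside $B_\alpha$.

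For the record, the paper's actual argument is cohomological and works for all $v$ at once: one shows $h^1(\calL(-Z_K-E_v))<h^0(E_v,\calL(-Z_K)|_{E_v})$, so that $H^0(\calL(-Z_K-E_v))\hookrightarrow H^0(\calL(-Z_K))$ is not onto. This is done by building a Laufer computation sequence for $Z_{min}$ starting at $E_v$ whose unique ``jump'' $(E_{v(i)},z_i)=2$ occurs at a vertex $u\in\calv_m=\{u\subset B_m:\ (E_u,Z_{B_m})<0\}$; concatenating the shifted sequences $Z_K+z_1,\dots,Z_K+Z_{min},Z_K+Z_{min}+z_1,\dots$ makes every connecting map on $H^1$ surjective, and the formal function theorem then forces $h^1(\calL(-Z_K-E_v))=0$ (or $\le 1$ in the degenerate minimally elliptic subcase, against $h^0(E_v,\calL(-Z_K)|_{E_v})\ge 2$). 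The hypothesis $C^2\ne -1$ enters exactly at the construction of such a sequence: if $\calv_m$ is a single vertex at which $Z_{B_m}$ has multiplicity $1$, i.e.\ $Z_{B_m}^2=-1$, no sequence jumping in $\calv_m$ exists. None of this appears in your proposal, so part (a) --- and with it the theorem --- remains unproved.
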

\begin{proof} Assume first that $(X,o)$ is minimally elliptic (and the resolution is minimal,
 hence with $Z_K=Z_{min}$).
We recall the following facts, valid in this situation, cf. \cite[Lemma 3.3]{Laufer77}.

 Fix any pair $E_v$ and $E_u$ ($E_v\not=E_u$) of irreducible exceptional divisors.
Then there exists a computation sequence for $Z_{min}$ which starts with
$E_v$ (i.e. $z_1=E_v$) and ends with $E_u$ (i.e. $E_{v(t-1)}=E_u$.
Recall that necessarily $(z_{v(t-1)},E_u)=2$), cf. \cite{Laufer77}.
Moreover, let $E_v$ be an irreducible component whose coefficient  in $Z_{min}$ is
strict greater than one,
then there exists a computation sequence for $Z_{min}$ which starts  and ends with $E_v$.

Next we prove that for any $E_v$ one has
 $h^1(\calL(-Z_K-E_v))<h^0(E_v, \calL(-Z_K))$. Note that this implies that
$H^0(\calL(-Z_K-E_v))\hookrightarrow H^0(\calL(-Z_K))$ is not onto. We use similar arguments as in
\cite[Lemma 3.12]{Laufer77}.

Assume that there exists  a computation sequence $\{z_i\}_{i=1}^t$ with $z_1=E_v$ and ends at some
$E_u$ such that $(E_u,Z_{min})<0$. Then consider the  infinite sequence $\{x_i\}_{i}$:
$Z_{min}+z_1,\ldots, Z_{min}+z_i,\ldots, Z_{min}+Z_{min},2 Z_{min}+z_1,\ldots, 3Z_{min},  3Z_{min}+z_1,\ldots.$
Then $H^1(\calL(-x_{i+1}))\to H^1(\calL(-x_i))$ is onto, hence
$\alpha:H^1(\calL(-(n+1)Z_{min}-E_v))\to H^1(\calL(-Z_{min}-E_v))$ is onto for any
$n\geq 0$. Compose this with $\beta:H^1(\calL(-Z_{min}-E_v))\to H^1(nZ_{min},\calL(-Z_{min}-E_v))$
to get an exact sequence.
But, by formal neighbourhood theorem, $\beta$ is an isomorphism for $n\gg 0$, hence $\alpha=0$, or
$h^1(\calL(-Z_{min}-E_v))=0$.

 If such a computation sequence does not  exist, then $E_v$ is the
 only component with $(Z_{min},E_v)<0$, and the coefficient of $E_v$ in $Z_{min}$ is 1.
 In this case we consider a computation sequence $\{z_i\}_i$
which starts with $E_v$ and ends at some other $E_u$, and a sequence $\{y_i\}_i$ which starts with
$E_u$ and ends at $E_v$. Take the infinite sequence $\{x_i\}_{i}$:
$Z_{min}+z_1,\ldots, Z_{min}+z_i,\ldots, 2Z_{min},2Z_{min}+y_1,\ldots, 2Z_{min}+y_i,\ldots, 3Z_{min},  3Z_{min}+y_1,\ldots.$ Then $H^1(\calL(-x_{i+1}))\to H^1(\calL(-x_i))$ is onto, except when we pass from
$2Z_{min}-E_u$
to $2Z_{min}$, in which case the corank is 1. Hence,
$\alpha$ has corank at most one, $h^1(\calL(-Z_{min}-E_v))\leq 1$.
But $h^0(E_v, \calL(-Z_{min}))\geq 2$.

Next, consider the case when $(X,o)$ is numerically Gorenstein with $m>0$. We will generalize
 the first argument presented above. Fix any $v\in \calv$. Let $\calv_m$ be the set of vertices
 $\{v\,:\, E_v\subset B_m,\ (E_v, Z_{B_m})<0\}$. Clearly, it is nonempty. Moreover, if
 $v\in\calv_m$ then by (\ref{eq:orthogonal}) $(E_v,Z_K)<0$ too. Recall that along any
  computation sequence   of $Z_{min}$ one has $(E_{v(i)},z_i)=1$ except one step when
  it `jumps', that is,  $(E_{v(i)},z_i)=2$. We claim that for any $v\in \calv$ there exists $u\in\calv_m$
and a computation sequence $\{z_i\}_i$ which starts with $E_v$ and jumps at $E_u$.
Indeed, we construct  the computation sequence as follows: it starts with $E_v$ and then we add
consecutively the shortest string of $E_w$'s connecting  $E_v$ with $B_m$. Let the last element
of the string (the first one  which is supported in $B_m$) be $E_{v'}$.
(If $E_v\subset B_m$ then $E_{v'}$ is just $E_v$; and also, it can happen that
$v'$ belongs to $\calv_m$, or not.) Then we continue starting from $E_{v'}$ to
construct the computation sequence of $Z_{B_m}$ which jumps at $E_u$. If $\calv_m\not=\{v'\}$
this is possible. If $\calv_m=\{v'\}$ and the multiplicity of  $Z_{B_m}$ at $E_u$ is $\geq 2$
then again is possible (for both cases see above). Otherwise $Z_{B_m}^2=-1$ which case is excluded.
Then,  finally, after we completed $Z_{B_m}$,  we continue (in an arbitrary way) Laufer's
algorithm to complete $Z_{min}$.
If   we concatenate this computation sequence as in the first part of minimally elliptic situation
(that is,
$Z_K+z_1, \ldots, Z_K+Z_{min},  Z_K+Z_{min}+z_1, \ldots$),
we obtain (by the very same argument) that $h^1(\calL(-Z_{K}-E_v))=0$.

{\it  (b)} $l\in \calS$ and by  {\it (a)} $l\leq Z_K$ too.
Hence the statement follows from Lemma \ref{lem:Ci}.
\end{proof}

It is instructive to compare this last theorem with the example from the Appendix, which
shows that without the required  assumptions of the theorem $l>Z_K$ might happen.
\bekezdes
{\bf Question:} Does the statement of Theorem \ref{th:fixed}{\it (b)} hold under Gorenstein assumption
(even if $C^2=-1$) ?
(Compare  with Appendix.)

\section{The Abel map of elliptic singularities}\label{s:AbelEll}

\subsection{The subspace arrangement $\{V(I\}_I$.} \label{ss:VI}
Let us consider the minimal resolution of an elliptic singularity whose link is
rational homology sphere. For  $Z\gg 0$ and
 $l'\in-\calS'$ we consider  the Abel map $c^{l'}:\eca^{l'}(Z)\to \pic^{l'}(Z)\simeq\C^{p_g}$. If $I=\{u\}$ for
$u\in\calv$ then  we write $V(u)$ instead of $V(I)$.

\begin{theorem}\label{th:Vk} With the above notations the following facts hold.

(1) $V(u) =0$ if $u\in B_{-1}\setminus B_0$ (in the $Z_K\not\in L$ case). In both cases,
$V(u) =0$ if $u\in B_{0}\setminus B_\alpha$.

(2) $ V(u) = V(v)$, if $u, v \in B_j \setminus B_{j+1}$ for some $ 0 \geq j \geq m$ (with the notation $B_{m+1} = \emptyset$).

(3) $ V(u) \subset V(v)$, if $u \in  B_i \setminus B_{i+1}$ and $v \in  B_j \setminus B_{j+1}$ for some $ m \geq j \geq i \geq 0$.

(4)   $\dim V(u) = k+1$ whenever $u\in B_{\alpha+k}\setminus B_{\alpha+k+1}$,
where $0\leq k\leq m-\alpha$.

(5) For any  $I \subset \calv$  let $i$ be the maximal number, such that there exists a vertex
$u \in I$ with $u \in  B_i \setminus B_{i+1}$. Then $ V(I)=V(u) $. Hence,
the linear subspace arrangement $\{V(I)\}_{I\subset \calv}$
is the flag $\{V(B_{\alpha+k}\setminus B_{\alpha+k+1})\}_{0\leq k\leq m-\alpha }$ in
$\C^{p_g}=\C^{m-\alpha+1}$, where $\dim (V(B_{\alpha+k}\setminus B_{\alpha+k+1}))=k+1$.

(6) $\dim\im (c^{-E_{u}^*}(Z)) = \dim V(u)$ for every vertex $u\in \calv$.

(7) $\dim\im (c^{l'}(Z)) = \dim V(I(l'))$, where $l'\in -\calS'$
(hence $\dim\im (c^{l'}(Z)) =\dim\im (c^{nl'}(Z))$ for $n\geq 1$).
In particular,    $\overline{\im (c^{l'}(Z))} = A(l')$.

(8)   $h^1$ is uniform on $\overline{ \im (c^{l'}(Z))} \subset \pic^{l'}(Z)$:
$h^1(\calL) =  p_g - \dim V(I(l') )=p_g(X_{\calv\setminus I(l')},o_{\calv\setminus I(l')})$
for every $\calL \in  \overline{\im (c^{l'}(Z))}$.

(9) If $\calL\in \overline{ \im (c^{l'}(Z))}$ (e.g., if $\calL$
has no fixed components) then $h^1 (\calL^{\otimes n})=h^1(\calL)$
for  $n\geq 1$.

(10) All the fibers of $c^{l'}(Z):\eca^{l'}(Z)\to \pic^{l'}(Z)$ have the same dimension
$(l',Z)-\dim V(I(l'))$.
\end{theorem}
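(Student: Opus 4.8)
The plan is to derive all ten assertions from two facts already available — the flag description of the subspace arrangement $\{\Omega(I)\}_I$ of differential forms (Corollary~\ref{cor:omegaflag}) together with the Laufer duality identification $V(I)=\Omega(I)^*$ (Proposition~\ref{prop:HDIZ}), and the vanishing package (Lemma~\ref{lem:szk}, Theorem~\ref{prop:vanishing}) — and then to propagate the conclusions along the monoid structure of divisors (Theorem~\ref{prop:AZ}(b,d)). The first move is to reduce the non--numerically--Gorenstein situation to the numerically Gorenstein one: by Lemma~\ref{lem:szk} every $h^1$ in sight is computed on $Z_K-s_{[Z_K]}$, which is supported on $B_0$; on $B_0$ the germ $(X_0,o_0)$ is numerically Gorenstein with the same $p_g=m+1-\alpha$, and the vertices of $B_{-1}\setminus B_0$ enlarge nothing in $\{V(I)\}$, already yielding the first half of~(1). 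For~(1)--(5) I would then just unwind the two cited statements. Since $\{\,i : I\cap B_i=\emptyset\,\}$ is an up--set, $i_I:=\min\{\,i : I\cap B_i=\emptyset\,\}$ is determined by the highest zone $B_i\setminus B_{i+1}$ that $I$ meets, and Corollary~\ref{cor:omegaflag} gives $\Omega(I)=\C\langle\omega_{\max(\alpha,i_I)},\dots,\omega_m\rangle$; as $V(I)$ is the annihilator of $\Omega(I)$ in $H^1(\calO_Z)$, one gets $\dim V(I)=p_g-\dim\Omega(I)=\max(\alpha,i_I)-\alpha$, and the nesting of the $\Omega(I)$'s dualizes to the nesting of the $V(I)$'s. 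Evaluating at $I=\{u\}$ gives~(1)--(4); the general case is $V(I)=V(u_0)$ with $u_0$ the vertex of $I$ in the highest zone, giving~(5) and exhibiting the flag $\{V(B_{\alpha+k}\setminus B_{\alpha+k+1})\}_{0\le k\le m-\alpha}$, with $\dim V(B_{\alpha+k}\setminus B_{\alpha+k+1})=k+1$.

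For~(6) I would feed the forms $\omega_\alpha,\dots,\omega_m$ of Corollary~\ref{cor:omegaflag}, whose pole divisors are $C'_\alpha,\dots,C'_m$, into Corollary~\ref{cor:dim}. For a fixed $u\in B_{\alpha+k}\setminus B_{\alpha+k+1}$ the form $\omega_j$ has a pole along $E_u$ exactly when $E_u\subseteq|C'_j|=B_j$, i.e. for $\alpha\le j\le\alpha+k$, and the pole orders $m_{E_u}(C'_\alpha)>\dots>m_{E_u}(C'_{\alpha+k})$ strictly decrease, each step being $m_{E_u}(Z_{B_j})\ge 1$ because minimal cycles are strictly positive on their support (the ``distinct pole property''); hence the residues along a generic $D\in\eca^{-E_u^*}(Z)$ have pairwise distinct poles and Corollary~\ref{cor:dim} gives $\dim\im(c^{-E_u^*}(Z))=k+1=\dim V(u)$ (and $=0=\dim V(u)$ for $u$ below $B_\alpha$). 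For~(7): the inclusion $\overline{\im(\widetilde{c}^{l'})}\subseteq A(l')$ forces $\dim\im(c^{l'}(Z))\le\dim V(I(l'))$, and conversely, writing $-l'=-l''+E_{u_0}^*$ with $-l''\in\calS'$ and $u_0$ the highest--zone vertex of $I(l')$, the sum map sends $\{D_0\}\times\eca^{-E_{u_0}^*}(Z)$ onto a translate of $\im(c^{-E_{u_0}^*}(Z))$ inside $\im(c^{l'}(Z))$, so $\dim\im(c^{l'}(Z))\ge\dim V(u_0)=\dim V(I(l'))$ by~(6) and~(5); equal dimension of the irreducible $\overline{\im(\widetilde{c}^{l'})}$ and the affine space $A(l')$ then gives $\overline{\im(c^{l'}(Z))}=A(l')$, and the $nl'$ statement follows from $I(nl')=I(l')$.

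For~(8): by Theorem~\ref{prop:AZ}(d) and~(7) the generic value of $h^1$ on $\im(c^{l'}(Z))$ — hence on the dense subset it forms in $A(l')$ — is $p_g-\dim V(I(l'))$, which by upper semicontinuity on the irreducible $A(l')$ is a lower bound for $h^1$ everywhere on $A(l')$; Theorem~\ref{prop:vanishing}(b) supplies the matching upper bound $h^1(\calL)\le p_g(\tX_{i_{I(l')}})$ for every $\calL$ with $c_1(\calL)=l'$, and $p_g(\tX_j)=m+1-\max(j,\alpha)$ together with $\dim V(I)=\max(\alpha,i_I)-\alpha$ identifies $p_g(\tX_{i_{I(l')}})=p_g-\dim V(I(l'))=p_g(X_{\calv\setminus I(l')},o_{\calv\setminus I(l')})$, the last equality being Theorem~\ref{prop:AZ}(c). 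For~(9): $\calL\in A(l')$ gives $\calL^{\otimes n}\in A(nl')$ by $A(l_1'+l_2')=A(l_1')+A(l_2')$, and $I(nl')=I(l')$, so~(8) yields $h^1(\calL^{\otimes n})=p_g-\dim V(I(l'))=h^1(\calL)$. For~(10): a nonempty fiber $(c^{l'})^{-1}(\calL)$ is, up to the discrepancy recorded in~\cite{NNI}, the projectivization of the space of fixed--component--free sections of $\calL|_Z$, so its dimension is controlled by $h^0(Z,\calL)$, hence by $h^1(Z,\calL)$ via $\chi(Z,\calL)=(l',Z)+1-p_g$; since~(8) makes $h^1$ uniform and equal to $p_g-\dim V(I(l'))$ on $\im(c^{l'}(Z))$, every nonempty fiber has dimension $(l',Z)-\dim V(I(l'))$.

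I expect~(6) to be the real obstacle: everything after it is formal bootstrapping along the additive structure, but~(6) is where the specific geometry of elliptic germs enters, needing the forms $\omega_j$ realized with \emph{exact} pole divisor $C'_j$ (through the Gorenstein structure of the $(X_j,o_j)$, Theorem~\ref{e220}(c)) and the positivity of minimal--cycle coefficients, so that Corollary~\ref{cor:dim} pins $\dim\im(c^{-E_u^*}(Z))$ exactly. A secondary nuisance is keeping the pre--term $Z_{B_{-1}}=s_{[Z_K]}$ of the new elliptic sequence straight in the non--numerically--Gorenstein case, and verifying the numerical identity $p_g(\tX_{i_I})=p_g-\dim V(I)$ that joins Theorem~\ref{prop:vanishing}(b) to Theorem~\ref{prop:AZ}(c).
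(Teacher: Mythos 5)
Your proposal is correct and follows essentially the same route as the paper: (1)--(5) by dualizing the flag $\{\Omega(I)\}_I$ of Corollary \ref{cor:omegaflag} via Proposition \ref{prop:HDIZ}, (6) from Corollary \ref{cor:dim} and the strictly decreasing pole orders $m_{E_u}(C'_j)$, (7) by combining the a priori bound $\dim\im(c^{l'})\le\dim V(I(l'))$ with the additive structure of divisors, (8) from semicontinuity plus the vanishing bound of Theorem \ref{prop:vanishing}, and (9)--(10) as formal consequences. The only cosmetic deviation is in (7), where you bound below by a single translate of $\im(c^{-E_{u_0}^*})$ for the highest-zone vertex $u_0$ (using (5)) instead of the paper's dominant quasi-finite sum map over all $E_v^*$-factors; both yield the same conclusion.
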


Property {\it (9)} for bundles of type $\calL=\calO_{\tX}(-Z)$ $(Z\in L_{\geq 0})$ was proved by
Okuma in \cite{Okuma18} too.

\begin{proof} Parts {\it (1)--(5)} follow from the duality statement of Proposition \ref{prop:HDIZ}
and the structure of the $\{\Omega_I\}_I$ linear subspace arrangement from Corollary \ref{cor:omegaflag}. The interested reader might verify directly the duality through the Laufer integration
detailed in \cite[\S 7]{NNI}.

{\it (6)} In general, cf. \ref{def:VZ},  $\dim\im (c^{l'}(Z)) \leq  \dim V(l')$ ($\dag$).
Assume that $u\in B_i\setminus B_{i+1}$. If $i<\alpha $ then by  {\it (1)}  and ($\dag$)
we are done.  Otherwise, by the general statement of Corollary \ref{cor:dim} and from the structure of the poles of differential forms constructed in \ref{bek:diff} follows that
$\dim\im (c^{-E_{u}^*}(Z))=i+1-\alpha$. On the other hand, by  {\it (4)},
$\dim V(u)=i+1-\alpha$ as well.

{\it (7)} We reduce the statement to {\it (6)}
via  the multiplicative structure from  subsection \ref{bek:addDiv} and Theorem \ref{prop:AZ}.
Firstly,
$s^{l'_1,l'_2}(Z)$ is dominant and quasi--finite. Therefore, for  $l'=\sum a_vE^*_v$, one has
$\dim V(l')=\dim (\sum_{a_v\not=0} V(v))=  \dim (\sum_{a_v\not=0} \im (c^{a_vE^*_v}))=
\dim \im (c^{l'})$.

{\it (8)} If $\calL^{im}_{gen}$ is a generic element of $\im (c^{l'})$ then $h^1(\calL^{im}_{gen})=
p_g-\dim \im (c^{l'})$ (cf. Theorem \ref{prop:AZ}{\it (d)}), which equals $p_g-\dim V(l')$ by
 {\it (7)}. Hence, by semicontinuity (see e.g. \cite[Lemma 5.2.1]{NNI}),
$h^1(\calL)\geq p_g-\dim V(l')$ for any
$\calL \in  \overline{\im (c^{l'}(Z))}$. On the other hand, by Proposition \ref{prop:vanishing}
$h^1(\calL)\leq  p_g-\dim V(I(l'))$. (See also Theorem \ref{prop:AZ}{\it (c)}.)

For {\it (9)} use {\it (8)} and
{\it (10)} follows from  \cite[Lemma 3.1.7]{NNI}.
\end{proof}

\begin{remark}\label{rem:8points}
(a) Parts {\it (6)--(7)} can be compared with  Theorem \ref{prop:AZ}{\it (e)}.
Theorem \ref{th:Vk} says that in the case of elliptic singularities there is no need to take any
multiple $nl'$ in order to obtain the maximal stabilized  dimension of $\im (c^{nl'})$, that is,
$\im (c^{l'})=\im (c^{nl'})$ for  any $l'\in-\calS'$ and $n\geq 1$. As a consequence,
the closure of any $\im (c^{l'})$ is an affine space.

Similarly, parts {\it (8)--(9)} can be compared with Theorem \ref{prop:AZ}{\it (e)}:
in order to have a uniform $h^1$--behaviour along the (closure of the image), no stabilization
is needed either.

The main property of elliptic singularities, which is responsible for the fact that the
stabilization take place from the very first term, cf. {\it (6)--(7)--(9)} above,
is the existence of forms $\{\omega_j\}_{j=1}^{p_g}$, which form a basis of
$H^0(\Omega^2_{\tX}(Z))/H^0(\Omega^2_{\tX})$ ($Z\gg0$), and which satisfies the `distinct
pole property' discussed in Corollary \ref{cor:dim}. This reads as follows: For any $v\in\calv$ let
$\calj_v$ be the index set of those forms $\omega_j$ (from this list),
 which have nontrivial pole along $E_v$.
Then the poles along $E_v$ of all forms $\{\omega_j\}_{j\in \calj_v}$ are pairwise distinct.
For elliptic singularities this property is guaranteed by Corollary \ref{cor:omegaflag}, since
the pole of each
$\omega_j$ is $ C'_j$.

The point is that if a normal surface singularity (with rational homology sphere link)
admits  a set of $p_g$ independent forms with the `distinct pole property' then
the above stabilization properties  {\it (6)--(7)--(9)} hold. This follows from Propositions
\ref{prop:HDIZ} and \ref{prop:Res} proved in \cite{NNI}.

It is natural to ask if the `distinct pole property' is an idiosyncrasy  merely of  elliptic
singularities. The answer is no, there are many germs with this property, see e.g. the next example.
\end{remark}

\begin{example}\label{rem:pole}
{\bf A singularity with `distinct  pole property'.}
Consider the following resolution graph (the associated minimal one can be obtained by blowing down the
two `cusps'.) The graph is not elliptic, $\min\chi=-1$ (and it  has two distinct candidates for the
elliptic cyle).

\begin{picture}(200,50)(-20,0)
\put(230,40){\makebox(0,0){\small{$-2$}}}
\put(90,40){\makebox(0,0){\small{$-2$}}}
\put(110,40){\makebox(0,0){\small{$-1$}}}
\put(170,30){\circle*{4}}\put(190,30){\circle*{4}}\put(210,30){\circle*{4}}\put(230,30){\circle*{4}}
\put(150,30){\circle*{4}}
\put(210,10){\circle*{4}}
\put(130,40){\makebox(0,0){\small{$-7$}}}
\put(150,40){\makebox(0,0){\small{$-3$}}}
\put(170,40){\makebox(0,0){\small{$-3$}}}\put(190,40){\makebox(0,0){\small{$-7$}}}
\put(210,40){\makebox(0,0){\small{$-1$}}}
\put(200,10){\makebox(0,0){\small{$-3$}}}\put(120,10){\makebox(0,0){\small{$-3$}}}
\put(90,20){\makebox(0,0){\small{$E_1$}}}\put(230,20){\makebox(0,0){\small{$E_2$}}}
\put(90,30){\circle*{4}}
\put(110,30){\circle*{4}}
\put(130,30){\circle*{4}}
\put(110,10){\circle*{4}}
\put(90,30){\line(1,0){140}}\put(110,10){\line(0,1){20}}\put(210,10){\line(0,1){20}}
\end{picture}

It is realized e.g. by the hypersurface singularity with non--degenerate Newton boundary
$\{z^3+x^{13}+y^{13}+x^2y^2=0\}$. This analytic structure has $p_g=5$ and
it is clearly Gorenstein. Let
$\omega $ be the Gorenstein form (with pole $Z_K$). Then the classes of the five forms
$\omega, \, \omega x, \, \omega x^2, \,  \omega y, \, \omega y^2$ constitute a basis of
$H^0(\Omega^2_{\tX}(Z))/H^0(\Omega^2_{\tX})$ ($Z\gg0$), and they satisfy the
`distinct  pole property' (the verification is left to the reader; the divisor of $x$ is $E_1^*$, while the divisor of $y$ is $E^*_2$).

In fact, even if we take the generic analytic structure on this graph (cf. \cite{NNII}), the property survives. Indeed, in this case
 $p_g=2$ \cite{NNII} and
 the cycle of poles of the corresponding two differential forms have even distinct support.
They are supported on the two minimally elliptic subgraphs obtained by deleting the two central
$(-3)$ vertices. Hence, again they satisfy the `distinct  pole property'.

(In fact we expect that the `distinct  pole property' is true for any analytic type supported on this
graph. It is really amazing that  for such graphs, when for any analytic type supported on them the
`distinct  pole property'  holds, the `stability'  analytic property
 `{\it $h^1 (\calL^{\otimes n})=h^1(\calL)$ for  $n\geq 1$ and $\calL$ without fixed components}'
 is imposed by the combinatorics of the graph.)
\end{example}

\subsection{The WECC and ECC terminology regarding the set $\{\im (\widetilde{c}^{l'})\}_{l'}$ \cite[\S 9]{NNI}.}\label{ss:terminology}
The mutual position of  the natural line bundle $\calO_Z(l')$ and $\im (c^{l'})$ (or,
equivalently, of $0$ and $\im (\widetilde{c}^{l'})$) is codified in the
following submonoid of $\calS'$. We set $\calS_{im}':= \{-l'\,:\, \calO_Z(l')\in \im (c^{l'})\}=
\{-l'\,:\, 0\in \im(\widetilde{c}^{l'})\}$.
In other words, $l'\in\calS'_{im}$ if and only if $\calO_Z(-l')$ has no fixed components.

As usual, we define the saturation of a submonoid ${\mathcal M}\subset \calS'$ as
$\overline{{\mathcal M}}:=\{l'\in\calS'\,: nl'\in {\mathcal M} \ \mbox{for some $ n\geq 1$}\}$.
Accordingly, $\overline {\calS'_{im}}=\{-l'\in\calS'\,: \,
0\in \im(\widetilde{c}^{nl'}) \ \mbox{for some $ n\geq 1$}\}$.

Recall also that we say that a resolution $\tX$ satisfies the `End Curve Condition' (ECC)
if $E^*_v\in \calS'_{im}$ for any end vertex $v$. The terminology was introduced by Neumann and Wahl
 in the context of splice quotient singularities \cite{NWsq}.
By the `End Curve Theorem' \cite{NWECTh} $\tX$ satisfies  ECC if and only the analytic type is
splice quotient associated with the dual graph of $\tX$. Furthermore, given a  resolution graph $\Gamma$,
a singularity resolution $\tX$ with dual graph $\Gamma$ and which satisfies ECC exists
if and only if  the graph satisfies the
`semigroup and congruence conditions'  of Neumann--Wahl \cite{NWsq}, or, equivalently, the `monomial condition' of
Okuma \cite{Ok}.

We say that  $\tX$ satisfies the
`Weak End Curve Condition' (WECC) if $E^*_v\in \overline{\calS'_{im}}$ for any end vertex $v$.
In fact, by  \cite[Proposition 9.2.2]{NNI}, $\tX$ satisfies the WECC if and only if
$\overline{\calS'_{im}}=\calS'$. In general, $\overline{\calS'_{im}}\not=\calS'$,
for concrete examples see \cite{NNI} (or below in Example \ref{ex:NOECC}).

\subsection{The set   $\{\overline{\im (\widetilde{c}^{l'})}\}_{l'}\subset H^1(\calO_Z)$}\label{ss:images}
By Theorem \ref{th:Vk}{\it (7)} $\overline{ \im (c^{l'})}=A(l')$ is an affine space of dimension
$\dim V(l')$. Furthermore, by the general result Theorem \ref{prop:AZ}{\it (e)}
$\im (c^{nl'})$ is automatically closed, hence it equals
$\overline{ \im (c^{nl'})}=A(nl')$ ($n\gg 1$),
a parallel affine space with $A(l')$ of the same dimension.
(Indeed, use {\it (7)} for $nl'$ and $V(nl')=V(l')$ from Theorem \ref{prop:AZ}{\it (a)}.)

\begin{remark}\label{rem:17} (a) Even for elliptic singularities it can happen that
$\im (c^{l'})\not=\overline {\im (c^{l'})}$. For a non--Gorenstein example see e.g.
part \ref{bek:A9} of Appendix. For a Gorenstein example  we can take even the very same graph as in Appendix, and its hypersurface realization
$\{x^2+y^3+z^{17}=0\}$ with $p_g=2$ and $Z_K=E^*_2$.
 Then by Theorem \ref{th:Vk}{\it (7)--(8)} $c^{-Z_K}=c^{-E^*_2}$ is dominant. We claim that
%
%
%
%
$\calO(-E^*_2)\not\in \im(c^{-E^*_2})$ (though $\tX$ satisfies the ECC, i.e. $\calO(-E^*_v)\in \im(c^{-E^*_v})$ for any end--vertex $v$, and also $\calO(-nE^*_2)\not\in \im(c^{-nE^*_2})$
for $n\gg 1$).

This can be proved as follows. Note that $E_1^*$ and $E^*_2$ cannot be realized
as divisors of functions (restricted to $E$) simultaneously, since the linking number of their
arrows is one, or equivalently, if $f_1,f_2$ are some realizations then the degree of
$(f_1,f_2):(X,o)\to (\C^2,0)$ would be one. Since in this hypersurface case (or any Gorenstein case)
$E^*_1$ is realized, $E^*_2$ cannot be realized.

(b) Note that WECC says that $0\in \im (\widetilde{c}^{-nE^*_v})$  for all end vertices $v$
(and $n\gg 1$). This in the elliptic case
implies that $\im (\widetilde{c}^{-nE^*_v})=V(-nE^*_v)=V(-E^*_v)$, hence
$\overline{ \im (\widetilde{c}^{-E^*_v})}=V(-E^*_v)$, or
$0\in\overline{\im (\widetilde{c}^{-E^*_v})}$.  Nevertheless, from
$0\in\overline{\im (\widetilde{c}^{-E^*_v})}$  the inclusion
  $0\in \im (\widetilde{c}^{-E^*_v})$ (that is, the ECC property)
in general
 cannot be guaranteed yet (by a general argument valid for any normal surface singularity).
 However, for elliptic germs it works, and it will
 be proved later in Theorem \ref{th:9.4.2}.
\end{remark}
\bekezdes\label{bek:shifted}
{\bf The structure of the affine subspaces
$\{\overline{\im (\widetilde{c}^{l'})}\}_{l'}$} of $\pic^0$ is the following.
First, recall that
 the structure of the linear subspace arrangement $\{V(I)\}_I$ is very simple, it is a flag.
 Associated with a fixed $V(I)$ there are several (in general,  infinitely many) associated
 parallel affine subspaces of type $\overline{\im (\widetilde{c}^{l'})}$. Indeed,
 as in Theorem \ref{th:Vk}{\it (5)},
 for any  $I \subset \calv$  let $i$ be the maximal number, such that there exists a vertex
$u \in I$ with $u \in  B_i \setminus B_{i+1}$. For all $l'$,   such that $I(l')$  has
 the same $i$, all the affine spaces  $\overline{\im (\widetilde{c}^{l'})}$ have the same dimension, and are parallel
 to the same $V(I(l'))$. (Their shifts have even an additional `semigroup structure' in the sense that
 if $A(l')=a(l')+V(l')$ then $A(nl')=n\cdot a(l')+V(l')$.)

In particular, if two subspaces of type $\overline{\im (\widetilde{c}^{l'})}$
intersect each other nontrivially, then one of them should contain the other one.

\begin{example}\label{ex:NOECC}
 It can really happen
  that these parallel affine subspaces do not collapse into one vector space (namely into $V(I)$),
 see e.g. any elliptic singularity which does not satisfy ECC.
 For example, the points $\{\calL_n\}_{n\geq 1}$ in \ref{bek:A10} are all parallel affine subspaces associated with  $V(Z_{min})=0$.

 Next we present a Gorenstein case as well.
 Take  $\{z^2=(u^2-w)(u^{11}-w^3)\}$
 with the next  graph (where the $(-2)$--vertices are unmarked):

\begin{picture}(200,50)(-20,0)
\put(170,30){\circle*{4}}\put(190,30){\circle*{4}}\put(210,30){\circle*{4}}\put(230,30){\circle*{4}}
\put(150,30){\circle*{4}}\put(210,10){\circle*{4}}
\put(70,30){\circle*{4}}
\put(210,40){\makebox(0,0){\small{$-4$}}}
\put(70,22){\makebox(0,0){\small{$E_1$}}}\put(110,2){\makebox(0,0){\small{$E_2$}}}
\put(210,2){\makebox(0,0){\small{$E_3$}}}
\put(240,30){\makebox(0,0){\small{$E_4$}}}
\put(220,22){\makebox(0,0){\small{$E_0$}}}
\put(90,30){\circle*{4}}
\put(110,30){\circle*{4}}
\put(130,30){\circle*{4}}
\put(110,10){\circle*{4}}
\put(70,30){\line(1,0){160}}\put(110,10){\line(0,1){20}}\put(210,10){\line(0,1){20}}
\end{picture}

One verifies that $m=1$, hence $p_g=2$. Let the flag of $\{V(I)\}_I$'s be denoted by
 $0\subset V\subset \C^2$, where
$\dim(V)=1$, cf. \ref{th:Vk}{\it (5)}.

This topological type does not support any any analytic structure with ECC (it does not satisfies
the semigroup or the monomial condition at the $(-4)$--node). In fact, later
we will show that this graph does not admit any analytic type with WECC either.
This will follow  either from Theorem \ref{th:extension} directly,
or from Theorem \ref{th:9.4.2} using the nonexistence of
ECC structure.   Hence,
for the present Gorenstein structure WECC fails
at least  at one of the end--vertices.

On the other hand, one verifies that ${\rm div}_E(w)=E_1^*$,
${\rm div}_E(z)=E_0^*+E^*_2$,
${\rm div}_E(u^2-w)=2E_0^*$,
${\rm div}_E(u^{11}-w^3)=2E_2^*$.
(In fact if we denote ${\rm div}(z)$ by $E_0^*+E^*_2+D_0+D_2$, where $D_0$ and $D_2$ are two transversal cuts of $E_0$ and $E_2$ respectively, then ${\rm div}(u^2-w)=2E_0^*+2D_0$ and
${\rm div}(u^{11}-w^3)=2E_2^*+2D_2$.)
Hence
$E_0^*,\, E_1^*,\, E^*_2\in \overline{ \calS'_{im}}$. Therefore,  the only obstruction for WEEC can be
caused by $E_3$ or $E_4$.
But, ${\rm div}_E(u)=E_3^*+E^*_4$.
(The strict tranform is $\{u=z^2-w^4=0\}$, whose two components are permuted by the $\Z_2$--Galois action of the double covering $u\mapsto u,\ w\mapsto w,\, z\mapsto -z$.)
Hence there exists
$D_j\in \eca^{-E^*_j}$ ($j=3,4$), so that $\calO_{\tX}(D_3+D_4+E_3^*+E^*_4)=0$.
This means that the two points $\calL_j:=\calO_{\tX}(D_j+E^*_j)\in \im ( \widetilde{c}^{-E^*_j})$
($j=3,4$)
satisfy $\calL_3+\calL_4=0$ in $\pic^0$. Note also that
$V(E^*_3)=V(E^*_4)=V$ (use \ref{th:Vk}); hence $\calL_3+\calL_4=0$ implies that WEEC $\Leftrightarrow \,
0\in\overline{\im (\widetilde{c}^{-E^*_3})} \,
\Leftrightarrow\,
0\in\overline{\im (\widetilde{c}^{-E^*_4})}$. Since WECC does not hold,  we get that
$0 \not \in\overline{\im (\widetilde{c}^{-E^*_3})}$  and
$0\not \in\overline{\im (\widetilde{c}^{-E^*_4})}$. Furthermore,
$\overline{\im (\widetilde{c}^{-nE^*_3})}=n\calL_3+V$ and
$\overline{\im (\widetilde{c}^{-nE^*_4})}=-n\calL_3+V$ $(n\geq 1)$. All these 1--dimensional
affine subspaces are
distinct parallel ones in $\pic^0=\C^2$, all associated with $V$. (The Galois action is $n\mapsto -n$.)

$V$ can also be  realized as some $\overline{\im (\widetilde{c}^{l'})}$. Indeed,
since $Z_{min}=E_3^*+E_4^*={\rm div}_E(u)$, $0\in \overline{\im (\widetilde{c}^{-Z_{min}})}$, hence
 $V=\overline{\im (\widetilde{c}^{-Z_{min}})}$.
\end{example}

\section{The stratification of $\pic^{l'}$ according to $h^1$.}\label{ss:BN}

\subsection{Definition of the strata $W_{l',k}$}\label{ss:strata}
\begin{definition}\label{def:BN} \ \cite[5.8]{NNI} We fix any singularity, one of its resolutions $\tX$,
  and $l'\in -\calS'$. We  define
 $W_{l', k} = \{\calL \in \pic^{l'}(\tX) : h^1(\tX, \calL) = k\}$. Its closure in
 $\pic^{l'}(\tX)$ will be denoted by
$\overline{W_{l', k}}$.

If it is necessary, when we handle several resolution spaces, we might also write
$W_{l', k}(\tX)$.
\end{definition}

E.g., by Theorem \ref{prop:AZ}{\it (d)} and semicontinuity, if
 $c: = p_g - \dim\im (c^{l'}(Z))$, then $ \overline{\im (c^{l'}(Z))}\subset
 \overline{W_{l', c}}$.
Hence, for each $l'\in-\calS'$,  $\pic^{l'}(\tX)$ has a stratification
into constructible subsets according to $\calL\mapsto h^1(\calL)$.
We will describe the stratification in several steps.

\subsection{The general reduction to $l'=0$}\label{ss:Redto}
For any fixed  $l'\in -\calS'$ let $I=I(l')$ be the $E^*$--support of $l'$, and let $0\leq i\leq m+1$
be the maximal index with $I\cap B_{i-1}\not=\emptyset$. ($i=0$ happens when $I=\emptyset$.)

Let $K$ be the kernel of $\pi_i:H^1(\calO_{\tX})\to H^1(\calO_{\tX_i})$, reinterpreted as
the kernel of $\pic^0(\calO_{\tX})\to \pic^0(\calO_{\tX_i})$. Note that if $\calL\in \pic^{l'}(\tX)$
then $\calL|_{\tX_i}$ has trivial Chern class, hence the restriction induces  a well--defined
affine map $\pi_i^{l'}:\pic^{l'}(\tX)\to \pic^0(\tX_i)$. The group $K$ acts on $\pic^{l'}(\tX)$
via $(\calL^0,\calL)\in K\times \pic^{l'}(\tX)$, $(\calL^0,\calL)\mapsto \calL^0\otimes \calL$. Thus,
 the orbits are exactly the affine fibers of $\pi_i^{l'}$.

\begin{proposition}\label{prop:restrict}
(a) $\calL\mapsto h^1(\calL)$ is constant  along the fibers of $\pi_i^{l'}$.

(b) $W_{l',k}(\tX)=(\pi_i^{l'})^{-1} (W_{0,k}(\tX_i))$.
\end{proposition}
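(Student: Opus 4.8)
The plan is to reduce both parts to the vanishing statement that $H^1$ is insensitive, under suitable positivity, to twisting by line bundles pulled back from the contraction $\tX\to\tX_i$, i.e. to line bundles lying in the subgroup $K=\ker\pi_i$. Concretely, fix $\calL\in\pic^{l'}(\tX)$ and $\calL^0\in K$. First I would recall from Theorem \ref{prop:vanishing}(a), applied to the $E^*$-support $I=I(l')$ with maximal index $i$ for which $I\cap B_{i-1}\neq\emptyset$, that $h^1(\tX,\calL)=h^1(\tX_i,\calL|_{\tX_i})$, and likewise $h^1(\tX,\calL^0\otimes\calL)=h^1(\tX_i,(\calL^0\otimes\calL)|_{\tX_i})$ since tensoring by $\calL^0$ does not change the Chern class and hence does not change the $E^*$-support. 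So everything is computed on $\tX_i$. Now $\calL^0\in K$ means precisely that $\calL^0|_{\tX_i}$ is trivial in $\pic^0(\tX_i)$ — this is the definition of $K$ as the kernel of $\pic^0(\tX)\to\pic^0(\tX_i)$, combined with the fact that $\pic^0$ restriction agrees with the $H^1(\calO)$ restriction via the exponential sequence. Therefore $(\calL^0\otimes\calL)|_{\tX_i}\cong\calL|_{\tX_i}$, and the two $h^1$ values on $\tX_i$ coincide. This gives part (a): $h^1(\calL)$ is constant on each $K$-orbit, and the $K$-orbits are exactly the fibers of $\pi_i^{l'}$.

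For the precise bookkeeping I would be careful about one point: a priori $K$ is defined as a subgroup of $\pic^0(\tX)$, but the relevant restriction map in Theorem \ref{prop:vanishing} is restriction to a small neighbourhood $\tX_i$ of $\cup_{E_v\subset B_i}E_v$, and one needs that the natural identifications $H^1(\calO_{\tX})=\pic^0(\tX)$ and $H^1(\calO_{\tX_i})=\pic^0(\tX_i)$ are compatible with restriction. This is standard from the exponential sequences $0\to\pic^0\to\pic\to L'\to 0$ on $\tX$ and on $\tX_i$ and the functoriality of $H^1(-,\calO^*)$ and $H^1(-,\calO)$ under the open inclusion (or under the contraction), so I would just invoke it. With this in hand, $\calL^0\in K$ forces $\calL^0|_{\tX_i}$ to be $\calO_{\tX_i}$, and the argument of the previous paragraph goes through verbatim.

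Part (b) is then essentially a reformulation. The affine map $\pi_i^{l'}:\pic^{l'}(\tX)\to\pic^0(\tX_i)$ sends $\calL\mapsto\calL|_{\tX_i}$, and by the computation above $h^1(\tX,\calL)=h^1(\tX_i,\pi_i^{l'}(\calL))$ for every $\calL\in\pic^{l'}(\tX)$. Hence for any $k$,
\[
W_{l',k}(\tX)=\{\calL:h^1(\tX,\calL)=k\}=\{\calL:h^1(\tX_i,\pi_i^{l'}(\calL))=k\}=(\pi_i^{l'})^{-1}\big(\{\calM\in\pic^0(\tX_i):h^1(\tX_i,\calM)=k\}\big)=(\pi_i^{l'})^{-1}(W_{0,k}(\tX_i)),
\]
which is the claim. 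One should also note that $\pi_i^{l'}$ is surjective (it is an affine-space map whose associated linear map $H^1(\calO_{\tX})\to H^1(\calO_{\tX_i})$ is surjective by the surjectivity part of the relevant cohomology exact sequence, or directly because $\tX_i\hookrightarrow\tX$ yields a surjection on $H^1(\calO)$ for elliptic germs), so that $(\pi_i^{l'})^{-1}(W_{0,k}(\tX_i))$ is genuinely a union of fibers and the description is not vacuous; this also matches the statement that the $K$-orbits are the fibers.

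The main obstacle is not conceptual but the verification that $i$, the maximal index with $I\cap B_{i-1}\neq\emptyset$, is exactly the index appearing in Theorem \ref{prop:vanishing} so that the reduction $h^1(\tX,\calL)=h^1(\tX_i,\calL|_{\tX_i})$ applies uniformly — in particular one must handle the degenerate cases $I=\emptyset$ (where $i=0$, $\tX_0$ is the first numerically Gorenstein germ, and the statement is the tautology $W_{0,k}(\tX)=W_{0,k}(\tX_0)$ only after invoking Lemma \ref{lem:szk}) and the non-numerically-Gorenstein case where the pre-term $Z_{B_{-1}}=s_{[Z_K]}$ enters. I would dispatch these by pointing to Lemma \ref{lem:szk} (reduction to the numerically Gorenstein support $Z_K-s_{[Z_K]}$) together with Theorem \ref{prop:vanishing}(a), so that in all cases $h^1(\tX,\calL)$ is literally the $h^1$ of the restriction to $\tX_i$, and then the orbit/fiber identification is immediate.
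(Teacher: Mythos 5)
Your proposal is correct and follows essentially the same route as the paper: both reduce $h^1(\calL^0\otimes\calL)$ and $h^1(\calL)$ to their restrictions on $\tX_i$ via Theorem \ref{prop:vanishing}(a) (which applies since tensoring by $\calL^0\in K$ preserves the Chern class and hence the $E^*$--support), and then use that $\calL^0|_{\tX_i}$ is trivial; part (b) is the immediate reformulation. The extra care you take with the degenerate case $I=\emptyset$ and the compatibility of the $\pic^0$/$H^1(\calO)$ identifications is fine but not a departure from the paper's argument.
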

\begin{proof}
If  $(\calL^0,\calL)\in K\times \pic^{l'}(\tX)$, then $c_1(\calL^0\otimes \calL)=c_1(\calL)=l'$,
 hence by
Theorem \ref{prop:vanishing} one gets $h^1(\calL^0\otimes \calL)=h^1(\calL^0\otimes \calL|_{\tX_i})=
h^1(\calL|_{\tX_i})$.
\end{proof}

In particular, the $h^1$--stratification of $\pic^{l'}(\tX)$ is completely determined (as a pull--back
via an affine map) by the $h^1$--stratification of $\pic^0(\tX_i)$. This reduces its study
to the $l'=0$ case.

In the next subsections \ref{ss:Gor0} and \ref{ss:nonGor0}
we clarify the $l'=0$ case. Though the statements for the
Gorenstein and non--Gorenstein cases can be formulated uniformly, we still decided to separate the two cases; in this way we can emphasize better the peculiarities of both situations.

\subsection{The case $(X,o)$ Gorenstein and $l'=0$}\label{ss:Gor0}
For any $j\in\{0,\ldots, m+1\}$ we denote the natural linear projection
$H^1(\calO_{\tX})\to H^1(\calO_{\tX_j})$ by $\pi_j$, and we also interpret it as the restriction
$\pic^0(\tX)\to \pic^0(\tX_j)=\pic^0(C'_j)$. (Here and below, by convention, $C_{-1}=
  C'_{m+1}=0$ and $H^1(\calO_{\tX_{m+1}})=0$.)

We write $W_{0,k}$  for $W_{0,k}(\tX)$. Recall that $m+1=p_g$.

\begin{theorem}\label{th:W1}  Assume that $(X,o)$ is Gorenstein, and we fix some
$\calL\in \pic^0(\tX)$. Let $l$ be the cycle of
fixed components of $\calL$ and write $\widetilde{l}:=\min\{l,Z_K\}$.
($\widetilde{l}\in\{C_{-1}, C_0,\ldots, C_m\}$ by Lemma \ref{lem:Ci}.)

\vspace{1mm}

\noindent {\bf (Coarse version:)}  The following facts are equivalent:

\vspace{1mm}

(a) $\widetilde{l}\in \{C_{-1}, C_0,\ldots,  C_{j-1}\}$;

(b) $\calL\in \ker(\pi_j)$;

(b') $\calL(-C_{j-1})|_{C'_j}\in \pic^0(C'_j)$ is trivial;

(c) $\calL\in \overline{ W_{0, p_g-j}}$;

(d) $\calL\in \overline{ \im (\widetilde{c}^{-C_{j-1}}(Z)) }$ (via identification
$\pic^0(Z)=\pic^0(\tX)$, where $Z\gg 0$).

In particular, each  $\overline{ W_{0, p_g-j}}$ is irreducible, it is
a $j$--dimensional linear subspace of $\pic^0(\tX)$, and, in fact, it equals both
 $\overline{ \im (\widetilde{c}^{-C_{j-1}} (Z)})$ and $\ker(\pi_j)$. E.g.,
 $W_{0,p_g}=\{\calO_{\tX}\}$ and $\overline{ W_{0,0}}=\pic^0(\tX)$.
 Furthermore,
 $W_{0,p_g-j+1}\subset \overline{ W_{0, p_g-j}}$ whenever $j>0$
 and $W_{0, p_g-j}= \overline{ W_{0, p_g-j}}\setminus \overline{ W_{0, p_g-j+1}}$.

\vspace{2mm}

\noindent {\bf (Fine version:)}  The following facts are equivalent:

\vspace{1mm}

(f-a) $\widetilde{l}= C_{j-1}$;

(f-b) $\calL\in \ker(\pi_j)\setminus \ker(\pi_{j-1})$;


(f-c) $\calL\in  W_{0, p_g-j}$.



\end{theorem}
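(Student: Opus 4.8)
The plan is to prove the coarse version first, and then derive the fine version as an immediate corollary of the chain of equivalences. All four coarse conditions will be shown equivalent by establishing a cycle of implications, with several of them reducing directly to earlier results in the paper.

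\textbf{Coarse version.} First I would handle the equivalence (a)$\Leftrightarrow$(b)$\Leftrightarrow$(b'). The implication (a)$\Rightarrow$(b') follows from Lemma \ref{lem:INEQj}(b): if $\widetilde{l}=\min\{l,Z_K\}\in\{C_{-1},\ldots,C_{j-1}\}$, say $\widetilde l = C_{i-1}$ with $i\le j$, then $\calL(-C_{i-1})|_{C'_i}$ is trivial; since $C_{j-1}\ge C_{i-1}$ and $C'_j\le C'_i$, and $\calL(-C_{i-1})|_{C'_i}$ trivial forces $\calL(-C_{j-1})|_{C'_j}$ trivial (restrict further and twist by the remaining $Z_{B_k}$'s, using the Gorenstein triviality of $\calO_{C'_{k+1}}(-C_k)$ from Theorem \ref{e220}(d)). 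Then (b')$\Leftrightarrow$(b) is essentially the definition: $\ker(\pi_j)$ is the kernel of $\pic^0(\tX)\to\pic^0(C'_j)$, and $\calL\mapsto\calL(-C_{j-1})|_{C'_j}$ differs from $\calL\mapsto\calL|_{C'_j}$ by the twist $\calO_{C'_j}(-C_{j-1})$, which is trivial in the Gorenstein case (Theorem \ref{e220}(d)). So $\calL\in\ker(\pi_j)$ iff $\calL|_{C'_j}$ is trivial iff $\calL(-C_{j-1})|_{C'_j}$ is trivial. For (b)$\Rightarrow$(a): if $\widetilde l=C_{i-1}$ with $i>j$, then by Lemma \ref{lem:INEQj}(b) $h^1(\calL)=p_g(\tX_i)=m+1-i<m+1-j=p_g(\tX_j)$; but if also $\calL\in\ker(\pi_j)$ then $\calL|_{C'_j}$ trivial gives $h^1(\calL)\ge h^1(\calL|_{C'_j})=h^1(\calO_{C'_j})=m-j+1$ (using Theorem \ref{e220}(b)), a contradiction. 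Here I should be careful about the reduction $h^1(\calL)=h^1(\calL|_{C'_j})$ when $\calL|_{C'_j}$ is trivial; this uses that $\calL$ has a section vanishing exactly on $C_{j-1}\le l$, so restricting to $C'_j$ loses nothing, plus the vanishing $h^1(\calL(-Z_K))=0$ as in the proof of Lemma \ref{lem:INEQj}(a).

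\textbf{Linking to $W$ and the Abel map.} Next, (a)$\Leftrightarrow$(c): combining (a)$\Leftrightarrow$(b) with the value $h^1(\calL)=p_g(\tX_i)$ when $\widetilde l=C_{i-1}$ (Lemma \ref{lem:INEQj}(b)), we see that on the locus $\{\widetilde l\le C_{j-1}\}=\ker(\pi_j)$ the function $h^1$ takes the value $m+1-j=p_g-j$ generically (namely when $\widetilde l=C_{j-1}$ exactly) and $\ge p_g-j$ everywhere. Since $\ker(\pi_j)$ is a linear subspace of dimension $j$ (it is the kernel of a surjective linear map $H^1(\calO_{\tX})\to H^1(\calO_{C'_j})$ of ranks $m+1$ and $m+1-j$), and it is irreducible, and $W_{0,p_g-j}$ is dense in it, we get $\ker(\pi_j)=\overline{W_{0,p_g-j}}$, which is precisely (c). The identification with $\overline{\im(\widetilde c^{\,-C_{j-1}}(Z))}$, i.e. (d), comes from Theorem \ref{th:Vk}(7): $\overline{\im(c^{-C_{j-1}})}=A(-C_{j-1})$ is an affine subspace, and since $\calO_{\tX}\in\im(\widetilde c^{\,-C_{j-1}})$ (the natural bundle $\calO_{\tX}(-C_{j-1})$ has no fixed components beyond... — more carefully: $0\in\im(\widetilde c^{\,-C_{j-1}})$ because $C_{j-1}\in\calS$ is realized as $\mathrm{div}_E$ of a section of $\calO_{\tX}(-C_{j-1})$ by Theorem \ref{e220}(c) together with Remark \ref{bek:discussion2}, so $\widetilde c^{\,-C_{j-1}}$ hits the origin), the affine subspace $A(-C_{j-1})$ passes through $0$, hence is the linear subspace $V(-C_{j-1})$. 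Its dimension is $\dim V(I(C_{j-1}))$; since $I(C_{j-1})$ meets $B_{j-1}$ but not $B_j$ (as $C_{j-1}=\sum_{k\le j-1}Z_{B_k}$ and by \eqref{eq:orthogonal}), Theorem \ref{th:Vk}(5) and (4) give $\dim V(I(C_{j-1}))=j$. A dimension count then forces $V(-C_{j-1})=\ker(\pi_j)$, or one can cite Theorem \ref{prop:AZ}(c) directly: $\dim V(I)=p_g(X)-p_g(X_{\calv\setminus I})=(m+1)-p_g(\tX_j)=j$, and Proposition \ref{prop:vanishing}(a) identifies the relevant restriction. The remaining assertions (irreducibility, the flag $W_{0,p_g-j+1}\subset\overline{W_{0,p_g-j}}$, the set-difference description) are then formal consequences: $\overline{W_{0,p_g-j}}=\ker(\pi_j)\supsetneq\ker(\pi_{j-1})=\overline{W_{0,p_g-j+1}}$, and $W_{0,p_g-j}=\overline{W_{0,p_g-j}}\setminus\overline{W_{0,p_g-j+1}}$ because $h^1=p_g-j$ exactly on $\ker(\pi_j)\setminus\ker(\pi_{j-1})$, which is (f-a)$\Leftrightarrow$(f-b)$\Leftrightarrow$(f-c).

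\textbf{Fine version and the main obstacle.} The fine version is now immediate: (f-a) $\widetilde l=C_{j-1}$ is equivalent to ``$\widetilde l\le C_{j-1}$ and $\widetilde l\not\le C_{j-2}$'', which by the coarse equivalence (a)$\Leftrightarrow$(b) translates to $\calL\in\ker(\pi_j)\setminus\ker(\pi_{j-1})$, i.e. (f-b); and (f-b)$\Leftrightarrow$(f-c) because $h^1$ is constant $=p_g(\tX_i)=p_g-i$ on $\ker(\pi_i)\setminus\ker(\pi_{i-1})$ for each $i$, so $h^1(\calL)=p_g-j$ precisely when $\calL$ lies in the $j$-th shell. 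I expect the main obstacle to be the careful bookkeeping in (b)$\Rightarrow$(a), specifically justifying $h^1(\calL)=h^1(\calL|_{C'_j})$ (rather than just an inequality) when $\calL\in\ker(\pi_j)$ and the cycle of fixed components is a priori only known to satisfy $\widetilde l\le C_{j-1}$ — one must run the computation-sequence/vanishing argument of Lemma \ref{lem:INEQj}(a) in the right direction and make sure the Gorenstein triviality of the bundles $\calO_{C'_{k+1}}(-C_k)$ is invoked at each gluing step. A secondary subtlety is checking that $0\in\im(\widetilde c^{\,-C_{j-1}})$ genuinely holds (not merely $0\in\overline{\im}$), which is where the Gorenstein hypothesis — via Theorem \ref{e220}(c) and the refinement in Remark \ref{bek:discussion2} that $\mathrm{div}_E$ of a suitable function equals exactly $C_{j-1}$ — is essential and distinguishes this case from the non-Gorenstein one treated in \S\ref{ss:nonGor0}.
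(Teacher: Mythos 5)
Most of your argument coincides with the paper's proof: (b)$\Leftrightarrow$(b') via the Gorenstein triviality of $\calO_{C'_j}(-C_{j-1})$, (a)$\Rightarrow$(b) via Lemma \ref{lem:INEQj}(b), and the identification of $\{h^1=p_g-j\}$ with $\ker(\pi_j)\setminus\ker(\pi_{j-1})$ via Lemmas \ref{lem:Ci} and \ref{lem:INEQj}(b). Your treatment of (b)$\Rightarrow$(c) by observing that $\ker(\pi_j)\setminus\ker(\pi_{j-1})$ is dense in the linear space $\ker(\pi_j)$ is a slightly cleaner packaging of the paper's explicit limiting-sequence argument, and your derivation of the fine version is the same. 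Also note that for (b)$\Rightarrow$(a) you only need the inequality $h^1(\calL)\geq h^1(\calL|_{C'_j})$, which is automatic from the surjection $H^1(\calL)\to H^1(\calL|_{C'_j})$, so the "careful reduction" you worry about there is not actually needed.

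There is, however, a genuine error in your handling of (d). You assert that $0\in\im(\widetilde{c}^{\,-C_{j-1}})$, i.e.\ that $\calO_{\tX}(-C_{j-1})$ has no fixed components, citing Theorem \ref{e220}(c) and Remark \ref{bek:discussion2}. But Theorem \ref{e220}(c) only controls the vanishing orders of $f_{j-1}$ along $B_j$; in general ${\rm div}_E(f_{j-1})=C_{j-1}+x_{j-1}$ with $x_{j-1}>0$ possible, and Remark \ref{bek:discussion2} explicitly states that eliminating $x_{j-1}$ requires the extra hypothesis $C^2\neq -1$ (with $\{x^2+y^3+z^{6m+7}=0\}$ as a counterexample otherwise); see also Remark \ref{rem:d} and Theorem \ref{th:fixed}. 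Since Theorem \ref{th:W1} assumes only Gorenstein, your claim fails in general, and with it your route to showing that $A(-C_{j-1})$ is a linear (rather than merely affine) subspace. Moreover, your fallback "dimension count" is incomplete: two $j$--dimensional linear subspaces of $\C^{p_g}$ need not coincide without an inclusion between them. The correct (and short) fix is the paper's: any $D\in\eca^{-C_{j-1}}(Z)$ restricts to an effective Cartier divisor on $C'_j$ of Chern class zero by (\ref{eq:orthogonal}), whose bundle is trivial by (\ref{eq:Chernzero1}); twisting by the trivial bundle $\calO_{C'_j}(C_{j-1})$ gives $\im(\widetilde{c}^{\,-C_{j-1}})\subset\ker(\pi_j)$, and then the equality of dimensions ($j$ on both sides, by Theorem \ref{th:Vk} and Theorem \ref{e220}) forces $\overline{\im(\widetilde{c}^{\,-C_{j-1}})}=\ker(\pi_j)$. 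With this substitution your proof is complete.
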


\begin{proof}
First we prove the `Coarse version'.
Notice that
{\it (b)} reads as $\calL|_{C'_j}$ is trivial, hence
{\it (b)$\Leftrightarrow$(b')}
follows from the triviality of $\calO_{C'_j}(-C_{j-1})$, cf. Theorem \ref{e220}.

{\it (a)$\Rightarrow$(b)} follows from Lemma \ref{lem:INEQj}{\it (b)}:
if $\widetilde{l}=C_{i-1}$ $(i\leq j)$ then $\calL|_{C_i'}$ is trivial, hence
$\calL|_{C_j'}$ is trivial too.

{\it (b)$\Leftrightarrow$(d)} By restriction, $\im (c^{-C_{j-1}})\subset \{\widetilde{\calL}
\in \pic^{-C_{j-1}}(\tX)\,:\, \widetilde{\calL}|_{C_j'}=\calO_{C_j'}\}$.
That is,  via {\it (b)$\Leftrightarrow$ (b')},
 $\im (\widetilde{c}^{-C_{j-1}})\subset \ker(\pi_j)$. This implies
 $\overline{\im (\widetilde{c}^{-C_{j-1}})} \subset \ker(\pi_j)$ too.
Note that  $\overline{\im (\widetilde{c}^{-C_{j-1}})}$ is an affine subspace of dimension $j$
(cf. Theorem \ref{th:Vk}) and $\dim \ker(\pi_j)=j$ too (cf. Theorem \ref{e220}). Therefore,
 $\overline{\im (\widetilde{c}^{-C_{j-1}})} = \ker(\pi_j)$.

{\it (c)$\Rightarrow$(a)} Fix some $\calL\in  W_{0, p_g-j}\subset  \pic^0(\tX)$.
This means that $h^1(\calL)=p_g-j$ ($\dag$).
Now, we know that $\widetilde{l}$ associated with $\calL$ is $C_{i-1}$ for some $i$, cf. Lemma \ref{lem:Ci}. By Lemma
\ref{lem:INEQj}{\it (b)} $h^1(\calL)=p_g-i$, hence by  ($\dag$) one gets $i=j$.
If $\calL\in \overline{ W_{0, p_g-j}}\setminus  W_{0, p_g-j}$, then by
semicontinuity of $h^1$ one has $h^1(\calL)=p_g-j'$ for some $j'< j$,
hence by the very same argument
$\widetilde{l}=C_{j'-1}$.

{\it (b)$\Rightarrow$(c)} Assume that $\calL|_{C_j'}$ is trivial.
Then $h^1(\calL)\geq h^1(\calL|_{C'_j})=
h^1(\calO_{C'_j})=p_g-j$ (cf. Theorem \ref{e220}). If $h^1 (\calL)=p_g-j$, then we are done.
Next assume that $h^1 (\calL)=p_g-j'>p_g-j$ for some $j'<j$. Then by the implication
{\it (c)$\Rightarrow$(b)} already proved, from $h^1(\calL)=p_g-j'$
 we get  $\calL\in \ker(\pi_{j'})$.
Consider a convergent sequence of line bundles $\{\calL_n\}_n$ in $\ker(\pi_j)\setminus
\ker(\pi_{j-1})$ with $\lim_{n\to \infty}\calL_n=\calL$. As above, but now for $\calL_n$,
 $h^1(\calL_n)\geq h^1(\calL_n|_{C'_j})=p_g-j$. However, here necessarily we should
have  equality (otherwise, if $h^1(\calL_n)> p_g-j$ then by the  implication
{\it (c)$\Rightarrow$(b)}
$\calL_n\in \ker(\pi_{j-1})$, which leads to a contradiction.) Hence
$\calL_n\in W_{0,p_g-j}$ and $\calL=\lim_{n\to\infty}\calL_n\in \overline{W_{0,p_g-j}}$.

The fine version follows directly from the coarse one.
\end{proof}

\begin{remark}\label{rem:d} The `fine version' cannot be completed with
{\it (f-d)}   \ $ \calL\in \im (\widetilde{c}^{-C_{j-1}}(Z)) $, as a forth condition equivalent
with {\it (f-a)$\Leftrightarrow$(f-b)$\Leftrightarrow$(f-c)}.

Indeed, {\it (f-d)$\Rightarrow$ (f-a,f-b,f-c)} does not hold. Take for example a
Gorenstein singularity with $m=1$, and $\calL:=\calO$. Then $\calO(-Z_{min})$ has no fixed components
\cite[\S 5]{weakly}, that is,  $\calO\in \im (\widetilde{c}^{-Z_{min}})$,
hence {\it (f-d)} holds for $j=1$.
On the other hand, $\calO\not\in W_{0,1}$.

However, the opposite implication
{\it (f-a,f-b,f-c)$\Rightarrow$(f-d)}
holds whenever $l\leq Z_K$ (e.g. when either  $C^2\not=-1$, or when $(X,o)$ is minimally
 elliptic, for details
see Theorem \ref{th:fixed}). In such case $\widetilde{l}=l$. Hence,
if $\widetilde{l}=C_{j-1}$ then in fact $l=C_{j-1}$, or $\calL(-C_{j-1})$ has no fixed components,
$\calL(-C_{j-1})\in \im (c^{-C_{j-1}})$ and $\calL\in \im (\widetilde{c}^{-C_{j-1}})$.
\end{remark}

\bekezdes {\bf Question:} Does
 $W_{0,p_g-j}\subset  \im (\widetilde{c}^{-C_{j-1}})$
hold for any Gorenstein singularity?

\subsection{The case $(X,o)$ non--Gorenstein and $l'=0$}\label{ss:nonGor0}
Fix an elliptic numerically Gorenstein singularity with length $m+1$ and  minimal resolution
$\tX$. Let $\alpha$ be defined as in (\ref{eq:alpha}).
Recall that $p_g(\tX)=p_g(\tX_j)$  and $\pic^0(\tX)=\pic^0(\tX_j)=\pic^0(C'_j)$ for any $0\leq j\leq \alpha$.

\begin{theorem}\label{th:W1b}  We fix some
$\calL\in \pic^0(\tX)$ and we denote by  $l$ the cycle of
fixed components of $\calL$. We also set $\widetilde{l}:=\min\{l,Z_K\}$.

\vspace{1mm}

Fix  any $j\in \{0, \ldots , \alpha\}$. Then
 for any such $j$, $\im (\widetilde{c}^{-C_{j-1}})$  consists of a single point and
$$ \Big\{\calL\,:\, \calL(-C_{j-1})|_{C'_j} \ \mbox{is trivial in }
\ \pic^0(C' _j)\Big\}=
\im (\widetilde{c}^{-C_{j-1}}).$$
  Moreover,  one has the next inclusions as well:
$$\{\calL\,:\, \widetilde{l}=C_{j-1}\}\subset
\im (\widetilde{c}^{-C_{j-1}}) \ \ \mbox{and} \ \
\im (\widetilde{c}^{-C_{j-1}})\subset \cup_{j'\leq j} \{\calL\,:\, \widetilde{l}=C_{j'-1}\}.$$
 Furthermore,
$W_{0,p_g(\tX)}=\cup_{j=0}^\alpha \im (\widetilde{c}^{-C_{j-1}})$.
Corresponding to $j=0$ (when $C_{-1}=\widetilde{l}=l=0$), $\im (\widetilde{c}^{-C_{-1}})=\{\calL:\, \widetilde{l}=C_{-1}\}=\calO$,
the origin $0\in \pic^0(\tX)$. (About the position of the other points we claim nothing.)

Corresponding to the indices $j\in \{0, \alpha+1,\alpha+2,\ldots, m+1\}$ we have
similar statements as in
Theorem \ref{th:W1}. Namely:

\noindent {\bf (Coarse version:)}  For any $j\in \{0, \alpha+1, \alpha+2,\ldots, m+1\}$
the following facts are equivalent:

\vspace{1mm}

(a) $\widetilde{l}\in \{C_{-1}, C_\alpha, C_{\alpha+1}, \ldots, C_{m+1}\}$;

(b) $\calL\in \ker(\pi_j)$;

(b') $\calL(-C_{j-1})|_{C'_j}\in \pic^0(C'_j)$ is trivial;

(c) $\calL\in \overline{ W_{0, p_g(\tX_j)}}$;

(d) $\calL\in \overline{ \im (\widetilde{c}^{-C_{j-1}}(Z)) }$ (via identification
$\pic^0(Z)=\pic^0(\tX)$, where $Z\gg 0$).

In particular, for any $j>\alpha $,
each  $\overline{ W_{0, p_g(\tX_j)}}$ is irreducible, it is
a $(p_g(\tX)-p_g(\tX_j))$--dimensional linear subspace of $\pic^0(\tX)$, and, in fact, it equals both
 $\overline{ \im (\widetilde{c}^{-C_{j-1}} (Z)})$ and $\ker(\pi_j)$.
 E.g.,
 $\overline{ W_{0,0}}=\pic^0(\tX)$.
 Furthermore,
 $W_{0,p_g(\tX_{j-1})}\subset \overline{ W_{0, p_g(\tX_j)}}$ whenever $j\geq \alpha+2$
and the same is true for $j=\alpha +1$ too if we disregard the points
$\cup_{j=1}^\alpha \im (\widetilde{c}^{-C_{j-1}})\subset W_{0,p_g(\tX)}$.

\vspace{2mm}

\noindent {\bf (Fine version:)}
If we disregard the points
$\cup_{j=1}^\alpha \im (\widetilde{c}^{-C_{j-1}})\subset W_{0,p_g(\tX)}$, then for any
 $j\in \{0, \alpha+1, \alpha+2,\ldots, m+1\}$
the following facts are equivalent:

\vspace{1mm}

(f-a) $\widetilde{l}= C_{j-1}$;

(f-b) $\calL\in \ker(\pi_j)\setminus \ker(\pi_{j-1})$;


(f-c) $\calL\in  W_{0, p_g(\tX_j)}$.



\end{theorem}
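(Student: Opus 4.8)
The plan is to reuse as much of the machinery of Theorem \ref{th:W1} as possible, localizing all arguments to the numerically Gorenstein support $B_\alpha$, where Theorem \ref{th:Okuma} tells us that $(X_\alpha,o_\alpha)$ is Gorenstein with $p_g(X_\alpha,o_\alpha)=p_g(X,o)=m+1-\alpha$. First I would record the basic restriction facts: by Lemma \ref{lem:szk} and Theorem \ref{prop:vanishing} (applied with trivial Chern class), $h^1(\tX,\calL)=h^1(\tX_j,\calL|_{\tX_j})$ and $\pic^0(\tX)=\pic^0(\tX_j)=\pic^0(C'_j)$ for every $0\leq j\leq\alpha$, so computing $h^1$ of bundles in $\pic^0(\tX)$ is literally the same as computing it for $(X_\alpha,o_\alpha)$. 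This immediately transplants the ``Coarse'' and ``Fine'' equivalences of Theorem \ref{th:W1}, stated for the Gorenstein germ $(X_\alpha,o_\alpha)$ with its own elliptic sequence $C_\alpha\subset C_{\alpha+1}\subset\cdots\subset C_{m+1}=Z_K$, onto $\tX$ for the indices $j\in\{\alpha+1,\ldots,m+1\}$ (the index $j=0$ being the degenerate $\calL=\calO$ case). The only subtlety is matching notation: the cycle of fixed components $l$ of $\calL$ on $\tX$ restricts to the cycle of fixed components of $\calL|_{\tX_\alpha}$ on $\tX_\alpha$ exactly on the support $B_\alpha$, and $\widetilde l=\min\{l,Z_K\}\in\{C_{-1},C_0,\ldots,C_m\}$ by Lemma \ref{lem:Ci}; but the value of $h^1$ only sees the part of $\widetilde l$ supported on $B_\alpha$, which is why the $j$'s with $1\leq j\leq\alpha$ all produce the same $h^1$-value $p_g(\tX)$ and hence collapse into the single stratum $W_{0,p_g(\tX)}$.

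Next I would treat the ``wandering points'' $\bigcup_{j=1}^\alpha\im(\widetilde c^{-C_{j-1}})$. For $0\leq j\leq\alpha$ one has, by Theorem \ref{th:Vk}(7) together with part (1) of that theorem (which gives $V(u)=0$ for $u\in B_{-1}\setminus B_\alpha$), that $\dim\overline{\im(c^{-C_{j-1}})}=\dim V(I(C_{j-1}))=0$, so $\im(\widetilde c^{-C_{j-1}})$ is a single point; and by Theorem \ref{prop:AZ}(d) (or Theorem \ref{th:Vk}(8)) the $h^1$ along that point equals $p_g-0=p_g(\tX)$, so all these points lie in $W_{0,p_g(\tX)}$. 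The description $\{\calL:\calL(-C_{j-1})|_{C'_j}\text{ trivial}\}=\im(\widetilde c^{-C_{j-1}})$ is proved by the same two inclusions as in Theorem \ref{th:W1}: one direction is the restriction argument (a divisor in $\eca^{-C_{j-1}}$ gives a section trivializing $\calL(-C_{j-1})$ on the relevant cycle, cf. Lemma \ref{lem:INEQj}(b) in the Gorenstein-on-$B_0$ form which forces the generic section of $\calL(-C_{j-1})$ to trivialize it on $C'_j$), and the reverse inclusion uses that both sides are zero-dimensional affine subspaces (single points) of $\pic^0(\tX)$. The sandwich $\{\calL:\widetilde l=C_{j-1}\}\subseteq\im(\widetilde c^{-C_{j-1}})\subseteq\bigcup_{j'\leq j}\{\calL:\widetilde l=C_{j'-1}\}$ then follows: the left inclusion because $\widetilde l=C_{j-1}$ forces $l=\widetilde l=C_{j-1}$ on the relevant support hence $\calL(-C_{j-1})$ has no fixed component there, and the right inclusion by semicontinuity/monotonicity of the fixed-cycle $\widetilde l$ under restriction to the image of an Abel map (Theorem \ref{th:fixed} and Lemma \ref{lem:INEQj}). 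Finally $W_{0,p_g(\tX)}=\bigcup_{j=0}^\alpha\im(\widetilde c^{-C_{j-1}})$ follows because $\calL\in W_{0,p_g(\tX)}$ iff $h^1(\calL|_{\tX_\alpha})=p_g(X_\alpha)$ iff (by Theorem \ref{th:W1}(f-c)$\Leftrightarrow$(f-a) applied to $(X_\alpha,o_\alpha)$) the fixed cycle of $\calL|_{\tX_\alpha}$ is $C_{\alpha-1}=0$ on $B_\alpha$, i.e. $\widetilde l\in\{C_{-1},C_0,\ldots,C_{\alpha-1}\}$, and each of these $\alpha+1$ cases is exactly one of the $\im(\widetilde c^{-C_{j-1}})$.

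For the remaining indices $j\in\{\alpha+1,\ldots,m+1\}$ I would copy the proof of Theorem \ref{th:W1} verbatim, working inside $(X_\alpha,o_\alpha)$: the implications (a)$\Rightarrow$(b)$\Leftrightarrow$(b')$\Leftrightarrow$(d)$\Rightarrow$(c)$\Rightarrow$(a) and the dimension count ``$\overline{\im(\widetilde c^{-C_{j-1}})}=\ker\pi_j$ of dimension $p_g(\tX)-p_g(\tX_j)$'' go through using Theorem \ref{th:Vk}(7) for $\overline{\im(c^{-C_{j-1}})}=A(-C_{j-1})$, Theorem \ref{e220}(b) for $\dim\ker\pi_j$, Lemma \ref{lem:INEQj}(b) and Theorem \ref{prop:vanishing} for the $h^1$-identities, and semicontinuity for the closure statements. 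The nesting $W_{0,p_g(\tX_{j-1})}\subset\overline{W_{0,p_g(\tX_j)}}$ for $j\geq\alpha+2$ is immediate from the flag structure $\ker\pi_{j-1}\subset\ker\pi_j$; for $j=\alpha+1$ one only gets this ``up to the wandering points,'' since $\ker\pi_\alpha=0$ but $W_{0,p_g(\tX)}$ contains the extra points $\im(\widetilde c^{-C_{j-1}})$, $1\leq j\leq\alpha$, which need not lie on $\overline{W_{0,p_g(\tX_{\alpha+1})}}=\ker\pi_{\alpha+1}$. The main obstacle I anticipate is precisely the bookkeeping around these wandering points: one must be careful that the ``non-Gorenstein defect'' is entirely concentrated in $B_{-1}\setminus B_\alpha$, that it contributes a zero-dimensional $V$ (hence isolated points, not higher-dimensional strata), and that excising these finitely many points is exactly what is needed to make the clean equivalences (f-a)$\Leftrightarrow$(f-b)$\Leftrightarrow$(f-c) and the Coarse version hold; establishing that $\im(\widetilde c^{-C_{j-1}})$ for $1\leq j\leq\alpha$ are genuinely distinct points generally off the higher strata — rather than proving anything quantitative about where they sit — is the delicate point, and this is why the statement explicitly ``claims nothing'' about their position.
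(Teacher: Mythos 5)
Your proposal is correct and follows essentially the same route as the paper: reduce everything to the Gorenstein germ $(X_\alpha,o_\alpha)$ via Theorem \ref{th:Okuma} and the restriction isomorphisms $\pic^0(\tX)\simeq\pic^0(\tX_j)$ for $j\leq\alpha$, rerun the proof of Theorem \ref{th:W1} for the indices $j>\alpha$, and handle $j\leq\alpha$ by observing that $\im(\widetilde c^{-C_{j-1}})$ is a single point contained in the (one-point) set of bundles with trivial restriction to $C'_j$, with the sandwich inclusions coming from Lemma \ref{lem:INEQj}(b) and Lemma \ref{lem:Ci}. The only wording to tighten is the justification of the left sandwich inclusion: $\widetilde l=C_{j-1}$ does not force $l=C_{j-1}$ (cf.\ the Appendix, where $l>Z_K$), but Lemma \ref{lem:INEQj}(b) --- which you cite --- gives the needed triviality of $\calL(-C_{j-1})|_{C'_j}$ directly.
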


\begin{proof}
The proof of Theorem \ref{th:W1} can be addapted. Let us prove the statements valid for $l\leq \alpha$.

Since the restriction $\pic^{-C_{j-1}}(\tX)\to \pic^0(C'_j)$ is an affine isomorphism,
$\{\calL: \calL(-C_{j-1})|_{C'_j}=\calO_{C'_j}\}$ is a point. Since
$\im (\widetilde{c}^{-C_{j-1}})\subset
\{\calL: \calL(-C_{j-1})|_{C'_j}=\calO_{C'_j}\}$,
the two (one cardinality) sets must agree. Furthermore,
$\{\calL; \widetilde{l}=C_{j-1}\}\subset \{\calL: \calL(-C_{j-1})|_{C'_j}=\calO_{C'_j}\}$ by Lemma
\ref{lem:INEQj}{\it (b)}. However, if $\calL\in \im (\widetilde{c}^{-C_{j-1}})$ then $\calL(-C_{j-1})\in
\im (c^{-C_{j-1}})$, hence $\widetilde{l}\leq C_{j-1}$.

Next, $\cup_{j=0}^\alpha \{\calL; \widetilde{l}=C_{j-1}\} \subset W_{0,p_g(\tX)}$ by
Lemma \ref{lem:INEQj}{\it (b)} and Theorem \ref{th:Okuma}. Finally, assume that $\calL\in W_{0,p_g}$.
Then, by definition, $h^1(\calL)=p_g$. On the other hand, assume that
its $\widetilde{l}$ is some $C_{i-1}$. Then by Lemma \ref{lem:INEQj}{\it (b)} $h^1(\calL)=p_g(\tX_i)$, hence $i\leq \alpha$ and $\calL\in \{\calL: \widetilde{l}\leq C_{\alpha-1}\}$.

The second case $j\in \{0, \alpha+1,\ldots, m+1\}$ follows analogously as the proof of the Gorenstein case, once we replace the statement {\it (d)} of Theorem \ref{e220} with
Theorem \ref{th:Okuma}.
\end{proof}
\begin{remark}
(a) If $l'=0$ then the $\{\overline{W_{0,k}}\}_k$ stratification is determined by a flag $\{0\}\cup \{
 \overline{ \im (\widetilde{c}^{-C_{j-1}}(Z)) }\}_{j\geq \alpha+1}$ of linear vector spaces
 whose dimensions are increasing one-by-one from 0 to $p_g$, and also by several
 {\it `wandering points'} $\cup_{j=1}^\alpha \im (\widetilde{c}^{-C_{j-1}})$, all of them being in
 $ W_{0,p_g(\tX)}$.
 All the irreducible components of $\overline{W_{0,k}}$ are affine subspaces of type
 $\overline{ \im (\widetilde{c}^{-C_{j-1}}(Z)) }$ ($0\leq j\leq m+1$), if $k<p_g$ then they are even linear subspaces.
 The non--linear ones are all points.

 (b) See Appendix for a non--Gorenstein singularity with $m=1$,
 $\alpha =1$, where there is a wandering point $\calL_1\not=0$ (cf. \ref{bek:A8}--\ref{bek:A10}).
\end{remark}

\bekezdes {\bf Questions:}
(1) Are the wandering points all distinct? Are they all different than 0 ?
(That is, is the cardinality of $\cup_{j=0}^\alpha \im (\widetilde{c}^{-C_{j-1}})$ exactly $\alpha+1$?)

(2) Are the wandering points in $\overline { W_{0, p_g-1}}$? Or, do they wander `even more' ?

(3) How the position of the wandering points reflect the variation of the analytic structure
of $(X,o)$ ?
\begin{remark}
For an index subset $I\subset \calv$ consider the set of all Chern classes $l'$ with
$I(l')=I$, and also the corresponding affine subspaces
$\overline{ \im (\widetilde{c}^{l' })}$ (indexed by $l'$ with $I(l')=I$).
Among them only a few might serve as irreducible component of some $\overline{W_{0,k}}$.
See for example in \ref{bek:A10} the set of points $\{\calL_n\}_{n\geq 0}$, among which only
$\{0,\calL_1\}$ are components of $W_{0,1}$. (Note that in this case $h^1(\calL_n)=0$ for all $n\geq 2$.)
For further role of closures of Abel images $\overline{ \im (\widetilde{c}^{l' })}$ in even  finer stratifications see \S\ref{s:basestrat}.
\end{remark}
\bekezdes {\bf Question:} Can those members  of $\overline{ \im (\widetilde{c}^{l' })}$,
which serve as components of some $\overline{W_{0,k}}$, be characterized by some universal property?
Are they characterized by the maximality of $h^1$ ?

\subsection{The case $(X,o)$ arbitrary elliptic  and $l'\in -\calS'$ arbitrary}\label{ss:Gorl'}

For any $\tX$ and $\calL$ with $c_1(\calL)\in -\calS' $, by Lemma \ref{lem:szk},
$h^1(\tX, \calL)=h^1(Z_K-s_{[Z_K]}, \calL)$. In this way the $h^1$--stratification
of an arbitrary elliptic singularity is reduced to the case of numerically Gorenstein ones.

Furthermore, if $(X,o)$ is numerically Gorenstein, then by Proposition
\ref{prop:restrict} the $h^1$--stratification is reduced to the $l'=0$ case.
If $\tX_i$ is Gorestein then one has to combine Proposition \ref{prop:restrict} with
Theorem \ref{th:W1}, otherwise Theorem \ref{th:W1} should be replaced by the more general
Theorem \ref{th:W1b}.

We invite the reader to complete the details writing down the corresponding set--identities.

\begin{remark}  Recall in the numerically Gorenstein case the identity
$W_{l',k}(\tX)=(\pi_i^{l'})^{-1}(W_{0,k}(\tX_i))$ (for the notation and the statement see \ref{ss:Redto} and
Proposition \ref{prop:restrict}). On the other hand, by Theorem \ref{th:W1b},
$\overline{ W_{0,k}(\tX_i)}$ equals $\overline{ \im (\widetilde{c}^{-C_{j-1}(\tX_i)})}$
for some cycle $C_{j-1}(\tX_i)\in L(\tX_i)$ associated with the singularity $(X_i,o_i)$. We show that
some similar structure statement is valid for $\overline{ W_{l',k}(\tX)}$ too, that is,
{\bf $\overline{ W_{l',k}(\tX)}$ is the closure of the image of a certain Abel map} (at the level of $\tX$).

First, let us shift $W_{l,k}(\tX)$ into $\pic^0(\tX)$ (where the images of modified Abel maps
$\widetilde{c}$ live). That is,
for each $l'$, via identification $\pic^{l'}\stackrel{\calO(-l')}{\longrightarrow} \pic^0$, we transport
$W_{l',k}$ into $W^0_{l',k}:=\calO(-l')\cdot W_{l',k}\subset \pic^0(\tX)$. In other words,
\begin{equation}\label{eq:Wshifted}
W^0_{l',k}:=\{\calL^0\in \pic^0(\tX)\,:\, h^1(\calL^0(l'))=k\}\subset \pic^0(\tX).\end{equation}
Its closure in $\pic^0(\tX)$ will be denoted by $\overline{W_{l', k}^0}$. Note also that
$W^0_{0,k}=W_{0,k}$.

Consider first the notations and situation from Proposition \ref{prop:restrict}.
First we analyze the Abel map $\im ( c^{-l'}):\eca ^{l'}(\tX)\to \pic^{l'}(\tX)$.  By Theorem
\ref{th:Vk} one gets $\dim \im (c^{l'})=\dim V(I(l'))$. On the other hand,
the dimension of $(\pi_i^{l'})^{-1}(0)$ in $\pic^{l'}(\tX)$ is $p_g(\tX)-p_g(\tX_i)=\dim V(I(l'))$ too,
cf. the same Theorem \ref{th:Vk}. In particular, $\overline {\im (c^{l'})}=(\pi_i^{l'})^{-1}(0)$
in  $\pic^{l'}(\tX)$.

Next, write $C_{j-1}(\tX_i)\in L(\tX_i)\subset L'(\tX_i)$ as $\sum _{v} a_vE^*_v(\tX_i)$ in
$\calS(\tX_i)\subset L'(\tX_i)$, and define its lift $C_{j-1}^{lift}:=\sum _{v} a_vE^*_v(\tX)$
into $\calS'(\tX)\subset L'(\tX)$. One sees that the restriction of $C_{j-1}^{lift}$ is exactly
$C_{j-1}(\tX_i)$, hence $\overline { \im ( c^{-C_{j-1}^{lift}})}$ restricted projects exactly onto
$\overline { \im ( c^{-C_{j-1}(\tX_i)})}$. This shows that
$\overline { \im ( c^{l'-C_{j-1}^{lift}})}$, as a subspace of $\pic^{l'-C_{j-1}^{lift}}(\tX)$,
shifted by $\calO_{\tX}(C_{j-1}^{lift})$ into $\pic^{l'}(\tX)$, is exactly
$\overline {W_{l',k}(\tX)}$ (cf. (\ref{eq:addclose})). Therefore,
\begin{equation}\label{eq:imctilde}
\overline { \im ( \widetilde{c}^{l'-C_{j-1}^{lift}})}=\overline { W^0_{l',k}(\tX)}.\end{equation}
For another, more `theoretical' presentation of $\overline { W^0_{l',k}(\tX)}$ as
$\overline { \im ( \widetilde{c}^{l'-l})}$ (with certain additional properties)
 see Theorem \ref{th:structure}.
\end{remark}

\section{The stratification of $\pic^{l'}$ according
to the base components}\label{s:basestrat}
\subsection{Notations} Fix $l'\in -\calS'$ as above.
In the previous section we considered the stratification of $\pic^{l'}(\tX)$
 provided by the value $\calL\mapsto h^1(\calL)$. Now we will consider a much `finer' stratification.
 Again, it is convenient to shift the structure into $\pic^0(\tX)$, these has
 the advantage that the strata  can be compared more
naturally with  subspaces of type $\overline{ \im (\widetilde{c}^{l''})}\subset \pic^0(Z)=
\pic ^0(\tX)$ $(Z\gg 0)$.

The strata are defined as follows (for the definition of $W_{l',k}$ see \ref{def:BN}):
$$F_{l',k}(l):=\{ \calL \in W_{l',k}\,:\, \mbox{the cycle of fixed components of} \
\calL \ \mbox{is $l\in L_{\geq 0}$}\}.$$
This shifted (via identity $\calL=\calL^0(l')$) provides a
stratification of $W^0_{l',k}$ (cf. (\ref{eq:Wshifted}))  as well:
$$F^0_{l',k}(l):=\{ \calL^0 \in W^0_{l',k}\,:\, \mbox{the cycle of fixed components of} \
\calL^0(l') \ \mbox{is $l\in L_{\geq 0}$}\}\subset \pic^0(\tX).$$

We denote by $\mathfrak{I}$ the set $\{\overline{ \im (\widetilde{c}^{l''})}\}_{l''}$
indexed by all possible $l''\in - \calS'$.
Our goal is to describe for the fixed  $l'\in -\calS'$ the sets
$\{F^0_{l',k}(l)\}_{k\in\Z_{\geq 0}, l\in L_{\geq 0}}$ in terms of certain elements of $\mathfrak{I}$.
This automatically will provide a new characterization of the
sets $\{W^0_{l',k}\}_{k\in \Z_{\geq 0}}$  as well, besides the one provided in the previous section.
Though the next theorem has some overlaps with statements form the previous section
regarding the $W$--stratification, we prefer this presentation since it provide a uniform presentation of the two type of stratification showing their interactions. (Even more, we deliberately use a
formulation and proof independent from section \ref{ss:BN} with the hope that this version
 can serve as a prototype for arbitrary cycle $Z$, not necessarily
 $Z\gg 0$, or  for more general singularities.)

For the  fixed $l'\in-\calS'$ and
$k\in\Z_{\geq 0}$ we define
$\mathfrak{I}_{l',k}$ by decreasing induction as follows.
For $k>p_g$ we set $\mathfrak{I}_{l',k}=\emptyset$ (note that by Theorem \ref{prop:vanishing}
we know that $W_{l',k}=\emptyset$ for $k>p_g$; check also that  the identity (\ref{eq:compat}) from below
has no solution in these cases). Assume next that $\mathfrak{I}_{l',k' }$  is already defined for any $k'>k$. Then, by definition,  $\mathfrak{I}_{l',k}$
consists  of all set of subspaces of type $\overline{ \im (\widetilde{c}^{l'-l})}$
of $\mathfrak{I}$ indexed by
\begin{equation}\label{eq:compat}\left\{\begin{array}{ll}
 \ &(i) \ \ l\in L_{\geq 0} \ \ \mbox{such that} \ \  l'-l\in -\calS',\\
 \ &(ii) \ \ k+\chi(l)+(l,l')=p_g-\dim V(I(l'-l)), \\
 \ &(iii) \ \  \mbox{$\overline{ \im (\widetilde{c}^{l'-l})}$ is not
 included in any subspace indexed
from $\cup_{k'>k}\mathfrak{I}_{l',k'}$.} \end{array}\right.
\end{equation}

Though the set $\mathfrak{I}$ can be infinite (see e.g. Example \ref{ex:NOECC}), each  set
$\mathfrak{I}_{l',k}$ is finite. Indeed, by (\ref{eq:compat}){\it (ii)} $\chi(l)+(l,l')$ is bounded,
hence by the negative definiteness of the intersection from, all the possible $l$ cycles
 are sitting in a finite ellipsoid and constitute a finite set.

\begin{theorem}\label{th:structure} {\bf (Structure Theorem of the $h^1$-- and
`base--component'--stratification)} \
Fix $Z\gg 0$ and $l'$, $k$ as above.

(a) Each irreducible component of  $\overline{W^0_{l',k}}$ and  $\overline{F^0_{l',k}(l)}$
is an affine subspace of $\pic^0$, in fact, it is an element of $\mathfrak{I}$.

(b)  The  irreducible component of $\overline{W^0_{l',k}}$ are the maximal elements of
$\mathfrak{I}_{l',k}$ (with respect to the inclusion).  Moreover,
$$W^0_{l',k}=\overline{W^0_{l',k}}\ \setminus \ \cup_{k'>k} \overline{W^0_{l',k'}}.$$

(c) An irreducible component of $\overline{W^0_{l',k}}$ might contain several subspaces  of type  $\overline{F^0_{l',k}(l)}$, there is a maximal one which equals it.
Any subspace of type  $\overline{F^0_{l',k}(l)}$ is nonempty if and only if
$\overline{ \im (\widetilde{c}^{l'-l})}$ belongs to $\mathfrak{I}_{l',k}$,
and in such a  case $\overline{F^0_{l',k}(l)}$ is irreducible and equals
$\overline{ \im (\widetilde{c}^{l'-l})}$ of dimension $\dim V(I(l'-l))$.
(Note that for each $l$ there exists exactly  one such subspace.)

 In particular, the collection of subspaces
 $\overline{F^0_{l',k}(l)}$ in $\overline{W^0_{l',k}}$
coincide with the set of affine  subspaces $\overline{ \im (\widetilde{c}^{l'-l})}$ indexed by $\mathfrak{I}_{l',k}$. The maximal ones fill in the irreducible components of $\overline{W^0_{l',k}}$,
the other ones are proper affine subspaces of these irreducible components.

If $\overline{F^0_{l',k}(l)}=\overline{ \im (\widetilde{c}^{l'-l})}$ for such an $l$ then
\begin{equation}\label{eq:F1}
F^0_{l',k}(l)= W^0_{l',k}\cap \im (\widetilde{c}^{l'-l}). \end{equation}
\end{theorem}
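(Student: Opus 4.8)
The plan is to reduce everything to the two structural facts already established: (i) for elliptic singularities every $\overline{\im(\widetilde c^{l''})}=A(l'')$ is an affine subspace of $\pic^0$ of dimension $\dim V(I(l''))$ with $h^1$ uniform on it, equal to $p_g-\dim V(I(l''))$ (Theorem \ref{th:Vk}{\it (7)--(8)}); and (ii) the additive compatibility $\im(\widetilde c^{l'_1})+\im(\widetilde c^{l'_2})\subset \im(\widetilde c^{l'_1+l'_2})\subset\overline{\im(\widetilde c^{l'_1})+\im(\widetilde c^{l'_2})}$ from \eqref{eq:addclose}, together with $A(l'_1+l'_2)=A(l'_1)+A(l'_2)$. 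I would begin by establishing the key bookkeeping identity behind \eqref{eq:compat}{\it (ii)}: if $\calL^0\in\pic^0$ has the property that $\calL^0(l')$ has cycle of fixed components exactly $l$, then $\calL^0(l'-l)=\calL^0(l')\otimes\calO(-l)$ has no fixed components, so $\calL^0(l'-l)\in\im(c^{l'-l})$ and $\calL^0\in\im(\widetilde c^{l'-l})$ after the shift by $\calO(-l)$; moreover $h^1(\calL^0(l'))=h^1(\calL^0(l'-l))$ because tensoring by $\calO(l)$, $l\in L_{\geq 0}$, kills fixed components without changing $h^0$ hence (via $\chi$) changes $h^1$ by $-\chi(\calO(l))$ in the appropriate sense — here one uses the exact sequence $0\to \calL^0(l'-l)\to\calL^0(l')\to\calL^0(l')|_l\to 0$ and that the section trivializing the fixed part gives $H^0(\calL^0(l'-l))=H^0(\calL^0(l'))$, so $h^1(\calL^0(l'))-h^1(\calL^0(l'-l))=-\chi(\calL^0(l')|_l)=\chi(l)+(l,l')$ (Riemann--Roch on $l$, using $c_1(\calL^0(l'))=l'$). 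Combined with $h^1$ on $\im(\widetilde c^{l'-l})$ being (generically, hence on a dense subset) $p_g-\dim V(I(l'-l))$, this yields exactly condition \eqref{eq:compat}{\it (ii)} as the defining equation for membership of $\calL^0$ in $F^0_{l',k}(l)$ up to the open/closed subtlety.

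With this in hand, part (c) is the heart. For a fixed $l$ with $l'-l\in-\calS'$, I claim $F^0_{l',k}(l)$ is nonempty iff the numerical identity \eqref{eq:compat}{\it (ii)} holds and, when nonempty, $\overline{F^0_{l',k}(l)}=\overline{\im(\widetilde c^{l'-l})}=A(l'-l)$. The inclusion $F^0_{l',k}(l)\subset\im(\widetilde c^{l'-l})$ was just shown; conversely, a generic point of $\im(\widetilde c^{l'-l})$ is the line bundle of a generic effective Cartier divisor $D$ with $c_1=-(l'-l)$, and for such $D$ the bundle $\calO(D)\otimes\calO(l')$ has $h^1$ equal to $p_g-\dim\im(c^{l'-l})=p_g-\dim V(I(l'-l))=k$ and has no fixed components beyond exactly $l$ — the latter because genericity of $D$ forces the fixed part of $\calO(D+l')$ to be the minimal possible, which a short argument (using that $\calO(D)$ itself has none and $\calO(l')$ contributes exactly $l'$'s antinef part, trimmed) identifies with $l$ when \eqref{eq:compat}{\it (ii)} forces it. So $F^0_{l',k}(l)$ is a dense constructible subset of the irreducible affine space $\overline{\im(\widetilde c^{l'-l})}$, giving its closure and irreducibility. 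Condition \eqref{eq:compat}{\it (iii)} then enters to eliminate redundancy: if $\overline{\im(\widetilde c^{l'-l})}$ sits inside some $\overline{\im(\widetilde c^{l'-l_0})}$ with $l_0$ a solution at a strictly larger $k'>k$, then on that bigger space $h^1\geq k'>k$ generically, so $F^0_{l',k}(l)$, being the locus where $h^1$ jumps back down to $k$, is not dense there — in fact the genuine $h^1=k$ locus with fixed cycle exactly $l$ is carved out precisely by removing the higher strata, which is the content of the displayed formula $W^0_{l',k}=\overline{W^0_{l',k}}\setminus\bigcup_{k'>k}\overline{W^0_{l',k'}}$ in part (b). Identity \eqref{eq:F1} is then immediate: $F^0_{l',k}(l)$ consists of those $\calL^0\in\im(\widetilde c^{l'-l})$ with $h^1(\calL^0(l'))=k$, i.e. $\calL^0\in W^0_{l',k}$, and such $\calL^0$ automatically has fixed cycle $\geq l$; the reverse inequality $\leq l$ is forced by $\calL^0\in\im(\widetilde c^{l'-l})$ meaning $\calL^0(l'-l)$ has no fixed components. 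Here one must be slightly careful that membership in $\im(\widetilde c^{l'-l})$ only bounds the fixed cycle from above by $l$, not pins it; but intersecting with $W^0_{l',k}$ and using the $h^1$-value $k$ together with \eqref{eq:compat}{\it (ii)} (which is an equality, not an inequality) rigidifies it to exactly $l$.

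Part (a) and the component description in (b) follow by assembling (c): $\overline{W^0_{l',k}}=\bigcup_{l}\overline{F^0_{l',k}(l)}=\bigcup\{A(l'-l)\,:\,l\text{ solves }\eqref{eq:compat}\}$, a finite union (finiteness of $\mathfrak I_{l',k}$ is the ellipsoid argument already in the excerpt) of affine subspaces, all members of $\mathfrak I$; its irreducible components are the maximal ones among them, which by \eqref{eq:compat}{\it (iii)} are exactly the maximal elements of $\mathfrak I_{l',k}$. That the non-maximal ones are proper subspaces of components, and that every $\overline{F^0_{l',k}(l)}$ in a given component $C$ is contained in $C$ with the maximal such equal to $C$ itself, uses the flag structure from \ref{bek:shifted}: two affine subspaces of type $\overline{\im(\widetilde c^{l''})}$ that meet must be nested, so within the cone over a single maximal component all the smaller $F$-strata form a nested family topped by the component. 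The main obstacle I anticipate is the genericity argument in (c) pinning down the fixed cycle of a generic bundle in $\im(\widetilde c^{l'-l})$ to be \emph{exactly} $l$ — making precise why \eqref{eq:compat}{\it (ii)}, an equality of integers, together with semicontinuity of $h^1$ (Theorem \ref{prop:AZ}{\it (d)}) and the uniform-$h^1$ statement Theorem \ref{th:Vk}{\it (8)}, forbids the fixed cycle from being strictly larger than $l$ on a dense set, so that $F^0_{l',k}(l)$ is genuinely dense in $A(l'-l)$ rather than a lower-dimensional subvariety. Everything else is a formal consequence of the additive structure of Abel maps and the flag structure of $\{V(I)\}_I$ for elliptic germs.
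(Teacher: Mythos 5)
Your overall architecture is the same as the paper's: the Riemann--Roch bookkeeping on $0\to \calL(-l)|_{Z-l}\to\calL|_Z\to\calL|_l\to 0$ that produces condition (\ref{eq:compat})(ii), the identification of each stratum closure with an affine subspace $\overline{\im(\widetilde{c}^{l'-l})}$ via Theorem \ref{th:Vk}, and the maximality/nesting argument for the irreducible components. However, there is a genuine gap exactly where you flag "the main obstacle I anticipate": the claim that a generic bundle $\calk^0\in\im(\widetilde{c}^{l'-l})$ satisfies $h^1(\calk^0(l'))=k$ (equivalently, has fixed cycle exactly $l$) is asserted, not proved. Theorem \ref{th:Vk}(8) controls $h^1(\calk^0(l'-l))$, not $h^1(\calk^0(l'))$; the exact sequence only gives $h^1(\calk^0(l'))=h^0(Z,\calk)-h^0(Z-l,\calk(-l))+k\geq k$, and nothing in genericity alone forces equality. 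The paper closes this by a decreasing induction on $k$ in two ways: for a component $S$ of $\overline{W^0_{l',k}}$ one already has a point with $h^1=k$ inside $\overline{\im(\widetilde{c}^{l'-l})}$, so semicontinuity caps the generic value at $k$; for an arbitrary member of $\mathfrak{I}_{l',k}$ one argues by contradiction --- if the generic $h^1$ were $k'>k$, the generic fixed cycle $\bar{l}$ would place $\overline{\im(\widetilde{c}^{l'-l})}$ inside a subspace indexed from $\mathfrak{I}_{l',k'}$ (inductive hypothesis), contradicting condition (iii). Neither mechanism appears in your write-up.

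Relatedly, your intermediate claim that "$F^0_{l',k}(l)$ is nonempty iff the numerical identity (\ref{eq:compat})(ii) holds" is false: condition (iii) is not mere redundancy-elimination but part of the nonemptiness criterion. In Example \ref{ex:continu}, $l=Z_{min}$ satisfies (i) and (ii) for $l'=-E^*_4$, $k=0$, yet $F^0_{-E^*_4,0}(Z_{min})=\emptyset$ because $\overline{\im(\widetilde{c}^{-E^*_4-Z_{min}})}=\overline{\im(\widetilde{c}^{-E^*_4})}$ already belongs to $\mathfrak{I}_{-E^*_4,1}$; every point of that subspace has $h^1\geq 1$ by semicontinuity, so no point of it can lie in the $h^1=0$ stratum (your phrase "not dense there" should be "empty there"). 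Once you (a) restate the nonemptiness criterion as membership in $\mathfrak{I}_{l',k}$, i.e.\ (i)+(ii)+(iii), and (b) supply the inductive contradiction argument pinning the generic $h^1$ to $k$, the rest of your proposal assembles into the paper's proof.
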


\begin{proof}
Parts {\it (b)} and {\it (c)} imply {\it (a)}. We start to prove {\it (b)}.
Note also that during the proof all the appeared cycles $l$ sit  in the bounded ellipsoid
$\{l:\ \chi(l)+(l,l')\leq p_g\}$, hence we can assume that not only $Z\gg0$ but all the possible
cycles of type $Z-l$ are also `large' (so, both $Z$ and $Z-l$ can be replaced by $\tX$ in
$h^1$--computations, if we wish).
Hence, sometimes we will omit $Z$ or $Z-l$.

{\bf (I)}
Let $S$ be an irreducible component of $\overline{W^0_{l',k}}$, and choose
 $\calL^0\in S\cap W^0_{l',k}$. Then
 $\calL:=\calL^0(l')\in W_{l',k}$  and it satisfies
 $h^1(\calL)=k$. Assume that $l\in L_{\geq 0}$ is the cycle of fixed components of
$\calL$, hence $H^0(Z,\calL)=H^0(Z-l,\calL(-l))$ and $H^0(Z-l,\calL(-l))_{reg}\not=\emptyset$. Then, by (\ref{eq:Chernzero}) necessarily $l'-l\in-\calS'$ and $\calL(-l)\in \im (c^{l'-l})$ (or,
$\calL^0\in \im(\widetilde{c}^{l'-l})$). From the exact sequence
(whenever $l>0$)
$0\to \calL(-l)|_{Z-l}\to \calL|_Z\to \calL|_l\to 0$ we get that $\chi(\calL|_l)-h^1(\calL(-l))
+h^1(\calL)=0$, or $h^1(\calL(-l))=k+\chi(l)+(l',l)$. On the other hand, by Theorem
\ref{th:Vk}{\it (8)} we have $h^1(\calL(-l))= p_g-\dim V(I(l'-l))$, hence $l$ satisfies
(\ref{eq:compat}){\it (ii)} as well.

Since (\ref{eq:compat}){\it (ii)}
 has no solution for $k>p_g$, we get that in such cases
$\overline{W^0_{l',k}}=\emptyset$,
and the choice of $\mathfrak{I}_{l',k}=\emptyset$ is also supported.
Then we prove {\it (b)} by decreasing induction on $k$. Fix again $k\leq p_g$ and
assume that the statement is already proved
for all $k'$ with $k'>k$. Consider again the situation from the previous paragraph:
$S$ be an irreducible component of $\overline{W^0_{l',k}}$, $\calL^0\in S\cap W^0_{l',k}$ and $\calL=\calL^0(l')$, $h^1(\calL)=k$.
Then we verified that there exists $l\in L_{\geq 0}$
so that
$\calL^0\in  \im(\widetilde{c}^{l'-l})$,
$l'-l\in- \calS'$ and  it satisfies  (\ref{eq:compat}){\it (ii)}. Note that the subspace
$\overline{ \im (\widetilde{c}^{l'-l})}$ cannot be included in any subspace index by
any $\mathfrak{I}_{l',k'}$ with $k'>k$ since by inductive step all the subspaces
indexed by $\mathfrak{I}_{l',k'}$ belong to $\cup_{k'>k}\overline{W^0_{l',k'}}$, hence all
their elements ${\mathcal K}^{0}$ satisfy  $h^1({\mathcal K}^{0}(l'))>k$; however $\calL^0
\in \overline{ \im (\widetilde{c}^{l'-l})}$ with $h^1(\calL^0(l'))=k$.
  Therefore, $l$ belongs to $\mathfrak{I}_{l',k}$.

{\bf (II)}
Now, by taking $\calL^0$ generic in $S$, the inclusion $\calL^0\in\im (\widetilde{c}^{l'-l})$ implies
$S\subset \overline{ \im (\widetilde{c}^{l'-l})}$, where the subspace
$\overline{ \im (\widetilde{c}^{l'-l})}$
is indexed from $\mathfrak{I}_{l',k}$.

{\bf (III)}
Conversely, consider some $\bar{l}\in L_{\geq 0}$ such that the subspace
$\overline{ \im (\widetilde{c}^{l'-\bar{l}})}$ is indexed from $\mathfrak{I}_{l',k}$
and $S\subset \overline{ \im (\widetilde{c}^{l'-\bar{l}})}$.
Let $\calk^0$ be a generic bundle  of $\overline{ \im (\widetilde{c}^{l'-\bar{l}})}$, and write
$\calk:=\calk^0(l')$. From Theorem \ref{th:Vk}{\it (8)}
\begin{equation}\label{eq:K}
h^1(\calk(-\bar{l}))=p_g-\dim V(I(l'-\bar{l})).\end{equation}
By a computation $\chi(\bar{l})+(l',\bar{l})=\chi(Z,\calk)-\chi(Z-\bar{l},\calk(-\bar{l}))$, and the right hand side also equals
$$h^0(Z, \calk)-h^1(Z,\calk)-
h^0(Z-\bar{l}, \calk(-\bar{l}))+h^1(Z-\bar{l},\calk(-\bar{l})).$$
This combined with (\ref{eq:compat}) and (\ref{eq:K}) give
\begin{equation}\label{eq:K2}
h^1(\calk^0(l'))=h^1(\calk)=h^0(Z, \calk)-
h^0(Z-\bar{l}, \calk(-\bar{l}))+k\geq k.
\end{equation}
We claim that necessarily $h^1(\calk^0(l'))= k$.
Indeed, since $\overline{ \im (\widetilde{c}^{l'-\bar{l}})}$  contains the  bundle
$\calL^0$ with $h^1(\calL^0(l'))=k$ (cf. {\bf (I)-(II)}), its generic bundle $\calk^0$ cannot satisfy
$h^1(\calk^0(l'))>k$ by the semicontinuity of $h^1$.
Hence, the generic element of $\overline{ \im (\widetilde{c}^{l'-\bar{l}})}$
belongs to $W^0_{l',k}$, which implies that $\overline{ \im (\widetilde{c}^{l'-\bar{l}})}= S$.
This in particular also shows, cf. (\ref{eq:K2}), that the cycle of fixed components of $\calk$ is $\bar{l}$.
Finally note
that $l$ is maximal in $\mathfrak{I}_{l',k}$. Indeed, assume that
there exists an overset of type $\overline{ \im (\widetilde{c}^{l'-\bar{l}})}$, then by the above discussion  $\overline{ \im (\widetilde{c}^{l'-\bar{l}})}$
equals $S$ too, hence must equal  $\overline{ \im (\widetilde{c}^{l'-l})}$ as well.

This ends the proof of part {\it (b)}. Next we prove {\it (c)}.
We will repeat several steps of the proof of {\it (b)}, but now applied for
the irreducible components of $\overline{F^0_{l',k}(l)}$.

{\bf (IV)}
Let $S$ be an irreducible component of $\overline{F^0_{l',k}(l)}$, and choose
 $\calL^0\in S\cap F^0_{l',k}(l)$. Set
 $\calL:=\calL^0(l')\in W_{l',k}$, hence $h^1(\calL)=k$.
 Next,   assume that $l\in L_{\geq 0}$ is the cycle of fixed components of
$\calL$. Then,  similarly as in {\bf (I)},
$\calL^0\in \im (\widetilde{c}^{l'-l})$ ($\dag$),
 $l'-l\in-\calS'$, $l$ satisfies (\ref{eq:compat}), and
 $\overline{\im (\widetilde{c}^{l'-l})}$ belongs to $\mathfrak{I}_{l',k}$.

By taking $\calL^0$ generic in $S\cap F^0_{l',k}(l)$ we get $S\subset
\overline{\im (\widetilde{c}^{l'-l})}$.

Conversely, as in {\bf (III)} for $\bar{l}=l$, one shows that $\overline{\im (\widetilde{c}^{l'-l})}
\subset S$ too, hence necessarily $S= \overline{\im (\widetilde{c}^{l'-l})}$.
Since for fixed $l',k$ and $l$ there is a unique affine subspace of type
$\overline{\im (\widetilde{c}^{l'-l})}$ with these data,
 $\overline{F^0_{l',k}(l)}$ should have only  one irreducible component, which equals
 $\overline{\im (\widetilde{c}^{l'-l})}$.

 Note that from ($\dag$) we also have
$ F^0_{l',k}(l)\subset W^0_{l',k}\cap \im (\widetilde{c}^{l'-l})$.
 The opposite inequality also follows as above (or as in {\bf (III)})
 since in the presence of $W^0_{l',k}$  we automatically have $h^1(\calk^0(l'))=k$.
 This shows (\ref{eq:F1}) as well.

{\bf (V)} In {\bf (IV)} we proved that each  $\overline{F^0_{l',k}(l)}$ is irreducible and equals
some $\overline{\im (\widetilde{c}^{l'-l})}$ from $\mathfrak{I}_{l',k}$.
Next we plan to show that any subspace from $\mathfrak{I}_{l',k}$ is realized in this way
by some  $\overline{F^0_{l',k}(l)}$.
(In fact, in this step we really exploite the `support condition' {\it (iii)}
from (\ref{eq:compat}).)  We proceed as in {\bf (III)}.

Fix $\overline{\im (\widetilde{c}^{l'-l})}$ from $\mathfrak{I}_{l',k}$.
Let $\calk^0$ be a generic bundle  from  $\im (\widetilde{c}^{l'-l})$, set
$\calk:=\calk^0(l')\in \pic^{l'}$.
Then (\ref{eq:K}) is still valid, and as in {\bf (III)} one also has
$$h^1(\calk)=h^0(Z, \calk)-
h^0(Z-l, \calk(-l))+k\geq k.$$
 We claim that $h^1(\calk)= k$. Assume that this is not the case, that is,
 $k':=h^1(\calk)>k$.    Let $\bar{l}$ be the cycle of fixed components of $\calk$.
 Then, as above, $\calk\in \overline{\im (\widetilde{c}^{l'-\bar{l}})}$ with $h^1(\calk)=k'$.
 By the inductive step, we can assume that $ \overline{\im (\widetilde{c}^{l'-\bar{l}})}$
  is indexed from $\mathfrak{I}_{l',k'}$. Since $\calk^0$ was chosen generically from
  $\overline{\im (\widetilde{c}^{l'-l})}$, we get that $\overline{\im (\widetilde{c}^{l'-l})}$
  is included in some space of type $\overline{\im (\widetilde{c}^{l'-\bar{l}})}$
   from $\mathfrak{I}_{l',k'}$, a contradiction.

Hence $h^1(\calk)=k$, $h^0(Z, \calk)=h^0(Z-l, \calk(-l)) $  and $\calk^0(l'-l)\in \im (c^{l'-l})$.
That is,  $\calk^0\in F^0_{l',k}(l)$ for a generic bundle  $\calk^0$
of  $\im (\widetilde{c}^{l'-l})$.
\end{proof}

\begin{remark}\label{rem:explicit}
The explicit determination of the index set $\mathfrak{I}_{l',k}$ --- even in concrete
examples ---  is not trivial at all.
The system  (\ref{eq:compat})  is not totally
combinatorial, it depends on the analytic structure (on the choice of $p_g$). But, even if we fix
$p_g=m-\alpha+1$ (between the possible  topological values $1$ and $m+1$), a fact which makes
$\dim V(I(l'-l))$ topological as well (cf. Theorem \ref{th:Vk}{\it (5)}), the
 list of solutions of the combinatorial
equation $\chi(l)+(l,l')+ \dim V(I(l'-l))=c$ is still  hard. We consider it as a real challenge
(see also subsection \ref{ss:challenge}  and the two examples after it).

Furthermore, the
description/characterization of the non--closed sets
$\im (\widetilde{c}^{l'-l})$ (indexed by $\mathfrak{I}_{l',k}$), respectively of
$F^0_{l',k}(l)$, is even  harder.

\end{remark}

\bekezdes{\bf Problem.}  Is it true that $F^0_{l',k}(l)= \im (\widetilde{c}^{l'-l})$ (with the notations of Theorem
\ref{th:structure}) ?

\bekezdes\label{ss:challenge}
 In Example \ref{ex:Fstrata} we show that the $F$--stratification of a certain $W$ can be non--trivial, while
 Example \ref{ex:trivi} presents a case when the $F$--stratification
is trivial (based  an additional geometric argument).

\begin{example}\label{ex:Fstrata} Consider the elliptic graph from Appendix. It has $m=1$,
hence $p_g\leq 2$. The maximal value $p_g=2$ can be realized e.g. by the hypersurface singularity
$\{x^2+y^3+z^{17}=0\}$; see also Remark \ref{rem:17}.

Assume in the sequel that $p_g=2$. Furthermore, assume also that $l'=-Z_K$.
In this case by Kodaira type or Grauert--Riemenschneider vanishing  $h^1(\calL)=0$ for any
$\calL\in \pic^{-Z_K}(\tX)$, hence $W_{-Z_K,0}=\pic^{-Z_K}(\tX)=\C^2$.
In fact, from the point of view of Proposition \ref{prop:restrict} the situation is also trivial:
$p_g(\tX_i)=0$, hence $\pic^{-Z_K}(\tX)$ consists of a unique stratum , namely $W_{-Z_K,0}$.

On the other hand, we will see that the `fixed component' stratification is not trivial.

First notice that $\overline { \im (c^{l'})}$ is 2--dimensional, hence it is
$\pic^{l'}$, and along it $h^1=0$, hence $\overline { \im (c^{l'})}=W_{l',0}$. To find the $F$--stratification  we have to find the solutions for
$l\in L_{\geq 0}$ of the system $Z_K+l\in\calS$ and
$\chi(l)-(l,Z_K)=2-\dim V(I(-Z_K-l))$ ($\dag$).

One solution is $l=0$ which provides $\im (c^{l'})$. The other solution is $l=E_1$ (see Appendix for notation).
In this case $Z_K+E_1=2Z_{min}\in \calS$, and $\chi(l)-(l,Z_K)=\dim V(I(2Z_{min}))=1$, hence
($\dag$) is satisfied. We will show that these are the only solutions. Indeed, assume that $l>0$ is such solution.
Then $\chi(l)-(l,Z_K)=\chi(-l)>0$ (see the third paragraph in \ref{bek:A7}),
hence $\dim V(I(-Z_K-l))\leq 1$. But, since $Z_K+l>0$ and the singularity is Gorenstein,
$\dim V(I(-Z_K-l))\geq  1$ too.
On the other hand, if $\dim V(I(-Z_K-l))= 1$ then $Z_K+l=nZ_{min}$ for some $n\geq 2$.
Hence, $\chi(l)-(l,Z_K)=\chi(-l)=\chi(Z_K-nZ_{min})=\chi(nZ_{min})=n(n-1)/2$.
This shows that necessarily $n=2$.

In conclusion, $\C=\im (\widetilde{c}^{-2Z_{min}})=F^0_{-Z_K,0}(E_1)$ and
$\C^2\setminus \C=\im (\widetilde{c}^{-Z_K})=F^0_{-Z_K,0}(0)$.
\end{example}
\begin{remark}\label{rem:geometry}
Though the set of subspaces of type $\{\overline{ F^0_{l',k}(l)} \}_l$ is in bijection with
$\mathcal{I}_{l', k}$ (completely defined/described above),
sometimes, in order  to reduce the the possible candidate  solutions
of (\ref{eq:compat})  we can use some additional geometric restrictions as well
(which, by Theorem \ref{th:structure},  are automatically satisfied, but this fact  might  not be  so
transparent from (\ref{eq:compat})).
E.g., if $l$ is a solution, hence $\{\overline{ F^0_{l',k}(l)} \}_l$ is a non--empty stratum,
then necessarily
$\dim V(I(l'-l))=\dim\, \im (c^{l'-l})\leq  \dim \overline { W_{l',k}}$, and equality
$\dim V(I(l'-l))<\dim \overline { W_{l',k}}$ whenever
 $\overline{\im (c^{l'-l})}$ is a proper subspace of $\overline { W_{l',k}}$.
 See the next Example for such an argument.
\end{remark}
\begin{example}\label{ex:trivi}
Consider the minimal resolution graph of a Gorenstein elliptic singularity, and fix
 $v\in B_0\setminus B_1$.  Then $\im (c^{-E^*_v})$ has dimension 1 (cf. Theorem \ref{th:Vk}).
 Recall that $\calL\in \pic ^{-E^*_v}$ belongs to $\im (c^{-E^*_v})$ if and only if it has no fixed components. We show that $\overline {\im (c^{-E^*_v})}=\im (c^{-E^*_v})$, that is,
 if $\calL\in \overline {\im (c^{-E^*_v})}$, then $\calL$ has no fixed components. Since
 $h^1$ along $\overline {\im (c^{-E^*_v})}$ is $p_g-1$ (cf. Theorem \ref{th:Vk}{\it (8)}), this fact reads also as
 $W_{-E^*_v,p_g-1}=\overline{ W_{-E^*_v,p_g-1}}=F_{-E^*_v,p_g-1}(0)$. (This fact will be used in the sequel,
 e.g. in the proof of Lemma
 \ref{lem:nobar}.)

 Indeed, assume that $\calL\in \overline {\im (c^{-E^*_v})}\setminus \im (c^{-E^*_v})$, and
 let $l\in L_{\geq 0}$ be the cycle of fixed components of $\calL$. Then from the exact sequence
 $0\to \calL(-l)\to \calL\to \calL|_l\to 0$ (or from (\ref{eq:compat})) we get that $l+E^*_v\in\calS'$ and
 $p_g-1+\chi(l)+(l,-E^*_v)=p_g-\dim V(I(-E^*_v-l))$.
 Since $\chi(l)\geq 0$, $(l,-E^*_v)\geq 0$ and $\dim V(I(-E^*_v-l))$ is
 necessarily at least 1 ($E^*+l>0$ and $(X,o)$ is Gorenstein), we get that
 \begin{equation}\label{eq:*}
 l\in L_{\geq 0},\
 \chi(l)=0, \ (l,-E^*_v)=0, \ l+E^*_v\in \calS', \ \mbox{and $E^*$--support of $E^*_v+l$} \subset B_0\setminus B_1\end{equation}
 We claim that the only solution of (\ref{eq:*}) is $l=0$.
 To verify from the combinatorics of the graph that no $l>0$ can be a solution
 of (\ref{eq:*}) might be tedious. But geometrically we see it as follows: this $F$--strata
 is zero dimensional, hence the corresponding
 $\dim V(I(l'-l))=\dim\, \im (c^{l'-l})$ must be  zero,
 but we already know that $\dim V(I(l'-l))$ is 1.

\end{example}

\begin{example}\label{ex:continu} {\bf (Continuation of Example \ref{ex:NOECC})} \
Here we exemplify how the condition (\ref{eq:compat}){\it (iii)} might enter
in the picture. Consider the situation from Example \ref{ex:NOECC}, and set $l'=-E^*_4$.

From Theorem \ref{prop:vanishing}  we get for any $\calL\in \pic^{-E^*_4}$ one has
$h^1(\calL)=h^1(\calL|_{\tX_1})\leq 1$. In particular $\cali_{-E^*_4,k}=\emptyset$
for $k\geq  2$. Next, $W_{-E^*_4,1}=\overline { \im (c^{-E^*_4})}$ is irreducible of dimension 1. Moreover, by Example \ref{ex:trivi},
$\overline { \im (c^{-E^*_4})}= \im (c^{-E^*_4})$, hence $l=0$ along
 $\overline{W_{-E^*_4,1}}=W_{-E^*_4,1}$.

 Next, for $k=0$ we consider the system (\ref{eq:compat}). Hence
 $E^*_4+l\in\calS' $, $\chi(l)+(l, -E^*_4)=2-\dim V(I(-E^*_4-l))$. Here, for any solution,
 $\dim V\geq 1$ since the support of $E^*_4+l$ is non--trivial and the germ is Gorenstein.

 First assume that $\dim V=2$. Then $\chi(l)=(l,E^*_4)=0$ and $E^*_4+l\in \calS'$. This shows that $|l|\subset \calv\setminus v_4$ and $l\in \calS(\calv\setminus v_4)$ ($\dag$)
 with $\chi(l)=0$. We claim that the only solution is $Z_0=Z_{min}(\calv\setminus v_4)$.
 [Indeed, by ($\dag$), $l=Z_0+l_1$, $l_1\geq 0$, $\chi(l_1)=(l_1, Z_0)=0$, hence $l_1$ is supported on the $E_8$ rational subgraph with $\chi=0$, hence $l_1=0$.]
Hence $\overline { F^0_{-E^*_4,0}(Z_0)} = \overline { \im ( \widetilde{c}^{-E^*_4-Z_0})}$,
it has dimension 2, hence it is $\overline{ W^0_{-E^*_4,0}}=\pic^0$.

Next assume that $\dim V=1$ and $\chi(l)+(l, -E^*_4)=1$. $\dim V=1$ happens
exactly when $0\not=E^*_4+l=nE^*_4+mE^*_3$ for some $n,m\in \Z_{\geq 0}$. We invite the reader to verify that the only solution is $l=Z_{min}$.
However, $\overline { \im (\widetilde{c}^{-E^*_4-Z_{min}})} =
\overline { \im (\widetilde{c}^{-E^*_4})}$, and this last one is a member of $\cali_{-E^*_4,1}$. Hence $\overline { \im (\widetilde{c}^{-E^*_4-Z_{min}})}
\not \in\cali_{-E^*_4,0}$, and along $W_{-E^*_4,0}$ one has $l=Z_0=Z_{min}-E_4$.

Recall that $\calO_{\tX}(-E^*_4) \not\in \overline { \im (c^{-E^*_4})}=W_{-E^*_4,1}$. Hence
$h^1(\calO_{\tX}(-E^*_4))=0$. 
 In particular, the cycle of fixed components of $\calO_{\tX}(-E^*_4)$ is  $Z_{min}-E_4$.
\end{example}

\section{Elliptic singularities with WECC}\label{saturated}

\subsection{WECC for arbitrary singularities}\label{ss:saturated}
Recall, that by definition,
a minimal resolution $\tX$  of a normal surface singularity satisfies WECC if and only if
 $E^*_v\in \overline { \calS'_{im} }$ for any end--vertex, that is,
 if for some $n>0$ one has
 $\calO_{\tX}(-nE^*_v)\in \im (c^{-nE^*_v})$, see also \ref{ss:images}.
 By \cite[Prop. 9.2.2]{NNI} this happens exactly when $\overline { \calS'_{im} }=\calS'$,
 that is, for any $l'\in \calS'$ there is some $n>0$ so that $\calO_{\tX}(-nl')\in \im (c^{-nl'})$.
(Recall, cf. \ref{ss:AbelMap}, that $\calO_{\tX}(l')\in \im(c^{l'})$ means that
$\calO_{Z}(l')\in \im(c^{l'})$ for $Z\gg0$.)

 As a comparison, EEC for $\tX$, by definition, is given by the condition
 $\calO_{\tX}(-E^*_v)\in \im (c^{-E^*_v})$
 for any end--vertex $v$.  (This can also be compared with the criterion
 (\ref{eq:firstchar}) valid for elliptic singularities.)
  It is known (see e.g. \cite[(2.15)]{Ok} or \cite[5.27]{NCL})
 that ECC is closed by taking `sub-singularities'. This means the following. For any connected union
 $E_{I}:=\cup_{w\in I}E_w$, where $I\subset \calv$, take $\tX_I$ a convenient
small neighbourhood of $E_I$ in $\tX$, then $\tX_I$ --- as the resolution of
 $(X_I, o_I)$ --- satisfies ECC as well. The very same proof gives the following.

 \begin{lemma}\label{lem:WECCrestr}
 Fix any (not necessarily elliptic)  singularity and one of its resolutions $\tX$. Then
 {\rm WECC} of $\tX$ is closed by taking `sub-singularities'.
 \end{lemma}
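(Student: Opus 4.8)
The plan is to rerun the argument by which ECC descends to sub-singularities (\cite[(2.15)]{Ok}, \cite[5.27]{NCL}), upgraded by the extra strength that WECC carries. First recall (\ref{ss:terminology}, \cite[Prop.~9.2.2]{NNI}) that $\tX$ satisfies WECC if and only if $\overline{\calS'_{im}(\tX)}=\calS'(\tX)$; since every $E^*_v$ ($v\in\calv$) is antinef, this says that for \emph{each} vertex $v$ there is some $n=n(v)\geq 1$ with $nE^*_v\in\calS'_{im}(\tX)$, that is, $\calO_{\tX}(-nE^*_v)$ has no fixed components. Writing $\phi_I:\tX_I\to (X_I,o_I)$ for the contraction, it is then enough to show $E^*_v(\tX_I)\in\overline{\calS'_{im}(\tX_I)}$ for every end-vertex $v$ of the subgraph $\Gamma_I$, and I would prove this for \emph{every} $v\in I$ at once, so that the usual split ``$v$ is, or is not, an end-vertex of $\Gamma$'' becomes unnecessary.

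So, fix $v\in I$ and, using $nE^*_v\in\calS'_{im}(\tX)$, choose a section of $\calO_{\tX}(-nE^*_v)$ without fixed components; let $D$ be its divisor, an effective Cartier divisor on $\tX$ with $\calO_{\tX}(D)\simeq\calO_{\tX}(-nE^*_v)$ and $|D|$ containing no $E_w$. Then $(D,E_w)=(-nE^*_v,E_w)=n\delta_{vw}$, so for $w\neq v$ the effective divisor $D$ has no common component with $E_w$ and zero intersection number with it, hence is disjoint from it; thus $|D|\cap E$ lies in the smooth interior of $E_v$, in particular inside $\tX_I$ and away from every other component. Now set $D_I:=D\cap\tX_I$: it is an effective Cartier divisor on $\tX_I$ with $|D_I|\cap E_I$ finite and $(D_I,E_w)=n\delta_{vw}$ for all $w\in I$, so $c_1(\calO_{\tX_I}(D_I))=-nE^*_v(\tX_I)$, and $D_I$ visibly has no fixed components.

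The one point requiring care, and the place where the lemma could slip, is that $\calO_{\tX_I}(D_I)$ is the \emph{natural} line bundle $\calO_{\tX_I}(-nE^*_v(\tX_I))$, since it is the natural bundle that enters the definition of $\calS'_{im}(\tX_I)$. I would get this from the general fact that $\calO(D)$ is natural whenever $D$ is a Cartier divisor on a resolution meeting the exceptional set in finitely many points: with $N$ the order of $L'(I)/L(I)$, the Weil divisor $N\phi_I(D_I)$ on the germ $(X_I,o_I)$ is Cartier, hence principal (the Picard group of a germ vanishes), say $N\phi_I(D_I)={\rm div}(g)$; then $\phi_I^{*}{\rm div}(g)=ND_I+l$ with $l\in L(I)$, whence $\calO_{\tX_I}(D_I)^{\otimes N}\simeq\calO_{\tX_I}(-l)$, so $\calO_{\tX_I}(D_I)$ is natural by the criterion recalled in \ref{bek:natline}. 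Granting this, $\calO_{\tX_I}(D_I)=\calO_{\tX_I}(-nE^*_v(\tX_I))$ carries a section without fixed components, i.e. $nE^*_v(\tX_I)\in\calS'_{im}(\tX_I)$, and hence $E^*_v(\tX_I)\in\overline{\calS'_{im}(\tX_I)}$ for every $v\in I$; applying this to each connected component of $E_I$ in the disconnected case finishes the argument. Apart from the ``meets $E$ in finitely many points $\Rightarrow$ natural'' input, everything is the same divisor-restriction bookkeeping as in the ECC case, now with ``$n=1$'' relaxed to ``$n\geq 1$'' (and in fact shortened by the global equality $\overline{\calS'_{im}}=\calS'$).
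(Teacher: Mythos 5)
Your reduction (show $nE^*_v(\tX_I)\in\calS'_{im}(\tX_I)$ for every $v\in I$, via $\overline{\calS'_{im}(\tX)}=\calS'(\tX)$) and the geometric bookkeeping for $D$ and $D_I$ are fine. But the step you yourself single out as ``the one point requiring care'' is where the argument fails. The ``general fact'' you invoke --- that $\calO_{\tX_I}(D_I)$ is natural for any effective Cartier divisor meeting $E_I$ in finitely many points, because $N\phi_I(D_I)$ is Cartier and hence principal --- is false. The local divisor class group of $(X_I,o_I)$ is not the finite group $L'(I)/L(I)$: for a germ with rational homology sphere link it is an extension of $H_1$ of the link by a vector space of dimension $p_g(X_I,o_I)$, so a Weil divisor need not be $\Q$--Cartier once $p_g(X_I,o_I)>0$. (``The Picard group of a germ vanishes'' gives Cartier $\Rightarrow$ principal, not Weil $\Rightarrow$ $\Q$--Cartier.) Concretely, whenever $p_g>0$ there are many non--natural bundles $\calL\in\pic^{l'}$ admitting sections without fixed components (e.g.\ generic points of a positive--dimensional $\im(c^{l'})$, of which the natural bundle is a single point of the fiber $c_1^{-1}(l')\simeq\C^{p_g}$); their divisors meet $E$ in finitely many points, yet $\calO(D)=\calL$ is not natural. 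If your general fact were true, the restriction of a natural bundle would always be natural and Corollary \ref{cor:WECC}(e) --- which asserts that this holds \emph{precisely} under WECC --- would be vacuous; the warning in \ref{bek:ujszam} is exactly that this is a genuine pathology.

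What you actually know is naturality upstairs: $\calO_{\tX}(ND+NnE^*_v)\simeq\calO_{\tX}$, so $ND+NnE^*_v={\rm div}(f)$ for a function $f$. Restricting the \emph{function} (not the divisor class) to $\tX_I$ yields that $\calO_{\tX_I}(ND_I+\sum_u Nnm_u\Delta_u)$ is natural, where $\Delta_u=E_u\cap\tX_I$ are the cuts coming from vertices $u\notin I$ adjacent to $I$ and $m_u$ are the $E_u$--coefficients of $E^*_v$. This places $nE^*_v(\tX_I)+\sum_u nm_u E^*_{w(u)}(\tX_I)$ --- not $nE^*_v(\tX_I)$ alone --- in $\overline{\calS'_{im}(\tX_I)}$, and disentangling the boundary contributions $\Delta_u$ is precisely the content of the cited arguments \cite[(2.15)]{Ok}, \cite[5.27]{NCL} and of Lemma \ref{lem:WEECnatural}. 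As written, your proof is complete only when the sub--singularities are rational (where every line bundle is natural and the contested step is trivial).
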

We will use the same notations even if $E_I$ is not connected, in such cases
 $(X_I,o_I)$ is  a multigerm. We set $\{I_j\}_j$ for the connected components of $I$.

 \bekezdes\label{bek:ujszam}
  Before we state the next result, we warn the reader that, in general,
 the restriction to some $\tX_I$ of a natural line bundle of $\tX$ is not natural
 (that is, $\calO_{\tX}(l')|_{\tX_I}\not= \calO_{\tX_I}(R(l'))$, where $R$ is the Chern class restriction).
By the next statements we prove that an analytic structure is {\it free from this pathology
if and only if it satisfies  WECC}.

In order to test the fact that the restriction of any natural line bundle it is natural
is enough to verify that $\calO_{\tX}(E_v)|_{\tX_{I_j}}$ is natural for any vertex $v$ and $j$,
where $I:=\calv\setminus v$.  Indeed, first note that it is enough to test only integral cycles.
Next,   $\calO_{\tX}(E_u)|_{\tX_{I_j}}=\calO_{\tX_{I_j}}(E_u)$ for any $u\not=v$,
hence by additivity applied for $\calO_{\tX}(l)$ ($l\in L$) the claim follows. Amazingly,
this property fits perfectly with the WECC.

 \begin{lemma}\label{lem:WEECnatural} As above, consider any
   singularity and one of its resolutions $\tX$.
 Fix any vertex $v\in\calv$ and set $I:=\calv\setminus v$, $I=\cup_jI_j$. Then
 $$
 E^*_v \in \overline{  \calS'_{im}}
  \ \ \ \mbox{if and only if}
 \  \ \
 \calO_{\tX}(E_v)|_{\tX_{I_j}} \ \ \ \mbox{is natural for all $j$}.$$
 \end{lemma}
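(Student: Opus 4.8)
The plan is to set $I=\calv\setminus\{v\}=\sqcup_jI_j$ and to exploit that the dual graph is a tree: then $v$ has a unique neighbour $u_j$ in each $I_j$, and $\tX_{I_j}$ meets $E_v$ only along a small transversal disc $D_j:=E_v\cap\tX_{I_j}$ at $E_{u_j}$, so that $\calO_{\tX}(E_v)|_{\tX_{I_j}}=\calO_{\tX_{I_j}}(D_j)$. Two short intersection computations (using $(E^*_v,E_u)=0$ for $u\neq v$, and $(E_v,E_u)=\delta_{u,u_j}$ for $u\in I_j$) give $R(E^*_v)=0$ and $R(E_v)=-E^*_{u_j}(\tX_{I_j})$ for the Chern class restriction $R$; hence the right--hand side of the statement is the assertion that $\calO_{\tX_{I_j}}(D_j)$ equals the natural bundle $\calO_{\tX_{I_j}}(-E^*_{u_j})$ for every $j$. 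I would also record the standing facts: for $Z\in L(\tX_{I_j})$ the bundle $\calO_{\tX_{I_j}}(Z)$ is natural and a Chern class determines a natural bundle uniquely; $\pic^0(\tX_{I_j})\simeq\C^{p_g(X_{I_j})}$ is torsion free; and the reformulation of the left--hand side, $E^*_v\in\overline{\calS'_{im}}$ iff for some $N>0$ with $NE^*_v\in L$ there is $f\in\calO_{X,o}$ with $\mathrm{div}_E(f\circ\phi)=NE^*_v$ (equivalently $\calO_{\tX}(-NE^*_v)$ admits a section without fixed components).

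For ``$\Rightarrow$'' I would take such an $f$ and write $\mathrm{div}(f\circ\phi)=NE^*_v+S$ with $S$ the strict transform; as $\mathrm{div}_E(f\circ\phi)$ has full support and $(NE^*_v+S,E_w)=0$ for all $w$, one gets $S\cdot E_w=N\delta_{vw}$, so $S$ meets $E$ only in $E_v$ away from the nodes. After shrinking $\tX_{I_j}$ to miss $S$, the restriction has divisor $\mathrm{div}(f|_{\tX_{I_j}})=Y_j+mD_j$ with $Y_j:=\sum_{w\in I_j}N(E^*_v)_wE_w\in L(\tX_{I_j})$ and $m:=N(E^*_v)_v$, and, $f|_{\tX_{I_j}}$ being a function vanishing on $E_{I_j}$, this shows $\calO_{\tX_{I_j}}(Y_j+mD_j)\simeq\calO_{\tX_{I_j}}$. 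Since $\calO_{\tX_{I_j}}(Y_j)$ is natural with Chern class $mE^*_{u_j}(\tX_{I_j})$ (same computation), it equals $\calO_{\tX_{I_j}}(-E^*_{u_j})^{\otimes(-m)}$, whence $\bigl(\calO_{\tX_{I_j}}(D_j)\otimes\calO_{\tX_{I_j}}(-E^*_{u_j})^{-1}\bigr)^{\otimes m}$ is trivial; this bundle has trivial Chern class, so it lies in the torsion--free group $\pic^0(\tX_{I_j})$ and is therefore trivial, i.e. $\calO_{\tX}(E_v)|_{\tX_{I_j}}=\calO_{\tX_{I_j}}(D_j)=\calO_{\tX_{I_j}}(-E^*_{u_j})$ is natural.

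For ``$\Leftarrow$'' I would first run the same bookkeeping in reverse: if $\calO_{\tX_{I_j}}(D_j)=\calO_{\tX_{I_j}}(-E^*_{u_j})$, then, using the uniqueness of natural bundles and additivity, $\calO_{\tX}(-NE^*_v)|_{\tX_{I_j}}=\calO_{\tX_{I_j}}(-(Y_j+mD_j))=\calO_{\tX_{I_j}}(-Y_j)\otimes\calO_{\tX_{I_j}}(D_j)^{\otimes(-m)}$ is trivial for every $j$ and every $N$ with $NE^*_v\in L$; enlarging $N$ I may also assume $NE^*_v\ge Z_K$, so that $h^1(\tX,\calO_{\tX}(-NE^*_v))=0$ by Grauert--Riemenschneider. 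It then remains to realize $NE^*_v$ as $\mathrm{div}_E$ of a function. The idea is to glue along the cover $\{\tX_{\{v\}}\}\cup\{\tX_{I_j}\}_j$: a small neighbourhood $\tX_{\{v\}}$ of $E_v$ contracts to a cyclic quotient (hence rational) singularity, so $\calO_{\tX}(-NE^*_v)|_{\tX_{\{v\}}}$ is free and carries a section $a_v$ whose divisor is $NE^*_v|_{\tX_{\{v\}}}$ together with $N$ curvettas of $E_v$ at generic points off the nodes; on each $\tX_{I_j}$ the triviality just proved furnishes a section $f_j$ with divisor $Y_j+mD_j$; and near each node $p_j=E_v\cap E_{u_j}$ the overlap $\tX_{\{v\}}\cap\tX_{I_j}$ is a polydisc on which $a_v$ and $f_j$ have the same divisor, hence differ by a unit. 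Gluing $a_v$ and the $f_j$ into a global section of $\calO_{\tX}(-NE^*_v)$ with no fixed components amounts to absorbing these units into units pulled back from $\tX_{\{v\}}$ and from the $\tX_{I_j}$; the residual obstruction lives in a Čech $H^1$ mapping to $\pic^0(\tX)$, namely in the class of the auxiliary degree--zero bundle $\calO_{\tX}(-NE^*_v-S)$ which, by the above, already restricts trivially to $\tX_{\{v\}}$ and to every $\tX_{I_j}$, and one must show it can be made trivial by a suitable choice of the curvettas $S$, using $\pic^0(\tX_{\{v\}})=0$, the acyclicity of the polydisc overlaps, and the vanishing $h^1(\tX,\calO_{\tX}(-NE^*_v))=0$ --- the argument running along the lines of the proof of the End Curve Theorem, cf. \cite{NWECTh,Ok}. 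This final patching is the main obstacle: one has to match the (cheap) local realization at $v$ coming from the cyclic quotient with the realizations on the $\tX_{I_j}$ supplied by the naturality hypothesis, across the nodes, and verify that the resulting obstruction in $\pic^0(\tX)$ vanishes; everything else is Chern--class bookkeeping plus the torsion--freeness of $\pic^0$.
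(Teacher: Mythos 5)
Your ``$\Rightarrow$'' direction is essentially the paper's argument: a section of $\calO_{\tX}(-NE^*_v)$ without fixed components has divisor meeting $E$ only along $E_v$, so its restriction trivializes $\calO_{\tX}(-NE^*_v)|_{\tX_{I_j}}$, and the bookkeeping $NE^*_v=\sum_j l_j+m_vE_v$ with $l_j\in L(\tX_{I_j})$ then exhibits a power of $\calO_{\tX}(E_v)|_{\tX_{I_j}}$ as the bundle of an integral cycle, hence natural. That half is correct.

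The converse, however, has a genuine gap exactly where you flag ``the main obstacle''. Knowing that $\calO_{\tX}(-nE^*_v)$ restricts trivially to each $\tX_{I_j}$ (equivalently to $\tX_I$) only places it in the fiber $(\pi_I)^{-1}(0)$ of the restriction $\pi_I:\pic^{-nE^*_v}(\tX)\to\pic^0(\tX_I)$; it does not by itself produce a section without fixed components, because $\ker\bigl(\pic^0(\tX)\to\pic^0(\tX_I)\bigr)$ has dimension $p_g(X,o)-p_g(X_I,o_I)$, which is in general positive, so no local or \v{C}ech-type argument on the cover $\{\tX_{\{v\}}\}\cup\{\tX_{I_j}\}_j$ can force your auxiliary degree-zero bundle $\calO_{\tX}(-nE^*_v)\otimes\calO_{\tX}(S)$ to become trivial. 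Varying the curvettas $S$ sweeps out exactly $\im(\widetilde{c}^{-nE^*_v})$, so what has to be proved is that this image fills the \emph{whole} fiber $(\pi_I)^{-1}(0)$. This is where the paper invokes Theorem \ref{prop:AZ}: for $n\gg0$ the image $\im(c^{-nE^*_v})$ is a closed affine subspace of dimension $\dim V(\{v\})=h^1(\calO_{\tX})-h^1(\calO_{\tX_I})$, which equals $\dim(\pi_I)^{-1}(0)$; since $\im(c^{-nE^*_v})\subset(\pi_I)^{-1}(0)$, the two coincide, and the naturality hypothesis puts $\calO_{\tX}(-nE^*_v)$ into this common space. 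Without this global dimension count (or an equivalent surjectivity statement), your gluing scheme does not close; deferring to ``the lines of the proof of the End Curve Theorem'' is not a substitute, since the vanishing of the residual obstruction in $\pic^0(\tX)$ is precisely the statement to be established.
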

 \begin{proof} `$\Rightarrow$' \ Take $n\gg 0$ so that $nE^*_v\in L$ and write
 $nE^*_v=\sum_j l_j+m_vE_v$ ($\dag$), where $l_j\in L(\tX_{I_j})_{>0}$. The assumption guarantees that
 $\calO_{\tX}(-nE^*_v)\in \im(c^{-nE^*_v})$, hence the existence of a divisor $D\in \eca^{-nE^*_v}(\tX)$ with
 $\calO_{\tX}(-nE^*_v)=\calO_{\tX}(D)$. Therefore,
  $\calO_{\tX}(-nE^*_v)|_{\tX_{I_j}}=\calO_{\tX}(D)|_{\tX_{I_j}}=\calO_{\tX_{I_j}}$.
  This reads as
  $\calO_{\tX}(m_vE_v)|_{\tX_{I_j}}=\calO_{\tX_{I_j}}(-l_j)$, hence
  $\calO_{\tX}(E_v)|_{\tX_{I_j}}$ is natural.

`$\Leftarrow$' \ If $\calO_{\tX}(E_v)|_{\tX_{I_j}}$ is natural then
  $\calO_{\tX}(m_vE_v)|_{\tX_{I_j}}$ has the form $\calO_{\tX_{I_j}}(-l_j)$
  for some $m_v\gg 0$. This means that
  for a convenient large $n$, such that $nE^*_v$ has the form ($\dag$), we get
  that  $\calO_{\tX}(-nE_v^*)|_{\tX_{I}}$ is trivial.  Consider next the
  restriction  $\pi_I : \pic^{-nE^*_v}(\tX)\to \pic^0(\tX_I)$. Then $(\pi_I)^{-1}(0)$
  is an affine space of dimension $h^1(\calO_{\tX})-h^1(\calO_{\tX_I})$. The same is true for
  $\im (c^{-nE^*_v})$ for $n\gg 0$. Since  $\im (c^{-nE^*_v})\subset (\pi_I)^{-1}(0)$,
  the two spaces should agree. Thus, $\calO_{\tX}(-nE^*_v)\in
  \im (c^{-nE^*_v})$ for $n\gg 0$.
 \end{proof}
 \begin{corollary}\label{cor:WECC} {\bf (Characterization of WECC for arbitrary singularity)}
 Under the condition of Lemma
 \ref{lem:WEECnatural} the following facts are equivalent:

(a)  WECC for $\tX$;

(b)  $\calO_{\tX}(E_v)|_{\tX_{\calv\setminus v}}$ is natural for any end vertex $v$;

(c)  $\calO_{\tX}(E_v)|_{\tX_{I_j}}$ is natural for any $v\in \calv$ and $j$;

(d) $\calO_{\tX}(-l)|_{\tX_{\calv\setminus I}}\in \pic^0(\tX_{\calv\setminus I})$
is trivial for any $l\in \calS$ with $E^*$--support $I$;

(e) The restriction to any $\tX_I$ of any natural line bundle of $\tX$ is natural.

 \end{corollary}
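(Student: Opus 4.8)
The plan is to establish the equivalences by closing the implication chain $(a)\Leftrightarrow(b)\Leftrightarrow(c)$ and then $(c)\Rightarrow(e)\Rightarrow(d)\Rightarrow(c)$. The only substantial inputs will be Lemma \ref{lem:WEECnatural} and the fact, due to \cite[Prop. 9.2.2]{NNI}, that WECC for $\tX$ is equivalent to $\overline{\calS'_{im}}=\calS'$; everything else is organizational.

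First I would dispose of $(a)\Leftrightarrow(b)$ and $(a)\Leftrightarrow(c)$ directly from Lemma \ref{lem:WEECnatural}. Since the dual graph is a tree, $\calv\setminus v$ is connected for every end vertex $v$, so that lemma reads precisely $E^*_v\in\overline{\calS'_{im}}$ iff $\calO_{\tX}(E_v)|_{\tX_{\calv\setminus v}}$ is natural; as WECC means $E^*_v\in\overline{\calS'_{im}}$ for all end vertices, this is $(a)\Leftrightarrow(b)$. For $(a)\Leftrightarrow(c)$ note that $\overline{\calS'_{im}}$ is a submonoid of $\calS'$ and $\calS'$ is generated as a monoid by the $E^*_v$, so $\overline{\calS'_{im}}=\calS'$ holds iff $E^*_v\in\overline{\calS'_{im}}$ for \emph{every} vertex $v$; Lemma \ref{lem:WEECnatural} applied vertex by vertex (with $I=\calv\setminus v$ and its components $\{I_j\}_j$) then turns this into exactly $(c)$.

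The implications $(e)\Rightarrow(d)$ and $(d)\Rightarrow(c)$ are short. For $(e)\Rightarrow(d)$: if $l\in\calS$ has $E^*$-support $I$, then $(l,E_w)=0$ for every $w\in\calv\setminus I$, so the Chern class of $\calO_{\tX}(-l)|_{\tX_{\calv\setminus I}}$ is orthogonal to the basis $\{E_w\}_{w\in\calv\setminus I}$ of $L(\calv\setminus I)$ and hence vanishes by negative definiteness; by $(e)$ this restriction is natural, so it must be the natural bundle of Chern class $0$, i.e. trivial. For $(d)\Rightarrow(c)$: fix $v$, let $I_1,\dots,I_r$ be the components of $\calv\setminus v$, pick $n\geq1$ with $nE^*_v\in L$, and observe that $nE^*_v\in\calS$ has $E^*$-support $\{v\}$; writing the integral cycle $nE^*_v=\sum_k y_k+mE_v$ with $y_k$ supported on $E_{I_k}$ and $m>0$, and restricting the (by $(d)$) trivial bundle $\calO_{\tX}(-nE^*_v)|_{\tX_{\calv\setminus v}}$ to $\tX_{I_k}$, one obtains $\big(\calO_{\tX}(E_v)|_{\tX_{I_k}}\big)^{m}=\calO_{\tX_{I_k}}(-y_k)$, which exhibits $\calO_{\tX}(E_v)|_{\tX_{I_k}}$ as natural.

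The step needing some care is $(c)\Rightarrow(e)$. Here I would first use $(a)\Leftrightarrow(c)$ together with Lemma \ref{lem:WECCrestr} to observe that, $\tX$ being WECC, property $(c)$ in fact holds on every sub-resolution $\tX_{I'}$ (which is again a WECC resolution, connected on each component). Since naturality is detected by passing to powers and restriction is multiplicative, it then suffices to show each $\calO_{\tX}(E_v)|_{\tX_I}$ is natural, which I would prove by induction on $|\calv\setminus I|$: the case $v\in I$ is trivial, the case $|\calv\setminus I|=1$ is exactly $(c)$, and for the inductive step one factors $\tX\to\tX_I$ through $\tX\to\tX_{\calv\setminus w}\to\tX_I$ for a vertex $w\in\calv\setminus I$ and applies the inductive hypothesis on each connected component of $\tX_{\calv\setminus w}$. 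I expect this induction to be the only genuine obstacle, and it is one of bookkeeping — keeping track of the fact that $(c)$, hence the inductive hypothesis, is legitimately available on the multigerm $\tX_{\calv\setminus w}$ — with no new geometric difficulty, since all the substance is already contained in Lemma \ref{lem:WEECnatural} and in \cite[Prop. 9.2.2]{NNI}. (The remaining implication $(e)\Rightarrow(c)$ is immediate, taking the natural bundle $\calO_{\tX}(E_v)$, but it already follows from the chain above.)
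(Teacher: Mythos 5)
Your proposal is correct and follows essentially the same route as the paper, whose one-line proof simply invokes Lemma \ref{lem:WEECnatural} (giving (a)$\Leftrightarrow$(b)$\Leftrightarrow$(c) via $\overline{\calS'_{im}}=\calS'$ and the monoid generation of $\calS'$ by the $E^*_v$), the argument inside that lemma's proof (your (d)-steps), and the reduction of \ref{bek:ujszam} combined with the heredity Lemma \ref{lem:WECCrestr} for (c)$\Leftrightarrow$(e). Your induction for (c)$\Rightarrow$(e) is sound — note that it does terminate on $|\calv\setminus I|$ since $|J\setminus I|\leq|\calv\setminus I|-1$ for each component $J$ of $\calv\setminus w$ — and merely makes explicit what the paper leaves implicit.
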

 \begin{proof}
 Use Lemma \ref{lem:WEECnatural} and its proof and the comment from \ref{bek:ujszam}.
 \end{proof}

 \subsection{WECC for elliptic singularities. First consequences.}

In the elliptic case, cf. Theorem \ref{th:Vk},
 since $\im (c^{nl'})=\overline { \im (c^{nl'})}$ and
 $\calO_{\tX}(nl')\in \overline { \im ( c^{nl'})}\
 \Leftrightarrow \ 0 \in \overline { \im ( \widetilde{c}^{nl'})}\
  \Leftrightarrow \ 0 \in \overline { \im ( \widetilde{c}^{l'})}\
  \Leftrightarrow \ \calO_{\tX}(l')\in \overline { \im ( c^{l'})}$
 for $n\gg0$, we get the following.

 \begin{corollary}\label{cor:first}
 {\bf (First analytic   characterization of WECC  for  elliptic $\tX$)}
 \begin{equation}\label{eq:firstchar}
 {\rm WEEC}\ \Leftrightarrow\ \calO_{\tX}(l')\in \overline { \im (c^{l'})} \
 \mbox{for any $l'\in -\calS'$}. 
 \end{equation}
\end{corollary}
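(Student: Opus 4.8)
The plan is to read the statement off the chain of equivalences displayed just before it, once the two black boxes it relies on are in place. The first is the reformulation of WECC proved in \cite[Prop. 9.2.2]{NNI}: $\tX$ satisfies WECC if and only if $\overline{\calS'_{im}}=\calS'$, i.e. for every $l'\in-\calS'$ there exists $n\ge 1$ with $\calO_{\tX}(nl')\in\im(c^{nl'})$. The second is the package of Theorems \ref{th:Vk} and \ref{prop:AZ} for elliptic germs, which tells us that $\overline{\im(\widetilde c^{l'})}=A(l')$ for all $l'$, that $\im(c^{nl'})=\overline{\im(c^{nl'})}$ for $n\gg1$, and that $V(nl')=V(l')$. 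Granting these, the corollary is a bookkeeping exercise with the dictionary ``multiplication by $\calO_Z(\mp l')$'' of \ref{bek:modAbel}, which converts $\calO_{\tX}(l')\in\overline{\im(c^{l'})}$ into $0\in\overline{\im(\widetilde c^{l'})}$ and back.

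For the implication WECC $\Rightarrow$ $\calO_{\tX}(l')\in\overline{\im(c^{l'})}$, fix $l'\in-\calS'$; by the WECC reformulation there is $n\ge1$ with $\calO_{\tX}(nl')\in\im(c^{nl'})\subseteq\overline{\im(c^{nl'})}$, which via the modified Abel map means $0\in\overline{\im(\widetilde c^{nl'})}$, hence $0\in\overline{\im(\widetilde c^{l'})}$ by the middle equivalence below, hence $\calO_{\tX}(l')\in\overline{\im(c^{l'})}$. For the converse, assume $\calO_{\tX}(l')\in\overline{\im(c^{l'})}$ for all $l'\in-\calS'$; apply this to $l'=-E^*_v$ for an end vertex $v$, run the same chain to get $\calO_{\tX}(-nE^*_v)\in\overline{\im(c^{-nE^*_v})}$ for every $n$, and use that by Theorem \ref{prop:AZ}{\it (e)} the image $\im(c^{-nE^*_v})$ is already closed for $n\gg1$; thus $\calO_{\tX}(-nE^*_v)\in\im(c^{-nE^*_v})$, i.e. $E^*_v\in\overline{\calS'_{im}}$ for all end vertices $v$, which is WECC.

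The one step that deserves an explicit (short) argument is the middle equivalence $0\in\overline{\im(\widetilde c^{nl'})}\Leftrightarrow 0\in\overline{\im(\widetilde c^{l'})}$. Here one uses Theorem \ref{th:Vk}{\it (7)} to write $\overline{\im(\widetilde c^{l'})}=A(l')$ and $\overline{\im(\widetilde c^{nl'})}=A(nl')$, the fact $V(nl')=V(l')$ from Theorem \ref{prop:AZ}{\it (a)}, and the ``semigroup of shifts'' recalled in \ref{bek:shifted}: if $A(l')=a(l')+V(l')$ then $A(nl')=n\,a(l')+V(l')$. Since $V(l')$ is a linear subspace, $a(l')\in V(l')$ holds iff $n\,a(l')\in V(l')$, which is exactly $0\in A(l')\Leftrightarrow 0\in A(nl')$.

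I do not anticipate a real obstacle: all the geometric content sits in Theorem \ref{th:Vk} (the stabilization $\im(c^{l'})=\im(c^{nl'})$ up to closure, and the flag structure of the $V(I)$) and in \cite[Prop. 9.2.2]{NNI}; the proof itself is the assembly of these with the modified Abel map correspondence. The only thing to be careful about is tracking signs consistently between $\calS'$, $-\calS'$, and the defining convention $\calS'_{im}=\{-l':\calO_Z(l')\in\im(c^{l'})\}$, so that ``$l'\in-\calS'$ in the statement'' matches ``$-l'\in\calS'$ in \cite[Prop. 9.2.2]{NNI}''.
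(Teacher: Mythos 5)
Your proposal is correct and follows essentially the same route as the paper: the paper's proof is precisely the chain $\calO_{\tX}(nl')\in \overline{\im(c^{nl'})}=\im(c^{nl'})\Leftrightarrow 0\in\overline{\im(\widetilde c^{nl'})}\Leftrightarrow 0\in\overline{\im(\widetilde c^{l'})}\Leftrightarrow \calO_{\tX}(l')\in\overline{\im(c^{l'})}$ for $n\gg0$, combined with the reformulation of WECC from \cite[Prop.\ 9.2.2]{NNI}. Your explicit justification of the middle equivalence via $A(nl')=n\cdot a(l')+V(l')$ and the linearity of $V(l')$ is exactly the detail the paper leaves implicit in \ref{bek:shifted}.
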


We invite the reader to review the definition of the
{\it analytic multivariable Poincar\'e series} $P(\bt)$, associated with a fixed resolution of a
normal surface singularity  e.g. from \cite{NPS,NCL}, or  \cite{CDGPs,CDGEq}, see also \cite[2.3.6]{NNI}.
Usually $P(\bt)$ is not topological. However, for singularities, which satisfy ECC $P(\bt)$ equals
the topological series $Z(\bt)$, cf. \cite{NCL}.

\begin{corollary}\label{cor:WECCGor}
(a) If  $\tX$ is elliptic, numerically Gorenstein and it satisfies WECC  then
it is Gorenstein too.  More generally,
if  $\tX$ is elliptic, non--numerically Gorenstein and it satisfies WECC  then
$(X_0,o_0)$  is Gorenstein.

(b) If  $\tX$ is elliptic   and it satisfies WECC  then
the analytic Poincar\'e  series $P(\bt)$ is determined by the resolution graph.
(The identity $P(\bt)=Z(\bt)$ will be proved later.)
\end{corollary}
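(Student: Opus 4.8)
The plan is to prove Corollary \ref{cor:WECCGor} in two parts, using the earlier structural results on elliptic singularities together with the WECC characterizations just established.

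\textbf{Part (a).} First I would treat the numerically Gorenstein case. By Theorem \ref{th:Okuma} the geometric genus is $p_g(X,o)=m+1-\alpha$, so $(X,o)$ is Gorenstein if and only if $\alpha=0$, i.e. $(X_0,o_0)=(X,o)$ is Gorenstein. So it suffices to show $\alpha=0$, which by definition of ${\mathcal A}_{gor}$ (see (\ref{eq:alpha})) means showing that $(X_0,o_0)$ is Gorenstein. I would use the criterion Theorem \ref{e220}{\it (d')}: $(X,o)$ is Gorenstein iff the line bundles $\calO_{C'_{j+1}}(-Z_{B_j})$ are trivial for $0\le j\le m-1$. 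The cycle $Z_{B_j}$ is antinef on $\tX$ (it is the minimal cycle supported on $B_j$), and its $E^*$-support $I_j:=I(Z_{B_j})$ satisfies $I_j\subset B_j$ while, by (\ref{eq:orthogonal}), $B_{j+1}$ is contained in $\calv\setminus$(the set of vertices meeting $|Z_{B_j}|$ transversally)\,---\,more precisely $\calv\setminus I_j\supseteq B_{j+1}$ (actually $B_{j+1}$ is disjoint from $|Z_{B_j}|$'s ``boundary''). The key is Corollary \ref{cor:WECC}{\it (d)}: WECC implies that for every $l\in\calS$ with $E^*$-support $I$, the restriction $\calO_{\tX}(-l)|_{\tX_{\calv\setminus I}}$ is trivial. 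Applying this with $l=Z_{B_j}$ (whose $E^*$-support $I_j$ is disjoint from $B_{j+1}$, hence $B_{j+1}\subseteq \calv\setminus I_j$, hence $C'_{j+1}$ is a cycle supported on $\tX_{\calv\setminus I_j}$) gives that $\calO_{\tX}(-Z_{B_j})$ restricted to a neighbourhood of $C'_{j+1}$ is trivial, hence $\calO_{C'_{j+1}}(-Z_{B_j})$ is trivial. By Theorem \ref{e220} this forces $(X,o)$ Gorenstein. For the non-numerically-Gorenstein case, Lemma \ref{lem:b0}{\it (c)} tells us $B_0$ supports a numerically Gorenstein elliptic germ, and WECC is inherited by the sub-singularity $(X_0,o_0)$ by Lemma \ref{lem:WECCrestr}; so the already-proved numerically Gorenstein case applied to $(X_0,o_0)$ gives that $(X_0,o_0)$ is Gorenstein.

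\textbf{Part (b).} The analytic Poincaré series $P(\bt)$ is built from the dimensions $h^1(\tX,\calO_{\tX}(-l'))$ (or equivalently from the Hilbert-type function $\bt\mapsto\sum\dim H^0/H^0(\text{twist})$) for $l'\in\calS'$; concretely the coefficients of $P(\bt)$ encode, via the standard formula, quantities of the form $h^0(\calO_{\tX}) - h^0(\calO_{\tX}(-l'))$ together with the Chern-class data. By Corollary \ref{cor:first}, WECC says $\calO_{\tX}(l')\in\overline{\im(c^{l'})}$ for every $l'\in-\calS'$, and by Theorem \ref{th:Vk}{\it (8)} the value $h^1$ is \emph{uniform} on $\overline{\im(c^{l'})}$, equal to $p_g - \dim V(I(l'))=p_g(X_{\calv\setminus I(l')},o_{\calv\setminus I(l')})$, a purely topological quantity once $p_g=m-\alpha+1$ is fixed (which itself is topological, cf. Theorem \ref{th:Vk}{\it (5)} and the flag structure in Corollary \ref{cor:omegaflag}\,---\,note $\alpha$ is determined by the graph because by part (a) WECC forces the relevant sub-singularities to be Gorenstein, so $\alpha$ is the smallest index with $B_\alpha$ numerically Gorenstein, read off the graph). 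Therefore, for WECC germs, $h^1(\tX,\calO_{\tX}(l'))$ is a topological function of $l'$, and feeding this into the Riemann--Roch-type expression for the coefficients of $P(\bt)$ shows that $P(\bt)$ depends only on $\Gamma$. (The sharper identity $P(\bt)=Z(\bt)$, as the statement itself announces, is deferred to the later section; here we only claim topological determinacy.)

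\textbf{Main obstacle.} The delicate point is part (b): extracting from ``$h^1(\calO_{\tX}(l'))$ is topological for all $l'\in-\calS'$'' the topological determinacy of the \emph{whole} series $P(\bt)$, since $P(\bt)$ a priori involves the dimensions $\dim H^0(\calO_{\tX}(-l'))$ and the interplay of fixed components, not merely the $h^1$-values. The clean way around this is to recall that $P(\bt)$ can be reconstructed from the function $l'\mapsto \chi(l') + h^1(\tX,\calO_{\tX}(-l'))$ (an Euler-characteristic computation valid once $-l'\in\calS'$, together with the counting formula for $P(\bt)$ in terms of $h^1$ jumps along coordinate directions, cf. \cite{NCL,NNI}); since $\chi$ is manifestly topological and $h^1$ is topological by the above, $P(\bt)$ is topological. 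Verifying this reconstruction carefully\,---\,i.e. that no further analytic input sneaks into $P(\bt)$ beyond the $h^1$-values of natural bundles with antinef Chern class\,---\,is the step that needs the most care, but it is exactly the content of the standard ``$P$ vs. $h^1$'' dictionary for surface singularities, which applies here because WECC places $\calO_{\tX}(l')$ inside $\overline{\im(c^{l'})}$ where Theorem \ref{th:Vk} controls everything.
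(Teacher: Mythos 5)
Your overall strategy for both parts is the paper's: part (a) via the ``restrictions of natural line bundles are trivial'' characterization of WECC (Corollary \ref{cor:WECC}{\it (d)}) feeding into Theorem \ref{e220}{\it (d')}, and part (b) via the reconstruction of $P(\bt)$ from $\Gamma$ plus the $h^1$ of natural line bundles with antinef Chern class, combined with Corollary \ref{cor:first} and the uniformity of $h^1$ on $\overline{\im(c^{l'})}$ from Theorem \ref{th:Vk}{\it (8)}. Part (b) as you wrote it is essentially the paper's proof; your worry about the ``$P$ vs.\ $h^1$'' dictionary is resolved exactly as you suspect, by citing the standard reconstruction (the paper points to \cite[4.2]{NPS}), and your observation that part (a) is what makes $p_g$ (equivalently $\alpha$) topological is also the paper's.

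There is, however, a genuine gap in your part (a). You assert that $Z_{B_j}$ is antinef on $\tX$ because it is the minimal cycle supported on $B_j$. This is true only for $j=0$ (where $Z_{B_0}=Z_{min}$). For $j\geq 1$ the cycle $Z_{B_j}$ is antinef only as a cycle of the subgraph $B_j$: for any vertex $v\notin B_j$ adjacent to $B_j$ one has $(Z_{B_j},E_v)>0$, so $Z_{B_j}\notin\calS(\tX)$, its $E^*$-expansion in $L'(\tX)$ acquires negative coefficients, and the hypothesis ``$l\in\calS$ with $E^*$-support $I$'' of Corollary \ref{cor:WECC}{\it (d)} simply fails. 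Consequently you cannot conclude the triviality of $\calO_{C'_{j+1}}(-Z_{B_j})$ for $j\geq 1$ by one direct application of that corollary at the level of $\tX$. The repair is the inductive step the paper performs: pass to the sub-singularity $\tX_j$ supported on $B_j$, which inherits WECC by Lemma \ref{lem:WECCrestr}; there $Z_{B_j}$ \emph{is} the fundamental cycle, hence antinef, and Corollary \ref{cor:WECC}{\it (d)} applied to $\tX_j$ yields that $\calO(-Z_{B_j})|_{B_{j+1}}$ is trivial (this is independent of the ambient space since $Z_{B_j}$ is an integral divisor). Iterating over $j$ and then invoking Theorem \ref{e220}{\it (d')}$\Leftrightarrow${\it (f)} closes the argument; your reduction of the non-numerically Gorenstein case to this one via Lemmas \ref{lem:b0} and \ref{lem:WECCrestr} is correct as stated.
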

\begin{proof} {\it (a)}
By part {\it (d)}  of Corollary  \ref{cor:WECC} $\calO_{\tX}(-Z_{B_0})|_{B_1}$ is trivial.
Using induction and Lemma \ref{lem:WECCrestr} we get that in fact
$\calO_{\tX}(-Z_{B_j})|_{B_{j+1}}$ are  trivial for all $0\leq j\leq m-1$. Then apply part
{\it (d')$\Leftrightarrow$(f)} of Theorem \ref{e220}. For the second part use the first part and Lemma
 \ref{lem:WECCrestr} again.

 {\it (b)}
First note that $(X_0,o_0)$ satisfies WECC (cf. Lemma \ref{lem:WECCrestr}), hence it is Gorenstein
 (by part (a)). In particular,  $p_g$ is topological. Furthermore,
it is known, see e.g. \cite[4.2]{NPS}, that $P(\bt)$ can be recovered from the dual resolution graph
of $\tX$ combined with the knowledge of the cohomology groups
$\{h^1(\calO_{\tX}(l'))\}_{l'\in -\calS'}$ of natural line bundles indexed by $-\calS'$.
However,  for each such $l'$ one has
$\calO_{\tX}(l')\in \overline {\im (c^{l'})}$
and $h^1$ along $\overline {\im (c^{l'})}$ is topological (apply Theorem \ref{th:Vk} for  a
Gorenstein singularity).
\end{proof}

\begin{remark}
Corollary \ref{cor:WECCGor}(a)  can be compared with the following statement.
Assume that the link of a singularity $(X,o)$ is a rational homology sphere. Then,
if $(X,o)$ is numerically Gorenstein and $\Q$--Gorenstein,   then it is Gorenstein.
(Recall that  weighted homogeneous singularities, or those which satisfy ECC, are
$\Q$--Gorenstein.)
\end{remark}
\begin{lemma}\label{lem:nobar}
(a) Assume that $(X,o)$ is elliptic, numerically Gorenstein and $\tX$ satisfies WECC
 (hence it is automatically Gorenstein,  cf. Corollary  \ref{cor:WECCGor}). If $v\in B_0\setminus B_1$
then $E^*_v\in \calS'_{im}$ (that is,
  $\calO_{\tX}(-E^*_v)\in \im (c^{-E^*_v})$).

(b) Assume that $(X,o)$ is elliptic, non numerically Gorenstein and $\tX$ satisfies WECC.
If $v\in B_{-1}\setminus B_0$
then $E^*_v\in \calS'_{im}$. Moreover, $\calO_{\tX}(-E^*_v)$ has no base  point.
\end{lemma}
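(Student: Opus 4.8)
\textbf{Proof proposal for Lemma \ref{lem:nobar}.}

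The plan is to reduce both parts to a $\dim V$--counting argument combined with the structure of the images of Abel maps from Theorem \ref{th:Vk}, and to use the WECC characterization from Corollary \ref{cor:WECC}(d) together with the sub-singularity stability of WECC (Lemma \ref{lem:WECCrestr}). For part (a): since $(X,o)$ is numerically Gorenstein, Gorenstein (by Corollary \ref{cor:WECCGor}(a)), and $v\in B_0\setminus B_1$, we have $\alpha=0$ and, by Theorem \ref{th:Vk}(4), $\dim V(v)=1$, hence $\dim\im(c^{-E^*_v})=1$ by Theorem \ref{th:Vk}(6). By WECC (Corollary \ref{cor:WECC}(d)) applied with $l=$ the integral multiple making $E^*_v$ integral, or directly with the natural line bundle interpretation, $\calO_{\tX}(-nE^*_v)|_{\tX_{\calv\setminus v}}$ is trivial for $n\gg0$; but here I want the statement for $n=1$. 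The key point is that by Example \ref{ex:trivi}, for $v\in B_0\setminus B_1$ one has $\overline{\im(c^{-E^*_v})}=\im(c^{-E^*_v})$, i.e. the image is already closed. Combined with the elliptic equivalence chain from Corollary \ref{cor:first} (which gives $\calO_{\tX}(l')\in\overline{\im(c^{l'})}$ for all $l'\in-\calS'$ under WECC), we conclude $\calO_{\tX}(-E^*_v)\in\overline{\im(c^{-E^*_v})}=\im(c^{-E^*_v})$, which is exactly $E^*_v\in\calS'_{im}$.

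For part (b), $(X,o)$ is non-numerically Gorenstein and $v\in B_{-1}\setminus B_0$. By Theorem \ref{th:Vk}(1), $V(v)=0$, hence $\dim\im(c^{-E^*_v})=0$ by Theorem \ref{th:Vk}(6), so $\overline{\im(c^{-E^*_v})}$ is a single point and automatically closed. Again by Corollary \ref{cor:first}, WECC forces $\calO_{\tX}(-E^*_v)\in\overline{\im(c^{-E^*_v})}=\im(c^{-E^*_v})$, giving $E^*_v\in\calS'_{im}$. For the "no base point" assertion: a bundle $\calL\in\pic^{-E^*_v}(\tX)$ with a section whose divisor is a transversal cut of $E_v$ has a base point only if every section vanishes at some fixed point of $E_v$. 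Since $h^1(\calO_{\tX}(-E^*_v))=h^1(\calO_{\tX})-\dim V(v)=p_g$ by Theorem \ref{th:Vk}(8), the section space $H^0(\tX,\calO_{\tX}(-E^*_v))$ has dimension $\chi(-E^*_v)+p_g = 1+p_g$ (using $\chi(-E^*_v)=(E^*_v,E^*_v-Z_K)/2\cdot(-1)$; one checks $\chi(E^*_v)=\ldots$ and finds $h^0=1$ for the "variable part"), so the linear system has exactly dimension $0$ as a projective system, meaning there is (up to scaling) a unique section. I would argue instead more structurally: the fiber of $c^{-E^*_v}$ over $\calO_{\tX}(-E^*_v)$ has dimension $(E^*_v,Z)-\dim V(v)=(E^*_v,Z)$ by Theorem \ref{th:Vk}(10), which is the full dimension $(E^*_v,Z)$ of $\eca^{-E^*_v}(Z)\cong E_v\setminus\cup_{u\neq v}E_u$ (well, its closure), i.e. every transversal cut of $E_v$ gives the bundle $\calO_{\tX}(-E^*_v)$; equivalently the divisors in this linear system sweep out all of $E_v$ (away from the other exceptional curves), so there is no base point on $E_v$, and $\calO_{\tX}(-E^*_v)$ is trivial on $\tX_{\calv\setminus v}$ (from WECC, Corollary \ref{cor:WECC}(d)), hence has no base point off $E_v$ either.

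The main obstacle I anticipate is pinning down the $n=1$ statement rather than the $n\gg0$ statement that WECC gives directly: the whole force of the argument rests on the fact that for these specific vertices ($v\in B_0\setminus B_1$ in the numerically Gorenstein case, $v\in B_{-1}\setminus B_0$ in the non-Gorenstein case) the Abel image $\im(c^{-E^*_v})$ is \emph{already closed}, so that $\calO_{\tX}(-E^*_v)\in\overline{\im(c^{-E^*_v})}$ (a soft consequence of WECC via Corollary \ref{cor:first}) upgrades for free to $\calO_{\tX}(-E^*_v)\in\im(c^{-E^*_v})$. For the numerically Gorenstein case this closedness is Example \ref{ex:trivi}; for the non-Gorenstein case it is immediate since the image is a point. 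A secondary technical point is the base-point-freeness, where I would combine the triviality of $\calO_{\tX}(-E^*_v)$ on $\tX_{\calv\setminus v}$ with the fiber-dimension count of Theorem \ref{th:Vk}(10) to see that the linear system $|\calO_{\tX}(-E^*_v)|$ cuts out every point of $E_v\setminus\cup_{u\neq v}E_u$ and is trivial elsewhere.
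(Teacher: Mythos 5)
Your proposal is correct and follows essentially the same route as the paper: WECC gives $\calO_{\tX}(-E^*_v)\in\overline{\im(c^{-E^*_v})}$ via Corollary \ref{cor:first}, the closedness of the image comes from Example \ref{ex:trivi} in case (a) and from the image being a point in case (b), and base-point-freeness in (b) follows because every effective Cartier divisor of Chern class $-E^*_v$ is then the zero set of a section, so the divisors sweep out $E_v$. The abandoned $h^0$-counting digression in part (b) is unnecessary (and its conclusion about a unique section would in fact point the wrong way), but the structural argument you settle on is exactly the paper's.
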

\begin{proof}  In both cases, from  (\ref{eq:firstchar}) $\calO_{\tX}(-E^*_v)\in
\overline{ \im (c^{-E^*_v})}$. In case (a),  by Corollary \ref{cor:WECCGor} the singularity is Gorenstein, hence by Example \ref{ex:trivi}
$\overline{ \im (c^{-E^*_v})}= \im (c^{-E^*_v})$. In case (b), by Theorem
\ref{th:Vk}, $\im (c^{-E^*_v})$ is  a point, hence
$\overline{ \im (c^{-E^*_v})}= \im (c^{-E^*_v})$.
Therefore, in both cases $\calO_{\tX}(-E^*_v)\in \im (c^{-E^*_v})$.
In case {\it (b)} use again that  $\im (c^{-E^*_v})$ is  a point, hence any `moved'
effective Cartier divisor from $\eca^{-E^*_v}$ is the zero--set of a section.
\end{proof}
\begin{example}\label{ex:nonobar}
Example \ref{ex:NOECC} shows that the WECC in the above Lemma \ref{lem:nobar}
is necessary.
\end{example}
The previous lemma guarantees that in the elliptic WECC Gorenstein case
$\calO_{\tX}(-E^*_v)$ $(v\in B_0\setminus B_1)$
has no fixed components. The next result focuses on the possible base points.

\begin{theorem}\label{th:extension} {\bf (Elliptic
Gorenstein/WECC extension obstruction)} \
Assume that $(X,o)$ is elliptic and  Gorenstein with minimal resolution $\tX$.
Let us denote the dual graph by $\Gamma$ and we fix a vertex   $v\in B_0\setminus B_1$.

(a) Assume that $\Gamma$ can be extended to a larger elliptic graph $\Gamma'$ by adding a new vertex
 $w$ connected to $\Gamma$ by an edge $(v,w)$.
  Then $\calO_{\tX}(-E_v^*)$ does not admit  $E_v$ as its fixed component (though any other $E_u$, $u\not=v$,
  might be fixed, cf. Example \ref{ex:continu}), and  along $E_v$ it has  a unique   base point.

(b) Assume that the elliptic  $\Gamma'$ (obtained from $\Gamma$ as in (a))
is the dual graph of a resolution $\tX'$,
and $\tX$ can be identified with a small neighbourhood of\, $\cup_{v\in \calv(\Gamma)}E_v$ in
$\tX'$. If $\tX'$ satisfies WECC then  $\calO_{\tX}(-E_v^*)$  has
a unique base point (along $E$), which  is exactly   $p:=E_v\cap E_w$.

In particular, $\tX$ cannot be embedded as a subsingularity in an elliptic WECC
 $\tX''$ which has two additional irreducible exceptional curves intersecting
 $E_v$ transversally in two different points $p_1,p_2\in E_v\setminus \cup_{u\in \calv(\Gamma)\setminus v}E_u$.
\end{theorem}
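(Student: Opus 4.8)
The plan is to prove parts (a) and (b) separately, with (a) being the geometric heart and (b) a rather short consequence.

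For part (a), I would first invoke Lemma \ref{lem:glue}: since $\Gamma$ extends to an elliptic graph by gluing $w$ to $v$, the $E_v$--multiplicity $m_{E_v}(Z_{min})$ equals $1$, $v\not\in B_1$, and (because $\Gamma$ is Gorenstein, hence numerically Gorenstein) $v$ is an end--vertex of $\Gamma$ with $m_{E_v}(Z_K)=1$ and its unique neighbour having $Z_K$--multiplicity $2$. Next, since $(X,o)$ is Gorenstein and $v\in B_0\setminus B_1$, Lemma \ref{lem:nobar}(a) gives $\calO_{\tX}(-E^*_v)\in\im(c^{-E^*_v})$, so $\calO_{\tX}(-E^*_v)$ has no fixed components; in particular $E_v$ is not a fixed component. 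It remains to count base points along $E_v$. Here I would use the residue/differential--form machinery: by Corollary \ref{cor:dim} and the structure of the forms $\{\omega_j\}$ from \ref{bek:diff} (with poles exactly $C'_j$), $\dim\im(c^{-E^*_v})=1$, and the tangent--map description of Proposition \ref{prop:Res} shows the fiber of $c^{-E^*_v}$ over $\calO_{\tX}(-E^*_v)$ through a divisor $D=\{p\}$ is cut out by one residue relation. The number of base points is then read off from where the restriction of the section of $\calO_{\tX}(-E^*_v)$ (equivalently, the vanishing of $\mathrm{Res}_D$) forces $p$: because $m_{E_v}(Z_K)=1$ and the adjacent vertex has $Z_K$--multiplicity $2$, the Gorenstein form $\omega_0$ has a simple pole along $E_v$ vanishing to first order toward the neighbour, so $\mathrm{Res}_D(\omega_0)$ has a single simple pole on $E_v$, forcing $D$ (hence the base point) to sit at one specific point of $E_v$. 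This gives exactly one base point along $E_v$.

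For part (b), I would argue as follows. By Lemma \ref{lem:WECCrestr}, WECC of $\tX'$ restricts to WECC of $\tX$, so by Lemma \ref{lem:nobar}(a) again $\calO_{\tX}(-E^*_v)$ has no fixed components, and by part (a) it has a unique base point $p_0$ along $E_v$ and no base point elsewhere along $E$ (the latter because, being in $\im(c^{-E^*_v})$, a generic divisor in $\eca^{-E^*_v}(\tX)$ realizing it is a moving transversal cut of $E_v$; base points can only accumulate on $E_v$ since the Chern class is supported there). Now I claim $p_0=p=E_v\cap E_w$. In $\tX'$, WECC together with Lemma \ref{lem:nobar} (applied to $\tX'$, using that $w$ is an end--vertex of $\Gamma'$, hence $w\in B_{-1}(\Gamma')\setminus B_0(\Gamma')$ or $w\in B_0(\Gamma')\setminus B_1(\Gamma')$ depending on the Gorenstein status of $\Gamma'$) gives $\calO_{\tX'}(-E^*_w)\in\im(c^{-E^*_w})$, realized by a divisor $D_w$ which is a transversal cut of $E_w$. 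The key point is that $\calO_{\tX'}(-E^*_w)$ restricted to $\tX$ is $\calO_{\tX}(-E^*_v)$ twisted by the exceptional part contributed along the edge $(v,w)$; more precisely, writing $E^*_w(\tX')$ in the $\tX'$--lattice and restricting to $L'(\tX)$ one sees $R(E^*_w(\tX'))$ is a multiple of $E^*_v(\tX)$ plus a cycle in $L(\tX)$, and the corresponding line bundle identity forces the base point of $\calO_{\tX}(-E^*_v)$ to be precisely the image of $E_w$, i.e. $p=E_v\cap E_w$. Combining uniqueness from (a) with this identification yields $p_0=p$.

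The final "in particular" is then immediate: if $\tX$ embedded as a subsingularity of an elliptic WECC $\tX''$ with two extra curves meeting $E_v$ transversally at distinct points $p_1,p_2$, then applying (b) to each of the two gluings (of each new curve to $v$) would force the unique base point of $\calO_{\tX}(-E^*_v)$ to equal both $p_1$ and $p_2$, a contradiction. I expect the main obstacle to be step (a)'s base--point count: making the residue argument rigorous requires carefully tracking, via Proposition \ref{prop:Res} and Corollary \ref{cor:dim}, that the single independent residue relation at a generic transversal cut $D$ of $E_v$ degenerates at exactly one point of $E_v$ — equivalently, that the restriction $\calO_{E_v}(-E^*_v)$ has exactly one section up to scale with a single simple zero, which follows from $(\,-E^*_v,E_v)=1$ but must be matched with the global statement that $\calO_{\tX}(-E^*_v)$ has no fixed components yet is not globally generated along $E_v$. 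The identification $p_0=p$ in (b) is the second delicate point, requiring the precise bookkeeping of how natural line bundles restrict under the contraction $\tX'\rightsquigarrow\tX$.
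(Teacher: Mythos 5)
There is a genuine gap in your part (a), concentrated in two places. First, you invoke Lemma \ref{lem:nobar}(a) under the hypotheses ``Gorenstein and $v\in B_0\setminus B_1$'', but that lemma requires WECC, which is \emph{not} assumed in part (a). The conclusion you draw from it --- that $\calO_{\tX}(-E^*_v)$ has no fixed components at all --- is in fact false under the hypotheses of (a): the theorem itself warns that other $E_u$ may be fixed, and Example \ref{ex:continu} exhibits a Gorenstein, non--WECC case where the cycle of fixed components of $\calO_{\tX}(-E^*_4)$ is $Z_{min}-E_4\neq 0$. Only the weaker claim that $E_v$ itself is not fixed is asserted, and it needs a different argument. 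Second, your base--point count via Proposition \ref{prop:Res} and Corollary \ref{cor:dim} does not close: those statements describe $h^1$ and the tangent map of the Abel map at a (generic) divisor $D$, i.e. properties of $\im(c^{-E^*_v})$ and its fibers, but they say nothing about where the \emph{specific} natural bundle $\calO_{\tX}(-E^*_v)$ sits relative to that image, nor about its base locus; ``$\mathrm{Res}_D(\omega_0)$ has a single simple pole, forcing $D$ to sit at one point'' is not a statement that follows from the cited machinery, and it would in any case only locate a divisor realizing the bundle, not prove that a base point exists.

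The paper's route is more elementary and gives all three conclusions of (a) at once: from $0\to \calO_{\tX}(-E^*_v-E_v)\to \calO_{\tX}(-E^*_v)\to \calO_{E_v}(-E^*_v)\simeq\calO_{\P^1}(1)\to 0$ it suffices to show $h^1(\calO_{\tX}(-E^*_v-E_v))=h^1(\calO_{\tX}(-E^*_v))+1$, i.e. that the restriction $\rho:H^0(\calO_{\tX}(-E^*_v))\to H^0(\calO_{\P^1}(1))=\C^2$ has rank exactly $1$; then $E_v$ is not fixed ($\rho\neq 0$), there is a base point ($\rho$ not onto), and it is unique ($\deg\calO_{E_v}(-E^*_v)=1$). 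The $h^1$ identity is obtained by a Laufer computation sequence from $E^*_v+E_v$ to $E^*_v+Z_{min}$ (using $m_{E_v}(Z_{min})=1$ from Lemma \ref{lem:glue}, with exactly one jump by ellipticity), followed by Theorem \ref{prop:vanishing} to restrict to $\tX_1$ and the Gorenstein triviality of $\calO(-Z_{min})|_{\tX_1}$ from Theorem \ref{e220}. Your part (b) is essentially the right outline (restrict WECC via Lemma \ref{lem:WECCrestr}, now legitimately apply Lemma \ref{lem:nobar}, use uniqueness from (a)), but the identification of the base point with $p$ is left as an unproved ``line bundle identity''; the paper makes it precise by using WECC of $\tX'$ to produce a divisor making $\calO_{\tX}(E^*_v+D_w)^{\otimes m}$ trivial with $D_w=E_w\cap\tX$, then killing torsion in $\pic(\tX)$ to get a section of $\calO_{\tX}(-E^*_v)$ vanishing along $D_w$, hence at $p$.
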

\begin{proof} {\it (a)}
We consider the cohomological exact sequence associated with $0\to \calO_{\tX}(-E^*_v-E_v)\to \calO_{\tX}(-E^*_v)\to \calO_{E_v}(-E^*_v)\to 0$. Then $\calO_{E_v}(-E^*_v)\simeq \calO_{{\mathbb P}^1}(1)$, hence
$ H^0(\calO_{E_v}(-E^*_v)) =\C^2$ and $ H^1(\calO_{E_v}(-E^*_v)) =0$.
We will show that
$h^1( \calO_{\tX}(-E^*_v-E_v))=h^1( \calO_{\tX}(-E^*_v)) +1$.
Using the cohomological exact sequence, this is equivalent with the fact that
 the dimension of the image of $\rho: H^0(\calO_{\tX}(-E^*_v))\to H^0(\calO_{E_v}(-E^*_v))=\C^2$  is 1.
This shows that $E_v$ is not fixed (since $H^0(\calO_{\tX}(-E^*_v-E_v))\hookrightarrow H^0(\calO_{\tX}(-E^*_v))$
is not onto), $E_v$ supports a base point (since $\dim(\im\rho)=1$) and the base point is unique
(since $(E_v,-E^*_v)=1$).

First we handle $h^1( \calO_{\tX}(-E^*_v-E_v))$. Note that
$v$ is an end--vertex by Lemma \ref{lem:glue},
and $m_{E_v}(Z_{min})=m_{E_v}(Z_K)=1$. On the other hand, it is known,
cf. \cite[Proposition 4.3.3]{trieste},
that for any $l'\in L'$ there exists a unique $s(l')\in \calS'$
with $s(l')-l'\in L_{\geq  0}$, and
minimal with these two properties.
Then $h^1(\calO_{\tX}(-l'))$ and $h^1(\calO_{\tX}(-s(l')))$ can be compared topologically.
Indeed, there exists a Laufer type computation sequence $\{z_i\}_{i=0}^t$ so that $z_0=l'$,
$z_t=s(l')$, and $z_{i+1}=z_i+E_{v(i)}$ such that
$(z_i,E_{v(i)})>0$ for $i<t$. Then, from a long exact sequence we get
$h^1(\calO_{\tX}(-z_{i+1}))=h^1(\calO_{\tX}(-z_{i}))- h^1(\calO_{\mathbb {P}^1}(-(z_i,E_{v(i)})))$.

Now, in our case, one sees that $s(E^*_v+E_v)=E^*_v+Z_{min}$, and the above computation sequence is in fact
$E^*_v+z_i'$, where $\{z_i'\}_i$ is the computation sequence connecting $E_v$ with $Z_{min}$ (compare the
Laufer algorithms for the two cases and use the fact that $m_{E_v}(Z_{min})=1$). Hence, from ellipticity,  there exists exactly one step when
$(z_i,E_{v(i)})=2$ and in all other steps it is 1. Hence $h^1(\calO_{\tX}(-E^*-E_v))=h^1(\calO_{\tX}(-E^*_v-Z_{min}))+1$.

Next, in the case of both line bundles $\calO_{\tX}(-E^*_v)$ and $\calO_{\tX}(-E^*_v-Z_{min})$ the $E^*$--support is included in $B_0\setminus B_1$ (use the assumption and (\ref{eq:orthogonal})). Hence, by Theorem
\ref{prop:vanishing}
$h^1(\calO_{\tX}(-E^*_v))=h^1(\calO_{\tX}(-E^*_v)|_{\tX_1})$ and
$h^1(\calO_{\tX}(-E^*_v-Z_{min}))=h^1(\calO_{\tX}(-E^*_v-Z_{min})|_{\tX_1})$. But
$\calO_{\tX}(-Z_{min})|_{\tX_1}$ is trivial by the Gorenstein property, cf. Theorem \ref{e220}
(note that $\pic(\tX_1)=\pic(C'_1)$).
Hence $h^1(\calO_{\tX}(-E^*_v-Z_{min})=h^1(\calO_{\tX}(-E^*_v))$.

{\it (b)}
 Next, assume that $\tX'$ (hence, by Lemma \ref{lem:WECCrestr} $\tX$ too) satisfies WECC.
Then $\calO_{\tX}(-E^*_v)$ has no fixed components at all (cf. \ref{lem:nobar}).
Since $(E_u,-E^*_v)=\delta_{uv}$, it can have a base point
only along $E_v$, where it really has one by {\it (a)}.
Let the disc $E_w\cap \tX$ be $D_w$. Having the WEEC  for $\tX'$, we can choose a divisor
$D\in \eca(\tX')$, which intersect $E(\tX')$ only along $E_w\setminus E_v$,  and some integer $m$ such that
$\calO_{\tX'}(m(\iota(E^*_v)+E_w)+D)$ is trivial. (Here $\iota$ is the embedding
 $L(\tX)\otimes \Q\to
L(\tX')\otimes\Q$.)  Then
$\calO_{\tX'}(m(\iota(E^*_v)+E_w))|_{\tX}=\calO_{\tX}(E^*_v+D_w)^{\otimes m}$
is trivial too. Since $\pic(\tX)$ has no torsion, $\calO_{\tX}(E^*_v+D_w)$ is also trivial.
Hence there is a section of $\calO_{\tX}(-E^*_v)$ which vanishes  along $D_w$, hence  at $p$.
\end{proof}
\begin{remark}
An elliptic Gorenstein analytic structure does  not satisfy  necessarily WECC.
Indeed, take e.g.
the germ from Example \ref{ex:NOECC}, or the graph from the right hand side from
Example \ref{ex:nagygraf}: they do not satisfy the above WECC extension property.
On the other hand, the graph from the left hand side in  Example \ref{ex:nagygraf}
carries a WECC analytic structure by the next Theorem \ref{th:WECCCrit}.

Note also that the identity $P(\bt)=Z(\bt)$ characterizes ECC,
cf. \cite[Theorem 7.2.1]{ICM}. Therefore, for these Gorenstein but not WECC
singularities   $P(\bt)=Z(\bt)$ also fails.
\end{remark}

\subsection{First topological characterization of the existence of WECC structure}
\label{ss:Top1WECC}
In the following we will give a topological characterization
in terms of the combinatorics of
the  minimal resolution graph $\Gamma$ for the existence of a
 WECC analytic type supported on $\Gamma$.

\begin{theorem}\label{th:WECCCrit} {\bf (Extension Criterion of the elliptic sequence)}
Fix an elliptic minimal graph $\Gamma$ with elliptic sequence $B_{-1},
 \ldots , B_m$.  Then there exists a  singularity  with minimal resolution
 $\tX$ with dual  graph $\Gamma$, and which satisfies WECC, if and only if
for every $0\leq i \leq m$ and for any vertex $v \in B_i \setminus B_{i+1}$,
 $v$ has  at most one  neighbour in $B_{i-1}$.
\end{theorem}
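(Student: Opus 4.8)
The plan is to prove the two directions of the equivalence separately. For the "only if" direction, suppose $\tX$ satisfies WECC and suppose, for contradiction, that some vertex $v\in B_i\setminus B_{i+1}$ has two neighbours $w_1,w_2$ lying in $B_{i-1}$. Since WECC is closed under taking sub-singularities (Lemma \ref{lem:WECCrestr}), we may replace $\Gamma$ by the full subgraph supported on $B_{i-1}$ and the analytic germ by $(X_{i-1},o_{i-1})$; on this smaller germ the vertex $v$ belongs to the zone $B'_0\setminus B'_1$ (the shift of $B_i\setminus B_{i+1}$ after relabelling the elliptic sequence of $B_{i-1}$), so by Corollary \ref{cor:WECCGor}(a) we are reduced to the numerically Gorenstein Gorenstein situation with $v\in B_0\setminus B_1$. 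Now I would run the exact-sequence computation of the proof of Theorem \ref{th:extension}(a): from $0\to\calO(-E_v^*-E_v)\to\calO(-E_v^*)\to\calO_{E_v}(-E_v^*)\to 0$ with $\calO_{E_v}(-E_v^*)\simeq\calO_{\mathbb P^1}(1)$, one gets that the image of $H^0(\calO(-E_v^*))$ in $H^0(\calO_{E_v}(-E_v^*))=\C^2$ is one-dimensional, so $\calO_{\tX}(-E_v^*)$ has a unique base point on $E_v$ (the same topological Laufer-sequence comparison $s(E_v^*+E_v)=E_v^*+Z_{min}$, ellipticity giving exactly one "jump", and $\calO(-Z_{min})|_{\tX_1}$ trivial by Gorensteinness, applies verbatim since the hypothesis that a glued vertex exists was used in Theorem \ref{th:extension}(a) only through $m_{E_v}(Z_{min})=m_{E_v}(Z_K)=1$, which here is forced instead by $v$ having two neighbours of multiplicity $1$ — I would verify this via Laufer's algorithm exactly as in Lemma \ref{lem:glue}). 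But WECC forces $\calO_{\tX}(-E_v^*)\in\im(c^{-E_v^*})$ by Lemma \ref{lem:nobar}(a), so $\calO_{\tX}(-E_v^*)$ has a section whose divisor is a transversal cut of $E_v$ at a point we may move freely along $E_v\setminus\cup_{u\ne v}E_u$ (since $\im(c^{-E_v^*})$ is one-dimensional and these divisors realize all of it); this contradicts the existence of a base point. The cleanest way to package this contradiction is to note that having \emph{two} neighbours in the lower zone is exactly what is needed to mimic the two-curve obstruction of Theorem \ref{th:extension}(b)'s last sentence, so the "in particular" clause there is essentially the contrapositive of this direction.

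For the "if" direction — the existence statement — I would construct the analytic structure concretely, following the splice-quotient philosophy. Given that the combinatorial condition holds, I claim $\Gamma$ in fact satisfies the semigroup and congruence conditions (equivalently Okuma's monomial condition), so one can take the Neumann–Wahl splice quotient, which satisfies ECC hence a fortiori WECC. The combinatorial heart is therefore: \emph{the extension criterion implies the semigroup/congruence conditions for elliptic graphs}. I would prove this by induction on the length $m+1$ of the elliptic sequence, peeling off $B_m$ (a minimally elliptic, hence star-shaped or at worst a cycle, subgraph for which the conditions are classical) and checking the semigroup conditions at each node of $B_i\setminus B_{i+1}$ using the orthogonality $(E_v,Z_{B_j})=0$ for $E_v\subset B_{j+1}$ from (\ref{eq:orthogonal}) together with the "at most one neighbour in $B_{i-1}$" hypothesis, which guarantees that the relevant $E_v^*$-coordinate computations (the values $\ell_{vw}=E_v^*\cdot E_w^*$ type quantities feeding the semigroup condition) only ever involve one "incoming" string per node and so land in the numerical semigroup generated by the node's edge-determinants. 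Alternatively — and this may be more honest to what the paper does — I would construct the WECC structure directly as in the proof technique behind Theorem \ref{th:WECCCrit} referenced by Corollary \ref{cor:WECCGor}(b): build functions $f_j$ on each zone with $\mathrm{div}_E(f_j)=C_j$ inductively (possible by Theorem \ref{e220}(c) once Gorensteinness is arranged, and Gorenstein structures exist on every numerically Gorenstein type by \cite{PPP}), and use the "at most one neighbour" condition to glue end-curve functions zone-by-zone so that every $E_v^*$, $v$ an end vertex, is realized up to a multiple.

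The main obstacle is the "if" direction. The "only if" direction is a routine repackaging of Theorem \ref{th:extension} plus Lemma \ref{lem:nobar} and Lemma \ref{lem:WECCrestr}, and I am fairly confident it goes through as sketched. The hard part is producing the analytic structure: one must show that the purely local "one neighbour in the previous zone" condition propagates into a global solvability statement for end-curve (monomial) equations along the whole resolution. I expect the right strategy is the inductive zone-by-zone construction — contract $B_m$ last, having already realized end curves on $B_{m-1}$, and use that the single edge joining a zone-$i$ vertex to zone $i-1$ lets one transport an end-curve function across that edge by multiplying by an appropriate power dictated by the elliptic cycle multiplicities — but verifying that the resulting candidate functions are genuinely analytic (i.e. that the formal/combinatorial solution converges, or equivalently that the congruence condition holds so splice-quotient machinery applies) is where the real work lies. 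I would lean on \cite{NWsq,NWECTh,Ok} to close that gap rather than redo the convergence argument from scratch.
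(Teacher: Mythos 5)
Your ``only if'' direction is essentially the paper's argument: the paper disposes of necessity in one sentence by invoking the extension obstruction of Theorem \ref{th:extension} together with Corollary \ref{cor:WECCGor} and Lemma \ref{lem:WECCrestr}, which is exactly your packaging via the ``in particular'' clause of Theorem \ref{th:extension}(b). One indexing slip: to place $v$ in $B_0\setminus B_1$ of a Gorenstein sub-germ you must restrict to $B_i$ (not $B_{i-1}$); after relabelling, the elliptic sequence of the numerically Gorenstein support $B_{i-1}$ begins with $B_0'=B_{i-1}$ itself, so $v\in B_i\setminus B_{i+1}$ lands in $B_1'\setminus B_2'$, not $B_0'\setminus B_1'$. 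The correct reduction is: $\tX_{B_i}$ is WECC hence Gorenstein, $v\in B_0(B_i)\setminus B_1(B_i)$, and $\tX_{B_i}$ sits inside the WECC space $\tX$ with two extra curves $E_{w_1},E_{w_2}$ meeting $E_v$, contradicting the uniqueness of the base point of $\calO_{\tX_{B_i}}(-E_v^*)$.

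The genuine gap is the ``if'' direction, which you sketch but do not prove. Your first route (show the extension criterion implies the monomial/semigroup--congruence conditions combinatorially, then take the splice quotient) is not carried out, and in the paper's logical architecture it would be close to circular: the equivalence of MC with the existence of a WECC structure is Theorem \ref{th:9.4.2}, whose proof \emph{uses} the construction from Theorem \ref{th:WECCCrit}. (Also, your base case justification ``minimally elliptic, hence star-shaped or at worst a cycle'' is false; what is used is that minimally elliptic graphs always carry splice quotient structures.) Your second route is the right one but stops exactly where the work starts. The paper's construction is a decreasing induction on $i$ by \emph{analytic plumbing}: assuming $\tX_i$ is built and WECC, for each contact vertex $w\in B_{i-1}\setminus B_i$ adjacent to some $v\in B_i\setminus B_{i+1}$ one uses Lemma \ref{lem:nobar} to produce a divisor $D_w\in\eca^{-E_v^*}(\tX_i)$ with $\calO_{\tX_i}(D_w)=\calO_{\tX_i}(-E_v^*)$ (here the ``at most one neighbour'' hypothesis is what makes a single transversal cut of $E_v$ suffice, since the base point of $\calO_{\tX_i}(-E_v^*)$ is unique), and glues a disc bundle $T_w$ over $E_w$ so that $E_w\cap\tX_i=D_w$. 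The WECC of the resulting $\tX_{i-1}$ is then verified not by exhibiting end-curve functions but via the naturality criterion of Corollary \ref{cor:WECC}(c)$\Rightarrow$(a), using the auxiliary Lemma \ref{lem:trivii} that a bundle on $\tX_{\overline B}$ (with $B_i\subset\overline B\varsubsetneq B_{i-1}$) is natural iff its restriction to $\tX_i$ is. None of this appears in your sketch, and deferring to \cite{NWsq,NWECTh,Ok} does not close it, since that would require independently establishing the semigroup/congruence conditions from the extension criterion.
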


\begin{proof}
The extension obstruction from Theorem \ref{th:extension} (applied via  Corollary \ref{cor:WECCGor})
shows that the combinatorial
restriction is necessary. Now, we fix a graph $\Gamma$, which satisfies the gluing obstruction,
and we wish to construct a WECC analytic type supported on it.
The construction builds a resolution space $\tX$  by analytic plumbing based on induction on $m$.
If $m=0$ then the graph is minimally elliptic, hence any analytic realization satisfies ECC \cite{NWsq},
hence WECC too.

Next, we assume that $\tX_i$ was already constructed, and it satisfies WECC.
Fix $v\in B_i$, which has a neighbour $w$ in $B_{i-1}$. By assumption, $v$ admits only one such $w$.
 By Lemma \ref{lem:glue} $v\not\in B_{i+1}$, and by Lemma
\ref{lem:nobar} $\calO_{\tX_i}(-E^*_v)\in \im (c^{-E^*_v})$ (here all invariants are
associated with $\tX_i$). Hence, there exists a divisor $D_w$ of $\tX_i$ such that
$\calO_{\tX_i}(-E^*_v)=\calO_{\tX_i}(D_w)$. The  Chern class shows that
$D_w$ is smooth and it intersects $E_v$ transversally and it intersects no other exceptional curve.
[The `Extension Theorem' \ref{th:extension}  and its proof show
that $D_w\cap E_v$ is uniquely determined  by the analytic type of $\tX_i$,
it is the base point of $\calO_{\tX_i}(-E^*_v)$.]
Then let $T_w$ be an  analytic disc bundle over $E_w$ with Chern number $E_w^2$,
and we analytically glue $T_w$ to $\tX_i$ in such a way that $\tX_i\cap E_w=D_w$.
We proceed similarly for all other such $w\in B_{i-1}\setminus B_i$ vertices,
 which have a neighbour
in $B_{i}$.  (We call such $w$  a contact vertex.)
The other disc bundles (corresponding to vertices $w\in B_{i-1}\setminus B_i$,
which have no neighbours in  $B_i$) are glued arbitrarily. The obtained resolution
space will be denoted by $\tX_{i-1}$.

We claim that $\tX_{i-1}$ supports a singularity with WECC. In the proof we use
Corollary \ref{cor:WECC}{\it (c)$\Rightarrow$(a)}. According to this, we need
to verify that
\begin{equation}\label{eq:need}\mbox{
$\calO_{\tX_{i-1}}(E_u)|_{\tX_{B_{i-1}\setminus u}}$ is natural
for any  vertex $u\in B_{i-1}$. }\end{equation}

First we prove a lemma. In order to formulate it, let us fix a connected
subgraph supported on $\overline{B}$ with $B_i\subset \overline{B}\varsubsetneq B_{i-1}$.
Note that the maximal
numerical Gorenstein support in $\overline{B}$ is $B_i$. [Indeed, $\overline{B}$ has a unique maximal numerically
Gorenstein subgraph with length $m+1-i$ by Remark \ref{rem:univ}, but $B_i$ satisfies
this requirement.]
\begin{lemma}\label{lem:trivii} Fix  $\calL\in \pic(\tX_{\overline{B}})$. Then
$\calL\in \pic(\tX_{\overline{B}})$ is natural  \ $\Leftrightarrow$ \
$\calL|_{\tX_i}\in \pic(\tX_i)$ is natural.
\end{lemma}
\begin{proof}
`$\Rightarrow$' \ Fix $n\gg 0$ so that $\calL^{\otimes n}=\calO_{\tX_{\overline{B}}}
(\sum n_uE_u)$ with $n_u\in\Z$. If $E_u\cap E_{B_i}=\emptyset$
then $\calO_{\tX_{\overline{B}}} (E_u)|_{\tX_i}$ is trivial,  if $u\subset B_i$
then $\calO_{\tX_{\overline{B}}} (E_u)|_{\tX_i}$  is obviously natural, and if  $u=w$ is a contact vertex then
$\calO_{\tX_{\overline{B}}} (E_w)|_{\tX_i}=\calO_{\tX_i}(D_w)$ is natural by construction.

`$\Leftarrow$' \ Note that the restriction $\pic^{l'}(\tX_{\overline{B}})
\to \pic^{R(l')}(\tX_i)$ is an isomorphism (here $l'\in L'(\tX_{\overline{B}})$,
and $R(l')$ is its  restriction). Now, if the restriction of $\calL\in \pic(\tX_{\overline{B}})$ is  natural,
then $\calL^n|_{\tX_i}=\calO_{\tX_i}(l)$ for some $n\in\Z$ and $l\in L(\tX_i)$.
Consider $\calO_{\tX_{\overline{B}}}(\iota(l))$, where $i:L(B_i)\to L(\overline{B})$ is the lattice embedding.
Then  $\calO_{\tX_{\overline{B}}}(\iota(l))|_{\tX_i}=\calO_{\tX_i}(l)=\calL^n|_{\tX_i}$, hence by the
injectivity of the restriction $\calL^n=\calO_{\tX_{\overline{B}}}(\iota(l))$.
\end{proof}
Now we verify (\ref{eq:need}). If $u\in B_{i-1}\setminus B_i$ then
$B_{i-1}\setminus u$ has a connected component $\overline{B}$ with
$B_i\subset \overline{B}\varsubsetneq B_{i-1}$, and maybe some other components, all of them
supporting rational graphs. Along the rational components any bundle is automatically natural. Then
$\calO_{\tX_{i-1}}(E_u)|_{\tX_{B_{i-1}\setminus u}}$ is natural by
Lemma \ref{lem:trivii}, since
its restriction to $\tX_i$ is natural (this last statement can be proved as the part
`$\Rightarrow$' of Lemma \ref{lem:trivii}).

Next, assume that $u\in B_i$. Let $j$ (where $m+1\geq j>i$)
be maximal so that $u\in B_{j-1}$. Then, similarly as in the previous case,
$\calO_{\tX_{i-1}}(E_u)|_{\tX_{B_{i-1}\setminus u}}$ is natural whenever its restriction
to $\tX_j$ is natural. (For $j=m+1$ this reads as follows: all the components are
 rational, hence the restricted bundle is natural.)
This follows from the WECC of $\tX_i$.
\end{proof}

\subsection{Further topological/analitical  characterizations of the  WECC structure}
\label{ss:Top2WECC}
In this subsection we will prove the following two statements:
if a minimal elliptic graph supports an analytic structure with WECC then it necessarily supports also one with
ECC. Even more, any analytic structure with WECC satisfies in fact ECC too.

We wish to separate sharply these two statements by the following reason.  Recall that the existence of an analytic structure with ECC is topological: it exists if and only if the graph either satisfies the semigroup and congruence
conditions of Neumann--Wahl \cite{NWsq}, or the monomial condition of Okuma \cite{Ok}. In this article we will
use the monomial condition (for definition see below). Hence, the first statement basically
is equivalent with the fact that a WECC elliptic singularity necessarily must satisfy the combinatorial
monomial condition. (The other direction is already in the literature: the monomial condition assures
the existence of a splice quotient analytic type \cite{Ok}, while splice quotients by their construction
satisfies ECC, hence WECC too.)

An immediate consequence of this is that the `old' combinatorial criterions, namely the semigroup--congruence condition, or the monomial condition,  for elliptic graph are equivalent with the existence of the gluing property from Theorem \ref{th:WECCCrit} (which is much easier to test!).

The second part is analytical in nature, it says that in the elliptic case
for {\it any} analytic structure already the WECC guarantees ECC. Recall that by the `End Curve Theorem'
\cite{NWECTh,OECTh}
the ECC is equivalent with splice quotient analytic type. Hence, a consequence of our next theorem is that
in the elliptic case the three notions --- splice quotient, WECC, ECC -- are equivalent.

\begin{definition}\label{def:mC} \cite{Ok} $\Gamma$ satisfies the {\it monomial condition} (MC) if for any node (rupture vertex) $v$ and any connected full subgraph $\Gamma_i$ of $\Gamma\setminus v$ there exists an effective cycle $C_i$ supported on $\Gamma_i$ such that $(E^*_v+C_i,E_u)=0$ for any $u\in \calv(\Gamma_i)\cup \{v\}$, which is not an end--vertex of $\Gamma$ sitting in $\calv(\Gamma_i)$.

[In fact, below, we will use only the `melody' of this definition: MC is satisfied iff
any node $v$ and any  $\Gamma_i$  satisfy some combinatorial property, which not necessarily should be specified.]
\end{definition}

\begin{theorem}\label{th:9.4.2} Fix an elliptic  minimal resolution graph $\Gamma$.

(1)  {\bf (Second topological characterization of the existence of an analytic structure with WECC)}
$\Gamma$ supports an analytic structure with WECC if and only if it satisfies MC.

(2) {\bf (Second analytic characterization of an analytic structure with WECC)}
Assume  that $\Gamma$ supports an analytic structure with WECC.
Then any such structure satisfies ECC too.
\end{theorem}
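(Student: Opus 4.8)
\textbf{Proof plan for Theorem \ref{th:9.4.2}.}
The plan is to deduce both parts from the combinatorial Extension Criterion of Theorem \ref{th:WECCCrit} together with the Extension Obstruction of Theorem \ref{th:extension}, by showing that for elliptic graphs the gluing condition ``every $v\in B_i\setminus B_{i+1}$ has at most one neighbour in $B_{i-1}$'' is \emph{equivalent} to the monomial condition MC. The work is thus entirely topological (about the graph $\Gamma$), and once this equivalence is in hand, part (1) is immediate from Theorem \ref{th:WECCCrit}, and part (2) follows because MC (equivalently, the semigroup--congruence condition) guarantees a splice quotient structure, which has ECC, hence has WECC and therefore forces $p_g$ and all the relevant analytic invariants to take their topological (minimal) values; a WECC structure on a graph satisfying MC then has the same $h^1(\calO_{\tX}(l'))$ for all $l'\in-\calS'$ as the splice quotient, and by Corollary \ref{cor:first} plus Lemma \ref{lem:nobar} one upgrades $\calO_{\tX}(-E^*_v)\in\overline{\im(c^{-E^*_v})}$ to $\calO_{\tX}(-E^*_v)\in\im(c^{-E^*_v})$ for all end vertices $v$, i.e. ECC.

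First I would set up the reduction: by Theorem \ref{th:WECCCrit}, $\Gamma$ supports a WECC structure iff the gluing condition (G) holds, so part (1) reduces to proving (G) $\Leftrightarrow$ MC for elliptic minimal graphs. For the direction MC $\Rightarrow$ (G), I would argue contrapositively: if some $v\in B_i\setminus B_{i+1}$ has two neighbours $w_1,w_2\in B_{i-1}$, then one of the connected components $\Gamma_j$ of $\Gamma\setminus v$ contains $w_1$ (hence meets $B_{i-1}$), and I would show the defining cycle $C_j$ of MC cannot exist: the orthogonality relations (\ref{eq:orthogonal}) force $C_j$ to agree with (a truncation of) the elliptic cycle on the part of $\Gamma_j$ inside $B_{i-1}$, but then $(E^*_v+C_j,E_{w_1})$ and the analogous expression computed from the other component force the $E_v$-coefficient to be simultaneously $\ge 1$ from two sides, contradicting the ellipticity bound (as in the proof of Lemma \ref{lem:glue}, where $\chi(Z_{B_0}+Z_{B_1}+E_w)<0$ was the obstruction). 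For the converse (G) $\Rightarrow$ MC, I would construct, for each node $v$ and each component $\Gamma_j$ of $\Gamma\setminus v$, the required $C_j$ explicitly: using (G) there is a unique ``contact'' vertex of $\Gamma_j$ adjacent to $v$, and one takes $C_j$ to be the appropriate elliptic/fundamental cycle on $\Gamma_j$ (the minimal cycle on the numerically Gorenstein support, extended by the Laufer computation sequence through the string connecting to the contact vertex); the orthogonality (\ref{eq:orthogonal}) and the adjunction formula then give $(E^*_v+C_j,E_u)=0$ at all required $u$.

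For part (2), given that $\Gamma$ satisfies MC, by Okuma's theorem \cite{Ok} there is a splice quotient $\tX_{sq}$ with graph $\Gamma$; splice quotients have ECC, hence WECC, hence by Corollary \ref{cor:WECCGor} $\tX_{sq}$ (or its relevant truncation $(X_0,o_0)$) is Gorenstein and $p_g=m+1-\alpha$ is topological. Now let $\tX$ be \emph{any} WECC structure on $\Gamma$; again $(X_0,o_0)$ is Gorenstein with the same topological $p_g$, so by Theorem \ref{th:Vk}(8) the value $h^1(\calO_{\tX}(l'))=p_g-\dim V(I(l'))$ along $\overline{\im(c^{l'})}$ is topological and independent of the structure. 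By Corollary \ref{cor:first}, WECC gives $\calO_{\tX}(l')\in\overline{\im(c^{l'})}$ for all $l'\in-\calS'$; and by Lemma \ref{lem:nobar} (combined with Example \ref{ex:trivi} in the Gorenstein case), for $v\in B_0\setminus B_1$ (resp.\ $B_{-1}\setminus B_0$) the image $\overline{\im(c^{-E^*_v})}$ already equals $\im(c^{-E^*_v})$, so $\calO_{\tX}(-E^*_v)\in\im(c^{-E^*_v})$ for such $v$. The remaining task is the genuinely new input: to handle end vertices $v$ that do \emph{not} lie in $B_0\setminus B_1$ (equivalently $v\in B_i\setminus B_{i+1}$ for $i\ge 1$). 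Here I would run the inductive plumbing/restriction argument exactly as in the proof of Theorem \ref{th:WECCCrit}: restrict to $\tX_i$, where $v$ becomes an end vertex in $B_0(\tX_i)\setminus B_1(\tX_i)$ of an elliptic WECC germ, apply the case already settled to get a moving section realizing $-E^*_v$ on $\tX_i$, and then use that WECC of $\tX$ makes $\calO_{\tX}(E^*_v+D_w)$ trivial (no torsion in $\pic(\tX)$) to push the section back up to $\tX$, as in part (b) of Theorem \ref{th:extension}. This yields $\calO_{\tX}(-E^*_v)\in\im(c^{-E^*_v})$, i.e.\ ECC.

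\textbf{Main obstacle.} I expect the hard part to be the purely combinatorial equivalence (G) $\Leftrightarrow$ MC in part (1), specifically the MC $\Rightarrow$ (G) direction: one must rule out the existence of \emph{every} admissible choice of the cycle $C_j$ when $v$ has two neighbours in $B_{i-1}$, which requires a careful bookkeeping of the elliptic sequence on the subgraphs $\Gamma_j$ and a tight use of $\chi(Z_{B_{i-1}}+\cdots)=0$ forcing a $\chi<0$ contradiction — the subtlety being the second configuration in Example \ref{ex:nagygraf}, where two components of $B_i\setminus B_{i+1}$ are adjacent to the \emph{same} vertex of $B_{i-1}$, so the obstruction has to be located at exactly the right vertex. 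The analytic upgrade in part (2) for the ``deep'' end vertices should then be routine given the machinery of Theorems \ref{th:extension} and \ref{th:WECCCrit}.
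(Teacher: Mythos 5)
Your plan diverges from the paper's proof in both parts, and in both places the divergence hides a genuine gap. For part (1) you reduce everything to a purely combinatorial equivalence between the gluing condition (G) of Theorem \ref{th:WECCCrit} and MC, but you do not prove that equivalence; your sketch of MC $\Rightarrow$ (G) (``the orthogonality relations force $C_j$ to agree with a truncation of the elliptic cycle'') is not an argument, and it is precisely the step the authors found they could not do directly. Note that MC quantifies over nodes and over \emph{all} admissible cycles $C_i$ on each branch, while (G) can fail at a vertex in a way that is not obviously localized at the same node/branch, so ruling out every candidate $C_i$ requires real bookkeeping that you have not supplied. The paper goes the other way: it proves (1) and (2) by simultaneous induction on $|\calv|$, assuming MC fails at a node $v$ and branch $\Gamma_1$ of a WECC structure, and then \emph{analytically constructing} (via the end-curve sections of $\calO(-E^*_v)$, blow-ups, and plumbing of disc bundles, as in Theorem \ref{th:WECCCrit}) an ECC structure on a modified elliptic graph $\Gamma_1^{me}$ on which MC still fails --- a contradiction. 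The combinatorial equivalence (G) $\Leftrightarrow$ MC is then a \emph{corollary} of the two theorems, not an input; treating it as an input without proof leaves part (1) unestablished.

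For part (2) you are one step away from the paper's argument and then take a wrong turn. From Corollary \ref{cor:WECCGor}(b) all WECC structures on $\Gamma$ have the same (topological) Poincar\'e series $P(\bt)$, and the paper simply observes that ECC is equivalent to $p(E^*_v)=1$ for all end vertices $v$; since by part (1) an ECC (splice quotient) structure exists and has $p(E^*_v)=1$, every WECC structure has $p(E^*_v)=1$ and is ECC. You instead try to show directly that $\calO_{\tX}(-E^*_v)\in\im(c^{-E^*_v})$. This works for end vertices $v\in B_0\setminus B_1$ (Lemma \ref{lem:nobar} plus Example \ref{ex:trivi}), but for an end vertex $v$ with $v\in B_i$, $i\geq 1$ (these occur, cf.\ Example \ref{ex:nagygraf}), your lifting step fails: knowing that $\calO_{\tX_i}(-E^*_v)$ has a section without fixed components on the subspace $\tX_i$ says nothing about sections on $\tX$, and the mechanism of Theorem \ref{th:extension}(b) runs in the opposite direction (it produces a divisor on the \emph{larger} space and restricts it \emph{down}; there is no ``$D_w$'' and no torsion argument available when you want to go up from $\tX_i$ to $\tX$). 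Moreover $\im(c^{-E^*_v})$ need not be closed for such deep $v$, so membership of $\calO_{\tX}(-E^*_v)$ in the closure (which is all WECC gives you) cannot be upgraded by a general argument --- this is exactly why the Poincar\'e series detour is needed.
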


\begin{proof}
We will prove the two statements by simultaneous induction on the number of vertices $|\calv|$.  For minimally elliptic or rational graphs the statements are true,
 because any minimally elliptic or rational singularity is  splice quotient.
Thus,  assume that the  statements are valid for graphs with less than $k$ vertices, and assume, that $|\calv| = k$.

We claim that it is enough to prove {\it (1)}, because {\it (1)} implies  {\it (2)}.
Indeed,  if  $P(\bt) = \sum_{l' \in S'} p(l') \bt^{l'}$
is the analytic multivariable  Poincar\'e series then an analytic structure satisfies ECC if and only if
$p(E_v^*) = 1$ for every end vertex $v$ (this follows basically from the definition of $P$).
On the other  hand, if a WECC analytic structure exists, then all of them have the same $P(\bt)$ determined
topologically, cf. Corollary \ref{cor:WECCGor}{\it (b)}. By part {\it (1)} a structure with ECC also exists, for which
$p(E_v^*) = 1$. Since ECC is WECC too, for all WECC structures $p(E_v^*) = 1$. Hence any WECC is ECC.

In the sequel we focus on part {\it (1)},
 where $\Gamma$ is an elliptic minimal resolution graph  with $|\calv| = k$.
We assume the existence of an analytic structure $\tX$ with WECC (on $\Gamma$)
and we wish to prove MC.

Assume that MC fails at some node $v$ and branch $\Gamma_1$ of $\Gamma\setminus v$. Denote by $v_1,\ldots, v_\delta$
the adjacent vertices of $v$ in $\Gamma$ with $v_1\in \calv(\Gamma_1)$, $\delta\geq 3$. [In fact, by the inductive step,
we can
even assume that $\calv(\Gamma)=\calv(\Gamma_1)\cup \{v, v_2,v_3\}$, otherwise we take the subgraph with these
vertices, it is WECC by restriction, cf. \ref{lem:WECCrestr},
 it is ECC by induction, hence it satisfies MC at $\Gamma_1$, a contradiction.
But this reduction does not really help in the next proof.]
Besides $\Gamma_1$ we will consider several graphs. $\Gamma_1^v$ denotes the full subgraph $\Gamma_1\cup \{v\}$
of $\Gamma$.  $\Gamma_1^m$ is obtained from $\Gamma_1^v$ by modifying the decoration of $v$ by a very negative
integer $N\ll 0$. Furthermore, the `extended--modified'
$\Gamma_1^{me}$ is obtained from the full subgraph $\Gamma_1\cup\{v, v_2,\ldots, v_\delta\}$ of $\Gamma$ by replacing all decorations of $\{v, v_2,\ldots, v_\delta\}$ by $N$.

We claim that $\Gamma_1^{me}$ is elliptic. Indeed, if we take subgraphs or we decrease decorations of an elliptic graph
we get an elliptic or rational graph. However, $\Gamma_1^{me}$ has a node and branch for which MC fails,
so it cannot be rational (since rational singularities are splice quotient \cite{NWsq}).

\begin{lemma}\label{lem:proof}
(a)   Let $B_0=B_0(\Gamma^{me}_1)$ be the support of the maximal numerically Gorenstein subgraph in
$\Gamma^{me}_1$  (cf. Remark \ref{rem:univ}). Then $\{v_2,\ldots, v_\delta\}\cap B_0= \emptyset.$

(b) Both $\Gamma_1^m$ and $\Gamma_1^v$ are elliptic graphs.
\end{lemma}
\begin{proof}
{\it (a)}
Assume $v_2\in B_0$. Then by the uniqueness of the maximal numerically Gorenstein subgraph and by symmetry
we get $\{v_2,\ldots, v_\delta\}\subset B_0$, and  by the connectedness of $B_0$ we get $v\in B_0$ too.
By decreasing $N\ll 0$ the  fundamental cycle of any subgraph is non--increasing, hence $Z_K(B_0)=\sum _{i\geq 0}
Z_{min}(B_i)\in L$ is non--increasing too. In particular, it must stabilize to an
integral  cycle independent of $N$.
Let the coefficients of $E_v$ and $E_{v_j}$ be $m_v$ and $m_{v_j}$. Then by the adjunction formula applied for $v_j$
 we get that necessarily $m_{v_j}=1$ and $m_v=2$ ($j\geq 2$). But then the adjunction formula for
$v$ gives an $N$--dependent relation, hence a contradiction.

{\it (b)} Since $\Gamma_1^{me}$ is elliptic, part {\it (a)} shows that
$\Gamma_1^m=\Gamma_1^{me}\setminus \{v_2,\ldots, v_\delta\}$ contains the elliptic cycle,
hence $\Gamma_1^m$ is elliptic. $\Gamma_1^v$ being a subgraph of $\Gamma$ is either
elliptic  or rational; but it cannot be rational since then $\Gamma_1^m$ would  also
be rational.
\end{proof}

Starting from the analytic type $\tX$ we construct an analytic type supported on $\Gamma_1^{me}$ with ECC.
This will contradict the fact that for $\Gamma_1^{me}$ MC fails. The construction has several steps.

{\em Step 1.} Consider a tubular neighbourhood $\tX^v_1$ of
exceptional divisors indexed by $\Gamma^v_1$ in $\tX$. It can be considered as a resolution space. By Lemma
 \ref{lem:WECCrestr}  it satisfies WECC. Since $|\calv(\Gamma^v_1)|<k$, by the inductive step it satisfies ECC too.
In particular, since $v$ is an end--vertex of $\Gamma_1^v$,
$\calO_{\tX^v_1}(-E^*_v)$ has no fixed components.
Moreover, by WECC extension Theorem  \ref{th:extension}, $v\not\in B_0(\Gamma^v_1)$.
[Indeed,  $B_0(\Gamma_1^v)=\Gamma_1^v$ implies that $\Gamma_1^v$ is numerically Gorenstein, being numerically Gorenstein and WECC it is Gorenstein by \ref{cor:WECCGor}, this contradicts  \ref{th:extension} since graph can be extended
as WECC by more than two vertices $v_{j}$]. Therefore, by Lemma \ref{lem:nobar}
 $\calO_{\tX^v_1}(-E^*_v)$ has no base points either.

{\em Step 2.}  We claim that $v_0\not\in B_0(\Gamma_1^m)$.
Indeed, otherwise $\Gamma_1^m$ is numerically Gorenstein. Similarly as in the proof of Lemma \ref{lem:proof}, by decreasing $N\ll 0$ the integral cycle $Z_K(\Gamma_1^m)$ stabilizes, and by adjunction formula its $E_v$--multiplicity is 1, independently of $N$.
But then the very same integral cycle might serve as the  canonical cycle of $\Gamma_1^v$ too, which contradicts the fact that $\Gamma_1^v$ is not numerically Gorenstein (see
{\it Step 1}).

{\em Step 3.}  Fix a (generic)  section  $s\in H^0(\calO_{\tX^v_1}(-E^*_v))$, set ${\rm div}(s)=C$, which is a transversal
 smooth cut of $E_v$ in $\tX^v_1$. Blow up the infinitesimal close point $C\cap E_v$ several times (that is,
 blow up $C\cap E_v$ by creating the new exceptional curve $E_{new}$, then blow up the intersection of
 $E_{new}$ with the strict transform of $E_v$, etc.). We blow up so many times that
 in the created total space $Bl(\tX^v_1)$
 the strict transform of $E_v$ (still denoted by $E_v$) will have Euler number $N$.
 Identify the subgraph determined by the strict transforms of exceptional curves  by $\Gamma_1^m$, and let
 $\tX^m_1$ a tubular neighbourhood of them in $Bl(\tX^v_1)$.
 Since we have blown up $C\cap E_v$, where $C$ is the zero of an end curve function, $Bl(\tX^v_1)$
 remains ECC.
 Then, by restriction, $\tX^m_1$ is also ECC, cf. \ref{ss:saturated}, in particular
 $\calO_{\tX^m_1}(-E^*_v)$ has no fixed components.
  We claim that  along $E_v$ it has  no base point either.
  Indeed, since  $\calO_{\tX^v_1}(-E^*_v)$ has no base points (cf. {\it Step 1}),
  all the sections of this bundle lifted to $Bl(\tX^v_1)$ and restricted to
  $\tX^m_1$ provide sections of $\calO_{\tX^m_1}(-E^*_v)$.
  [The fact that $\calO_{\tX^m_1}(-E^*_v)$ has no base points also shows that
  $\Gamma_1^m$ is not numerically Gorenstein. Indeed, otherwise, being WEEC too,
  it would be Gorenstein with elliptic extension, hence with a base point of
  $\calO_{\tX^m_1}(-E^*_v)$  by \ref{th:extension}{\it (a)}.]

 {\em Step 4.} We fix (generic) sections $s_i\in H^0(\calO_{\tX^m_1}(-E^*_v))$,
 $2\leq i\leq \delta$. Write
 ${\rm div}(s_i)=C_i$, they are transversal cuts of $E_v$ in $\tX^m_1$ with
 $\calO_{\tX^m_1}(C_i+E^*_v)=\calO_{\tX^m_1}$ in $\pic(\tX^m_1)$.
 Then we make along each $C_i$ an analytic plumbing by gluing a
 disc bundle over $E_{v_i}$ with Euler number $N$ to $\tX^m_1$ such that $E_{v_i}\cap \tX^m_1=C_i$.
 In this way we get a resolution space $\tX^{me}_1$ associated with $\Gamma_1^{me}$.

 In fact, for these analytic gluings, any transversal cut $C'$  works (instead of $C_i$'s considered above).
 Indeed, by {\it Step 2}
  we have
  $v\not\in B_0(\tX^m_1)$. In particular, the image of the Abel map $c^{-E^*_v}$ (associated with a large cycle $Z\subset L(\tX^m_1)$) is a point, hence any $C'\in \eca^{-E^*_v}(\tX^m_1)$ satisfies
 $\calO_{\tX^m_1}(C'+E^*_v)\simeq\calO_{\tX^m_1}$. This together with the construction from the proof of Theorem
 \ref{th:WECCCrit} also shows that $\tX^{me}_1$ satisfies WECC.
Therefore Corollary \ref{cor:WECC} applies and \begin{equation}\label{eq:restrtilde}
\calO_{\tX^{me}_1}(l'))|_{\tX^m_1}=\calO_{\tX^m_1}(R(l'))) \ \ \ \ (\mbox{$R$ is the restriction}).\end{equation}

 \vspace{1mm}

 We claim that  $\tX^{me}_1$  satisfies ECC as well.
 Let us focus first on an end vertex $u$ of $\Gamma^{me}_1$ different than
  $v_{i}$ ($i\geq 2$). By Lemma \ref{lem:proof}
 we know that the restriction $H^1(\calO_{\tX^{me}_1})\to H^1(\calO_{\tX^{m}_1})$ is an isomorphism,
 hence we have  a bijection
 $\pic^{-E^*_u}(\tX^{me}_1)\to \pic^{E^*_u}(\tX^{m}_1)$ too. This bijection together with
 $\calO_{\tX^m_1}(-E^*_u)\in \im (c^{-E^*_u}(\tX^m_1))$ (ECC for $\tX^m_1$)  and (\ref{eq:restrtilde})
 gives that  $\calO_{\tX^{me}_1}(-E^*_u)\in \im (c^{-E^*_u}(\tX^{me}_1))$, hence ECC for $\tX^{me}_1$ and vertex $u$.

 Finally take the end vertex $u=v_i$ ($i\geq 2$) of $\Gamma^{me}_1$.  Since $\tX^{me}_1$ satisfied WECC and
 $u\not \in B_0(\Gamma^{me}_1)$, by Lemma \ref{lem:nobar} we get  $\calO_{\tX^{me}_1}(-E^*_u)\in \im (c^{-E^*_u}(\tX^{me}_1))$ again.
\end{proof}

\begin{example} \ \cite[3.1.4]{Sell}, \cite[Ex. 10.4]{NWECTh}
Consider the weighted homogeneous
hypersurface elliptic singularity
$\{z^2=x^4+y^9\}$. It has $m=1$ hence $p_g=2$.  Its graph is

\begin{picture}(200,50)(-100,0)
\put(50,30){\circle*{4}}
\put(-30,30){\circle*{4}}
\put(-30,40){\makebox(0,0){\small{$-2$}}}\put(-10,40){\makebox(0,0){\small{$-5$}}}
\put(10,40){\makebox(0,0){\small{$-1$}}}
\put(30,40){\makebox(0,0){\small{$-5$}}}\put(50,40){\makebox(0,0){\small{$-2$}}}
\put(20,10){\makebox(0,0){\small{$-2$}}}
\put(-10,30){\circle*{4}}
\put(10,30){\circle*{4}}
\put(30,30){\circle*{4}}
\put(10,10){\circle*{4}}
\put(-30,30){\line(1,0){80}}\put(10,10){\line(0,1){20}}
\end{picture}

This analytic structure (e.g., since it is weighted homogeneous) satsifies ECC.
 The point is that it has a (positive weight, $p_g$--constant) hypersurface deformation
$\{z^2=x^4+y^9+txy^7\}$, which has even non--degenerate Newton principal part, but which is not a splice quotient deformation.
In other words, this hypersurface/Gorenstein analytic type does not satisfy ECC.
This is verified (see [loc.cit.]) by checking  that the deformation monomial cannot be realized by
splice quotient equations. By our results, this analytic type does not satisfy  WECC  either.
However, we do not know any `elementary verification' of  this statement, valid even only
for this particular example. (This shows that usually the `direct'  verification of WECC can be very hard.)
\end{example}

\section{Appendix}

\subsection{} Let us fix the following minimal resolution graph $\Gamma$
(where the $(-2)$--vertices are unmarked).

\begin{picture}(200,50)(-20,0)
\put(170,30){\circle*{4}}\put(190,30){\circle*{4}}\put(210,30){\circle*{4}}\put(230,30){\circle*{4}}
\put(150,30){\circle*{4}}
\put(70,30){\circle*{4}}
\put(210,40){\makebox(0,0){\small{$-3$}}}
\put(230,20){\makebox(0,0){\small{$E_1$}}}
\put(210,20){\makebox(0,0){\small{$E_2$}}}\put(190,20){\makebox(0,0){\small{$E_3$}}}
\put(90,30){\circle*{4}}
\put(110,30){\circle*{4}}
\put(130,30){\circle*{4}}
\put(110,10){\circle*{4}}
\put(70,30){\line(1,0){160}}\put(110,10){\line(0,1){20}}
\end{picture}

It is a numerically Gorenstein graph with $L'=L$ and $m=1$,
hence $Z_K=Z_{min}+C$ ($C=Z_{B_1}$). $B_1$ is obtained by deleting
$E_1$; $Z_{min}=E_1^*$, $Z_K=E_2^*$. Note that $C^2=-1$.

We show that any non--Gorenstein
 analytic type supported by this topological type admits a special unique  line bundle
$\calL\in\pic^0(\tX)$ such that the cycle of fixed components $l$ of $\calL$ is
$2Z_{min}$, which is $>Z_K$.
(In any other situation $l\leq Z_K$, hence $l\in \{0,Z_{min}, Z_K\}$, cf. Lemma \ref{lem:Ci}.)

We will brake the discussion in several steps.

\bekezdes \label{bek:A1} {\bf The starting point.} The cycle $l$ of fixed components is zero if and only if  $\calL\simeq\calO_{\tX}$. Otherwise,
since $l\in\calS$, necessarily $l\geq Z_{min}$. In the sequel we assume $l\geq Z_{min}$.

\bekezdes \label{bek:A2} {\bf Inequalities for $l$.} {\it We claim that {\it (a)} if $l>Z_{min}$ then $l\geq Z_K$, and {\it (b)} if $l>Z_K$ then $l\geq 2 Z_{min}$.}
(Here the only needed property of $l$ is $l\in\calS$.)

For {\it (a)} use Lemma \ref{lem:cs2}. According to this algorithm, if $E_1\subset |l-Z_{min}|$ then $l\geq 2Z_{min}$, and if $E_v\subset |l-Z_{min}|$ $(E_v\not=E_1)$ then $l\geq Z_K$. For {\it (b)}
let us denote by $\Gamma_8$ the $E_8$--subgraph of $\Gamma$ (obtained from
$\Gamma$ by deleting $E_1$ and $E_2$). Assume that $l>Z_K$ but $l\not\geq 2Z_{min}$.
Then $l=Z_K+x$ with $x>0$ and $x$ supported on the $\Gamma_8$ subgraph.
Then for any $v\in\calv(\Gamma_8)$ one has
$(x,E_v)=(l,E_v)\leq 0$, hence $x\in \calS(\Gamma_8)\setminus \{0\}$, hence $x\geq Z_{min}(\Gamma_8)$.
In particular, the coefficient  of $x$ at $E_3$ is $\geq 2$. But then $(l, E_2)\leq 0$ fails, which is a contradiction.

\bekezdes \label{bek:A3} Using the exact sequence $0\to \calL(-Z_{min})\to \calL\to \calL|_{Z_{min}}\to 0$ and
$H^0(\calL(-Z_{min}))=H^0(\calL)$ and $\chi(Z_{min})=0$ we get that
$h^1(\calL(-Z_{min}))=h^1(\calL)$.

\bekezdes \label{bek:A4} {\bf Characterization of $l\not=Z_{min}$.} {\it We claim that
$l>Z_{min}$ if and only if $h^1(\calL(-Z_{min}))=0$. }

Consider the exact sequence $0\to \calL(-Z_K)\to \calL(-Z_{min})\to \calL(-Z_{min})|_{C}\to 0$.
Here $h^1(\calL(-Z_K))=0$ and $\chi(\calL(-Z_{min})|_{C})=0$. Therefore,
(use also \ref{bek:A2}) $l>Z_{min} \, \Leftrightarrow\, l\geq Z_K \, \Leftrightarrow\,
H^0(\calL(-Z_K))=H^0(\calL(-Z_{min}))\, \Leftrightarrow\, h^1(\calL(-Z_{min}))=0$.

\bekezdes \label{bek:A5} {\bf Characterization of fixed components of $\calL(-Z_K)$.}
Note first that $Z_K+E_1=2Z_{min}$. Using \ref{bek:A2}{\it (b)} one obtains that $\calL(-Z_K)$ has a nontrivial fixed component if and only if $E_1$ is a fixed component.
Then from the exact sequence
$0\to \calL(-2Z_{min})\to \calL(-Z_K)\to \calL(-Z_K)|_{E_1}\to 0$ one gets that $E_1$ is a fixed component of $\calL(-Z_K)$ if and only if $h^1(\calL(-2Z_{min}))\not=0$ if and only if
$h^1(\calL(-2Z_{min}))=1$.

\bekezdes \label{bek:A6} By \ref{bek:A2} $l$ is either $Z_{min}$, or $Z_K$,  or it is $>Z_K$. We claim that in the Gorenstein case $l>Z_K$ cannot happen. Indeed, if $l>Z_K$ then
$h^1(\calL(-Z_{min}))=0$ (by \ref{bek:A4}) and $h^1(\calL(-2Z_{min}))\not=0$ (by \ref{bek:A5}).
On the other hand, $(X,o)$ is Gorenstein if and only if $Z_{min}=Z_{max}$ (cf. Theorem \ref{e220} and
\ref{bek:discussion}--\ref{bek:discussion2}). Hence $\calO_{\tX}(-Z_{min})$ has no fixed components,
let $s$ be a generic section of it (that is, $s$ is the generic linear section).
Then consider the exact sequence $0\to \calL(-Z_{min})\stackrel{\cdot s}
\longrightarrow \calL(-2Z_{min})\to \calC\to 0$
where $\cdot s$ is multiplication by $s$ and $\calC$ is a Stein cut of $E_1$ with $h^1(\calC)=0$.
Hence $h^1(\calL(-Z_{min}))\geq h^1(\calL(-2Z_{min}))$, a contradiction.

\bekezdes \label{bek:A7} Next assume that $l>Z_K$. By the above discussion  this means that
$l\geq 2Z_{min}$,
$(X,o)$ is not Gorenstein and it has  $p_g=1$,
$h^1(\calL(-Z_{min}))=h^1(\calL)=0$ (cf. \ref{bek:A3}--\ref{bek:A4}),
$h^1(\calL(-2Z_{min}))=1$  (cf. \ref{bek:A5}).

From the exact sequence $0\to \calL(-l)\to \calL\to \calL|_l\to 0$, $H^0 (\calL(-l))=H^0(\calL)$, and
$h^1(\calL)=0$ we get that necessarily $\chi(l)=h^1(\calL(-l))$ ($\dag$). On the other hand,
from the definition of $l$ we have that $\calL(-l)\in \im (c^{-l})$, hence by Theorem \ref{th:Vk}{\it (8)} and
Theorem \ref{prop:AZ}{\it (c)} $h^1(\calL(-l))=
p_g(X_{\calv\setminus I},o_{\calv\setminus I})$, where $I$ is the $E^*$--support of $l$.

Clearly $I\not=\emptyset$. We consider two cases. If $I\not =\{E_1\}$, then
 $(X_{\calv\setminus I},o_{\calv\setminus I})$ is necessarily
rational with $p_g(X_{\calv\setminus I},o_{\calv\setminus I})=0$,
hence $\chi(l)=0$ too. We claim that this cannot happen, since
$l>Z_K$ implies $\chi(l)>0$. Indeed, consider $x:=Z_K-l\in L_{<0}$, and the Laufer sequence
from Lemma \ref{lem:cs2}
connecting $x$ with $s(x)=0$. Along the sequence $\chi$ is non--increasing and in the very last step
before $z_t=0$ we have $z_{t-1}=-E_v$ for some $v$. But $\chi(-E_v)>0$ hence $\chi(l)>0$ too.

(The fact that $l\in\calS$ and $\chi(l)=0$ imply  $l\in \{C_i\}_i$ can be deduced also from
\cite[Th. 6.3]{Tomari85}, or also from  the
structure of the graded root associated with elliptic singularities, cf. \cite{NOSZ}.)

In particular, the only remaining possibility is the second case $I=\{E_1\}$. This means that
$l=nE_1^*=nZ_{min}$ for some $n\geq 2$. In this case $(X_{\calv\setminus I},o_{\calv\setminus I})$
is the minimally elliptic singularity $(X_1,o_1)$ with $p_g(X_1,o_1)=1$, hence
form ($\dag$) we have $\chi(l)=1$. Since $\chi(nZ_{min})=n(n-1)/2$, we get that $n=2$ is the
unique possibility.

Summarized, if $l>Z_K$ then necessarily $l=2Z_{min}$ (and $(X,o)$ must satisfy
all the cohomological restrictions listed at the beginning of this subsection).

\bekezdes \label{bek:A8} We show that $l=2Z_{min}$ can be realized for some special $\calL$ indeed.

Fix any non--Gorenstein analytic type $(X,o)$  and its resolution
$\tX$ with dual graph $\Gamma$.

First we consider the Abel map $c^{-Z_{min}}$. Since the $E^*$--support $I$ of $Z_{min}=E_1^*$
is $E_1$, $p_g=1$ (cf. \ref{e220})
and this $p_g$ is already supported on $C$, from Theorem \ref{prop:AZ} it follows that $\dim(V(I))=0$.
Hence $\im (c^{-Z_{min}})$ is a
point, say $\calb_1\in \pic^{-Z_{min}}$.
Since $Z_{min}\not=Z_{max}$ (the non--Gorenstein property, see again Theorem \ref{e220}),
 $\calO_{\tX}(-Z_{min})$ has nontrivial
 fixed components, that is, $\calO_{\tX}(-Z_{min})\not \in \im(c^{-Z_{min}})$. In other words,
 $\calL_1:= \calb_1(Z_{min})= \im (\widetilde{c}^{-Z_{min}})\not=0$ in $\pic^0$.

By additivity, cf. \ref{bek:addDiv},
 $\im (c^{-2Z_{min}})$ is a
point too, say $\calb_2\in \pic^{-2Z_{min}}$, and set  $\calL_2:= \calb_2(2Z_{min})
= \im (\widetilde{c}^{-2Z_{min}})\in\pic^0$.
By additivity again, $\calL_2=\calL_1+\calL_1$ (using additive notation of the group structure of
$\pic^0=H^1(\calO_{\tX})=\C$), hence $\calL_2\not=0$ as well.

We set $\calL:=\calL_2\in\pic^0$. Then $\calL(-2Z_{min})=\calb_2= \im(c^{-2Z_{min}})$, hence
Theorem \ref{th:Vk}{\it (8)} applies and  we get $h^1(\calL(-2Z_{min}))=1$.

Consider next the bundle $\calL(-Z_{min})=\calb_2(Z_{min})$. Its restriction to
$C'_1=C$ is $\calO_{C'_1}(Z_{min})$ (Indeed, the restriction of $\eca^{-Z_{min}}$ to
$C'_1$ is the empty divisor, hence the restriction of $\calb_2$ to $C'_1$ is the  trivial bundle).
Furthermore, by Theorem \ref{e220}{\it (d)} $\calO_{C'_1}(Z_{min})$
 is not the trivial bundle in $\pic^0(C'_1)$.
By Theorem \ref{prop:vanishing}{\it (a)} we get that
$h^1(\calL(-Z_{min}))=h^1(C'_1, \calO_{C'_1}(Z_{min}))$. However, since $\calO_{C'_1}(Z_{min})$
is nontrivial, $h^1(C'_1, \calO_{C'_1}(Z_{min}))=0$ by \cite[\S 3]{weakly}. Therefore,
$h^1(\calL(-Z_{min}))=0$. This combined with \ref{bek:A3} shows that $h^1(\calL)=0$ too.

Finally, use again
 $0\to \calL(-2Z_{min})\to \calL\to \calL|_{2Z_{min}}\to 0$. Since $h^1(\calL)=0$ we get that
 $h^1(\calL|_{2Z_{min}})=0$ too. Therefore,  from $\chi(2Z_{min})=1$ one gets that
 $h^0(\calL|_{2Z_{min}})=1$. Since
 $h^1(\calL(-2Z_{min}))=1$ too, one obtains that $H^0(\calL(-2Z_{min}))\hookrightarrow H^0(\calL) $
 is an isomorphism. This shows that the cycle of fixed components $l$
 of $\calL(-2Z_{min})$ is $\geq 2Z_{min}$.
 But by the previous discussions $l\leq 2Z_{min}$ always. Hence $l=2Z_{min}$.

 \bekezdes \label{bek:A9} $\calL$ constructed in \ref{bek:A8} satisfies another uniqueness property
 as well. Recall that $Z_K=E_2^*$.
 The image of $c^{-Z_K}=c^{-E^*_2}$ is 1--dimensional, and in fact (using the Laufer integration formula
\cite[\S 7]{NNI} applied to  the unique differential form of pole one along $E_2$)
it is the bijective
image of $\eca^{-E^*_2}(Z_{min})=\C^*$ (the moving divisor/point along $E_2\setminus (E_1\cup E_3)$).
Since $Z_{min}$ is the cohomological cycle (or,
for any $Z\geq Z_{min}$ one has $\pic^0(Z)=\pic^0(Z_{min})$),
$\im(c^{-E^*_2}(Z))=\im(c^{-E^*_2}(Z_{min}))$, see also diagram (3.1.1) from \cite{NNI}.
Hence $\im(c^{-Z_K})=\C^*$ in $\pic^{-Z_K} =\C$. In other words, $\pic^{-Z_K}\setminus \im(c^{-Z_K})$
consists of
one point. This is exactly $\calL(-Z_K)$ (since this bundle has nontrivial cycle of fixed components).
In other words,
$\calb_2(E_1)$ is the gap point $\pic^{-Z_K}\setminus \im(c^{-Z_K})$ of $\pic^{-Z_K}$.

This example suggest fully the subtlety of the cycle of fixed components $l$ of a bundle $\calL$
compared with $h^1(\calL)$. In $\pic^{-Z_K}$ any line bundle has $h^1=0$ by the generalized
Grauert--Riemenschneider vanishing. However,
$\pic^{-Z_K}$ might have a nontrivial interesting stratification according to $l$ (and even
the possible values of $l$ are not evident at all).

  \bekezdes \label{bek:A10} Consider the situation from \ref{bek:A8}.
  It is instructive to determine the possible values $l$ for all line bundles
  $\calL_n:=\calb_n(nZ_{min})\in \pic^0$, where $\calb_n=\im(c^{-nZ_{min}})$, $n\geq 0$. Clearly, for $n=0$ we have $l=0$.
If $n=1$ then $\calL_1(-Z_{min})=\calb_1$ is in the image of the Abel map, hence by Theorem
\ref{th:Vk} $h^1(\calL_1(-Z_{min}))=1$. Hence by \ref{bek:A3} $l=Z_{min}$.
If $n=2$ we already know that $l=2Z_{min}$.

Next assume that $n\geq 3$. Then the restrictions to $C'_1$ of both $\calL_n(-Z_{min})=\calb((n-1)Z_{min})$
and $\calL_n(-2Z_{min})=\calb((n-2)Z_{min})$ are nontrivial (the restriction of $\calb_n$ is trivial, while
of $\calO(Z_{min})$ is not), hence $h^1$ of both bundles is zero by \cite[\S 3]{weakly}. Hence by
\ref{bek:A3}  and \ref{bek:A5} one obtains $l=Z_K$.

(The reader is invited to repeat the discussion for all $\calb_n(mZ_{min})$ as well.)

Finally let us provide $h^1(\calL_n)$ for all $n\geq 0$. If $n=0$ then $\calL_0=\calO$,
hence  $h^1(\calL_0)=1$. For $n=1$ one has $h^1(\calL_1)=h^1(\calL_1(-Z_{min}))=h^1(\calb_1)=1$ too, cf. \ref{th:Vk}{\it (8)}.
For $n=2$ by \ref{bek:A8} $h^1(\calL_2)=0$. If $n\geq 3$, in the exact sequence
$0\to \calL_n(-Z_K)\to \calL_n\to \calL_n|_{Z_K}\to 0$ one has
$H^0(\calL_n(-Z_K))=H^0(\calL_n)$ and $\chi(\calL_n|_{Z_K})=0$, hence
$h^1(\calL_n)=h^1(\calL_n(-Z_K))=0$.

\vspace{1mm}

This shows that though for several different Chern classes $l'$ it can happen that they have the same
$I(l')$ and $V(I(l'))$, the corresponding affine spaces $\overline { \im ( \widetilde{c}^{l'})}$
might be different, each individual affine subspace preserves some information about $l'$,
more than just $I(l')$. The above computation shows that even the $h^1$--behaviour  along these subspaces might vary.

\end{document}